\definecolor{gray}{gray}{0.5}
\newcommand\Z{\mathbb{Z}}
\newcommand\N{\mathbb{N}}
\newcommand\Q{\mathbb{Q}}
\newcommand\conv{\stackrel{\scriptscriptstyle<}{\rightsquigarrow}}
\newcommand\vnoc{\stackrel{\scriptscriptstyle>}{\leftsquigarrow}}
\newcommand\free{{\mathbb F}}
\newcommand\relfree{{\mathbb V}}
\newcommand\cay[2]{{\mathscr C(#1,#2)}}
\newcommand\grig{G_{012}}
\newcommand\subgp{\subseteq}
\newcommand\dist{{\operatorname{dist}_{\conv}}}
\newcommand\Torsion{{\operatorname{Torsion}}}
\newcommand\variety{{\mathcal V}}
\newcommand\markedgroups{{\mathscr G}}
\newcommand\extsmallergroups{prelimit of an overgroup}
\newcommand\preforms{preforms}
\newcommand\preform{preform}
\newcommand\preformed{preformed}
\newtheorem{lemma}{Lemma}[section]
\newtheorem{proposition}[lemma]{Proposition}
\newtheorem{theorem}[lemma]{Theorem}
\newtheorem{corollary}[lemma]{Corollary}
\newtheorem{example}[lemma]{Example}
\newtheorem{maintheorem}{Theorem}
\newtheorem{mainproposition}[maintheorem]{Proposition}
\theoremstyle{definition}
\newtheorem{remark}[lemma]{Remark}
\newtheorem{definition}[lemma]{Definition}
\newtheorem{question}[lemma]{Question}
\font\manfnt=manfnt
\newcommand\twoheaddownarrow{\hbox to 0pt{\raisebox{0.3ex}{$\downarrow$}}
  \hbox to 0pt{\raisebox{-0.2ex}{$\downarrow$}}\phantom\downarrow}
\begin{document}
\title{Ordering the space of finitely generated groups}

\author{Laurent Bartholdi}
\address{L.B.: Mathematisches Institut, Georg-August Universit\"at, G\"ottingen, Germany}

\author{Anna Erschler}
\address{A.E.: C.N.R.S., D\'epartement de Math\'ematiques, Universit\'e Paris Sud, Orsay, France}

\date{January 20, 2013}

\thanks{The work is supported by the ERC starting grant 257110
  ``RaWG'', the ANR ``DiscGroup: facettes des groupes discrets'' and
  the Courant Research Centre ``Higher Order Structures'' of the
  University of G\"ottingen}

\begin{abstract}
  We consider the oriented graph whose vertices are isomorphism
  classes of finitely generated groups, with an edge from $G$ to $H$
  if, for some generating set $T$ in $H$ and some sequence of
  generating sets $S_i$ in $G$, the marked balls of radius $i$ in
  $(G,S_i)$ and $(H,T)$ coincide.

  Given a nilpotent group $G$, we characterize its connected component
  in this graph: if that connected component contains at least one
  torsion-free group, then it consists of those groups which generate
  the same variety of groups as $G$.

  The arrows in the graph define a preorder on the set of isomorphism
  classes of finitely generated groups. We show that a partial order
  can be imbedded in this preorder if and only if it is realizable by
  subsets of a countable set under inclusion.

  We show that every countable group imbeds in a group of non-uniform
  exponential growth.  In particular, there exist groups of
  non-uniform exponential growth that are not residually of
  subexponential growth and do not admit a uniform imbedding into
  Hilbert space.
\end{abstract}
\maketitle



\section{Introduction}


Our aim, in this paper, is to relate the following preorder on the set
of isomorphism classes of finitely generated groups with asymptotic
and algebraic properties of groups.

\begin{definition}\label{def:precede}
  Let $G,H$ be finitely generated groups.  We write $G\conv H$, and
  say that $G$ \emph{\preforms} $H$, if the following holds.  There
  exist a finite generating set $T$ of $H$ and a sequence of finite
  generating sets $S_1,S_2,\dots$ of $G$, with bijections $S_n\to T$
  such that, for all $R\in\N$, if $n$ is large enough then the balls
  of radius $R$ in the marked Cayley graphs of $(G,S_n)$ and $(H,T)$
  are isomorphic.

  We denote by $\cay GS$ the Cayley graph of the group $G$ with
  respect to the generating set $S$. Its edges are marked with the
  generator they correspond to.

  If $G$ \preforms\ $H$, then we also say that $H$ \emph{is
    \preformed\ by} $G$.
\end{definition}

Definition~\ref{def:precede} can be interpreted in terms of the
\emph{Chabauty-Grigorchuk topology}, also called the \emph{Cayley
  topology}, defined as follows. The \emph{space of marked groups} is
the set $\markedgroups$ of pairs $(G,S)$ with $G$ a finitely generated
group and $S$ a finite ordered generating set, considered up to group
isomorphism preserving the generating set. This is equipped with a
natural topology, two marked groups $(G,S)$ and $(G',S')$ being close
to each other if marked balls of large radius in the Cayley graphs
$\cay GS$ and $\cay G{S'}$ are isomorphic.

Chabauty considered this topological space
in~\cite{chabauty:limites}*{\S3}; he used it to describe the space of
lattices in locally compact
groups. Gromov~\cite{gromov:nilpotent}*{pages 71--72} used it to
derive an effective version of his theorem on groups of polynomial
growth. Grigorchuk ~\cite{grigorchuk:gdegree} was the first to study
this topology systematically; in particular, he used it to construct
groups of wildly-oscillating intermediate growth, by approximating
them in $\markedgroups$ by solvable groups. For generalities on the
the space of marked groups,
see~\cite{champetier-guirardel:limitgroups}.

Definition~\ref{def:precede} may then be formulated as follows:
$G\conv H$ if and only if the closure of the isomorphism class of $G$
in the Chabauty-Grigorchuk topology contains $H$.

It is essential for our definition that we consider limits in the
space of marked groups of a fixed group, letting only its generating
set vary.  Various authors have already considered limits in the space
of marked groups, not necessarily restricting to limits within one
isomorphism class.  Limits of one fixed group have been studied when
this group is free: they coincide with limits groups, as shown by
Champetier and
Guirardel~\cite{champetier-guirardel:limitgroups}*{Theorem~1.1};
see~\S\ref{ss:limitgroups} for more
references. Zarzycki~\cite{zarzycki:limitsthompson} considers groups
that are \preformed\ by Thompson's group $F$, and gives some necessary
conditions for HNN extensions to appear in this manner;
Guyot~\cites{guyot:dihedral,guyot:metabelian} considers groups that
are \preformed\ by $G$ for some metabelian groups $G$, and identifies
their closure in $\markedgroups$. On the other hand, groups that
\preform\ free groups are groups that have infinite girth for
generating sets of fixed cardinality. Olshansky and Sapir characterize
them in~\cite{olshanskii-sapir:fklike} as groups without
almost-identities, see also~\S\ref{ss:smallerfree}.

We recall that a \emph{preorder} is a binary relation $\precsim$ such
that $A\precsim C$ whenever $A\precsim B$ and $B \precsim C$ and such
that $A\precsim A$ for all $A$. If furthermore `$A\precsim B$ and
$B\precsim A$' imply $A=B$, then it is an \emph{order}. A preorder is
\emph{directed} if every finite subset has an upper bound. It is easy
to see that the relation `$\conv$' is a preorder, and that $G\conv H$
does not depend on the choice of a finite generating set in $H$ (see
Lemmas ~\ref{lem:preorder} and~\ref{choiceofgen} in the next section).
It is also not difficult to see that the restriction of this relation
to some classes of groups is an order; this happens, for example, for
residually finite finitely presented groups, such as polycyclic groups
(see Corollary~\ref{cor:polyorder}). For some other classes of groups
this is not true: for example, there exist solvable groups $G$
admitting a continuum of non-isomorphic solvable groups which are
equivalent to $G$ under our preorder, that is, which both \preform\
and are \preformed\ by $G$. Nekrashevych gave
in~\cite{nekrashevych:nue} examples of groups acting on rooted trees
which are equivalent under our preorder.

In many cases, if $A$ \preforms\ $B$, then $A$ ``looks smaller''
than $B$. Simple examples of this kind include: $\Z^m \conv \Z^n$ if
and only if $m\le n$; free groups satisfy $\free_m \conv\free_n$ if
and only if $m\le n$; and the $n$-generated free groups $\relfree_n$
in the variety generated by a torsion-free nilpotent group of
nilpotency class $c$ satisfy, for $m, n \ge c$, the same relation
$\relfree_m \conv\relfree_n$ if and only $m \le n$, see
Theorem~\ref{thm:A}.  On the other hand, it may happen for $A$
that \preform\ $B$ that the growth of $A$ is larger than the growth of
$B$; we consider this in more detail in~\S\ref{ss:introgrowth}.

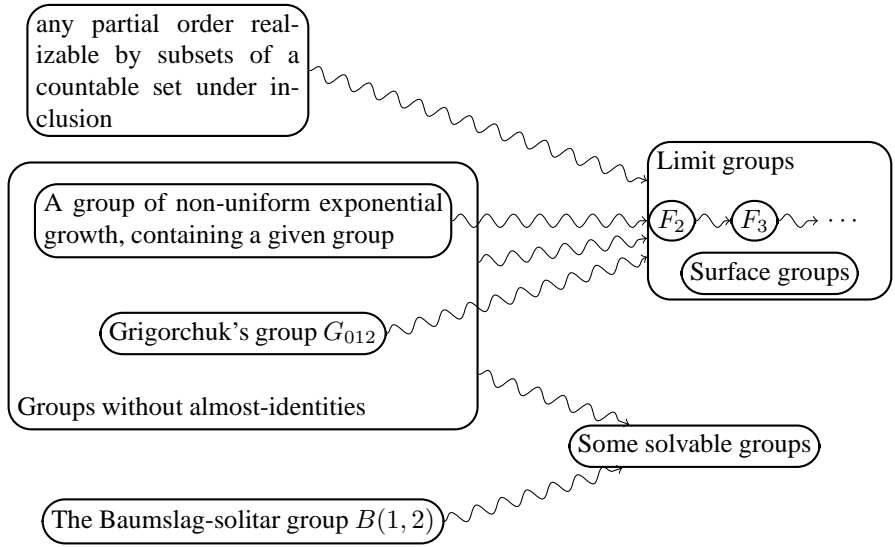
\begin{figure}[t]
\begin{tikzpicture}[box/.style={rectangle,rounded corners=3mm,thick,draw},
  snake/.style={->,decorate,decoration=snake}]
  \node[box] (order) at (0,10) {\parbox{35mm}{any partial order realizable by subsets of a countable set under inclusion}};
  \node[box] (f2) at (6.7,8) {$F_2$};
  \node[box] (f3) at (7.8,8) {$F_3$};
  \node (fn) at (9,8) {$\cdots$};
  \node[box] at (8,7.3) {Surface groups};
  \node[box] (lg) at (8,8) {\parbox{3cm}{Limit groups\\[12mm]}};
  \node[box] (gg) at (1,6.5) {Grigorchuk's group $\grig$};
  \node[box] (qi) at (1,7) {\parbox{6cm}{\vspace{3cm}Groups without almost-identities}};
  \node[box] (nue) at (1,8) {\parbox{5.3cm}{A group of non-uniform exponential growth, containing a given group}};
  \node[box] (sol) at (7,5) {Some solvable groups};
  \node[box] (bs) at (1,4) {The Baumslag-solitar group $B(1,2)$};
  \path (f2) edge[snake] (f3)
  (f3) edge[snake] (fn)
  (order.east) edge[snake] (lg)
  (gg.east) edge[snake] (lg)
  (nue) edge[snake] (lg)
  (qi) edge[snake] (lg)
  (qi) edge[snake] (sol)
  (bs.east) edge[snake] (sol);
\end{tikzpicture}
\caption{Some classes of groups and their relationship under $\conv$}
\end{figure}

\subsection{The structure of components}
We view $\conv$ as specifying the edge set of an oriented graph with
vertex set the isomorphism classes of finitely generated groups. In
studying this graph, we may consider independently the \emph{connected
  components} of its underlying unoriented graph. What do they look
like? Which components admit an initial vertex?  a terminal vertex?
Given a connected component, does it have an upper bound? What is the
group of preorder preserving bijections of a given component? Which
groups' \emph{strongly connected component} are reduced to points, or
have the cardinality of the continuum?

Unlike some other natural preorders, such as ``being a subgroup'',
``being a quotient group'', or ``being larger'' in the sence of Pride
($G\succsim_p H$ if $H_1$ is a quotient of $G_1$, for respective
quotients $G_1,H_1$ of finite-index subgroups of $G,H$ by finite
normal subgroups, see \cites{pride:large,thomas:largeness}), the
preorder that we consider in this paper has infinitely many connected
components.  An easy example is the connected component of $\Z$: it
contains all infinite abelian groups, and we describe the group of the
order preserving bijections of this component in
Proposition~\ref{prop:aut_ab}.

For a nilpotent torsion-free $G$ group, its connected component is
closely related to groups that generated the same variety as $G$.

\begin{maintheorem}[= Proposition~\ref{mainpropositionnilpotent}]\label{thm:A}
  Let $G$ be a finitely generated nilpotent group such that $G$ and
  $G/\Torsion(G)$ generate the same variety (i.e.\ satisfy the same
  identities). Then, for all $k\in\N$ large enough, $G$ \preforms\
  the relatively free group of rank $k$ in that variety.
\end{maintheorem}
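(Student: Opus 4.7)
The plan is to reduce to the torsion-free case $N := G/\Torsion(G)$ and then, by a Zariski-density argument in Mal'cev coordinates, to produce a sequence of generating $k$-tuples of $N$ whose marked Cayley balls stabilise to those of $\relfree_k$. By hypothesis the variety $\variety$ generated by $G$ coincides with the one generated by the torsion-free nilpotent group $N$, so $\relfree_k := \relfree_k(\variety)$ embeds as a subgroup of a direct power of $N$ (via the evaluation map on all $k$-tuples in $N$); it is therefore itself a finitely generated torsion-free nilpotent group, and embeds as a lattice in its Mal'cev $\Q$-completion, a unipotent algebraic $\Q$-group.

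The core construction is, for each $R \in \N$, to exhibit a generating $k$-tuple $\bar S_R \in N^k$ on which the canonical homomorphism $\relfree_k \to N$ sending the free generators to $\bar S_R$ is injective on the ball of radius $R$ in $\relfree_k$. In Mal'cev coordinates, every word $w \in F_k$ gives a polynomial evaluation map $N^k \to N$; if $\bar w \in \relfree_k$ is non-trivial, this polynomial cannot be identically zero (else $\bar w$ would be a law of $N$, hence of $\variety$, contradicting $\bar w \ne 1$ in $\relfree_k$), so its vanishing locus is a proper Zariski-closed subvariety of $N^k$. Intersecting the complements over the finitely many non-trivial $\bar w \in \relfree_k$ of length at most $R$ yields a Zariski-open dense set $U_R \subseteq N^k$ of ``$R$-generic'' tuples. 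For $k$ strictly larger than $d(N)$, the set of generating $k$-tuples of $N$ is also Zariski-dense in $N^k$, by a Nakayama-type reduction to generation in the (torsion-free) abelianization $N^{\mathrm{ab}}$ combined with the density in $(\Z^{d(N)})^k$ of tuples whose maximal minors have gcd $1$; any tuple in the intersection of these two dense sets serves as $\bar S_R$. This gives $(N,\bar S_R) \to (\relfree_k,T)$ in the Cayley topology, hence $N \conv \relfree_k$.

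To deduce $G \conv \relfree_k$, lift each $\bar S_R$ to a generating $k$-tuple $S_R$ of $G$; this is possible for $k$ above a threshold depending on $d(G)$ and $|\Torsion(G)|$, using the finiteness of $\Torsion(G)$ (a bounded number of coordinates may be adjusted by torsion elements). The relations in $(G,S_R)$ of length at most $R$ are then squeezed between those of $\relfree_k$ (satisfied in every member of $\variety$, in particular $G$) and those of $(N,\bar S_R)$ (since $N$ is a quotient of $G$); as the outer two coincide by construction, so does the middle, completing the argument. I expect the main obstacle to be the combined density/generation step --- ensuring that one can select a single $k$-tuple of $N$ which simultaneously generates $N$ and is $R$-generic --- which requires care both in establishing Zariski-density of the generating locus in a nilpotent group and in verifying that the non-trivial elements of $\relfree_k$ really do yield non-zero polynomial word maps on $N^k$; both rely on the torsion-free nilpotent structure forced by the hypothesis on $G$.
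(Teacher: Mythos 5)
Your core mechanism — non-vanishing of polynomial word maps in Mal'cev coordinates, together with an abundance of generating $k$-tuples for $k$ large — is exactly the engine of the paper's proof (their Lemma~\ref{polynomials}, Lemma~\ref{rapidgenerators} and Lemma~\ref{lem:discriminatedbygenerators}, phrased via ``coordinates growing at speed $C$'' rather than Zariski-density; the two are interchangeable). Your squeeze at the end is a nice repackaging that avoids the paper's intermediate step of producing a torsion-free \emph{prelimit} of $G$ first (Lemma~\ref{lem:nilprecedestfnil}): you instead trap the relations of $(G,S_R)$ between those of $\relfree_k$ and those of $(N,\bar S_R)$.

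The genuine gap is the lift $\bar S_R \rightsquigarrow S_R$. For a nilpotent $G$, a $k$-tuple generates $G$ iff its image generates $G^{\mathrm{ab}}$, and the natural map $G^{\mathrm{ab}}\to N^{\mathrm{ab}}$ has kernel $F$ = image of $\Torsion(G)$ in $G^{\mathrm{ab}}$, which is typically \emph{non-trivial}. So an arbitrary lift of a generating tuple of $N$ generically does \emph{not} generate $G$, and ``adjusting a bounded number of coordinates by torsion elements'' only alters the composite $\Z^k\to G^{\mathrm{ab}}$ on the kernel $K$ of $\Z^k\to N^{\mathrm{ab}}$; you need the resulting map $K\to F$ to be onto, which is a Diophantine condition on the integer vectors spanning $K$ and is in no way ensured by the Zariski-generic choice of $\bar S_R$. (This interacts delicately with the choice of $\bar S_R$: you are asking for a tuple simultaneously $R$-generic in the algebraic sense and arithmetically compatible with the extension $F\hookrightarrow G^{\mathrm{ab}}\twoheadrightarrow N^{\mathrm{ab}}$, which may itself have torsion even when $N$ is torsion-free.) The paper sidesteps this cleanly with a preliminary reduction: apply Lemma~\ref{lem:verbalquotients} with $\variety$ the abelian variety and use that every infinite abelian group \preforms\ a free abelian one, to replace $G$ by a prelimit with \emph{torsion-free abelianization}. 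Then $G^{\mathrm{ab}}\cong N^{\mathrm{ab}}$, so every lift of a generating tuple of $N$ automatically generates $G$, and the rest of your argument goes through verbatim. You should insert this reduction (or give an explicit arithmetic argument for the liftability of suitably chosen $\bar S_R$) to close the gap.

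A secondary, more cosmetic point: Zariski-density of the generating locus of $N$ inside $N_\R^k$ is what you need, but it must be argued in the Mal'cev-coordinate chart $N_\R\cong\R^n$, noting that for a nilpotent $N$ generation is detected by the abelianization and that the Mal'cev projection $N_\R\to N^{\mathrm{ab}}_\R$ is a coordinate projection; the paper's Lemma~\ref{rapidgenerators} is precisely the required density statement for $\Z^d$, proved by an explicit determinant construction, and you should either cite such a lemma or supply it.
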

In particular, every finite set of such nilpotent groups has a
supremum with respect to our preorder. We believe, in fact, that this
last statement holds for all virtually nilpotent groups.  However, if
a nilpotent group $G$ is not torsion-free, the connected component of
$G$ can be much smaller that the set of (isomorphism classes) of
groups generating the same varitety as $G$, see
Corollary~\ref{cor:nilpotentcomponents}.

We show, on the other hand, that the preorder types that can occur are
quite general, even within solvable groups of class $3$, or within
groups that \preform\ free groups:

\begin{maintheorem}[= Corollary~\ref{cor:order} and Remark~\ref{rem:order}]\label{thm:B}
  Let $(X,\precsim)$ be a preorder. Then
  $(\markedgroups/{\cong},\conv)$ contains $(X,\precsim)$ as a
  subpreorder if and only if $X$ has cardinality at most the
  continuum, and all the partial orders it contains are imbeddable in
  the partial order of subsets of $\mathscr B$ under inclusion, for a
  countable set $\mathscr B$.
\end{maintheorem}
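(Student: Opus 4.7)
The plan is to handle the two directions separately. Necessity will follow from an invariant of the preorder $\conv$ with values in subsets of a countable set, while sufficiency will be an explicit construction producing groups realising $(\mathcal{P}(\N), \subseteq)$ inside $(\markedgroups/\cong, \conv)$.

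For necessity, the cardinality bound $|X| \le 2^{\aleph_0}$ follows from the fact that every finitely generated group is a quotient of some $\free_n$ by a normal subgroup, and each such subgroup is a subset of a countable set. For the embeddability condition, I would let $\mathscr{B}$ be the countable collection of isomorphism classes of finite edge-labelled balls, and for each finitely generated group $G$ let $\beta(G) \subseteq \mathscr{B}$ be the set of all marked balls appearing in some Cayley graph $\cay{G}{S}$. Combining Definition~\ref{def:precede} with Lemma~\ref{choiceofgen}, one verifies that
\[
  G \conv H \iff \beta(H) \subseteq \beta(G),
\]
so $G \mapsto \mathscr{B} \setminus \beta(G)$ descends to a monotone order embedding of the quotient partial order into $(\mathcal{P}(\mathscr{B}), \subseteq)$, and hence of any partial order contained in $(X, \precsim)$.

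For sufficiency, after reducing to a partial order $P$ via equivalence classes and using the hypothesis to embed $P$ into $(\mathcal{P}(\N), \subseteq)$, it suffices to realise $(\mathcal{P}(\N), \subseteq)$ as a subpreorder of $(\markedgroups/\cong, \conv)$. I would fix a sequence $(w_n)_{n \in \N}$ of words in a suitable free group whose triviality is independently detectable in a single marked ball, and associate to each $A \subseteq \N$ a finitely generated group $G_A$ realising exactly the identities $\{w_n = 1 : n \in A\}$, for instance via the relatively free group of the corresponding variety, in the spirit of Theorem~\ref{thm:A}. The inclusion $A \subseteq B$ would then be upgraded to $G_A \conv G_B$ via a Cayley-graph approximation that enforces the extra identities in the limit, while the necessity-direction invariant $\beta(\cdot)$ obstructs $G_A \conv G_B$ whenever $A \not\subseteq B$.

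The main obstacle is choosing the words $w_n$ so that each identity can be independently imposed without triggering cascades of further identities, and so that the corresponding Cayley approximation can be performed locally enough that $A \subseteq B$ translates to $G_A \conv G_B$. The first is a small-cancellation or Ol\textquoteright shanskii-type requirement on the $w_n$; the second requires adapting the approximation scheme behind Theorem~\ref{thm:A} to this general variety-theoretic setting. Once the family $\{G_A\}_{A \subseteq \N}$ is in place, composing $A \mapsto G_A$ with the given embedding $X \hookrightarrow (\mathcal{P}(\N), \subseteq)$ (on equivalence classes) yields the required embedding of $X$ into $(\markedgroups/\cong, \conv)$.
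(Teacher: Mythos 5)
Your necessity argument is essentially the paper's: you define the invariant $\beta(G)$ as the set of all marked balls occurring in some $\cay GS$, observe (using Lemma~\ref{choiceofgen}) that $G\conv H$ iff $\beta(H)\subseteq\beta(G)$, and embed into $\mathcal P(\mathscr B)$ by complement. This is precisely the paper's map $G\mapsto\mathscr O_G$ in the proof of Proposition~\ref{prop:order}, including the cardinality bound. One small omission: the paper pairs $\mathscr O_G$ with $\ker(\free\twoheadrightarrow G)$ to make the map injective on $\markedgroups$, but for embedding the quotient preorder your version suffices.

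The sufficiency direction, however, has a genuine gap and also departs from the paper's route in a way that is not made to work. You propose to encode $A\subseteq\N$ by building a group $G_A$ whose identities are exactly $\{w_n:n\in A\}$ for a suitably independent family $(w_n)$, with $A\subseteq B$ forced to give $G_A\conv G_B$ via some Cayley-graph approximation ``in the spirit of Theorem~\ref{thm:A}.'' But Theorem~\ref{thm:A} concerns \emph{nilpotent} varieties and discriminating groups; there is no general mechanism guaranteeing that the relatively free group of the smaller variety $\conv$-approximates the relatively free group of the larger one — indeed, by Lemma~\ref{lem:identity}(1) the direction of entailment between $\conv$ and satisfaction of identities only gives you the obstruction you want, not the positive approximation. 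You acknowledge this as ``the main obstacle'' but do not resolve it, and this is exactly where the real work lies. Separately, you do not address the preorder structure: $(\markedgroups/{\cong},\conv)$ must contain equivalence classes of continuum cardinality to realize a general preorder (the $\mathcal P(\mathscr B)\times\mathscr X$ factor in Proposition~\ref{prop:order}), and your sketch produces at most one $G_A$ per $A$.

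By contrast, the paper constructs a concrete family: Hall's solvable group $H(\Z^2)=\Z^2\ltimes\overline N_{2,\Z^2}$ (class-$3$ solvable), with central quotients $H_\phi$ indexed by prime colourings $\phi:(\Z^2)_+\to\{1\}\cup\{\text{primes}\}$. The invariant controlling $\conv$ is not a set of identities but the set of primes occurring as orders of torsion elements (Lemma~\ref{lem:order1}), and the positive direction ($X\supseteq Y\Rightarrow H_{X,C}\conv H_{Y,C'}$) is achieved by constructing $I$-universal colourings and twisting generating sets by $\mathbf{SL}_2(\Z)$ matrices so that the two Cayley graphs agree on large balls. The continuum of $I$-universal colourings per prime set gives the $\mathscr X$ factor for free. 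Your necessity argument is right, but the sufficiency argument needs to be replaced by an explicit construction of this kind.
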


Furthermore, the imbedding of $(X,\precsim)$ can then be chosen to be
within the set of isomorphism classes of solvable groups of solubility
class $3$  or, alternatively, within the set
of isomorphism classes of groups that \preform\ $\free_3$.

Thomas studies in~\cite{thomas:largeness} the complexity, with respect
to the Borelian structure on $\markedgroups$, of Pride's ``largeness''
preorder and of the ``being a quotient'' preorder. He shows that these
preorders are high in the Borel hierarchy (namely, \emph{$\mathbf
  K_\sigma$-universal}). The preorder $\conv$ differs from the above
mentioned preorders even if we forget the underlying Borelian
structure: the quotients and largeness preorders have chains with
cardinality the continuum, while (by Theorem~\ref{thm:B}) chains for
$\conv$ are countable.




\subsection{Groups larger or smaller than a given group}
Given a group $G$, how many groups \preform\ $G$?  How
many groups are \preformed\ by $G$? How big is the connected
component of $G$? What is its diameter? 

We note that, if a group $G$ is virtually nilpotent, then its component is
countable. The number of groups that are \preformed\ by $G$ is 
countably infinite.

If $G$ is a free group, a surface group, or more generally a
non-abelian limit group (see~\S\ref{ss:limitgroups}), then there are
countably many groups that are \preformed\ by $G$,
see~\cites{sela:diophantine1,kharlampovich-myasnikov:iavofg1}.
However, the connected component of $G$ has the cardinality of the
continuum, see Example~\ref{ex:continuum}.

We study the groups that \preform\ free groups. Schleimer
considered groups of unbounded girth (there are generating sets such
that the smaller cycle in the Cayley graph is arbitrarily long) in an
unpublished note~\cite{schleimer:girth}, and they are intimately
connected to groups that \preform\ free groups, see
Question~\ref{qu:girth}. The latter are groups that do not satisfy an
\emph{almost-identity}~\cite{olshanskii-sapir:fklike}: a word whose
evaluation vanishes on every generating set.  Olshanskii and Sapir
show in ~\cite{olshanskii-sapir:fklike} that there are groups with
non-trivial quasi-identities among groups satisfying no non-trivial
identitity.

In~\S\ref{ss:abert}, we modify a criterion by
Ab\'ert~\cite{abert:nonfree} about groups without identities to
determine when a group has no almost-identity. This lets us answer
negatively a question by
Schleimer~\cite{schleimer:girth}*{Conjecture~6.2} that groups of
unbounded girth have exponential word growth
(see~\S\ref{ss:introgrowth} for the definition of growth):
\begin{maintheorem}[= Corollary~\ref{cor:grig<free}]
  The first Grigorchuk group $\grig$ \preforms\ $\free_3$.
\end{maintheorem}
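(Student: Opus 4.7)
The plan is to invoke the criterion developed in \S\ref{ss:abert}, which refines Ab\'ert's absence-of-identities condition to the following statement: a $k$-generated group $G$ satisfies $G\conv\free_k$ as soon as it admits no \emph{almost-identity} on $k$ variables, i.e.\ whenever, for every non-trivial word $w(x_1,\dots,x_k)$, there exists a generating $k$-tuple $(g_1,\dots,g_k)$ of $G$ with $w(g_1,\dots,g_k)\neq 1$. Since $\grig=\langle a,b,d\rangle$ (using the relation $c=bd$ among the four standard generators), it suffices to establish this property for $k=3$.

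To exhibit, for each non-trivial word $w$, the required generating $3$-tuple, I would exploit the \emph{regular branch} structure of $\grig$: inside each rigid level-$n$ stabiliser $\operatorname{rist}_n$ sit $2^n$ pairwise commuting copies of a fixed finite-index branching subgroup $K$, acting independently on the level-$n$ subtrees of the infinite rooted binary tree. Since $\grig$ is known not to satisfy any group law, first pick a tuple $(k_1,k_2,k_3)\in\grig^3$ with $w(k_1,k_2,k_3)\neq 1$, and use self-similarity to reproduce this non-trivial evaluation inside an arbitrarily deep rigid stabiliser. One then perturbs the standard generating triple to $(g_1,g_2,g_3)=(a\cdot k_1',b\cdot k_2',d\cdot k_3')$, where $(k_1',k_2',k_3')$ is a suitable deep conjugate of $(k_1,k_2,k_3)$ chosen so that its action is supported, at level $n$, on subtrees separated from the support of $(a,b,d)$; the resulting $w(g_1,g_2,g_3)$ retains a non-trivial contribution inside the deep commuting copy of $K$ and hence does not vanish. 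Moreover the perturbed triple projects modulo $\operatorname{rist}_n$ to $(a,b,d)$, so it generates the finite quotient $\grig/\operatorname{rist}_n$; by Pervova's theorem that every maximal subgroup of $\grig$ has finite index, any tuple whose image generates a sufficiently large finite quotient must itself generate $\grig$.

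The main obstacle is balancing three competing demands on the triple $(g_1,g_2,g_3)$: it must generate the torsion group $\grig$, it must evade the prescribed relation $w$, and the construction must be robust enough to evade \emph{all} non-trivial relations of bounded length simultaneously. The branch structure is tailor-made for decoupling these: generation is detected in a shallow finite quotient, whilst non-vanishing of $w$ is realised in arbitrarily deep, mutually commuting copies of $K$. A diagonal argument over the finitely many freely-reduced words of length at most $2R$ then produces, for every $R$, a single generating triple $S_R\subset\grig$ whose marked Cayley ball of radius $R$ is isomorphic to that of $(\free_3,\{t_1,t_2,t_3\})$; the sequence $(S_R)$ witnesses $\grig\conv\free_3$.
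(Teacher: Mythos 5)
Your proposal lands on the same high‑level strategy the paper uses: reduce to showing that $\grig$ satisfies no $3$‑almost‑identity (Corollary~\ref{cor:noai=<free}), and use Pervova's theorem that all maximal subgroups of $\grig$ have finite index so that perturbed generating triples still generate. But the paper does \emph{not} argue directly from the branch structure the way you do. It establishes the general criterion of Theorem~\ref{prop:abert} (Ab\'ert-style): if $G$ separates a set $X$ and $\Phi(G)$ has finite index, then $G$ has no almost-identity; then Lemma~\ref{lem:wb=>sep} checks that any weakly branched group (in particular $\grig$) separates $\Sigma^\infty$, and Pervova supplies $[\Phi(\grig):\grig]<\infty$.

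The reason the paper passes through separation rather than the direct ``deep rigid stabiliser'' perturbation you propose is that your central claim is unjustified, and the justification is precisely what Ab\'ert's argument provides. You set $(g_1,g_2,g_3)=(a k_1',b k_2',d k_3')$ with $k_i'$ inside a deep $\operatorname{rist}_v$ and assert that ``$w(g_1,g_2,g_3)$ retains a non-trivial contribution inside the deep commuting copy of $K$ and hence does not vanish.'' But if $w(a,b,d)=1$ (as will happen for plenty of non-trivial $w$), then $w(g_1,g_2,g_3)\in\operatorname{rist}_n$ is a \emph{product of conjugates of the $k_i'^{\pm1}$ by the $(a,b,d)$-prefixes of $w$}, and these conjugators move the support vertex $v$ around. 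Whenever two prefixes carry $v$ to the same vertex, the corresponding $k'$-factors collide and multiply; the resulting local product is \emph{not} $w(k_1,k_2,k_3)$ but some other word in $k_1,k_2,k_3$, which can very well be trivial. Nothing in your construction prevents such cancellations; indeed the full word $w(a,b,d)$ being trivial already forces at least one coincidence among the orbit points of $v$. Controlling this requires a step-by-step inductive modification of the generators so that the orbit points visited by successive prefixes stay distinct — that bookkeeping is exactly Theorem~\ref{prop:abert}, where one perturbs \emph{one generator at a time} by an element of $G_Y\cap\Phi(G)$ chosen to move $p_{n-1}$ off a prescribed finite set. A one-shot perturbation of all three generators, as you propose, does not supply this control, so as written the step ``hence does not vanish'' is a genuine gap rather than a missing routine detail.
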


Extending an argument by Akhmedov (see~\cite{akhmedov:girth}), we give
a criterion for a wreath product with infinite acting group to \preform\
a free group:
\begin{mainproposition}[= Proposition~\ref{prop:wreath}]
  Let $G$ and $H$ be finitely generated groups, and suppose that $H$ is
  infinite. Then the restricted wreath product $G\wr H:=G^{(H)}\rtimes
  H$ \preforms\ a free group if and only if at least one of the
  following conditions holds:
  \begin{enumerate}
  \item $G$ does not satisfy any identity;
  \item $H$ does not satisfy any almost-identity.
  \end{enumerate}
\end{mainproposition}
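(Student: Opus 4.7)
By the Olshanskii--Sapir characterisation~\cite{olshanskii-sapir:fklike} recalled in the introduction, a finitely generated group \preforms\ a free group if and only if it has no almost-identity; the proposition is therefore equivalent to the statement
\[
G\wr H\text{ has an almost-identity}\iff G\text{ has an identity and }H\text{ has an almost-identity}.
\]
Throughout I would work with the split extension $1\to G^{(H)}\to G\wr H\stackrel{\pi}{\to}H\to 1$ and the elementary observation that $G^{(H)}\cong\bigoplus_H G$ satisfies every identity of $G$ coordinate-wise.

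\textbf{Building an almost-identity (easy direction).} Given an identity $u(y_1,\dots,y_d)$ of $G$ and an almost-identity $v(x_1,\dots,x_n)$ of $H$ (with $n\ge 2$), I would choose $w_1,\dots,w_d\in\free_n$ so that the conjugates $w_i v w_i^{-1}$ freely generate a rank-$d$ subgroup of $\free_n$; this is a routine matter for a non-trivial $v$ in a free group of rank at least two. Then the word
\[
W(x_1,\dots,x_n)\;:=\;u\bigl(w_1 v w_1^{-1},\,\dots,\,w_d v w_d^{-1}\bigr)
\]
is non-trivial in $\free_n$. For any generating $n$-tuple $(s_i)$ of $G\wr H$, the projection $(\pi(s_i))$ generates $H$, so $v(s)\in\ker\pi=G^{(H)}$; normality of $G^{(H)}$ keeps each $w_i(s)v(s)w_i(s)^{-1}$ inside $G^{(H)}$; and since $u$ is an identity of $G^{(H)}$, one has $W(s)=1$. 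Hence $W$ is an almost-identity of $G\wr H$.

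\textbf{Avoiding almost-identities (hard direction).} Fix any non-trivial $W\in\free_n$; the goal is a generating $n$-tuple $(s_i)$ of $G\wr H$ with $W(s)\ne 1$. Parametrise $s_i=f_i t_i$ with $f_i\in G^{(H)}$ and $t_i\in H$, so that $\pi(W(s))=W(t_1,\dots,t_n)$. Under hypothesis (2) there is a generating tuple $(t_i)$ of $H$ with $W(t_1,\dots,t_n)\ne 1$; following Akhmedov~\cite{akhmedov:girth}, take the $f_i$'s supported at well-separated points of the infinite group $H$ with values covering a generating set of $G$, so that the $\langle t_i\rangle$-shift action together with the remaining $s_j$'s covers $G^{(H)}$. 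Then $\pi(W(s))\ne 1$ yields $W(s)\ne 1$. Under hypothesis (1), either $W(t_1,\dots,t_n)\ne 1$ for some generating tuple of $H$ and the above applies, or $W$ is itself an almost-identity of $H$ and $W(s)\in G^{(H)}$; in the latter case the sparse-support construction lets one read off $W(s)$ at a distinguished position $h\in H$ as a prescribed non-trivial word in the values $f_i(h')\in G$, and the absence of a $G$-identity lets one choose those values so the word is non-trivial.

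\textbf{Main obstacle.} The delicate point is the sparse-support construction of the $f_i$'s, which must simultaneously (a) make $(s_i)$ generate $G\wr H$ under the $\langle t_i\rangle$-shift action on $G^{(H)}$; (b) prevent collisions among the many conjugates of the $f_i$'s contributing to the position-by-position expansion of $W(s)$, so that $W(s)(h)$ collapses to a single prescribed word in the $f_i$-values for suitable $h\in H$; and (c) present that prescribed word as a non-trivial element of $\free_k$, so that the absence of a $G$-identity (respectively an $H$-almost-identity) can be invoked. This is Akhmedov's~\cite{akhmedov:girth} separation-of-supports strategy, extended here from the non-identity to the non-almost-identity setting.
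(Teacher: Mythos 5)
Your reduction to the almost-identity criterion (Corollary~\ref{cor:noai=<free}) is exactly the right framing, and your ``easy direction'' is essentially the paper's Lemma~\ref{GidentityHquasi} word for word: the paper builds the almost-identity $v\bigl(w^{a_1},\dots,w^{a_m}\bigr)$ with the conjugates $w^{a_i}$ freely generating a rank-$m$ subgroup of $\free_k$ (taking $a_i=a^i$ for a suitable $a$), which is the same device as your $u\bigl(w_1vw_1^{-1},\dots,w_dvw_d^{-1}\bigr)$. The observation that $\bigoplus_H G$ inherits all identities of $G$ is likewise what makes the evaluation vanish.

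For the converse you take a genuinely different route. The paper never argues ``word by word''; it works generating set by generating set and leans hard on the preorder machinery established earlier. Under hypothesis~(2) it uses $H\conv\free_k$, functoriality of $\wr$ under $\conv$ (Lemma~\ref{lem:products}(4)), and Lemma~\ref{lem:quotientfreeprecedesfree} applied to the free quotient $\free_k$ of $G\wr\free_k$, so no combinatorics of supports is needed at all. Under hypothesis~(1), Lemma~\ref{lem:generalizedakhmedov} first invokes Corollary~\ref{lemma:withoutidentity} and Lemma~\ref{lem:products}(4) to replace $G$ by a group containing a non-abelian free subgroup, and only then runs the Akhmedov-style separated-supports construction, producing for each $R$ a single generating set $U$ whose Cayley graph agrees with that of $\free_{k+1}*H$ on a ball of radius $R$. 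The preliminary reduction is the crucial simplification: with $G\supset\free_\infty$ the base values can be taken in a free basis, so ``the collapsed word is non-trivial'' is automatic and one never has to re-invoke the hypothesis that $G$ has no identity at the level of arbitrary $f_i$-values.

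This is precisely where your proposal has a real gap, which you flag but do not close. In the subcase of~(1) where $W$ happens to be an almost-identity of $H$, you want $W(s)$ evaluated at a distinguished coordinate $h$ to be a prescribed non-trivial formal word in the $f_i$-values; but the positions $h\,h_r^{-1}$ are governed by the prefixes $h_r$ of $W(t)$, which are dictated by $W$ and your chosen $(t_i)$, and coincidences among these prefixes can make the resulting word in abstract letters collapse. Showing that no collapse occurs for a well-chosen $(t_i)$, while still keeping $(s_i)$ a generating tuple of the right cardinality, is the technical heart, and ``$G$ satisfies no identity'' does not by itself tell you that the particular word forced on you by $W$ can be falsified — you would first have to argue the word is formally non-trivial. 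The cleanest fix is to mimic the paper: apply Corollary~\ref{lemma:withoutidentity} and Lemma~\ref{lem:products}(4) to reduce to $G$ containing a non-abelian free group, then choose the $f_i$-values freely so that any non-trivial formal word is automatically non-trivial in $G$. Alternatively, carry out the separation-of-prefixes argument in detail; as it stands, the step is asserted rather than proved.
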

From this, we deduce (see Remark~\ref{re:atleastthree}) that the
connected component of the free group has diameter at least $3$; this
is in contrast with the nilpotent case, see Theorem~\ref{thm:A}. There
are solvable groups, and infinite free Burnside groups, at distance
$2$ from a free group.

See also subsection~\ref{limitsandprelimits} where we discuss groups
that \preform\ a group containing a given subgroup.

\subsection{Growth of groups}\label{ss:introgrowth}
We finally give in~\S\ref{ss:growth} some new examples of groups of
non-uniform exponential growth. Recall that, for a group $G$ generated
by a set $S$, its \emph{growth function} counts the number
$\nu_{G,S}(R)$ of group elements expressible as a product of at most
$R$ generators. The group has \emph{exponential growth} if
$\lambda_{G,S}:=\lim\sqrt[R]{\nu_{G,S}(R)}>1$ and \emph{subexponential
  growth} otherwise; it then has \emph{polynomial growth} if
$\nu_{G,S}$ is dominated by a polynomial, and \emph{intermediate
  growth} otherwise. The existence of groups of intermediate growth
was asked by Milnor in~\cite{milnor:5603}, and answered by Grigorchuk
in~\cite{grigorchuk:gdegree}, by means of his group $\grig$.

If $G$ has exponential growth, then it has \emph{uniform exponential
  growth} if furthermore $\inf_S\lambda_{G,S}>1$. The existence of
groups of non-uniform exponential growth was asked by Gromov
in~\cite{gromov:metriques}*{Remarque~5.12}; see
also~\cite{harpe:uniform}. The first examples were constructed by
Wilson~\cite{wilson:ueg}; see
also~\cites{bartholdi:nueg,nekrashevych:nue,wilson:fnueg}.

\begin{maintheorem}[= Corollary~\ref{cor:imbednueg}]\label{thm:E}
  Every countable group may be imbedded in a group $G$ of non-uniform
  exponential growth.

  Furthermore, let $\alpha\approx0.7674$ be the positive root of
  $2^{3-3/\alpha} + 2^{2-2/\alpha} + 2^{1-1/\alpha} = 2$.  Then $G$
  may be required to have the following property: there is a constant
  $K$ such that, for any $R>0$, there exists a generating set $S$ of
  $G$ with
  \[\nu_{G,S}(r) \le \exp(K r^\alpha)\text{ for all }r\le R.
  \]
\end{maintheorem}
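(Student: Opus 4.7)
The plan is to produce, for any countable group $A$, a finitely generated group $G$ that simultaneously (i) contains $A$, (ii) has exponential growth, and (iii) satisfies $\grig\conv G$. Once such a $G$ is in hand, the quantitative bound follows painlessly: by Bartholdi's upper estimate for the growth of $\grig$, there is a constant $K$ with $\nu_{\grig,S'}(r)\le\exp(Kr^\alpha)$ for any fixed generating set $S'$ of $\grig$ and all $r$, with $\alpha$ the positive root of the stated polynomial (this is exactly where the numerical value of $\alpha$ enters). The defining property of $\grig\conv G$ then gives, for every $R$, a generating set $S$ of $G$ whose marked ball of radius $R$ is isomorphic to a marked ball of the same radius in $\cay{\grig}{S'_n}$ for some $n$, so $\nu_{G,S}(r)\le\exp(Kr^\alpha)$ for all $r\le R$. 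Combined with the exponential growth of $G$ on some other generating set, this yields both non-uniformity and the promised bound.

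The first, easy step is to embed the arbitrary countable $A$ into a finitely generated overgroup of exponential growth: a standard B.H.\ Neumann or Higman--Neumann--Neumann embedding places $A$ into a $2$-generated group, and if $A$ happens to be finite one can append a free factor $\free_2$ or pass to the wreath product $A\wr\Z$. Call the resulting finitely generated group $H$.

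The main step is to realize $H$ inside a group $G$ for which $\grig\conv G$ still holds. This is the purpose of the \extsmallergroups\ machinery developed in subsection~\ref{limitsandprelimits}: starting from Corollary~\ref{cor:grig<free} that $\grig\conv\free_k$ for $k=3$ (and for larger $k$ by a similar argument), and from an imbedding $H\subgp G$ into an overgroup $G$ built from $\free_k$ and $H$ (for example via a free product with amalgamation or a wreath-type extension), one lifts the $\preforming$ sequence of generating sets $S_n$ of $\grig$ to a sequence that converges to a marked generating set of $G$. The crucial feature of the construction is that at each finite stage the extra generators encoding the ``new'' elements of $G$ not seen in $\free_k$ are placed sufficiently deep in $\cay{\grig}{S_n}$ to be invisible in balls of radius $n$; thus the $n$-balls of the approximating graphs continue to coincide with those of $\grig$, while the limit faithfully realises the overgroup $G\supseteq H$.

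The principal obstacle is exactly this second step: controlling the lifted generating sets so that, on one hand, the newly introduced generators ``escape to infinity'' at each finite stage (preserving $\grig\conv G$), while on the other hand they combine in the limit to generate a group that actually contains $H$. All the other parts are routine: the embedding of $A$ into $H$ is classical, the Bartholdi bound on $\nu_{\grig}$ is quoted, and the passage from (i)--(iii) to Theorem~\ref{thm:E} is a direct unwinding of definitions.
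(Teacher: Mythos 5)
Your plan has the preorder pointing the wrong way, and this is fatal to the quantitative claim. You posit a group $G$ with $\grig\conv G$, and then assert that this gives, ``for every $R$, a generating set $S$ of $G$'' whose $R$-ball matches a $\grig$-ball. But $\grig\conv G$ means the generating set $T$ of $G$ is \emph{fixed} and the generating sets $S'_n$ of $\grig$ vary; you get $\nu_{G,T}(r)=\nu_{\grig,S'_n}(r)$ for a single $T$, not a family of generating sets of $G$. Moreover, the uniform bound $\nu_{\grig,S'_n}(r)\le\exp(Kr^\alpha)$ you then want to quote is false: the constant in Bartholdi's estimate depends on the generating set, and for the very sequences $S'_n$ witnessing, say, $\grig\conv\free_3$, the ball of radius $n$ in $\cay{\grig}{S'_n}$ is a free-group ball and hence exponentially large. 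What the theorem's conclusion actually requires is the opposite relation $G\conv H$ for a \emph{fixed} group $H$ of intermediate growth (this is precisely what feeds into Lemma~\ref{lem:lambdainf}): then one fixed $T$ on $H$ gives, for each $R$, a generating set $S$ on $G$ with $\nu_{G,S}(r)=\nu_{H,T}(r)\le\exp(Kr^\alpha)$ for $r\le R$, with $K$ honestly independent of $R$.

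The paper's actual construction is quite different from your second step and there is no shortcut around it. Take $H$ finitely generated, containing $A$, of exponential growth, \emph{and generated by torsion elements} (a condition you omit but which is essential), and form the permutational wreath product $W=H\wr_X\grig$ over the orbit $X$ of $0^\infty$. By Corollary~\ref{cor:grigwr} (built on Lemma~\ref{lem:any<direct}, which exploits the contracting self-similarity of the Schreier graph of $X$), one has $W\conv B\wr_X\grig$ where $B$ is the direct product of the cyclic groups of the orders of the generators of $H$; torsion generators make $B$ finite, and $B\wr_X\grig$ then has growth $\sim\exp(R^\alpha)$ by \cite{bartholdi-erschler:permutational}. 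Non-uniformity and the scale-$R$ bound follow from Lemma~\ref{lem:lambdainf}. Your appeal to the ``\extsmallergroups'' machinery of \S\ref{limitsandprelimits} is not available here: the paper never establishes that property for free groups or for limits of $\grig$, and in fact exhibits finitely presented groups for which it fails. So the heart of the argument — the wreath-product lemma producing a concrete intermediate-growth prelimit — is missing from your proposal, not merely deferred.
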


Theorem~\ref{thm:E} implies the existence of groups of non-uniform
exponential growth that do not imbed uniformly into Hilbert space;
this answers a question by Brieussel~\cite{brieussel:entropy}*{after
  Proposition 2.5}, who asked whether there exist groups of
non-uniform exponential growth without the Haagerup property.  We also
construct groups of non-uniform exponential growth that admit
infinitely many distinct intermediate growth functions at different
scales.  Moreover, these examples can be constructed among groups that
\preform\ free groups and groups of intermediate growth.

The idea of the proof of Theorem~\ref{thm:E} is as follows.  We denote
by $\grig$ the first Grigorchuk group. It acts on the infinite binary
tree $\{0,1\}^*$ and its boundary $\{0,1\}^\infty$. We denote by $X$
the orbit $\grig\cdot1^\infty$.  We prove in Corollary~\ref{cor:nueg}
that the group $G\wr_X\grig$ has non-uniform exponential growth
whenever $G$ is a group of exponential growth.  To prove
Corollary~\ref{cor:nueg} we show that $G\wr_X\grig$ \preforms\ a
group of intermediate growth. (In fact, all known examples of groups
of non-uniform exponential growth \preform\ groups of
intermediate growth, though the corresponding group of intermediate
growth is not always given explicitly by their construction ; for more
on this see Question~\ref{qu:nue=>seg}).

\subsection{Acknowledgments}
The authors are grateful to Yves de Cornulier, Slava Grigorchuk,
Fr\'ed\'eric Paulin and Pierre de la Harpe for their comments on an
earlier version of this manuscript; to Olga Kharlampovich for having
corrected an inaccuracy in our understanding of limit groups; to Misha
Gavrilovich for enlightening discussions; and to Simon Thomas and
Todor Tsankov for their generous explanations on Borel relations.


\section{First properties and examples}
\begin{lemma}[A special case of~\cite{champetier-guirardel:limitgroups}*{Proposition~2.20}] \label{choiceofgen}
  The ``for some generating set $T$'' in Definition~\ref{def:precede}
  may be changed to ``for every generating set $T$''.
\end{lemma}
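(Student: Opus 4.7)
The plan is to transport a witnessing sequence for one generating set of $H$ to a witnessing sequence for any other generating set, by substituting words. Suppose that $G\conv H$ is witnessed by a generating set $T=(t_1,\dots,t_k)$ of $H$ and a sequence of generating sets $S_n=(s_{n,1},\dots,s_{n,k})$ of $G$. Let $T'=(t'_1,\dots,t'_m)$ be another finite generating set of $H$. Since $T$ generates $H$, each $t'_j$ is expressible as a word $t'_j=w_j(t_1,\dots,t_k)$, and I would define the candidate sets by $s'_{n,j}:=w_j(s_{n,1},\dots,s_{n,k})$, together with the obvious bijection $S'_n\to T'$ sending $s'_{n,j}\mapsto t'_j$. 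Let $L:=\max_j |w_j|$.

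First, I would verify that $S'_n$ generates $G$ for all sufficiently large $n$. Since $T'$ generates $H$, each $t_i$ is a word $v_i(t'_1,\dots,t'_m)$; substituting gives a fixed word $u_i:=v_i(w_1,\dots,w_m)$ in $t_1,\dots,t_k$ with $u_i(t)=t_i$ in $H$, of some bounded length $\ell$. Equivalently, $u_i(t)\cdot t_i^{-1}$ represents the identity in $H$ via a word of length $\ell+1$. Taking $n$ large enough that the marked balls of radius $\ell+1$ in $(G,S_n)$ and $(H,T)$ are isomorphic, the same word represents the identity in $G$, so $s_{n,i}=u_i(S_n)\in\langle S'_n\rangle$; hence $\langle S'_n\rangle=G$.

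Second, I would verify the ball condition. Fix $R\in\N$ and let $R':=(R+1)L$. Choose $n$ so large that a marked-graph isomorphism $\phi$ identifies the balls of radius $R'$ in $(G,S_n)$ and $(H,T)$. Any element $g\in G$ with $d_{S'_n}(g,e)\le R$ is a product of at most $R$ letters of $S'_n^{\pm 1}$, each a word of length at most $L$ in $S_n^{\pm 1}$, so $g$ and any neighbour $gs'_{n,j}$, together with all intermediate vertices obtained while spelling $w_j(S_n)$, lie in the $S_n$-ball of radius $R'$. Since $\phi$ preserves $S_n/T$-labels, it sends $g\mapsto \prod w_{j_i}(T)^{\epsilon_i}=\prod (t'_{j_i})^{\epsilon_i}$, hence into the $T'$-ball of radius $R$; and it sends an $S'_n$-edge from $g$ to $gs'_{n,j}$ to the corresponding $T'$-edge from $\phi(g)$ to $\phi(g)t'_j$. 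The symmetric argument, expressing each $t_i$ as the word $u_i$ in $T'$, shows $\phi$ surjects onto the $T'$-ball of radius $R$. Thus $\phi$ restricts to an isomorphism of the marked $R$-balls for the $S'_n\to T'$ marking.

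There is essentially no substantive obstacle here; the statement is a routine change-of-generators argument. The only care needed is the quantitative bookkeeping choosing $R'$ large enough to contain all prefixes encountered while expanding $S'_n$-edges into $S_n$-paths at the boundary of the ball, and to accommodate the generation step through the auxiliary words $u_i$.
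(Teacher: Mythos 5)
Your argument is correct and is essentially the paper's own proof: express each generator of $T'$ as a word over $T$, substitute those words into the witnessing sequence $S_n$ to obtain $S'_n$, and observe that the marked $R$-ball of $(G,S'_n)$ is determined inside the marked $R'$-ball of $(G,S_n)$ for a suitable linear $R'$. The only addition is your explicit check that $S'_n$ generates $G$ for $n$ large (and that its elements are distinct, giving the required bijection to $T'$), a point the paper leaves implicit.
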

\begin{proof}
  Assume $G\conv H$, that $T$ generates $H$ and that $\cay G{S_n}$
  coincides with $\cay HT$ on ever larger balls. Write $\tau_n:T\to
  S_n$ the bijections.

  Let $T'$ be another generating set of $H$; write every $t\in T'$ as
  a word $w_t$ over $T$. Let $k$ be the maximum of the lengths of the
  $w_t$. Consider the generating sets $S'_n=\{w_t(\tau_n)\colon t\in
  T\}$ of $G$ obtained by replacing each $T$-letter in $w_t$ by its
  corresponding element $\tau_n(t)\in G$.

  Then, if $\cay G{S_n}\cap B(1,R)$ is isomorphic to $\cay HT\cap
  B(1,R)$, then $\cay G{S'_n}\cap B(1,\lfloor R/k\rfloor)$ is
  isomorphic to $\cay H{T'}\cap B(1,\lfloor R/k\rfloor)$, since they
  are respective subsets in the isomorphic graphs $\cay G{S_n}\cap
  B(1,R)$ and $\cay HT\cap B(1,R)$.
\end{proof}

\begin{lemma}\label{lem:preorder}
  The relation $\conv$ is a preorder.
\end{lemma}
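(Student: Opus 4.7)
The plan is to verify the two preorder axioms, reflexivity and transitivity, with the transitivity step relying crucially on Lemma~\ref{choiceofgen}.

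For reflexivity, I would fix a generating set $T$ of $G$ and take $S_n = T$ for every $n$, with the identity bijection $S_n\to T$. Then the balls in $\cay{G}{S_n}$ and $\cay{G}{T}$ agree at every radius, so $G\conv G$.

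For transitivity, suppose $G\conv H$ via sets $S_n$ with bijections $S_n\to T_H$, and $H\conv K$ via sets $U_m$ with bijections $U_m\to T_K$. The key trick is that, by Lemma~\ref{choiceofgen}, the generating set chosen in $H$ for the relation $G\conv H$ is not fixed: for each $m$, we may apply the lemma with $T_H$ replaced by $U_m$, producing a sequence of generating sets $S^{(m)}_n$ of $G$ with bijections $S^{(m)}_n\to U_m$ such that the marked balls of radius tending to infinity in $\cay{G}{S^{(m)}_n}$ and $\cay{H}{U_m}$ agree. A diagonal argument then finishes the job: choose $n_m$ so large that the marked balls of radius $m$ in $\cay{G}{S^{(m)}_{n_m}}$ and $\cay{H}{U_m}$ coincide. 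Set $V_m := S^{(m)}_{n_m}$, with the composed bijection $V_m\to U_m\to T_K$.

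To check that $G\conv K$ via $(V_m,T_K)$: given $R\in\N$, use $H\conv K$ to find $M$ such that the marked balls of radius $R$ in $\cay{H}{U_m}$ and $\cay{K}{T_K}$ coincide for $m\ge M$. For $m\ge\max(R,M)$, the marked ball of radius $R$ in $\cay{G}{V_m}$ is isomorphic (through $U_m$) to the marked ball of radius $R$ in $\cay{K}{T_K}$. The only subtlety is verifying that the bijections compose correctly to give a marked-graph isomorphism, which is immediate from the construction.

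I don't anticipate a real obstacle; the argument is essentially the standard diagonal extraction. The only thing to be careful about is the dependence on the generating set: without Lemma~\ref{choiceofgen} one might worry that the two approximations use incompatible generating sets of $H$, but the lemma precisely allows one to realign them.
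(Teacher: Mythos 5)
Your proof is correct and matches the paper's argument: both reduce transitivity to realigning the generating sets of the middle group $H$ via Lemma~\ref{choiceofgen} and then extracting a diagonal subsequence. Your write-up is a bit more explicit about the choice of radii and the composition of bijections, but the strategy is identical.
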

\begin{proof}
  It is clear that $G\conv G$ holds for all groups $G$.
  
  Consider now $G\conv H\conv K$, and let $U$ be a generating set for
  $K$. There are then generating sets $T_n$ for $H$, in bijection with
  $U$, such that $\cay H{T_n}$ and $\cay KU$ agree in ever larger
  balls. For each $n$, there are generating sets $S_{mn}$ for $G$, in
  bijection with $T_n$, such that $\cay G{S_{mn}}$ and $\cay H{T_n}$
  agree in ever larger balls.

  Therefore, the generating sets $S_{nn}$, which are in bijection with
  $U$, are such that $\cay G{S_{nn}}$ and $\cay KU$ agree in ever
  larger balls, which shows $G\conv K$.
\end{proof}

Let $\free$ be the free group on infinitely many generators
$x_1,x_2,\dots$, and consider the space $\markedgroups$ of finitely
generated groups $(G,T)$ with marked generating set. This marking may
be given by a homomorphism $\free\twoheadrightarrow G$ such that almost
all $x_n$ map to $1$; and this identifies $\markedgroups$ with the set of
normal subgroups of $\free$ containing almost all the $x_n$. This turns
$\markedgroups$ into a locally compact Polish space. In this alternative
terminology, we have the obvious
\begin{lemma}
  Let $G,H$ be finitely generated groups. Then $G\conv H$ if and only
  if for some (hence all) generating set $T$, the marked group
  $(H,T)$ belongs to the closure of $\{(G,S)\colon S\text{ generates
  }G\}$ in $\markedgroups$.
\end{lemma}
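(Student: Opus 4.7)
The statement is essentially a translation of Definition~\ref{def:precede} into the topological language of $\markedgroups$, so my plan is to unfold the two definitions and check that they say the same thing, using Lemma~\ref{choiceofgen} to handle the ``for some'' versus ``for all'' dichotomy.

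First I would fix notation. A point in $\markedgroups$ is an isomorphism class of pairs $(G,S)$ with $S$ an ordered generating tuple; equivalently, after the identification with normal subgroups of $\free$ discussed just before the statement, it is the kernel of the homomorphism $\free \twoheadrightarrow G$ sending the free generators to $S$ and to $1$. A fundamental system of neighbourhoods of $(H,T)$ is given by the sets
\[
U_R(H,T)=\bigl\{(G',S')\in\markedgroups : |S'|=|T|,\ \cay{G'}{S'}\cap B(1,R)\cong\cay HT\cap B(1,R)\text{ as marked graphs}\bigr\},
\]
where the bijection between generating tuples comes from the ordering. This is just the defining description of the Chabauty-Grigorchuk topology recalled after Definition~\ref{def:precede}.

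For the forward direction, assume $G\conv H$. By Definition~\ref{def:precede} there exist a generating set $T$ of $H$ and generating sets $S_n$ of $G$, in bijection with $T$, such that for every $R$ the marked ball of radius $R$ in $\cay G{S_n}$ is isomorphic to that of $\cay HT$ once $n$ is large enough. Equivalently, $(G,S_n)\in U_R(H,T)$ for all $n$ large, so $(G,S_n)\to(H,T)$ in $\markedgroups$; hence $(H,T)$ lies in the closure of $\{(G,S):S\text{ generates }G\}$. By Lemma~\ref{choiceofgen}, the same conclusion holds for every generating set $T$ of $H$, which gives the ``hence all'' clause.

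For the reverse direction, suppose $(H,T)$ lies in the closure of $\{(G,S):S\text{ generates }G\}$ for some generating set $T$ of $H$. Then there exists a sequence $(G,S_n)$ converging to $(H,T)$ in $\markedgroups$. Convergence means that for each $R$ the pair $(G,S_n)$ eventually lies in $U_R(H,T)$; in particular the tuples $S_n$ and $T$ have the same cardinality (so the ordering provides the required bijection $S_n\to T$), and the marked balls of radius $R$ in $\cay G{S_n}$ and $\cay HT$ are isomorphic for $n$ large. This is exactly the condition defining $G\conv H$.

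The only step requiring any care is matching the Chabauty-Grigorchuk notion of closeness with the marked-ball condition used in Definition~\ref{def:precede}, and checking that the bijection of generating sets demanded there is the same one implicit in the ordered marking on $\markedgroups$; both are immediate once the two definitions are placed side by side, so I do not anticipate any genuine obstacle.
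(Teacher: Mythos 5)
Your proof is correct and is exactly the unfolding of definitions that the paper treats as immediate (the lemma is stated without proof, prefaced by ``we have the obvious''); your use of Lemma~\ref{choiceofgen} for the ``hence all'' clause is the right move. The only step you leave implicit in the reverse direction is that closure in $\markedgroups$ can be computed via sequences, which is legitimate because $\markedgroups$ is a metrizable (indeed Polish) space, as the paper notes just before the statement.
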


We observe that if $G\conv H$ and either $G$ or $H$ are finite, then
$G=H$. We thus restrict ourselves to infinite, finitely generated
groups.

\begin{lemma}\label{lem:fp}
  Let $G$ be a finitely generated group, and let $H$ be a finitely
  presented group.  If $G\conv H$, then $G$ is a quotient of $H$.
\end{lemma}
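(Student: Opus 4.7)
The plan is to exploit the finite presentation of $H$: write $H=\langle T\mid r_1,\dots,r_k\rangle$ with the $r_i$ words over $T\cup T^{-1}$, and let $R$ be an integer larger than the word-length of each $r_i$. The goal is to show that, for $n$ large enough, the bijection $T\to S_n$ coming from the definition of $G\conv H$ extends to a surjective homomorphism $H\twoheadrightarrow G$.

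First, using Lemma~\ref{choiceofgen}, fix this particular generating set $T$ of $H$ and obtain generating sets $S_n$ of $G$, together with bijections $\tau_n\colon T\to S_n$, such that the marked balls of radius $n$ in $\cay{G}{S_n}$ and $\cay{H}{T}$ are isomorphic. Choose $n\ge R$. Each relator $r_i$, evaluated in $H$ at the generators $T$, corresponds to a closed loop at the identity in $\cay{H}{T}$ of length at most $R\le n$, so it lies entirely in the ball of radius $n$ around $1_H$. Transporting this loop through the marked isomorphism of balls yields a closed loop at $1_G$ in $\cay{G}{S_n}$ labelled by the same word $r_i$; hence $r_i(\tau_n(t_1),\dots,\tau_n(t_m))=1_G$ in $G$.

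Since all defining relators of $H$ vanish in $G$ under the map $T\to S_n$, the universal property of the presentation gives a homomorphism $\varphi\colon H\to G$ sending $t\mapsto \tau_n(t)$. Because $S_n=\varphi(T)$ generates $G$, the homomorphism $\varphi$ is surjective, so $G\cong H/\ker\varphi$ is a quotient of $H$.

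There is no real obstacle: the argument is just a matter of choosing $n$ larger than the relator length and reading off that closed loops in a ball are preserved by a marked isometry of that ball. The only point worth being careful about is that the isomorphism in Definition~\ref{def:precede} is an isomorphism of \emph{marked} Cayley graphs, i.e.\ it respects the labels on edges, which is precisely what is needed to transport a word-labelled loop from one graph to the other.
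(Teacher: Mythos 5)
Your argument is correct and is essentially the paper's proof, just carried out in full detail: the paper likewise takes $R$ to be the maximal relator length, finds a generating set $S$ of $G$ with $\cay GS$ and $\cay HT$ agreeing on a ball of radius $R$, and concludes that all defining relators of $H$ vanish in $G$, giving a surjection $H\twoheadrightarrow G$. Your careful spelling out of the transport of labelled loops through the marked isomorphism and the appeal to the universal property of the presentation is exactly what the paper's one-sentence proof leaves implicit.
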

\begin{proof}
  Let $T$ be a generating set of $H$, and let $R$ be the maximal
  length of $H$'s relators in that generating set. If $G\conv H$, then
  there exists a generating set $S$ for $G$ such that $\cay GS$ and
  $\cay HT$ coincide in a ball of radius $R$; so all relations of $H$
  hold in $T$.
\end{proof}

We note (\cite{champetier-guirardel:limitgroups}*{Example~2.4(e)})
that every residually finite group is a limit of finite groups;
however, the closure of the set of finite groups in $\markedgroups$
has not been convincingly identified.

It has been shown by Shalom~\cite{shalom:rigidity} that every group
$G$ with Kazhdan's property (T) is a quotient of a finitely presented
group with Kazhdan's property (T). Therefore,
\begin{lemma}[\cite{champetier-guirardel:limitgroups}*{Proposition~2.15}]
  If $G\conv H$ and $G$ does \emph{not} have Kazhdan's property (T),
  then neither does $H$.\qed
\end{lemma}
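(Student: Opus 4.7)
The plan is to combine Shalom's theorem (already cited in the excerpt) with the finitely-presented Lemma~\ref{lem:fp} by working not directly with $H$, but with the finitely presented group that witnesses its property (T). Argue by contrapositive: assume $H$ has property (T) and show $G$ does too.

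By Shalom's theorem there is a finitely presented group $H_0$ with property (T) and a surjection $\pi\colon H_0\twoheadrightarrow H$. Choose a finite generating set $T_0$ of $H_0$ and set $T=\pi(T_0)$; this is a generating set of $H$. Let $\mathcal R$ be a finite set of relators defining $H_0$ with respect to $T_0$, and let $R$ be the maximum of their lengths.

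Since $G\conv H$, Lemma~\ref{choiceofgen} lets us use the generating set $T$ of $H$: there exist generating sets $S_n$ of $G$ in bijection $\tau_n\colon T\to S_n$ such that the ball of radius $R$ in $\cay G{S_n}$ is isomorphic as a labeled graph to the ball of radius $R$ in $\cay HT$ for all $n$ large enough. Each relator $r\in\mathcal R$ is a word of length $\le R$ in $T_0$; rewriting it in $T$ via $\pi$ and then in $S_n$ via $\tau_n$, it still has length $\le R$ and traces out a loop at $1$ in $\cay G{S_n}$ (because it does so in $\cay HT$ and the balls of radius $R$ agree). Hence the assignment $T_0\to S_n$ kills every relator of $H_0$, so it extends to a surjection $H_0\twoheadrightarrow G$. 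Since property (T) passes to quotients, $G$ has property (T), as required.

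The key idea is that Definition~\ref{def:precede} only gives information about arbitrarily large but fixed balls, which is exactly enough to see any finite list of relators, so it is natural to transport property (T) along $\conv$ through a \emph{finitely presented} witness group rather than through $H$ itself; this is precisely the role of Shalom's result. The only genuine input is Shalom's theorem — the rest is essentially a repetition of the argument already used for Lemma~\ref{lem:fp}, so there is no real obstacle once that theorem is invoked as a black box.
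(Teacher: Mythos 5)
Your proof is correct and follows exactly the route the paper intends: the lemma is stated immediately after the paper invokes Shalom's theorem that any group with property~(T) is a quotient of a finitely presented group with property~(T), so the argument is precisely to apply the idea of Lemma~\ref{lem:fp} with the finitely presented witness $H_0$ in place of $H$, concluding that $G$ is a quotient of $H_0$ and inherits property~(T). Your write-up just makes explicit the mild generalization of Lemma~\ref{lem:fp} that the paper leaves implicit (if $G\conv H$ and $H$ is a quotient of a finitely presented $H_0$, then $G$ is also a quotient of $H_0$), which is the right way to fill in the \qed.
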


There are isolated points in the space of groups; they are studied
in~\cite{cornulier-guyot-pitsch:isolated}.  Clearly, isolated groups
are minimal elements for $\conv$; but the converse is not true. For
example, $\Z$ and $\Z\oplus\Z/p\Z$ are minimal, but none of them is
isolated.

\subsection{Partial orders} On some classes of groups, the relation
$\conv$ is also antisymmetric, and therefore defines a partial
order. Recall that a group $G$ is Hopfian if every epimorphism
$G\twoheadrightarrow G$ is an automorphism.
\begin{lemma}\label{lem:fpresfinite}
  Among Hopfian, finitely presented groups, $\conv$ is an order
  relation. More generally, if $G$ and $H$ are finitely presented
  groups with $G\conv H\conv G$ and $G$ is Hopfian, then $G$ and $H$
  are isomorphic.
\end{lemma}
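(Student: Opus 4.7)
The plan is to apply Lemma~\ref{lem:fp} in both directions and then play the two resulting epimorphisms against each other using the Hopfian hypothesis.

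First I would observe that since $H$ is finitely presented and $G \conv H$, Lemma~\ref{lem:fp} produces an epimorphism $\phi \colon H \twoheadrightarrow G$. Symmetrically, since $G$ is finitely presented and $H \conv G$, the same lemma yields an epimorphism $\psi \colon G \twoheadrightarrow H$.

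Next I would compose: the map $\phi \circ \psi \colon G \twoheadrightarrow G$ is a composition of two surjections, hence an epimorphism from $G$ to itself. The Hopfian hypothesis on $G$ forces $\phi \circ \psi$ to be an automorphism, and in particular injective. But if a composition $\phi \circ \psi$ is injective, then the first map $\psi$ must already be injective. Combined with the surjectivity of $\psi$ established in the previous step, this shows that $\psi \colon G \to H$ is an isomorphism.

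For the first assertion, antisymmetry of $\conv$ on Hopfian finitely presented groups is then immediate: any pair $G,H$ in this class with $G \conv H \conv G$ falls under the ``more general'' statement (the hypothesis on $G$ alone suffices), so $G \cong H$. Reflexivity and transitivity have already been established in Lemma~\ref{lem:preorder}. There is no real obstacle here; the only thing worth double-checking is the direction of the epimorphism produced by Lemma~\ref{lem:fp} (from the finitely presented group onto the other), which is what allows us to arrange the two maps so that the Hopfian hypothesis applies to the composition landing in $G$.
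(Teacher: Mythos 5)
Your proof is correct and follows essentially the same route as the paper: apply Lemma~\ref{lem:fp} in both directions to get the two epimorphisms, compose them to obtain a self-epimorphism of $G$, and invoke Hopfianness. You spell out slightly more explicitly why $\psi$ itself must be an isomorphism (injectivity of the composite forces injectivity of the first factor), but this is exactly the implicit content of the paper's one-line argument.
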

\begin{proof}
  From $G\conv H$ and Lemma~\ref{lem:fp} we deduce that $G$ is a
  quotient of $H$; and similarly $H$ is a quotient of $G$. Therefore
  we have epimorphisms $G\twoheadrightarrow H\twoheadrightarrow G$,
  and since $G$ is Hopfian these epimorphisms are isomorphisms.
\end{proof}

\begin{corollary}\label{cor:polyorder}
  The relation $\conv$ is an order relation on polycyclic groups, and
  on limit groups.
\end{corollary}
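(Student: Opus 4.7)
The plan is to deduce the corollary directly from Lemma~\ref{lem:fpresfinite}, which asserts antisymmetry of $\conv$ on Hopfian finitely presented groups. Thus it suffices to verify that polycyclic groups, and limit groups, are both finitely presented and Hopfian.

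For polycyclic groups, finite presentability is a classical result of Hirsch (proved by induction on the derived length, using that a polycyclic group is a successive extension of cyclic groups, and that a cyclic extension of a finitely presented group is finitely presented). The Hopf property then follows from Mal'cev's theorem that finitely generated residually finite groups are Hopfian, together with the fact that polycyclic groups are residually finite (Hirsch again; this uses that a finitely generated nilpotent group is residually finite, combined with the polycyclic extension structure).

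For limit groups, I would invoke the deeper theorem of Sela (and independently Kharlampovich--Myasnikov), already cited in the introduction, that limit groups are finitely presented. Hopficity then follows from the observation that a limit group is by definition fully residually free, hence in particular residually free; since free groups are residually finite, limit groups are residually finite, and Mal'cev's theorem again applies.

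With these two inputs in hand, the corollary is just a one-line application of Lemma~\ref{lem:fpresfinite} to each class. The only real content external to this paper is the finite presentability and residual finiteness of the two families of groups; within polycyclic groups both facts are elementary, while for limit groups finite presentability is the nontrivial ingredient, but it is already cited earlier in the excerpt and can be used as a black box.
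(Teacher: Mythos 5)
Your proof is correct and follows exactly the route the paper takes: reduce to Lemma~\ref{lem:fpresfinite} and then check finite presentability and residual finiteness (hence Hopficity via Mal'cev) for each of the two classes. The paper's proof is essentially identical, merely less explicit about the classical references for polycyclic groups.
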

\begin{proof}
  Polycyclic groups are known to be finitely presented and residually
  finite.  We will recall some known facts about limit groups
  in~\S\ref{ss:limitgroups}; for the proof of the corollary it
  suffices to know that limit groups are residually free and therefore
  residually finite; and that they are finitely presented.

  Since residually finite groups are Hopfian
  (see~\cite{malcev:matrix}), the corollary follows from
  Lemma~\ref{lem:fpresfinite}.
\end{proof}

\subsection{Identities and universal statements}\label{ss:identities}
Let $G$ be a group. An \emph{identity for $G$} is a non-trivial word
$w(x_1,x_2,\dots)$ in the free group on countably many generators,
such that $w(g_1,g_2,\dots)=1$ for every choice of $g_i\in G$. Note
that $w$ is really a word in finitely many of the $x_i$'s, namely
$w=w(x_1,\dots,x_n)$ for some $n\in\N$.

An identity for $G$ is really the following universal sentence.
`$\forall g_1,g_2\dots(w=1)$'. More generally, any well-formed
expression made of conjunctions, disjunctions, equalities, and universal
quantifiers, is a \emph{positive universal sentence}. If furthermore
negations are allowed, it is a \emph{universal sentence}. The
\emph{variety} generated by a group $G$ is the set of identities that
it satisfies; and its \emph{(positive) universal theory} is the set of
(positive) universal sentences that it satisfies.

For example, consider the group $G=\langle x,y,z\mid [x,y]z^{-1}, z^2,
[x,z], [y,z]\rangle$. It satisfies the identity $[x_1,x_2]^2$. It also
satisfies the positive universal statement
\[\forall x_1,\dots,x_4([x_1,x_2]=1\vee[x_1,x_3]=1\vee\cdots\vee[x_3,x_4]=1).
\]
As a last example, limits groups are known to be
``commutative-transitive''; this is the universal statement
\begin{equation}\label{eq:commtrans}
  \forall x,y,z([x,y]=1\wedge[y,z]=1\Rightarrow[x,z]=1).
\end{equation}
Note that this statement is not positive; rewriting it in terms of the
primitives $\vee,\wedge,\neg$ gives $\forall
x,y,z(\neg([x,y]=1\wedge[y,z]=1)\vee[x,z]=1)$. An example of a
positive statement appears in Example~\ref{ex:n22xn22}. For more
details relating logic to the space of marked groups,
see~\S\ref{ss:limitgroups}
and~\cite{champetier-guirardel:limitgroups}*{\S5}. In particular, the
first assertion of the following lemma
is~\cite{champetier-guirardel:limitgroups}*{Proposition~5.2}.

\begin{lemma}\label{lem:identity}
  \begin{enumerate}
  \item If $G\conv H$ and $G$ satisfies a universal statement
    (e.g., an identity), then $H$ satisfies it too;
  \item If $G\conv H$ and $H$ is a finitely presented group satisfying
    a positive universal statement, then $G$ satisfies it too;
  \item If $G\conv H$ and $G$ is torsion-free, then $H$ is torsion
    free. More generally, if $F$ is a finite subgroup of $H$, then $F$
    imbeds in $G$.
  \end{enumerate}
\end{lemma}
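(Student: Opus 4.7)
The common thread in all three parts is that any atomic formula $w(g_1,\dots,g_k)=1$ whose arguments are expressed by words of bounded length in a chosen generating set has its truth determined by a single Cayley-graph ball of sufficiently large but fixed radius. I fix a generating set $T$ of $H$, and a sequence of generating sets $S_n$ of $G$ with bijections $S_n\to T$ making the marked balls coincide on ever larger scales, as in Definition~\ref{def:precede}.

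For (1), after prenex-normalising a universal statement to the form $\phi=\forall x_1,\dots,x_k\,\psi(x_1,\dots,x_k)$ with $\psi$ a Boolean combination of atomic formulas $w_j=1$, I argue by contrapositive: suppose $H$ fails $\phi$. Pick witnesses $h_i=u_i(T)$ for the failure, and let $R$ bound the length of every word $w_j(u_1,\dots,u_k)$. The marked $R$-ball of $\cay HT$ then records, for each $j$, whether $w_j(h_1,\dots,h_k)=1$ holds in $H$. Once $n$ is large enough that the marked $R$-balls of $\cay G{S_n}$ and $\cay HT$ are isomorphic, the tuple $(u_1(S_n),\dots,u_k(S_n))$ in $G$ satisfies each atomic formula with the same truth value; since $\psi$ is a Boolean combination of these atoms, $\psi$ fails in $G$ too, contradicting $G\models\phi$.

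For (2), I will appeal to Lemma~\ref{lem:fp}: because $H$ is finitely presented and $G\conv H$, there is an epimorphism $\pi\colon H\twoheadrightarrow G$. A positive universal statement is built from equalities using only $\vee$ and $\wedge$, and equalities are preserved by any group homomorphism, so given any tuple in $G$, lifting it through $\pi$, invoking $\phi$ in $H$, and projecting back yields $\phi$ for that tuple in $G$.

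For (3), the same local reasoning applies: writing each $f\in F$ as a word $u_f(T)$, the full multiplication table $u_f(T)u_{f'}(T)=u_{ff'}(T)$ together with the inequalities $u_f(T)\ne u_{f'}(T)$ for $f\ne f'$ is certified by a single ball of some radius $R$ in $\cay HT$. For $n$ large, the matching ball in $\cay G{S_n}$ certifies the same equalities and inequalities, so $f\mapsto u_f(S_n)$ defines an injective homomorphism $F\hookrightarrow G$; the torsion-free case of the first assertion follows by applying this to the cyclic subgroup generated by a hypothetical torsion element. I do not expect a serious obstacle anywhere; the only subtlety, which is also what makes (1) work beyond the positive case and makes (3) work with injectivity, is that a Cayley-graph ball records both equalities \emph{and} inequalities of bounded-length elements, so negations in $\psi$ and distinctness of the $u_f(S_n)$ are both handled by the same ball-matching mechanism.
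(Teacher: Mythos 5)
Your proof is correct and in substance follows the same route as the paper's: all three parts rest on the observation that a marked Cayley ball of fixed radius certifies equalities and inequalities of elements representable by bounded-length words, so Boolean combinations of atomic formulas have their truth transferred from $(H,T)$ to $(G,S_n)$; part (2) in both treatments factors through Lemma~\ref{lem:fp} and preservation of positive formulas under epimorphisms. The only cosmetic differences are that you argue (1) by contrapositive while the paper argues it directly after extending the tuple $\{h_1,\dots,h_n\}$ to a generating set of $H$, and in (3) you build the imbedding $F\hookrightarrow G$ directly from the matching balls whereas the paper phrases ``$F$ is not a subgroup'' as a universal statement and deduces (3) from (1) — the same idea packaged differently.
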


\begin{remark}
  It is essential not to allow negations in~(2): a group with torsion, and moreover a torsion group, 
  can \preform\ a finitely presented torsion-free group --- e.g.,
  Grigorchuk's group $G$ \preforms\ $\free_3$.  In fact, if $G
  \conv\free_n$ for some $n$, then $G$ has the same positive universal
  theory as $\free$. However, $G$ is universally equivalent to $H$ if
  and only if $G$ is a non-abelian limit group of Sela
  (see~\S\ref{ss:limitgroups}, that is, if $\free\conv G$.

  The lemma implies in particular that if $G$ is virtually nilpotent,
  then every group in the same connected component has the same
  language of positive universal statements. However, in any such
  connected component there are groups that are not universally
  equivalent to $G$.
\end{remark}

\begin{proof}
  Ad (1): consider a universal statement satisfied in $G$; it is of
  the form $\forall x_1,\dots,x_n(E)$ for a boolean expression $E$
  made of identities $w_1,\dots,w_\ell$. Let $R$ be the maximal length
  $w_1,\dots,w_\ell$.

  Consider arbitrary $h_1,\dots,h_n\in H$. Extend $\{h_1,\dots,h_n\}$
  to a generating set $T$ of $H$, and find a generating set $S$ of $G$
  such that the balls of radius $R$ in $\cay GS$ and $\cay HT$
  coincide. Let $g_1,\dots,g_n$ be the generators of $G$ that
  correspond to $h_1,\dots,h_n$ respectively. Then $w_i$ traces a path
  in $\cay GS$ that remains in an $R$-neighbourhood of the origin, so
  $w_i$ traces a closed loop in $\cay GS$ if and only if it traces a
  closed loop in $\cay HT$; therefore,
  $w_i(h_1,\dots,h_n)=1\Leftrightarrow w_i(g_1,\dots,g_n)=1$, so
  $E(h_1,\dots,h_n)$ follows from $E(g_1,\dots,g_n)$.

  Ad (2): Lemma~\ref{lem:fp} shows that $G$ is a quotient of $H$; and
  positive universal statements are preserved by taking quotients.

  Ad (3): consider a finite group $F$. Then the fact that $F$ is
  \emph{not} a subgroup of $G$ is a universal statement: writing
  $f_1,\dots,f_k$ the elements of $F$, with multiplication table
  $f_if_j=f_{m(i,j)}$, the statement is $\forall
  g_1,\dots,g_k(g_i=g_j\text{ for some }i\neq j\vee g_ig_j\neq
  g_{m(i,j)}\text{ for some }i,j)$. Therefore (3) follows from~(1).
\end{proof}

\subsection{Varieties} We defined varieties in~\S\ref{ss:identities}
as collections of identities. Alternatively
(see~\cite{neumann:varieties}), it is a family of groups closed under
taking subgroups, quotients and cartesian products, namely the class
$\variety$ of all the groups that satisfy these identities. The
variety $\variety$ is \emph{finitely based} if it may be defined by
finitely many identities. It is \emph{finite} if all finitely
generated groups in the variety are finite. For a group $G$, one
defines $\variety(G)=\langle w_i(g_1,g_2,\dots)\colon
i\ge1,\,g_1,g_2,\dots\in G\rangle$, the \emph{verbal subgroup of $G$}
corresponding to $\variety$; thus $\variety(G)=1$ if and only if $G$
belongs to the variety. The $k$-generated \emph{relatively free group}
is $\relfree_k:=\free_k/\variety(\free_k)$; it belongs to $\variety$,
and every $k$-generated group is $\variety$ is a quotient of
$\relfree_k$. A direct consequence of Lemma~\ref{lem:identity}(1) is
the
\begin{lemma}
  If $G\conv H$ and $G$ belongs to $\variety$, then $H$ belongs to
  $\variety$.\qed
\end{lemma}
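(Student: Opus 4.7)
The plan is to reduce the statement directly to Lemma~\ref{lem:identity}(1). By definition (equivalently, by Birkhoff's characterization of varieties as equational classes), the variety $\variety$ is cut out by a (possibly infinite) collection of identities $\{w_i(x_1, x_2, \dots) : i \in I\}$, and a group lies in $\variety$ precisely when each $w_i$ vanishes on every tuple from the group. Fixing an arbitrary $i \in I$, the assumption $G \in \variety$ yields that the universal sentence $\forall x_1, x_2, \dots\,(w_i = 1)$ holds in $G$.

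Applying Lemma~\ref{lem:identity}(1) to this single identity transfers it from $G$ to $H$: thus $H$ also satisfies $w_i$ as an identity. Since $i$ was arbitrary, $H$ satisfies every identity defining $\variety$, and therefore $H \in \variety$. There is essentially no obstacle; the only subtlety worth noting is that the index set $I$ need not be finite, but since the conclusion is obtained separately for each $i$ and then collected, no uniformity in $i$ is required from Lemma~\ref{lem:identity}(1). In particular the lemma reduces to a single invocation of part (1) of the previous lemma, once one unpacks the definition of belonging to a variety.
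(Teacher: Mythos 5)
Your argument is exactly the paper's intended one: the paper presents this lemma as a direct consequence of Lemma~\ref{lem:identity}(1), and you correctly unpack ``belonging to $\variety$'' as satisfying each defining identity, then transfer each one separately by that lemma. Nothing to add.
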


We will consider, in later sections, the restriction of the relation
$\conv$ to groups belonging to a variety. Just as $\markedgroups$ is a
topology on the normal subgroups of $\free_k$, there is a topology
$\markedgroups(\variety)$ on the normal subgroups of $\relfree_k$, or
equivalently on the normal subgroups of $\free_k$ that contain
$\variety(\free_k)$. Directly from the definitions,
\begin{lemma}[\cite{champetier-guirardel:limitgroups}*{Lemma 2.2}]
  The natural map
  $\markedgroups(\variety)\hookrightarrow\markedgroups$ is a
  homeomorphism on its image, and that the image is closed if and only
  if $\relfree_k$ is finitely presented for all $k\in\N$.\qed
\end{lemma}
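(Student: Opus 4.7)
The lemma has two halves, which I treat in sequence.

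For the homeomorphism onto image, the plan is to compare basic neighborhoods in the two Chabauty topologies. Writing $V_k:=\variety(\free_k)$ and $\pi:\free_k\twoheadrightarrow\relfree_k$, basic open neighborhoods of $M\triangleleft\relfree_k$ in $\markedgroups(\variety)$ have the form $\{M':M'\cap F=M\cap F\}$ for finite $F\subseteq\relfree_k$, while basic neighborhoods in $\markedgroups$ are indexed by finite subsets of $\free_k$. The image consists of normal subgroups $N\triangleleft\free_k$ containing $V_k$, and each such $N$ is a union of $V_k$-cosets, so membership of any $f\in\free_k$ in $N=\pi^{-1}(M)$ is determined by whether $\pi(f)\in M$. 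Consequently a finite test set $F\subseteq\relfree_k$ corresponds to any finite lift $\tilde F\subseteq\free_k$, and the two systems of basic neighborhoods are mutually refining under the bijection $M\leftrightarrow\pi^{-1}(M)$. This gives the homeomorphism.

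For the closedness characterization, the image equals $\mathrm{Im}_k=\{N\triangleleft\free_k:V_k\subseteq N\}$ inside the $k$-generator stratum of $\markedgroups$. In the easy direction, suppose $\relfree_k$ is finitely presented, so $V_k$ is the normal closure of a finite set $\{r_1,\ldots,r_m\}\subseteq\free_k$. Then $V_k\subseteq N$ iff each $r_i\in N$, and the condition $\{N:r_i\in N\}$ is clopen in the Chabauty topology, since membership of $r_i$ is detected by $N\cap B_{\free_k}(1,|r_i|)$. A finite intersection of clopen sets is closed, so $\mathrm{Im}_k$ is closed. For the converse I plan to argue contrapositively: if $\relfree_k$ fails to be finitely presented, write $V_k=\bigcup_n N_n$ as a strictly ascending union of finitely normally generated $N_n\subsetneq V_k$; the quotients $G_n:=\free_k/N_n$ are finitely presented, lie outside $\variety$, and Chabauty-converge to $\relfree_k\in\variety$.

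The main obstacle lies in the converse. The naive approximation $G_n\to\relfree_k$ just sketched shows non-openness of $\mathrm{Im}_k$ rather than non-closedness, so the construction must be inverted: the goal is a sequence \emph{inside} $\mathrm{Im}_k$ whose Chabauty limit escapes $\mathrm{Im}_k$. I anticipate a diagonal construction in which the approximating groups $H_n\in\variety$ are built so that their defining relators witness all small-radius consequences of $V_k$ but omit an unboundedly growing part of $V_k$, so that the limit $H=\lim H_n$ lies outside $\variety$. That such a construction should be available precisely when $\relfree_k$ is not finitely presented reflects the fact that, absent a finite set of defining relators, no finite ball in $\free_k$ suffices to force $V_k\subseteq N$; converting this slack into a genuine failure of closedness is the technical heart of the proof.
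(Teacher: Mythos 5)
Your argument for the homeomorphism is fine: the correspondence $M\leftrightarrow\pi^{-1}(M)$, where $\pi\colon\free_k\twoheadrightarrow\relfree_k$, is a bijection, and since each $N=\pi^{-1}(M)$ is a union of cosets of $V_k:=\variety(\free_k)$, finite test sets in $\relfree_k$ and their (arbitrary) lifts to $\free_k$ give mutually refining systems of basic Chabauty neighbourhoods.

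The second half, however, runs into a real problem, and the difficulty you describe in your last paragraph is a symptom of it rather than an incidental obstacle. The image $\mathrm{Im}_k=\{N\triangleleft\free_k:V_k\subseteq N\}$ is \emph{always} closed, with no hypothesis on $\relfree_k$ whatsoever: $\mathrm{Im}_k=\bigcap_{w\in V_k}\{N:w\in N\}$, each $\{N:w\in N\}$ is clopen (it is determined by $N\cap B(1,|w|)$), and an arbitrary intersection of closed sets is closed. Concretely, if $N_n\in\mathrm{Im}_k$ and $N_n\to N$, then for each $v\in V_k$ one eventually has $N_n\cap B(1,|v|)=N\cap B(1,|v|)$, hence $v\in N$, so $N\in\mathrm{Im}_k$. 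The ``diagonal construction'' you anticipate therefore cannot exist, and the ``only if'' direction of the statement, as printed, is false.

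What finite presentability of $\relfree_k$ actually governs is \emph{openness} of the image, and your proposal already contains the whole proof of that. Your ``easy direction'' exhibits $\mathrm{Im}_k$ as a \emph{finite} intersection of \emph{clopen} sets, hence clopen --- you wrote ``closed'' at the end, but you had just proved ``open'', and it is the openness that genuinely uses the finite presentation. Moreover, the ``naive approximation'' that you set aside as proving the wrong thing is precisely the converse: if $\relfree_k$ is not finitely presented, exhaust $V_k$ by a strictly increasing chain of finitely normally generated $N_n\subsetneq V_k$; then $N_n\notin\mathrm{Im}_k$ while $N_n\to V_k\in\mathrm{Im}_k$, so no Chabauty neighbourhood of $V_k$ lies inside $\mathrm{Im}_k$ and the image is not open. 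The lemma should read ``open'' (equivalently ``clopen''), not ``closed''; the right move here was to flag the misstatement rather than to hunt for an impossible construction.
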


\begin{lemma}\label{lem:verbalsg}
  Let $\variety$ be a finite variety. If $G\conv H$, then
  $\variety(G)\conv\variety(H)$.
\end{lemma}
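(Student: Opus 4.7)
The plan is to promote the markings of $G$ and $H$ to markings of their verbal subgroups via Schreier generators in $\free_k$. Identifying $S_n$ and $T$ with the free generators $x_1,\dots,x_k$ of $\free_k$ via the given bijections, write $\pi_n:\free_k\twoheadrightarrow G$ and $\pi':\free_k\twoheadrightarrow H$ for the induced marking surjections. A routine check shows that verbal subgroups commute with surjections, so $\pi_n(\variety(\free_k))=\variety(G)$ and $\pi'(\variety(\free_k))=\variety(H)$.

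Because $\variety$ is finite, $\relfree_k=\free_k/\variety(\free_k)$ is a finite group, and hence $\variety(\free_k)$ has finite index in $\free_k$. Reidemeister--Schreier rewriting then furnishes a finite generating set $\sigma_1,\dots,\sigma_N$ of $\variety(\free_k)$ whose elements are words of length at most some $L$ in $x_1^{\pm1},\dots,x_k^{\pm1}$. Set
\[U_n:=(\pi_n(\sigma_1),\dots,\pi_n(\sigma_N)),\quad U:=(\pi'(\sigma_1),\dots,\pi'(\sigma_N));\]
these are generating tuples for $\variety(G)$ and $\variety(H)$ by the previous paragraph, and the tautological bijection $U_n\leftrightarrow U$ is the one demanded by Definition~\ref{def:precede}.

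To verify the ball condition, fix $R>0$ and consider two words $W_1,W_2$ in the alphabet $\{1,\dots,N\}^{\pm1}$ of length at most $R$. Their substitutions $W_i(\sigma_\bullet)$ are words in $x_1^{\pm1},\dots,x_k^{\pm1}$ of length at most $LR$, so if $n$ is large enough for the balls of radius $LR$ in $\cay G{S_n}$ and $\cay HT$ to coincide, then the loop $W_1(\sigma_\bullet)W_2(\sigma_\bullet)^{-1}$ stays in that ball and closes in $G$ iff it closes in $H$. Equivalently, $W_1(U_n)=W_2(U_n)$ in $\variety(G)$ iff $W_1(U)=W_2(U)$ in $\variety(H)$, which yields the required isomorphism of marked balls of radius $R$ in $\cay{\variety(G)}{U_n}$ and $\cay{\variety(H)}U$.

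The only substantive obstacle is to produce a generating set for the verbal subgroup whose description is intrinsic to $\free_k$ and of bounded length, so that the same substituted words can be used inside both $G$ and $H$; finiteness of the variety is used precisely here, through the finiteness of the Schreier transversal. The degenerate case $G\in\variety$ — in which both $\variety(G)$ and $\variety(H)$ are trivial — can be dismissed at the outset.
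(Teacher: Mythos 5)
Your proof is correct and follows essentially the same route as the paper's: both arguments exploit finiteness of the variety to produce a finite generating set of $\variety(\free_k)$ expressible as words of bounded length in $x_1,\dots,x_k$, push that set forward to $\variety(G)$ and $\variety(H)$ via the marking surjections, and then transfer agreement of $LR$-balls in $\cay G{S_n}$ and $\cay HT$ to agreement of $R$-balls in the verbal subgroups' Cayley graphs. The only cosmetic difference is how the finite generating set is manufactured: the paper takes finitely many verbal words $w(v_1,\dots)$ (enough of them, since $\variety(\free_k)$ is finitely generated), whereas you invoke Reidemeister--Schreier on the finite-index subgroup $\variety(\free_k)$; both devices yield the needed bounded-length generating tuple and the rest of the argument is identical.
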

\begin{proof}
  Let $H$ be generated by a set $T=\{h_1,\dots,h_k\}$ of cardinality
  $k$, and let $\free_k$ denote the free group on $k$ generators
  $x_1,\dots,x_k$. Then $\variety(\free_k)$ admits a generating set
  of the form $w(v_1,\dots)$ for some identities $w$ in $\variety$
  and some $v_1,\dots\in\free_k$. Then $\variety(H)$ is generated by
  the set $T'$ of all corresponding $w(v_1(h_1,\dots,h_k),\dots)$.

  Consider a generating set $S=\{g_1,\dots,g_k\}$ of $G$, such that
  $\cay GS$ coincides with $\cay HT$ in a large ball; then
  $S'=\{w(v_1(g_1,\dots,g_k),\dots),\dots\}$ generates $\variety(G)$,
  and the Cayley graphs $\cay{\variety(G)}{S'}$ coincides with
  $\cay{\variety(H)}{T'}$ in a large ball.
\end{proof}

Given a variety $\variety$, the \emph{verbal product} of groups
$G_1,G_2,\dots,G_n$ is defined as follows: first set
$G=G_1*G_2\cdots*G_n$ the free product; then
\[\prod_{\variety}G_i=\frac{G}{\variety(G)\cap\langle[g_i,g_j]\colon
  g_i\in G_i^G,g_j\in G_j^G,i\neq j\rangle}.
\]
For example, if $\variety$ is the variety of all groups, then
$\prod_{\variety}$ is the free product; while if $\variety$ is the
variety of abelian groups, then $\prod_{\variety}$ is the direct
product.

Recall that the \emph{wreath product} of two groups $G_1,G_2$ is
\[G_1\wr G_2=\{f:G_2\to G_1\text{ of finite support}\}\rtimes G_2,\]
where   $G_2$  acts by shift on functions $G_2\to G_1$.

\begin{lemma}\label{lem:products}
  Let $G_1,G_2,H_1,H_2$ be groups, and assume $G_1\conv H_1$ and
  $G_2\conv H_2$. Then
  \begin{enumerate}
  \item $G_1\times G_2\conv H_1\times H_2$;
  \item $G_1*G_2\conv H_1*H_2$;
  \item Let $\variety$ be a variety of groups. Then $\prod_\variety G_i\conv\prod_{\variety}H_i$;
  \item $G_1\wr G_2\conv H_1\wr H_2$.
  \end{enumerate}
\end{lemma}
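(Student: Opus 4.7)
The plan is to handle all four parts by the same continuity argument: for each construction, the marked ball of radius $R$ in the combined group is determined by balls of radius (a bounded function of) $R$ in each factor. Fix a generating set $T_i$ of $H_i$ for $i=1,2$, and sequences $S_i^{(n)}$ of generating sets of $G_i$ in bijection with $T_i$ realizing $G_i \conv H_i$. Form the combined generating sets $T = T_1 \sqcup T_2$ and $S^{(n)} = S_1^{(n)} \sqcup S_2^{(n)}$, embedded into the relevant combined group in a construction-specific way. Applying Definition~\ref{def:precede} to each factor at a sufficiently large radius then yields the desired convergence.

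For~(1), embed $s\in S_1^{(n)}$ as $(s,1)$ and $s\in S_2^{(n)}$ as $(1,s)$. Every word of length $\le R$ in $S^{(n)}$ decomposes, by commutativity across factors, into subwords in $S_1^{(n)}$ and $S_2^{(n)}$ of lengths $\le R$, determining the projection in each factor independently. For~(2), embed $S_i^{(n)}\subset G_i\subset G_1*G_2$; the normal form in a free product of an element of $S^{(n)}$-length $\le R$ consists of alternating entries, each a subword of $S_i^{(n)}$-length $\le R$, and equality is decided factor-by-factor. For~(4), embed $S_1^{(n)}$ at the coordinate $1_{G_2}$ in $G_1^{(G_2)}$ and $S_2^{(n)}$ into $G_2$. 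A word of length $\le R$ determines a walk of length $\le R$ in $\cay{G_2}{S_2^{(n)}}$ starting at $1$ (obtained by erasing the $S_1^{(n)}$ letters), together with a ``lamp configuration'' supported on that walk whose values are products of at most $R$ letters of $S_1^{(n)}$; two words evaluate equally iff they yield the same endpoint and lamp configuration. In each of the three cases, the marked $R$-ball in the combined group is thus determined by the marked $R$-balls in the two factors.

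Part~(3) is the most subtle. Having established~(2), we know $G_1*G_2\conv H_1*H_2$ with generating set $S^{(n)}$; and $\prod_\variety G_i$ is the quotient of $G_1*G_2$ by the normal subgroup
\[
N := \variety(G_1*G_2)\cap\langle[g_i,g_j]\colon g_i\in G_i^{G_1*G_2},\,g_j\in G_j^{G_1*G_2},\,i\neq j\rangle.
\]
The plan is to show that passage to this quotient is continuous in the Chabauty-Grigorchuk topology, so that the convergence in the free product descends to the verbal product. The main obstacle is precisely the locality of the congruence $w_1 w_2^{-1} \in N$: a priori this condition need not be decidable from balls of bounded radius, since identities in $\variety$ may have arbitrarily large length and cross-commutators may be conjugated by arbitrarily long elements of the free product. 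The key observation to overcome this is that $N$ is defined uniformly in terms of the generators and of $\variety$; in particular $N$ is the image in $G_1*G_2$ of the analogous subgroup of the free group $\free_{|S^{(n)}|}$ reduced modulo the relations of $G_1$ and $G_2$. Whether two words of length $\le R$ are identified modulo $N$ is therefore governed by the factor-level relations visible in bounded-radius balls, which by assumption agree between $(G_i,S_i^{(n)})$ and $(H_i,T_i)$ for $n$ large.
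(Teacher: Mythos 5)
Your proof takes essentially the same route as the paper's: each of the four constructions is ``local'' in the sense that the marked $R$-ball of the product group is determined by the marked $R$-balls of the factors, and that locality is what drives the convergence. For part~(3) both treatments are terse: the paper says only that the relations imposed are ``formally defined by $\variety$'', while you flag explicitly that locality of membership in $N=\variety(G_1*G_2)\cap C$ ($C$ the cartesian subgroup) is the delicate point. Your stated key fact --- that $N$ is the image in $G_1*G_2$ of the analogous subgroup of the free group $\free_{|S|}$ --- is indeed true (to see surjectivity, lift an element of $N$ componentwise and correct it using $\variety(\free_{|S_1|}\times\free_{|S_2|})=\variety(\free_{|S_1|})\times\variety(\free_{|S_2|})$), but the final ``therefore'' still leaves implicit why a short word's membership in $N$ is detectable from bounded balls; this is precisely the point that the paper's one-line argument for~(3) also sweeps under the rug.
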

\begin{proof}
  We start by~(2), and argue that, for arbitrarily large $R$, we can
  make balls of radius $R$ agree in respective Cayley graphs. For all
  $i\in\{1,2\}$, let $T_i$ generate $H_i$, and let $S_i$ generate
  $G_i$ in such a manner that balls of radius $R$ coincide in
  $\cay{G_i}{S_i}$ and $\cay{H_i}{T_i}$. Then $T:=\bigsqcup T_i$
  generates $H:=\bigast_i H_i$, and the corresponding set
  $S:=\bigsqcup S_i$ generates $\bigast_i G_i$. Balls of radius $R$
  coincide in $\cay GS$ and $\cay HT$.

  Ad (3), the relations imposed on $\bigast_i G_i$ and $\bigast_i H_i$
  are formally defined by $\variety$, so again balls of radius $R$ in
  $\cay{\prod_{\variety}G_i}{S}$ and $\cay{\prod_\variety H_i}{T}$
  coincide.

  (1) is a special case of~(3).

  Ad (4), note that the relations giving $G_1\wr G_2$ from
  $G:=G_1*G_2$ are $[x_1^{x_2},y_1^{y_2}]$ for all $x_1,y_1\in G_1$
  and $x_2,y_2\in G_2\setminus\{1\}$. These relations do not exactly
  define a varietal product; but nevertheless there is a bijection
  between non-trivial elements of norm $\le R$ in $G_2$ and $H_2$, and
  between elements of norm $\le R$ in $G_1$ and $H_1$. The result
  again follows.
\end{proof}

Note that in~(1) we can have $G_1\times C\conv H_1\times C$ without
having $G_1\conv H_1$. We we examine more carefully this for abelian
groups in~\S\ref{ss:abelian}:
\begin{example}
  We have $1\times\Z\conv\Z\times\Z$, yet $1$ doesn't \preform\ $\Z$.

  For $A=\Z/6\times\Z$, $B=\Z/35\times\Z$, $C=\Z/10\times\Z$,
  $D=\Z/21\times\Z$, we also have $A\times B\conv C\times D$ while
  $A,B,C,D$ are mutually incomparable.
\end{example}
\begin{proof}
  Consider $\{(1,0),(0,1)\}$ a generating set of $\Z\times\Z$, and,
  for arbitrary $R\in\N$, the generating set $\{(0,1),(0,2R+1)\}$ of
  $1\times\Z$. Their Cayley graphs agree on a ball of radius $R$.

  For the second claim, note that $A\times B$ is isomorphic to
  $C\times D$, but for any two groups among $A,B,C,D$, none is a
  quotient of the other.
\end{proof}

Similarly, in~(2) we can have $G_1*C\conv H_1*C$ without having
$G_1\conv H_1$. We will examine more closely the situation of free
groups in~\S\ref{ss:limitgroups}; here and in the sequel we use the
notation $\free_k$ for free groups on $k$ generators. For now, we just
mention the
\begin{example}
  Let $G$ be a $k$-generated group. Then, for every $m\ge2$, the free
  product $G*\free_m$ \preforms\ $\free_{k+m}=\free_k*\free_m$; yet
  $G$ need not \preform\ $\free_k$, for example if $G$ satisfies an
  identity.
\end{example}
\begin{proof}
  Let $S$ generate $\free_k$, let $T$ generate $\free_m$, and let
  $\{g_1,\dots,g_k\}$ generate $G$. Then $S\sqcup T$ generates
  $\free_k*\free_m$. In $\free_m$, there exist elements
  $w_1,\dots,w_k$ such that no relation among them and $T$, of length
  $\le R$, holds; consider the generating set
  $\{g_1w_1,\dots,g_kw_k\}\sqcup T$ of $G*\free_m$. Then no relation
  of length $\le R$ holds among them.
\end{proof}

Note finally that in~(4) we may have $G_1\wr C\conv H_1\wr C$ without
having $G_1\conv H_1$; see~\S\ref{ss:wreath} for more examples:
\begin{example}\label{ex:wreath}
  Consider $A,B$ arbitrary groups, and an infinite group $C$. Then
  $(A*B)\wr C\conv(A\times B)\wr C$.

  On the other hand, if $A$ and $B$ are non-trivial, finitely
  presented, and each satisfies an identity, then $A*B$ does not
  satisfy the identities of $A\times B$, so $A*B$ doesn't \preform\
  $A\times B$ by Lemma~\ref{lem:identity}(2).
\end{example}
\begin{proof}
  Let $S,T,U$ be generating set of $A,B,C$ respectively. Then, as
  generating set of $(A\times B)\wr C$, we consider $S'\sqcup
  T'\sqcup U$, in which $S'$ corresponds to the generators of $A$
  supported at $1\in C$, and similarly for $T'$.

  For arbitrary $R\in\N$, choose $x\in C$ of norm $>R$, and consider
  the following generating set $S''\sqcup T''\sqcup U$ of $(A*B)\wr
  H$. The copy $S''$ of $S$ corresponds to the generators of $A$
  supported at $1\in C$, while the copy $T''$ corresponds to the
  generators of $T$ supported at $x$.

  Both $(A\times B)\wr C$ and $(A*B)\wr C$ are quotients of $A*B*C$;
  in both cases, all relations of the form $[s_1^h,s_2]$ and
  $[t_1^h,t_2]$ are imposed for all $h\neq1$ and $s_i\in S',t_i\in
  T'$, respectively $s_i\in S'',t_i\in T''$. However, in the former
  case, all relations of the form $[s^h,t]$ are also imposed for all
  $h\in H$ and $s\in S',t\in T'$. In the latter case, these relations
  are only imposed for $h\neq x$ and $s\in S'',t\in T''$. However,
  this distinction is invisible in the ball of radius $R$.
\end{proof}

\subsection{Limits and prelimits of groups with a given subgroup or
  quotient}\label{limitsandprelimits}
We start by the following straightforward lemma.

\begin{lemma}
  If $A\conv B$ and $A$ is a subgroup of $G$, then there exists a
  group $H$ containing $B$ as a subgroup and satisfying $G\conv H$:
  \[\begin{matrix}G & \textcolor{gray}{\conv} & \textcolor{gray}{H}\\
    \cup && \textcolor{gray}{\cup}\\
    A &\conv & B.\end{matrix}\]
\end{lemma}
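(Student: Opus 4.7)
The plan is to construct $H$ as a limit in $\markedgroups$ of $G$ with varying generating sets that incorporate the \preforming\ of $B$ by $A$. Fix a finite generating set $U$ of $G$, and, using $A\conv B$ together with Lemma~\ref{choiceofgen}, choose a finite generating set $T$ of $B$ and a sequence of finite generating sets $S_n$ of $A$, in bijection with $T$, such that the marked balls of radius $R$ in $\cay{A}{S_n}$ and $\cay{B}{T}$ coincide for all $n$ large enough. Since $A\le G$, each $S_n$ is a subset of $G$, and $S_n\sqcup U$ generates $G$. By compactness of the space of marked groups on $|T|+|U|$ ordered generators, after passing to a subsequence, $(G,S_n\sqcup U)$ converges in $\markedgroups$ to some marked group $(H,T\sqcup U)$. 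By construction $G\conv H$.

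Next I would show that the natural map $\varphi\colon B\to H$, sending each $t\in T$ to the corresponding generator of $H$, is a well-defined injective homomorphism, which gives the required inclusion $B\le H$. For well-definedness: any relation $w(T)=1$ holding in $B$, of length at most $R$, eventually translates to $w(S_n)=1$ in $A$ by $A\conv B$, hence to $w(S_n)=1$ in $G$ because $A\le G$, and finally to $w(T)=1$ in $H$ by convergence of $(G,S_n\sqcup U)$ to $(H,T\sqcup U)$. Injectivity is obtained by running the same chain of implications backwards: if $w(T)\ne 1$ in $B$, then $w(S_n)\ne 1$ in $A$ for all large $n$, whence $w(S_n)\ne 1$ in $G$ (using $A\le G$), whence $w(T)\ne 1$ in $H$.

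The only genuine obstacle is the injectivity of $\varphi$, and it is handled precisely by the hypothesis that $A$ sits as a subgroup of $G$. Were $A$ merely a quotient of $G$, relations among $S_n$-letters could hold in $G$ without holding in $A$, producing spurious relations in $(H,T)$ and collapsing $B$ in the limit; but the assumption $A\le G$ guarantees that the $S_n$-relations of $G$ coincide with those of $A$, so that $\cay{B}{T}$ embeds faithfully in $\cay{H}{T\sqcup U}$ as the $T$-labelled subgraph based at the identity.
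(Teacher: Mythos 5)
Your proof is correct and takes essentially the same route as the paper: pass to a convergent subsequence of $(G, S_n\sqcup U)$ to obtain $(H,T\sqcup U)$ and identify the limit of the $T$-marked subgraph with $B$. The paper states the last step more tersely (``$(A,S_{n_k})$ converges to the subgroup $\langle V\rangle$ of $H$'') where you spell out the verification; the key observation you make explicit, that $A\le G$ forces relations among the $S_n$-letters in $G$ to coincide with those in $A$, is exactly what justifies the paper's one-line assertion.
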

\begin{proof}
  Consider finite generating sets $S_n$ of $A$ and $T$ of $B$ such
  that $(A,S_n)$ converges to $(B,T)$ in the space $\markedgroups$ of
  marked groups, as $n\to\infty$. Let $S$ be a finite generating set
  of $G$. Set $S'_n=S\sqcup S_n$; these define finite generating sets
  of $G$. Consider a subsequence $(n_k)$ such that $(G,S'_{n_k})$
  converges in $\markedgroups$; denote its limit by $(H,U\sqcup V)$.

  In particular, $(A,S_{n_k})$ converges to the subgroup $\langle
  V\rangle$ of $H$. Since $(A,S_n)$ converges to $B$, we conclude that
  $\langle V\rangle$ is isomorphic to $B$.
\end{proof}

\begin{lemma}\label{lem:quotients}
  If $A\conv B$ and $A$ is a quotient of $G$, then there exists a
  group $H$ with $G\conv H$ and $B$ is a quotient of $H$:
  \[\begin{matrix}G & \textcolor{gray}{\conv} & \textcolor{gray}{H}\\
    \twoheaddownarrow && \textcolor{gray}{\twoheaddownarrow}\\
    A &\conv & B.\end{matrix}\]
\end{lemma}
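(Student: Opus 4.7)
The plan is to imitate the proof of the preceding (subgroup) lemma, but replace ``using lifts that generate a subgroup isomorphic to $B$'' with ``using lifts that, together with $\ker\pi$-corrections, generate all of $G$, and whose images kill a normal subgroup projecting onto $B$.''

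First I would fix the data: let $\pi\colon G\twoheadrightarrow A$ be the given quotient map, let $T=\{t_1,\dots,t_k\}$ generate $B$, and let generating sets $S_n=\{s_{n,1},\dots,s_{n,k}\}$ of $A$, in bijection with $T$, witness $A\conv B$, so that $(A,S_n)\to(B,T)$ in $\markedgroups$. Also fix a finite generating set $\{g_1,\dots,g_m\}$ of $G$. For each $n,i$ pick a lift $\tilde s_{n,i}\in G$ with $\pi(\tilde s_{n,i})=s_{n,i}$. Because $\pi(g_j)\in A=\langle S_n\rangle$, I can write $\pi(g_j)=v_j^{(n)}(s_{n,1},\dots,s_{n,k})$ for some word $v_j^{(n)}$, and then set
\[k_{n,j}:=g_j^{-1}\cdot v_j^{(n)}(\tilde s_{n,1},\dots,\tilde s_{n,k})\in\ker\pi.\]
The key observation is that $\{\tilde s_{n,1},\dots,\tilde s_{n,k},k_{n,1}^{-1},\dots,k_{n,m}^{-1}\}$ now generates $G$, since each $g_j$ is expressed in these elements. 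Call this generating set $S'_n$; it has constant size $k+m$ and is canonically indexed by $T\sqcup\{e_1,\dots,e_m\}$.

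Next, by compactness of the space of marked groups of bounded generator number, extract a subsequence with $(G,S'_{n_j})\to(H,T'\sqcup E')$; this gives $G\conv H$ by definition. I would then define a candidate surjection $\psi\colon H\to B$ on generators by $\psi(t')=t$ for the generator $t'\in T'$ corresponding to $t\in T$, and $\psi(e')=1$ for $e'\in E'$. Well-definedness amounts to checking: whenever a word $w$ in the generators of $H$ evaluates trivially, the corresponding word with the $E'$-letters replaced by $1$ evaluates trivially in $B$. By Chabauty convergence this relation holds in $(G,S'_{n_j})$ for all large $j$; applying $\pi$ turns it into the same word evaluated at $S_{n_j}$ (with the kernel generators becoming~$1$) being trivial in $A$; and then the convergence $(A,S_{n_j})\to(B,T)$ forces the reduced word to be trivial in $B$, as required. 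Since $T$ generates $B$, the map $\psi$ is surjective, proving $B$ is a quotient of $H$.

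The main obstacle I anticipate is Step~3, the choice of $S'_n$: naive lifting $\tilde S_n$ of $S_n$ need not generate $G$, and there is no a priori reason $\ker\pi$ is finitely generated, so I cannot simply add a fixed finite set from the kernel. The trick is to let the extra kernel elements $k_{n,j}$ depend on $n$ in a way that keeps the cardinality of the generating set constant and ensures that, under $\pi$, they become identically trivial. Everything else (compactness, applying $\pi$, transporting relations along $(A,S_n)\to(B,T)$) is formal bookkeeping of the sort already carried out in the preceding lemma.
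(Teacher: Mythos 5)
Your proposal is correct and follows essentially the same route as the paper: lift the generating sets $S_n$ of $A$ to $G$, adjoin $n$-dependent kernel elements of the form $g_j^{-1}v_j^{(n)}(\tilde S_n)$ to restore the generating property while keeping constant cardinality, take a convergent subsequence to define $H$, and verify the induced map $H\twoheadrightarrow B$ is a homomorphism by pushing relations through $\pi$ and then along $(A,S_n)\to(B,T)$. The only cosmetic difference is that the paper normalizes $1\in S_n$ and splits the generating set as $S''_n\sqcup S'''_n$, but this is exactly your $\tilde S_n\sqcup\{k_{n,j}^{-1}\}$.
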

\begin{proof}
  Let $A,B$ be $k$-generated, with $T$ a generating set for $B$.
  Since $A$ \preforms\ $B$, there exists a sequence of
  generating sets $S_n$ of cardinality $k$ such that $(A,S_n) \to
  (B,T)$. Without loss of generality, we may assume $1\in S_n$ for all
  $n\in\N$.

  Let $\pi\colon G\twoheadrightarrow A$ be the given
  epimomorphism. Let $G$ be $\ell$-generated. Then for each $n\in\N$
  there exists a generating set $S'_n=S''_n\sqcup S'''_n$ of $G$ such
  that $S''_n$ maps bijectively to $S_n$ under $\pi$ and $S'''_n$ maps
  to $1\in A$ and has cardinality $\ell$. Indeed first choose a
  generating set $S'$ for $G$ of cardinality $\ell$; then, for each
  $n\in\N$, choose an arbitrary lift $S'_n$ of $S_n$; and multiply
  each $g\in S'$ by an appropriate word in $S'_n$ to obtain $S'''_n$
  mapping to $1$.

  Passing if need be to a subsequence, we can assume that $(G,S'_n)$
  converges in the space $\markedgroups$ of marked groups. Denote the
  limit of the subsequence by $(H,T')$, again with decomposition
  $T'=T''\sqcup T'''$. Let us construct an epimorphism $\rho\colon
  H\twoheadrightarrow B$, showing that $B$ is a quotient of $H$.
  Recall that $T''$ in naturally in bijection with $T$, via $S''_n$ and
  $S_n$. We define $\rho$ on $T''$ by this bijection, and put
  $\rho(t)=1$ for all $t\in T'''$.

  To prove that $\rho$ is a homomorphism, consider a word
  $w(x_1,\dots,x_{k+\ell})$ with $w(T')=1$ in $H$. Since $(G,S'_n)$
  converges to $(H,T')$, for sufficiently large $n\in\N$ we have
  $w(S'_n)=1$ in $G$. Let $v(x_1,\dots,x_k)$ denote the word obtained
  from $w$ be deleting its letters $x_{k+1},\dots,x_{k+\ell}$. Since
  $\pi$ is a homomorphism, we then have $v(S_n)=1$, and therefore in
  the limit $v(T)=1$. This is precisely the result of computing
  $\rho(w(T'))$ letter by letter.

  Finally, $T$ is in the image of $\rho$ so $\rho$ is surjective.
\end{proof}

We may improve on Lemma~\ref{lem:quotients} in case the quotient is by
a verbal subgroup:
\begin{lemma}\label{lem:verbalquotients}
  Let the group $G$ be generated by a set of cardinality $k$, and let
  $\variety$ be a variety.  If $G/\variety(G)\conv\relfree_k$, then
  there exists a group $H$ with $G\conv H$ and
  $\relfree_k=H/\variety(H)$:
  \[\begin{matrix}G & \textcolor{gray}{\conv} & \textcolor{gray}{H}\\
    \twoheaddownarrow && \textcolor{gray}{\twoheaddownarrow}\\
    G/\variety(G) &\conv & \relfree_k.\end{matrix}\]
\end{lemma}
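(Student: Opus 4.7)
The plan is to follow the construction used in the proof of Lemma~\ref{lem:quotients}, applied to the canonical projection $G\twoheadrightarrow G/\variety(G)$ and the given preforming $G/\variety(G)\conv\relfree_k$, and then to verify that the resulting limit group $H$ satisfies $H/\variety(H)\cong\relfree_k$ rather than merely having $\relfree_k$ as a quotient.

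Concretely, I would choose $k$-element generating sets $\bar S_n$ of $G/\variety(G)$ with $(G/\variety(G),\bar S_n)\to(\relfree_k,T)$, lift each $\bar S_n$ to $S''_n\subseteq G$, fix a generating set $U=\{u_1,\dots,u_\ell\}$ of $G$, and, choosing words $w_{j,n}\in\free_k$ with $\bar u_j=w_{j,n}(\bar S_n)$ in $G/\variety(G)$, set $S'''_{j,n}:=u_j\cdot w_{j,n}(S''_n)^{-1}$; by construction $S'''_{j,n}\in\variety(G)$ and $S'_n:=S''_n\sqcup S'''_n$ generates $G$. Passing to a convergent subsequence $(G,S'_n)\to(H,T''\sqcup T''')$ in $\markedgroups$, and further subsequencing so that each length $|w_{j,n}|$ stabilizes (so that one may take $w_{j,n}=w_j$ constant in $n$), Lemma~\ref{lem:quotients} yields a surjection $\rho\colon H\twoheadrightarrow\relfree_k$ with $\rho(T''_i)=T_i$ and $\rho(T'''_j)=1$.

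Since $\relfree_k\in\variety$ we have $\variety(H)\subseteq\ker\rho$, so $\rho$ factors as $H\twoheadrightarrow H/\variety(H)\twoheadrightarrow\relfree_k$. Conversely, because $H/\variety(H)\in\variety$, the assignment $T_i\mapsto[T''_i]$ extends to a homomorphism $\sigma\colon\relfree_k\to H/\variety(H)$, and the composition $\rho\circ\sigma$ fixes each $T_i$ so is the identity of $\relfree_k$. Hence $\sigma$ and the induced $H/\variety(H)\to\relfree_k$ are mutually inverse isomorphisms as soon as $\sigma$ is surjective---equivalently, as soon as $T'''_j\in\variety(H)$ for every $j$.

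The main obstacle is precisely this last claim. The naive implication ``$S'''_{j,n}\in\variety(G)$ for every $n$ in the subsequence, hence $T'''_j\in\variety(H)$ in the limit'' fails in general, because membership in a verbal subgroup is a global condition: an element of $\variety(G)$ may require expressions as a product of evaluations of identities whose length is unbounded in $n$. To overcome this I would exploit the special shape of $S'''_{j,n}$: with $w_j$ stabilized, the element $y_j\,w_j(x)$ of the free group $F:=\free_{k+\ell}$ has bounded length and the \emph{same} image $u_j\in G$ under every quotient $\pi_n\colon F\twoheadrightarrow G$. Combining this bounded-length witness with the parallel convergence of the varietal quotients $(G/\variety(G),\bar S'_n)\to(\relfree_k,T\sqcup\{1,\dots,1\})$ should pin down $T'''_j\in\variety(H)$ in the limit, completing the proof.
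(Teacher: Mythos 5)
Your reduction and your diagnosis of the crux are correct: after constructing $H$ and $\rho\colon H\twoheadrightarrow\relfree_k$ as in Lemma~\ref{lem:quotients}, one gets $\variety(H)\subseteq\ker\rho$ for free, and the whole content is to show that the extra generators $T'''$ die in $H/\variety(H)$ (equivalently, that the section $\sigma$ you build is onto). The paper's own proof passes over this point very quickly (``write $c=w(T)$\dots\ so $w\in\variety(\free_k)$\dots\ so $c\in\variety(H)$''), silently using that the generators of $H$ which $\rho$ kills already lie in $\variety(H)$; you are right that this is exactly what requires an argument, and you are right that the naive ``$S'''_{j,n}\in\variety(G)$ for all $n$, hence $T'''_j\in\variety(H)$ in the limit'' does not follow.

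However, the specific fix you propose has a gap of its own. You pass to a subsequence along which ``each length $|w_{j,n}|$ stabilizes'', so as to replace $w_{j,n}$ by a single constant word $w_j$. But $w_{j,n}$ expresses the \emph{fixed} element $\bar u_j\in G/\variety(G)$ in terms of the \emph{varying} generating set $\bar S_n$, and as $(G/\variety(G),\bar S_n)$ converges to $(\relfree_k,T)$ there is no reason for the $\bar S_n$-length of $\bar u_j$ to stay bounded; typically it tends to infinity. Concretely, with $\variety$ abelian and $G/\variety(G)=\Z$, take $\bar S_n=\{n,\,n^2+1\}$: then $(\Z,\bar S_n)\to(\Z^2,T)$, yet the $\bar S_n$-length of the fixed element $1\in\Z$ grows like $n$. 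So no subsequence makes $w_{j,n}$ eventually constant, the ``bounded-length witness'' $y_j w_j(x)\in\free_{k+\ell}$ is unavailable, and the final step, which you yourself only assert ``should pin down $T'''_j\in\variety(H)$'', is not carried out. As written, the proposal identifies the difficulty but does not complete the proof.
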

\begin{proof}
  We proceed first as in the proof of Lemma~\ref{lem:quotients}, to
  construct a group $H$ and an epimorphism
  $\rho:H\twoheadrightarrow\relfree_k$.

  On the one hand, $\variety(H)\subseteq\ker\rho$, because
  $\relfree_k$ belongs to $\variety$. On the other hand, consider
  $c\in\ker\rho$, and write $c=w(T)$ as a word in the generators $T$
  of $H$. Then $\rho(w(T))=1$, so $w$ belongs to the variety
  $\variety(\free_k)$ because $\relfree_k$ is relatively free. It
  follows that $c$ belongs to $\variety(H)$.
\end{proof}

\begin{lemma}[\cite{champetier-guirardel:limitgroups}*{Proposition~2.25}]\label{lem:quotients2}
  If $G\conv H$ and $A$ is a quotient of $G$, then there exists a
  group $B$ with $A\conv B$ and $B$ is a quotient of $H$:
  \[\begin{matrix}G & \conv & H\\
    \twoheaddownarrow && \textcolor{gray}{\twoheaddownarrow}\\
    A &\textcolor{gray}{\conv} & \textcolor{gray}{B}.\end{matrix}\qedhere\]
\end{lemma}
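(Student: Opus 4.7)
The plan is to construct $B$ as a Chabauty limit of marked quotients of $G$, using the fact that the space of $k$-generated marked groups is compact (so we can always extract a convergent subsequence).

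Fix a generating set $T=\{t_1,\dots,t_k\}$ of $H$ and, using $G\conv H$, choose generating sets $S_n=\{s_{n,1},\dots,s_{n,k}\}$ of $G$ in bijection with $T$ such that $(G,S_n)\to(H,T)$ in $\markedgroups$. Let $\pi\colon G\twoheadrightarrow A$ be the given epimorphism, and set $U_n=\{\pi(s_{n,1}),\dots,\pi(s_{n,k})\}$; each $U_n$ generates $A$ since $\pi$ is surjective. By compactness of the space of $k$-marked groups, pass to a subsequence $(n_j)$ such that $(A,U_{n_j})$ converges to some $(B,V)$ with $V=\{v_1,\dots,v_k\}$. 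By construction, $A\conv B$.

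It remains to exhibit an epimorphism $\rho\colon H\twoheadrightarrow B$ sending $t_i\mapsto v_i$. For this, it suffices to show that every relation of $H$ in the generators $T$ holds in $B$ when $t_i$ is replaced by $v_i$. Let $w(t_1,\dots,t_k)=1$ be such a relation, of length at most $R$. Since $(G,S_n)\to(H,T)$, for all sufficiently large $n$ we have $w(s_{n,1},\dots,s_{n,k})=1$ in $G$. Applying the homomorphism $\pi$ gives $w(u_{n,1},\dots,u_{n,k})=1$ in $A$ for those $n$. Restricting to the subsequence and using $(A,U_{n_j})\to(B,V)$, the relation $w=1$ of length $\le R$ persists in the limit marked ball, so $w(v_1,\dots,v_k)=1$ in $B$. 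Hence $\rho$ is a well-defined homomorphism, and it is surjective because $V$ generates $B$.

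The only subtle point is the coordination of the two limits: one needs to take the subsequence $(n_j)$ after fixing the convergence $(G,S_n)\to(H,T)$, so that for any radius $R$ one can find $j$ large enough that both (i) all length-$\le R$ relations in $(H,T)$ already hold in $(G,S_{n_j})$ and (ii) the ball of radius $R$ in $(A,U_{n_j})$ agrees with that in $(B,V)$. Otherwise the proof is routine and parallel to the construction in Lemma~\ref{lem:quotients}, with $A$ in place of $G$ and $\pi$ playing the role of the epimorphism used there.
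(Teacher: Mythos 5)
Your proof is correct. The paper itself does not provide a proof for this lemma (it simply cites Champetier--Guirardel, Proposition~2.25, as the source), so there is no in-paper argument to compare against; but your construction is exactly the dual of the paper's own proof of Lemma~\ref{lem:quotients} (push the generating sets $S_n$ forward along $\pi$ rather than lifting them, extract a convergent subsequence by compactness of $\markedgroups$, and read off the homomorphism $\rho$ from the persistence of relations in the limit), and it is the standard argument one would find in the cited reference. The point you flag as "subtle" is handled correctly: passing to a subsequence $(n_j)$ does not disturb the convergence $(G,S_{n_j})\to(H,T)$, so for any $R$ you can indeed choose $j$ so that both $w(S_{n_j})=1$ for every length-$\le R$ relator $w$ of $(H,T)$ and the radius-$R$ balls of $(A,U_{n_j})$ and $(B,V)$ coincide, which is all that is needed to conclude $w(V)=1$.
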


Let us turn to the converse property: if $A\conv B$ and $B$ is a
subgroup of $H$, does there exist a group $G$ containing $A$ with
$G\conv H$?  Given a subgroup $B$ of a group $H$, we say that the pair
$(H,B)$ satisfies the ``\extsmallergroups'' property if, whenever $A$
is a group which \preforms\ $B$, there exists a group $G$ which
\preforms\ $H$ and contains $A$:
\[\begin{matrix}\textcolor{gray}{\exists G} & \textcolor{gray}{\conv} & H\\
  \textcolor{gray}{\cup} && \cup\\
  \forall A & \conv & B.\end{matrix}\]
We then say that $H$ has the ``\extsmallergroups'' property
if $(H,B)$ has that property for all finitely generated subgroups $B$
of $H$.

\begin{question}
  Which finitely generated groups have the ``\extsmallergroups'' property?
\end{question}

It is clear that if $H$ has very few subgroups, for example if every
proper subgroup of $H$ is finite, then $H$ has the
``\extsmallergroups'' property.

\begin{lemma}
  All finitely generated abelian group have the ``\extsmallergroups''
  property.
\end{lemma}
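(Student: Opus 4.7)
The plan is to take $G := H/N$, where $N$ is the kernel of a surjection $\phi \colon B \twoheadrightarrow A$ provided by Lemma~\ref{lem:fp} (applicable because $B$, being a finitely generated abelian group, is finitely presented).

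First I would check that $A$ embeds in $G$: since $N \subseteq B \subseteq H$, the inclusion $B \hookrightarrow H$ descends to an injection $A = B/N \hookrightarrow H/N = G$, which realizes $A$ as a subgroup of $G$.

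Second I would establish $G \conv H$. To this end, fix witnesses for $A \conv B$: a generating set $T$ of $B$ and generating sets $S_n$ of $A$ with bijections $\sigma_n\colon S_n \to T$ such that marked balls in $(A, S_n)$ and $(B, T)$ agree on ever larger radii. Extend $T$ to a generating set $T' = T \sqcup T''$ of $H$, choosing $T''$ so that the relations it imposes in $H/B$ are long (which is possible since $H/B$ is finitely generated abelian, and its generators can be spread out by replacing them with integer combinations involving large coefficients). For each $n$, lift $S_n$ to $\tilde S_n \subseteq B \subseteq H$ (such that $\phi(\tilde s) = s$), and take $U_n \subseteq G$ to be the image of $\tilde S_n \cup T''$ modulo $N$; this generates $G$ because $\tilde S_n$ surjects onto $B/N = A$ and $T''$ surjects onto $G/A = H/B$.

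The hard part will be verifying that $(G, U_n) \to (H, T')$ in the Chabauty topology. Short relations in $(G, U_n)$ correspond to tuples $(m_s, m_{t''})$ with $\sum m_s \tilde s + \sum m_{t''} t'' \in N \subseteq B$, while short relations in $(H, T')$ are tuples with $\sum m_t t + \sum m_{t''} t'' = 0$ in $H$. The ``pure $T$-part'' relations (those with $m_{t''}=0$) match for $n$ large by $(A, S_n) \to (B, T)$, since such a relation in $G$ just says $\sum m_s \tilde s \in N$, i.e.\ $\sum m_s s = 0$ in $A$, which by the convergence matches $\sum m_t t = 0$ in $B$. The delicate point is the ``mixed'' relations with $m_{t''} \neq 0$: one must show no unexpected short element of $N$ arises from nontrivial $T''$-contributions. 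This is where the spread-out choice of $T''$ is used: any short word in $U_n$ landing in $B$ must already have trivial $T''$-content, so mixed relations reduce to pure ones, completing the verification.
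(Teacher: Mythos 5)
Your overall plan — take $G := H/N$ for $N = \ker(\phi\colon B\twoheadrightarrow A)$ — is a good one and does lead to a correct (and in fact cleaner) proof than the paper's, but as written it has two genuine gaps, one of which is fatal to the Cayley-graph verification you sketch.

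First, Lemma~\ref{lem:fp} only supplies \emph{some} surjection $\phi$, whereas you need $\phi$ to be injective on the torsion of $B$, which is exactly the content of Proposition~\ref{prop:abelian}. Without this, $N$ may contain torsion, and $G=H/N$ need not preform $H$: e.g.\ $B=\Z^2\oplus\Z/4$, $A=\Z\oplus\Z/4$, $\phi(x,y,z)=(x,y\bmod4)$ is a perfectly valid surjection with $\ker\phi\supseteq\Z/4$, and the resulting $G$ fails to be a quotient of $H$ injective on torsion.

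Second, and more seriously, the Chabauty verification that $(G,U_n)\to(H,T')$ does not go through, even when $\phi$ is chosen correctly. The ``spreading out $T''$'' device cannot eliminate mixed relations when $H/B$ has torsion (and $H/B$ will typically have torsion — the step $B\subgp H$ is not a split extension in general). Concretely, take $B=2\Z\oplus2\Z\subgp H=\Z^2$, $A=\Z$, $\phi(2a,2b)=a$, so $N=0\oplus2\Z$ and $G=H/N=\Z\oplus\Z/2$. Here $A\hookrightarrow G$ as $2\Z\oplus0$, and Proposition~\ref{prop:abelian} certainly gives $G\conv H$. But any $T''$ that, together with $T=\{(2,0),(0,2)\}$, generates $H$ must contain some $t''=(c,d)$ with $d$ odd. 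Then $2t''-ct_1-dt_2=0$ is a relation of length $2+|c|+|d|$ in $(H,T')$, while the corresponding word in $(G,U_n)$ evaluates to
\[
2\overline{t''}-c\,\overline{\tilde s_{n,1}}-d\,\overline{\tilde s_{n,2}}
=(2c,0)-c(2,0)-d(2n,0)=(-2dn,0)\ne0
\]
for every $n\ge1$. So this relation is \emph{never} reproduced in $(G,U_n)$, and the balls of radius $\ge 2+|c|+|d|$ disagree for all $n$. Making $c,d$ larger only delays the failure; it does not remove it. In the same example, your key assertion — ``any short word in $U_n$ landing in $B$ must already have trivial $T''$-content'' — is false: the length-$2$ word $2\overline{t''}=(2c,0)$ lies in the image of $B$ but has nontrivial $T''$-content. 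The underlying obstruction is that the generators in $T''$, once projected to $G=H/N$, interact with the image of $B$ in ways that the Cayley-ball isomorphism between $(A,S_n)$ and $(B,T)$ simply does not control: that isomorphism relates $s_{n,i}$ to $t_i$, not to $\phi(t_i)$.

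The fix keeps your $G=H/N$ but replaces the Cayley-graph argument by two invocations of Proposition~\ref{prop:abelian}. Take $\phi\colon B\twoheadrightarrow A$ injective on torsion, which exists by Proposition~\ref{prop:abelian}. Then $N=\ker\phi$ is torsion-free (its torsion is $N\cap\Torsion(B)=\ker(\phi|_{\Torsion(B)})=1$). Hence the quotient $H\to H/N=G$ is injective on $\Torsion(H)$: a torsion element of $H$ lying in $N$ is a torsion element of the torsion-free group $N$, hence trivial. Since $B/N=A$ is infinite, $G$ is infinite, and Proposition~\ref{prop:abelian} now gives $G\conv H$; the embedding $A=B/N\hookrightarrow H/N=G$ is as you describe. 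This route is genuinely different from the paper's proof, which decomposes the inclusion $B\subgp H$ and the relation $A\conv B$ into elementary steps and then exhibits $G$ explicitly as $\Z\oplus\Z/abc\Z$ in a single reduced case; your approach is more conceptual and avoids the case analysis, at the price of a forward reference to Proposition~\ref{prop:abelian} (a reference the paper's own proof also implicitly makes via its final ``observe that $\ldots$ preforms $H$'').
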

\begin{proof}
  Inclusions of finitely generated abelian groups into one another can
  be decomposed into the following ``elementary inclusions'': $B\subgp
  B\oplus\Z$, $B\subgp B\oplus\Z/a\Z$ and $B\oplus\Z/a\Z\subgp
  B\oplus\Z/ab\Z$. Similarly, the cases to consider for $A$ that
  \preforms\ $B$ are of the form $\Z\oplus\Z/ac\Z \conv
  \Z\oplus\Z/a\Z$ and $\Z\conv\Z^2$. To prove the lemma, it suffices
  therefore to consider the following case: $B=\Z^2\oplus\Z/a\Z$ is a
  subgroup of $H=\Z^2\oplus\Z/ab\Z$, and $A=\Z\oplus\Z/ac\Z$
  \preforms\ $B$.  We observe that in this case $G:=\Z\oplus\Z/abc\Z$
  contains $A$, and \preforms\ $H$.
\end{proof}

\begin{example}[Groups without the ``\extsmallergroups'' property]
  There are finitely generated groups $A\conv B\subgp H$ such that
  there exists no group $G$ with $A\subgp G\conv H$.

  Take indeed $A=\free_2\wr\Z$; it \preforms\ $B=\Z^2\wr\Z$, which
  is metabelian. By~\cite{baumslag:sgfpmetabelian}, every metabelian
  group imbeds in a finitely presented metabelian group $H$. If
  $G\conv H$, then $G$ is a quotient of $H$. This shows that every
  group which \preforms\ $H$ is metabelian. Therefore, there are no
  groups that \preform\ $H$ that contain $A$ as a subgroup.
\end{example} 

\begin{example}[Finitely presented groups without the ``\extsmallergroups'' property]
  Here is another example of this kind. Consider a finitely presented
  infinite torsion-free simple group $H$ containing a non-abelian free
  group $B=\free_3$ as a subgroup; such groups do exist,
  see~\cite{burger-m:fpsimple}.  Set $A=\free_ 2\times\Z/2\Z$; then
  $A\conv B$ and $B\subgp H$. However, if $G\conv H$, then $G=H$
  because $H$ is finitely presented and simple. However, $H$ does not
  contain $A$ because $H$ is torsion-free.
\end{example}

It is usually not true that, if $G$ \preforms\ $H$, then
the torsion of $G$ and $H$ coincide. Here is a partial result in this
direction:
\begin{lemma}\label{lem:comparingverbalsubgroups}
  Let $G$ and $H$ be groups with $H$ finitely presented and $G\conv
  H$, and let $\variety$ be a variety. Then
  \begin{enumerate}
  \item $\#\variety(G) = \#\variety(H)$;
  \item if $\variety(G)$ is finite, then $\variety(G)$ is isomorphic
    to $\variety(H)$.
  \end{enumerate}
\end{lemma}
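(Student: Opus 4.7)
The plan is to use Lemma~\ref{lem:fp} in a sharpened form: for each radius $R$ large enough, the hypothesis $G\conv H$ produces not only a surjection $H\twoheadrightarrow G$, but one whose restriction to the ball of radius $R$ in $H$ is a \emph{bijection} onto the corresponding ball of $G$. Controlling how this map interacts with the verbal subgroup $\variety(H)$ then yields both assertions.

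More precisely, fix a finite generating set $T$ of $H$ and let $R_0$ be the maximum length of the defining relators of $H$ in $T$. For any $R\ge R_0$, the relation $G\conv H$ provides a generating set $S$ of $G$ and a bijection $T\to S$ under which the balls of radius $R$ in $\cay GS$ and $\cay HT$ agree. The argument of Lemma~\ref{lem:fp} then extends this bijection to an epimorphism $\pi_R\colon H\twoheadrightarrow G$; by construction the restriction $\pi_R|_{B_R^H}$ realizes the graph isomorphism of balls, and in particular is a bijection onto $B_R^G$. Each $\pi_R$ sends $\variety(H)$ onto $\variety(G)$, which already gives the inequality $\#\variety(G)\le\#\variety(H)$.

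For the opposite inequality in~(1), I would split cases. If $\variety(G)$ is infinite then both $\variety(H)$ and $\variety(G)$ are countably infinite: the surjection $\pi_R|_{\variety(H)}\twoheadrightarrow\variety(G)$ forces $\variety(H)$ to be infinite, and both sit inside finitely generated (hence countable) groups. If $\variety(G)$ is finite of cardinality $N$, then injectivity of $\pi_R$ on $B_R^H$ gives $\#(\variety(H)\cap B_R^H)\le N$ for every $R\ge R_0$; since $\variety(H)=\bigcup_R \variety(H)\cap B_R^H$ is an increasing union of sets of bounded finite size, we conclude $\#\variety(H)\le N$.

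For (2), assume $\variety(G)$ is finite; by~(1) so is $\variety(H)$. Choose $R$ both $\ge R_0$ and large enough that $\variety(H)\subseteq B_R^H$. Then $\pi_R|_{\variety(H)}\colon\variety(H)\to\variety(G)$ is an injective group homomorphism between finite sets of equal cardinality, hence a group isomorphism. I do not anticipate a serious obstacle; the one point to be explicit about is that the graph-theoretic bijection of balls actually coincides with the restriction of the group homomorphism $\pi_R$, but this is built into the construction of $\pi_R$ on generators in the proof of Lemma~\ref{lem:fp}.
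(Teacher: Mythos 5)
Your proposal is correct and follows essentially the same approach as the paper's: both exploit that the finite presentation of $H$ yields a surjection $\pi\colon H\twoheadrightarrow G$ restricting to a bijection on balls of radius $R$, and both use that bijectivity to transport verbal elements and compare cardinalities. The only cosmetic difference is in part~(1), where you split on whether $\variety(G)$ is finite or infinite; the paper argues uniformly that $\#\variety(H)\ge N$ implies $\#\variety(G)\ge N$ by writing $N$ distinct elements of $\variety(H)$ as short verbal words and reading them off in a large enough ball of $G$.
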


\begin{proof}
  By Lemma~\ref{lem:identity}(1), the group $G$ is a quotient of $H$,
  so $\variety(G)$ is a quotient of $\variety(H)$. In particular,
  $\#\variety(G)\le\#\variety(H)$. Furthermore, if $\variety(H)$ is
  finite then Lemma~\ref{lem:identity}(3) implies that $\variety(G)$
  and $\variety(H)$ are isomorphic. It therefore remains to prove
  $\#\variety(G)\ge\#\variety(H)$. We will prove in fact that, if
  $\#\variety(H)\ge N$, then $\#\variety(G)\ge N$.

  Choose generating sets $S_n$ of $G$ and $T$ of $H$, of cardinality
  $k$, such that $(G,S_n)$ converges to $(H,T)$ is the space
  $\markedgroups$ of marked groups.

  Consider then $N$ distinct elements $h_1,\dots,h_N$ in
  $\variety(H)$, and write each $h_j=w_j(T)$ for a word
  $w_j\in\variety(\free_k)$. Take $R\in\N$ bigger than the length of
  each $w_j$, and let $i$ be such that the balls of radius $R$ in
  $\cay G{S_i}$ and $\cay HT$ coincide. Then the ball of radius $R$ in
  $\cay HT$ contains at least the $N$ distinct elements
  $h_1,\dots,h_N$ from $\variety(H)$, so the ball of radius $R$ in
  $\cay G{S_n}$ also contains at least $N$ distinct elements
  $w_1(S_n),\dots,w_N(S_n)$ from $\variety(G)$.
\end{proof}

\subsection{Universal theories of solvable groups}
For a group $G$, we denote by $G^{(n)}$ its derived series, with
$G^{(0)}=G$ and $G^{(n+1)}=[G^{(n)},G^{(n)}]$. In particular
$G^{(1)}=G'$ and $G^{(2)}=G''$.

Here is an example of metabelian group that \preforms\ the free
group in its variety. In the next sections, we will study when a
nilpotent group \preforms\ the free group in the variety it
generates.
\begin{example}
  We have $\Z\wr\Z\conv\free_2/\free_2''$.
\end{example}
\begin{proof}
  Consider the presentation $\langle a,t\mid [a,a^{t^m}]\forall
  m\rangle$ of $\Z\wr\Z$, and its generating sets
  $S_n=\{t,t^na\}$. Write $u=at^n$; then $[t,u]=[a,t]$, and
  $[t,u]^{t^xu^y}$ all have distinct supports, for $|x|,|y|\le
  n$.
\end{proof}

Chapuis considers in~\cite{chapuis:univsolvable} the universal theory
of some solvable groups; he shows that $\free_k/\free_k''$ and
$\Z^k\wr\Z^\ell$ have the same universal theory. An explicit
description of that theory is given in~\cite{chapuis:metabelian}. On
the other hand, $\Z\wr\Z\wr\Z$ and $\free_k/\free_k^{(3)}$ do not have
the same theory.

Timoshenko proves in~\cite{timoshenko:preserve} that, if $G_1,G_2$
have the same universal theory, and $H_1,H_2$ have the same universal
theory, then $G_1\wr H_1$ and $G_2\wr H_2$ have the same universal
theory. He shows, however, that the varietal wreath product does not,
in general, enjoy this property; in particular, it fails in the
metabelian variety~\cite{timoshenko:metabelian}.

He also shows in~\cite{timoshenko:uesolvable} that, if $G$ is the
quotient of $\mathbb S_{2,n}:=\free_2/\free_2^{(n)}$ by a finitely
generated normal subgroup, and has the same universal theory as
$\mathbb S_{2,n}$, then either $G\cong\mathbb S_{2,n}$ or $G$ is a
verbal wreath product $\Z\wr\Z$, in the variety of soluble groups of
class $n-1$. He shows:
\begin{lemma}
  Let $\relfree$ be a free group in a variety $\variety$, and let $H$
  be a subgroup of $\relfree$ that generates the same variety
  $\variety$. Assume that $\relfree$ is discriminating
  (see~\S\ref{ss:freenil}). Then the universal theories of $\relfree$
  and of $H$ coincide.\qedhere
\end{lemma}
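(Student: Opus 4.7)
The plan is to prove the two inclusions of universal theories separately. Since $H$ is a subgroup of $\relfree$, any tuple in $H^n$ lies in $\relfree^n$, so any universal sentence satisfied by $\relfree$ is automatically satisfied by $H$; this gives $\mathrm{Univ}(\relfree) \subseteq \mathrm{Univ}(H)$ for free.

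For the reverse inclusion, I would dualize to existential sentences and, after passing to disjunctive normal form, reduce to the following statement: given words $w_1,\dots,w_k$ and $v_1,\dots,v_\ell$ in $n$ variables and a witness $\bar f\in\relfree^n$ with $w_i(\bar f)=1$ and $v_j(\bar f)\neq 1$ for all $i,j$, one must produce $\bar h\in H^n$ satisfying the same equations and inequations. The plan is to find a homomorphism $\phi\colon\langle\bar f\rangle\to H$ with $\phi(v_j(\bar f))\neq 1$ for every $j$; then $\bar h:=(\phi(f_1),\dots,\phi(f_n))$ is the desired tuple, since $\phi$ automatically sends each $w_i(\bar f)=1$ to $1$.

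The existence of such $\phi$ breaks into two steps: first, $\relfree$ is residually $H$; second, residual-$H$ upgrades to discriminated-by-$H$. The first step is elementary: if $f\in\relfree$ is non-trivial, then $f$ is not an identity of $\variety=\variety(H)$, so some substitution of elements of $H$ into $f$ evaluates to a non-trivial element, which amounts to a homomorphism $\relfree\to H$ not killing $f$. Iterating over the $v_j(\bar f)$ produces, for each $j$ separately, a homomorphism $\phi_j\colon\relfree\to H$ witnessing non-triviality of $v_j(\bar f)$; but residual separation does not by itself provide a single $\phi$ working simultaneously for all $j$.

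The upgrade from residual-$H$ to discriminated-by-$H$ is exactly where the discriminating hypothesis on $\relfree$ enters, in the form developed in \S\ref{ss:freenil}: finitely many non-trivial elements of $\relfree$ can be separated from $1$ by a single homomorphism into $H$. Applying this to $\{v_j(\bar f)\colon j=1,\dots,\ell\}$ gives the required $\phi$, which we then restrict to $\langle\bar f\rangle$. The main obstacle is making this last step precise: one must unpack the definition of ``discriminating'' and verify that residual-$H$ membership of $\relfree$ upgrades to discrimination-by-$H$. This is the substantive content of the lemma and is precisely what the discriminating hypothesis is tailored to provide.
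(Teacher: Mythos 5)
The easy direction and the reduction of the hard direction to ``$\relfree$ is fully residually $H$'' (i.e., for any finitely many non-trivial $v_1,\dots,v_\ell\in\relfree$ there is a single homomorphism $\relfree\to H$ killing none of them) are both correct, and you rightly observe that mere residual-$H$-ness follows from $\variety(H)=\variety$. The gap is in your last step. You assert that the discriminating hypothesis on $\relfree$, ``in the form developed in \S\ref{ss:freenil}'', says that ``finitely many non-trivial elements of $\relfree$ can be separated from $1$ by a single homomorphism into $H$''. That is \emph{not} what discriminating means. The definition in \S\ref{ss:freenil} is internal to the group in question: $\relfree$ is discriminating iff, given finitely many non-identities of $\variety$, there is a single tuple \emph{in $\relfree$} --- equivalently, an endomorphism of $\relfree$ --- falsifying all of them simultaneously. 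It says nothing about substitutions into $H$. The statement you quote is exactly ``$\relfree$ is discriminated by $H$'' (equivalently, ``$H$ discriminates $\variety$''), which is the thing to be proved, not a hypothesis.

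So the missing step is: prove that ``$\relfree$ discriminates $\variety$'' together with ``$H\leq\relfree$ generates $\variety$'' implies ``$H$ discriminates $\variety$''. This does not fall out of unpacking the definition; you have a simultaneous witness in $\relfree$ and, for each $v_j$ separately, a witness in $H$, and you must genuinely merge these into one witness in $H$. That merge has to exploit the subgroup relation $H\leq\relfree$ (in the nilpotent setting one can sidestep the issue by invoking Lemma~\ref{lem:discriminatedbygenerators} directly, since $H$ is then a torsion-free finitely generated nilpotent group generating $\variety$ and hence itself discriminates; but the lemma as stated is more general and needs an argument or citation). Be aware also that the paper itself offers no proof here --- the lemma is attributed to Timoshenko --- so there is no in-text argument you can lean on; the omitted step is entirely yours to supply.
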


\begin{lemma}
  Let $G,H$ be groups in a variety $\variety$, and assume that $G$ is
  universally equivalent to $H$. Then $A$ is residually $B$.\qedhere
\end{lemma}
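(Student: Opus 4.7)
Interpreting the missing symbols as $G$ and $H$ (the typo seems transparent), the goal is to show that for every $g \in G \setminus \{1\}$ there is a homomorphism $\phi \colon G \to H$ with $\phi(g) \neq 1$. Choose a $k$-element generating set $\bar g_1, \dots, \bar g_k$ of $G$, and write $G = \relfree_k / N$ where $\relfree_k$ is the relatively free group of rank $k$ in $\variety$ and $N$ is the normal subgroup of relations. Since $H \in \variety$, specifying a homomorphism $G \to H$ is the same as picking a tuple $(h_1, \dots, h_k) \in H^k$ such that $r(h) = 1$ for every $r \in N$. Writing $g = w(\bar g_1, \dots, \bar g_k)$ for some $w \in \relfree_k \setminus N$, the task becomes finding such a tuple that additionally satisfies $w(h) \neq 1$.

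The key step uses existential-universal duality. For any finite subset $\{r_1, \dots, r_m\} \subseteq N$, the sentence
\[
\exists x_1, \dots, x_k \Bigl( r_1(x) = 1 \wedge \cdots \wedge r_m(x) = 1 \wedge w(x) \neq 1 \Bigr)
\]
holds in $G$, witnessed by $x_i = \bar g_i$. Its negation is a universal sentence, so that negation fails in $G$; since $G$ and $H$ are universally equivalent it fails in $H$ as well, yielding witnesses $(h_1^{(m)}, \dots, h_k^{(m)}) \in H^k$ that respect the chosen finitely many relations while not killing $w$.

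The main obstacle is to upgrade from finitely many relations to the whole of $N$. I would do this by passing to an ultrapower $H^* = H^{\N}/\mathcal{U}$ for a nonprincipal ultrafilter $\mathcal{U}$: by \L{}o\'s's theorem the class $([h_i^{(m)}])_m$ satisfies every single $r \in N$ (each being satisfied cofinally in $m$) together with $w \neq 1$, giving an honest homomorphism $G \to H^*$ that does not kill $g$. Converting this into residuality in $H$ itself is the delicate point; I expect this to need either finite presentability of $G$ inside $\variety$ (so that a single large $m$ already captures all defining relations of $G$ and produces a genuine homomorphism $G \to H$), or a saturation/discrimination property of $H$ specific to the solvable varieties treated in this subsection, as in the Timoshenko and Chapuis results cited just above. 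The essential content of the lemma thus sits in this last descent, and the natural context in which it will be applied is what ought to supply precisely the finite-detectability required.
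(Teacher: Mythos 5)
The paper gives no proof of this lemma. It is stated (with the typos $A$, $B$ where $G$, $H$ are clearly intended) inside a short survey of Timoshenko's results, and the stray \verb|\qedhere| in the statement is a formatting artifact rather than a self-contained argument. So there is nothing in the paper to compare against; the only question is whether your proposal would work.

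Your setup is correct: the negation of
\[
\exists x_1,\dots,x_k\bigl(r_1(x)=1\wedge\cdots\wedge r_m(x)=1\wedge w(x)\ne 1\bigr)
\]
is a universal sentence, so universal equivalence transports the finite witness from $G$ to $H$, and \L o\'s's theorem then produces an embedding $G\hookrightarrow H^*$ into an ultrapower. You are also right that this is where the argument genuinely stops. Embeddability in an ultrapower of $H$ is strictly weaker than being residually $H$; it does not by itself give a homomorphism $G\to H$ separating $g$ from $1$. The standard way to close the gap is exactly what you list: either $G$ must be finitely presented in $\variety$ (so a single sufficiently large finite set $\{r_1,\dots,r_m\}$ generates $N$ as a normal subgroup and the witness already defines a genuine map $G\to H$), or $H$ must be equationally Noetherian in $\variety$ (so the infinite system $\{r=1:r\in N\}$ is equivalent over $H$ to a finite subsystem), or $H$ must be discriminating. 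The lemma as printed carries none of these hypotheses and in that generality it does not follow; the intended setting is Timoshenko's, where the relevant Noetherian/discrimination property of the relatively free solvable group is what makes the conclusion hold. In short: your diagnosis of where the real content sits is accurate, and the residual uncertainty you flag reflects a hypothesis missing from the statement rather than a defect in your reasoning.
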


Timoshenko also considered the universal theories of partially
commutative metabelian groups in~\cite{timoshenko:pcmetabelian} and
subsequent papers.

\section{Abelian groups}\label{ss:abelian}
By Corollary~\ref{cor:polyorder}, the relation $\conv$ is a partial order
on the set of abelian groups. The following is straighforward.
\begin{lemma}
  For non-zero $m,n\in\N$, we have $\Z^m\conv\Z^n$ if and only if
  $m\le n$.
\end{lemma}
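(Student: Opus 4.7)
The plan is to prove both directions separately; each is elementary given the earlier material.

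For the \emph{only-if} direction, suppose $\Z^m \conv \Z^n$. Since $\Z^n$ is finitely presented, Lemma~\ref{lem:fp} gives that $\Z^m$ is a quotient of $\Z^n$. The torsion-free rank of an abelian group does not increase under surjections, so $m \le n$.

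For the \emph{if} direction, suppose $m \le n$, and fix the standard generating set $T = \{e_1,\dots,e_n\}$ of $\Z^n$. For each $R \in \N$ I would construct a generating set $S_R = \{g_1,\dots,g_n\}$ of $\Z^m$, in bijection with $T$, such that the marked balls of radius $R$ in $\cay{\Z^m}{S_R}$ and $\cay{\Z^n}{T}$ coincide. Concretely, pick an integer $K = K(R) > 4R$, and set $g_i = e_i \in \Z^m$ for $1 \le i \le m$ and $g_{m+j} = K^j e_1$ for $1 \le j \le n-m$; clearly $S_R$ generates $\Z^m$. A sufficient condition for the balls to coincide is that every $\vec{c} \in \Z^n$ with $\sum_i |c_i| \le 2R$ and $\sum_i c_i g_i = 0$ in $\Z^m$ satisfies $\vec{c} = 0$. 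Expanding $\sum_i c_i g_i$ gives $\bigl(c_1 + \sum_j c_{m+j} K^j\bigr) e_1 + \sum_{i=2}^m c_i e_i$; the $\Z$-independence of the standard basis of $\Z^m$ forces $c_2 = \cdots = c_m = 0$ and $c_1 + \sum_j c_{m+j} K^j = 0$. Since each $|c_i| \le 2R < K/2$, reducing this last equation modulo $K$ successively extracts $c_1 = 0$, then $c_{m+1} = 0$, and so on.

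There is no substantial obstacle: once Lemma~\ref{lem:fp} is invoked, both directions are routine. The only minor care is in choosing $K = K(R)$ large enough that in the relation $c_1 + \sum_j c_{m+j} K^j = 0$ the ``digits'' $c_i$ are strictly bounded by $K/2$, which yields the usual uniqueness of balanced base-$K$ expansions.
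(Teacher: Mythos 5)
Your proof is correct and follows the same overall strategy as the paper: the only-if direction via Lemma~\ref{lem:fp}, and the if direction by appending large multiples of $e_1$ to a basis of $\Z^m$ so that base-$K$ uniqueness prevents short relations. There is, however, one point worth flagging: your choice of base $K>4R$ is actually a \emph{correction} of the paper's argument, not just equivalent to it. The paper takes the base to be $R$ itself, giving $S=\{e_1,\dots,e_m,Re_1,\dots,R^{n-m}e_1\}$; but then the relation $e_1^R=(Re_1)^1$ produces a cycle of length $R+1$ lying well inside the ball of radius $R$, so that ball cannot coincide with the cycle-free ball of radius $R$ in $\cay{\Z^n}{T}$. (For instance with $m=1$, $n=2$, $R=3$ one gets $S=\{1,3\}\subset\Z$, whose ball of radius $3$ has $17$ vertices rather than $25$.) Your argument, with each coefficient $|c_i|\le 2R<K/2$, is the right quantitative bound and makes the base-$K$ digit extraction legitimate. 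Any base strictly larger than $2R$ (e.g.\ the $2R+1$ the paper itself uses in the earlier $1\times\Z\conv\Z\times\Z$ example) already suffices; $4R$ is a comfortable margin.
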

\begin{proof}
  If $\Z^m\conv\Z^n$, then $\Z^m$ is a quotient of $\Z^n$ by
  Lemma~\ref{lem:fp}, so $m\le n$. Conversely, if $m\le n$, then
  choose for $\Z^n$ a basis $T$ as generating set, and let
  $\{e_1,\dots,e_m\}$ be a basis of $\Z^m$. For arbitrary $R\in\N$,
  choose $S=\{e_1,\dots,e_m,Re_1,R^2e_1,\dots,R^{n-m}e_1\}$ as
  generating set for $\Z^m$, and note that $\cay{\Z^m}S$ and
  $\cay{\Z^n}T$ agree on a ball of radius $R$.
\end{proof}

We now show that all infinite abelian groups are in the same component
of $\conv$, which has diameter $2$; more precisely,
\begin{proposition}\label{prop:net}
  The restriction of $\conv$ to infinite abelian subgroups is a
  \emph{net}: a partial order in which every pair of elements has an
  upper bound.
\end{proposition}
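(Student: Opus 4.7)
My plan is to prove the stronger statement: for every infinite finitely generated abelian group $A$ and every integer $k$ at least as large as the minimum number of generators $d(A)$ of $A$, one has $A\conv\Z^k$. Once this is established, the net property is immediate: given two infinite abelian groups $A$ and $B$, take any $k\ge\max\{d(A),d(B)\}$, and then $\Z^k$ is a common upper bound.

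To prove the auxiliary claim, write $A=\Z^m\oplus T$ with $T$ finite abelian, and note $m\ge 1$ since $A$ is infinite. Choose a free basis $a_1,\dots,a_m$ of $\Z^m$ and generators $\tau_1,\dots,\tau_r$ of $T$, so that $d(A)=m+r$ (or $m$, if $T=0$). For each $N\in\N$, I would define a generating set $S_N=\{s_1,\dots,s_k\}$ of $A$ of cardinality exactly $k$ by
\[
s_i = a_i\ \text{for } 1\le i\le m,\qquad
s_{m+j} = N^{j}a_1 + \tau_j\ \text{for } 1\le j\le r,\qquad
s_{m+r+j} = N^{r+j}a_1\ \text{for } 1\le j\le k-m-r.
\]
These elements generate $A$: the first $m$ give the free part, and then each $\tau_j$ can be recovered by subtracting an integer multiple of $a_1$ from $s_{m+j}$.

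The key computation is that this generating set has no short relations. Indeed, suppose $\sum_{i=1}^k e_i s_i=0$ in $A$ with $\max_i|e_i|\le R$. Projecting onto $\Z^m$ and using that $a_1,\dots,a_m$ is a free basis gives $e_j=0$ for $2\le j\le m$, together with the single equation
\[
e_1 + \sum_{j=1}^{k-m} e_{m+j}\,N^{j}\; =\; 0.
\]
Provided $R<N/2$, uniqueness of the balanced base-$N$ expansion of $0$ forces $e_1=e_{m+1}=\dots=e_k=0$. Thus every non-trivial relation in $\cay A{S_N}$ has length at least $\lfloor N/2\rfloor$, so the ball of radius $R=\lfloor N/2\rfloor-1$ in $\cay A{S_N}$, under the bijection $S_N\to\{\text{free basis of }\Z^k\}$, is isomorphic to the ball of radius $R$ in the standard Cayley graph of $\Z^k$. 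Letting $N\to\infty$ yields $A\conv\Z^k$.

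The main obstacle is simply designing the generating set $S_N$ so that the finite torsion $T$ is simultaneously generated and invisible in balls of radius $o(N)$; the trick is to pair each torsion generator with an element of very large norm in the free part, so that any word representing a torsion element must be long enough to exhibit the large $N$-adic coefficient in its first coordinate. Everything else is a routine verification using Lemma~\ref{lem:fp} (to see that $k\ge d(A)$ is also necessary) and the structure theorem for finitely generated abelian groups.
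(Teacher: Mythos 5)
Your proof is correct, and it takes a somewhat more direct route than the paper's. The paper obtains Proposition~\ref{prop:net} as an almost immediate corollary of Proposition~\ref{prop:abelian} (which characterizes $A\conv B$ among abelian groups as ``quotient map injective on torsion''), and the hard content is hidden in the proof of the converse direction of that proposition, which proceeds by reducing to a chain of minimal epimorphisms $\Z^2\oplus\Z/k\Z\twoheadrightarrow\Z\oplus\Z/k\ell\Z$ and exhibiting a generating set such as $\{\ell e_1,e_2,e_1+Re_2\}$. You instead prove the single self-contained claim that $A\conv\Z^k$ whenever $k\ge d(A)$, by an explicit generating set that handles all the torsion generators and all the extra free generators at once; this bypasses the structural characterization and the reduction to minimal quotients. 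The underlying trick is identical in both arguments (pair each auxiliary generator with a scalar multiple of $a_1$ so large that any relation involving it has to escape the ball), so I would describe this as the same mechanism, organized more directly. Two small points worth tightening: first, $d(\Z^m\oplus T)=m+d(T)$, so you should take $\tau_1,\dots,\tau_r$ to be a \emph{minimal} generating set of $T$ in order to cover the case $k=d(A)$ exactly; second, the passage from ``no non-trivial relation $\sum e_is_i=0$ with $\max_i|e_i|\le R$'' to ``the $R$-ball of $\cay A{S_N}$ equals the $R$-ball of $\cay{\Z^k}{\text{basis}}$'' loses a factor of $2$, since two distinct points of the ball of radius $R$ in $\Z^k$ with the same image give a relation of $\ell^1$-norm up to $2R$; this is fixed by taking $R\approx N/5$ rather than $R=\lfloor N/2\rfloor-1$, and as $N\to\infty$ the conclusion $A\conv\Z^k$ is unaffected.
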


\begin{proposition}\label{prop:abelian}
  For infinite abelian finitely generated groups $A,B$, we have
  $A\conv B$ if and only if $A$ is a quotient of $B$ via a map
  $B\twoheadrightarrow A$ that is injective on the torsion of $B$.
\end{proposition}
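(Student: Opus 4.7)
For the forward direction, Lemma~\ref{lem:fp} produces a surjection $\phi\colon B\twoheadrightarrow A$, since $B$ is finitely generated abelian and hence finitely presented: fix a generating set $T=\{t_1,\ldots,t_k\}$ of $B$ and approximating generating sets $S_n=\{s_{n,1},\ldots,s_{n,k}\}$ of $A$; for $n$ larger than the maximal length of the defining relators of $B$, the assignment $t_i\mapsto s_{n,i}$ extends to an epimorphism. To see $\phi$ is injective on $T_B$, observe that the finite set $T_B$ lies inside the ball of some radius $R_0$ in $\cay{B}{T}$; for $n\ge R_0$, the marked ball isomorphism of radius $R_0$ sends every non-identity $\tau\in T_B$, represented by a word of length $\le R_0$, to a non-identity element $\phi(\tau)$ in $A$. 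Hence $\phi|_{T_B}$ has trivial kernel.

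For the converse, fix such $\phi$ and decompose $B\cong\Z^b\oplus T_B$ and $A\cong\Z^a\oplus T_A$. The kernel $K:=\ker\phi$ is torsion-free (since $\phi|_{T_B}$ is injective), hence $K\subseteq\Z^b$, and it has rank $b-a$; Smith normal form gives $\Z^b/K\cong\Z^a\oplus\bigoplus_{j=1}^{b-a}\Z/d_j\Z$, so $A\cong\Z^a\oplus\bigoplus\Z/d_j\Z\oplus T_B$, and in particular $T_A/T_B$ can be generated by at most $b-a$ elements. Fix a $\Z$-basis $t_1,\ldots,t_b$ of $\Z^b\subseteq B$, a generating set $t_{b+1},\ldots,t_k$ of $T_B$, a basis $f_1,\ldots,f_a$ of $\Z^a$, torsion elements $\theta_{a+1},\ldots,\theta_b\in T_A$ whose images generate $T_A/T_B$, and for each $n$ an integer $N=N(n)>4n$. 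Define
\[
s_{n,i}=\begin{cases}(f_i,0)&\text{if }1\le i\le a,\\(N^{i-a}f_1,\theta_i)&\text{if }a<i\le b,\\(0,t_i)&\text{if }b<i\le k,\end{cases}
\]
identifying $T_B\subseteq T_A$ through $\phi|_{T_B}$. Then $S_n$ generates $A$: the first $a$ elements span $\Z^a$; the differences $s_{n,i}-N^{i-a}s_{n,1}=(0,\theta_i)$ for $a<i\le b$ supply generators of $T_A/T_B$; and the last $k-b$ elements generate $T_B$.

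To deduce $A\conv B$ it suffices to check that, for every integer vector $(c_1,\ldots,c_k)$ with $\sum|c_i|\le 2n$, the identity $\sum c_is_{n,i}=0$ in $A$ is equivalent to $\sum c_it_i=0$ in $B$, as this is precisely the condition for the marked balls of radius $n$ in $(A,S_n)$ and $(B,T)$ to coincide. Projecting the $A$-relation to $\Z^a$, the basis coordinates $f_2,\ldots,f_a$ force $c_2=\cdots=c_a=0$, while the $f_1$-coordinate becomes $c_1+\sum_{i=a+1}^b c_iN^{i-a}=0$ in $\Z$; for $N>4n$ and $|c_i|\le 2n$, a standard polynomial estimate forces each of $c_1,c_{a+1},\ldots,c_b$ to vanish term by term. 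The residual torsion equation then reads $\sum_{i>b}c_it_i=0$ in $T_A$, equivalent by the torsion-injectivity of $\phi$ to $\sum_{i>b}c_it_i=0$ in $T_B$; together with $c_1=\cdots=c_b=0$ this is exactly the $B$-identity. The main subtlety is arranging the $\theta_i$'s so that $S_n$ covers all of $T_A$, including torsion not lifted from $B$, without introducing short spurious identifications; the powers-of-$N$ separation handles the free part, and the $\theta_i$-terms are automatically neutralized once the free coefficients are forced to vanish.
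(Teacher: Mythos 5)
Your proof is correct. The forward implication is essentially the paper's: extract a quotient map $\phi\colon B\twoheadrightarrow A$ via Lemma~\ref{lem:fp}, and note that once the ball radius exceeds the diameter of $\Torsion(B)$ in $\cay BT$, the marked-ball isomorphism carries non-identity torsion of $B$ to non-identity elements of $A$, so $\phi$ has trivial kernel on $\Torsion(B)$.

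The converse is where you diverge. The paper factors a torsion-injective epimorphism $B\twoheadrightarrow A$ through a maximal chain of non-invertible epimorphisms with infinite-cyclic kernels and, using transitivity of $\conv$, reduces to the single minimal case $B=\Z^2\oplus\Z/k\Z\twoheadrightarrow A=\Z\oplus\Z/k\ell\Z$, where the explicit three-element generating set $S=\{\ell e_1,e_2,e_1+Re_2\}$ matches the standard generating set of $B$ on a ball of radius $R$. You instead treat an arbitrary torsion-injective epimorphism in one step: decompose $B=\Z^b\oplus T_B$ and $A=\Z^a\oplus T_A$ with $T_A\cong T_B\oplus\bigoplus\Z/d_j\Z$, attach the extra torsion generators $\theta_i$ to large multiples $N^{i-a}f_1$ of one free generator with $N>4n$, and invoke uniqueness of the base-$N$ expansion with digits in $(-N/2,N/2)$ to force all free coefficients in a short relation to vanish, after which the remaining torsion relation transfers exactly because $\phi|_{T_B}$ is injective. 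Your construction is longer but entirely self-contained: it avoids having to justify that a torsion-injective epimorphism factors into torsion-injective cyclic steps, and makes the role of the hypothesis transparent. Both arguments are sound.
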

\begin{proof}
  If $A\conv B$, then $A$ is a quotient of $B$ by
  Lemma~\ref{lem:fp}. Let $R$ be larger than the order of the torsion
  of $A$ and $B$, and let $S,T$ be generating sets of $A,B$
  respectively such that $\cay AS$ and $\cay BT$ coincide in the ball
  of radius $R$. Then all torsion elements of $B$ belong to that ball,
  and are mapped, by the identification of the ball, to torsion
  elements of $A$. This imbeds the torsion of $B$ into that of $A$.

  Conversely, consider an epimorphism $B\twoheadrightarrow A$ that is
  injective on the torsion of $B$. Let $B=G_0\twoheadrightarrow
  G_1\twoheadrightarrow\cdots\twoheadrightarrow G_n=A$ be a maximal
  sequence of non-invertible epimorphisms. If we prove $G_i\conv
  G_{i-1}$ for all $i=1,\dots,n$, then we have $A\conv B$ by
  Lemma~\ref{lem:preorder}, so we may restrict to a minimal
  epimorphism $\pi\colon B\twoheadrightarrow A$. Its kernel is thus infinite
  cyclic, and we have reduced to the case $A=\Z\oplus\Z/(k\ell)\Z$ and
  $B=\Z^2\oplus\Z/k\Z$.

  In that case, we consider $T=\{f_1,f_2,f_3\}$ the standard
  generating set for $B$, and denote by $\{e_1,e_2\}$ the standard
  generators for $A$. For arbitrary $R\in\N$, we consider the
  generating set $S=\{\ell e_1,e_2,e_1+Re_2\}$ for $A$, and note that
  the balls of radius $R$ in $\cay BT$ and $\cay AS$ coincide.
\end{proof}

\begin{proof}[Proof of Proposition~\ref{prop:net}]
  Consider $A,B$ abelian groups, written as
  \[A=\bigoplus_{i=1}^a\Z/m_i\Z,\qquad B=\bigoplus_{i=1}^b\Z/n_i\Z.\]
  Then both groups \preform\ $\Z^{\max(a,b)}$.
\end{proof}

\begin{corollary}\label{cor:tf=>lin}
  Let $A$ be an infinite abelian group. Then $A$ is torsion-free if
  and only if the set of groups that are \preformed\ by $A$ is
  linearly ordered.
\end{corollary}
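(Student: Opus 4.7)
The plan is to leverage Proposition~\ref{prop:abelian}, which restates $A\conv B$ (for finitely generated infinite abelian groups) as: $A$ is a quotient of $B$ via a map injective on the torsion subgroup of $B$. This turns the claim into a purely algebraic question about abelian groups.

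For the forward direction, suppose $A\cong\Z^r$ is torsion-free. If $A\conv B$, then the torsion of $B$ imbeds into $A$, which is torsion-free; hence $B\cong\Z^s$ for some $s\ge r$. Conversely, each such $\Z^s$ is preformed by $A$, and these groups form a chain under $\conv$, since $\Z^s\conv\Z^t$ iff $s\le t$.

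For the converse I argue by contrapositive: assuming $A\cong\Z^r\oplus T$ with $T\ne 0$, I will exhibit two $\conv$-incomparable groups preformed by $A$. Letting $m$ denote the minimum number of generators of $A$, the candidates are $H_1:=\Z^m$ and $H_2:=\Z^m\oplus T$. Both are preformed by $A$ directly from Proposition~\ref{prop:abelian}: the standard surjection $\Z^m\twoheadrightarrow A$ handles $H_1$ (the injectivity condition being vacuous), while for $H_2$ one collapses $m-r$ of the free summands to obtain a surjection onto $A$ that is the identity on $T$.

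The main obstacle is then to show that $H_1$ and $H_2$ are incomparable. On the one hand, $H_1\not\conv H_2$, because any surjection $H_2\twoheadrightarrow\Z^m$ must kill the torsion $T$ of $H_2$, violating the injectivity requirement of Proposition~\ref{prop:abelian} as soon as $T\ne 0$. On the other hand, $H_2\not\conv H_1$, because the minimum number of generators of $H_2=\Z^m\oplus T$ strictly exceeds $m$, so $H_2$ cannot be a quotient of $H_1=\Z^m$. Together these show that the set of groups preformed by $A$ is not linearly ordered.
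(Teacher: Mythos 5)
Your proof is correct and follows essentially the same route as the paper's: both directions are reduced to Proposition~\ref{prop:abelian}, the forward direction is immediate because the torsion of $B$ must imbed in the torsion-free $A$, and the converse is handled by exhibiting two incomparable groups preformed by $A$. The only cosmetic difference is the choice of witnesses: the paper first passes to $A\conv\Z^d\oplus\Z/p\Z$ and then compares $\Z^{d+1}$ with $\Z^{d+1}\oplus\Z/p\Z$, whereas you compare $\Z^m$ with $\Z^m\oplus T$ directly (where $m$ is the minimal number of generators and $T$ the full torsion subgroup), which is equally valid.
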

\begin{proof}
  If $A=\Z^d$ and $A\conv B$, then $B=\Z^e$ for some $e\ge d$. The set
  of such $B$ is order-isomorphic to $\{d,d+1,\dots\}$.

  Now suppose that $A$ is not torsion-free. By
  Proposition~\ref{prop:abelian}, we have $A\conv\Z^d\oplus\Z/p\Z$ for
  some $p>1$ and $d>1$. Then $A\conv\Z^{d+1}$ and
  $A\conv\Z^{d+1}\oplus\Z/p\Z$, but these last groups are not comparable.
\end{proof}

Let us denote by $\mathscr A$ the subset of $\markedgroups$ consisting of
abelian groups, and by $\mathscr A/{\cong}$ the set of isomorphism
classes of abelian groups; as we noted above, $(\mathscr
A/{\cong},\conv)$ is a net.
\begin{corollary}
  Every finite partial order is imbeddable in $(\mathscr
  A/{\cong},\conv)$.
\end{corollary}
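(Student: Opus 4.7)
The plan is to use the classical fact that every finite poset $(P,\le)$ embeds in the Boolean lattice $(2^P,\subseteq)$ via the principal downsets $x \mapsto D(x) := \{y \in P : y \le x\}$, and to realise each such downset as the prime support of the torsion of a finitely generated abelian group. Pick a family of distinct primes $\{p_z : z \in P\}$ indexed by $P$, and for each $x \in P$ set
\[A_x := \Z^{|D(x)|+1} \oplus \bigoplus_{z \in P \setminus D(x)} \Z/p_z\Z.\]
I will verify, using Proposition~\ref{prop:abelian}, that $x \mapsto A_x$ defines an embedding of $(P,\le)$ into $(\mathscr A/{\cong},\conv)$.

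The implication $A_x \conv A_y \Rightarrow x \le y$ is the easy direction. By Proposition~\ref{prop:abelian}, the torsion of $A_y$ embeds in $A_x$, and hence in its torsion subgroup. Since the primes $p_z$ are pairwise distinct, the torsion of $A_x$ contains an element of order $p_z$ precisely when $z \in P \setminus D(x)$. Consequently $P \setminus D(y) \subseteq P \setminus D(x)$, i.e., $D(x) \subseteq D(y)$, which is exactly $x \le y$.

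For the converse, given $x \le y$ I will build an epimorphism $\pi \colon A_y \twoheadrightarrow A_x$ that is injective on the torsion $T_y$ of $A_y$. The hypothesis gives $P \setminus D(y) \subseteq P \setminus D(x)$, so $T_y$ is naturally a direct summand of the torsion $T_x$ of $A_x$, with complement $T_x/T_y \cong \bigoplus_{z \in D(y) \setminus D(x)} \Z/p_z\Z$; by the Chinese Remainder Theorem this group is cyclic since the $p_z$ are distinct primes. I will define $\pi$ to be the inclusion on $T_y$ and, on the free part, any surjection $\Z^{|D(y)|+1} \twoheadrightarrow \Z^{|D(x)|+1} \oplus (T_x/T_y)$. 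Such a surjection exists because the target has at most $|D(x)|+2$ minimal generators (one per free factor, plus one for the cyclic torsion piece), and $|D(y)|+1 \ge |D(x)|+2$ whenever $D(x) \subsetneq D(y)$; when $D(x) = D(y)$, antisymmetry of $P$ forces $x = y$ and $\pi$ is simply the identity. In either case $\pi$ is onto $A_x$ and equals the identity on $T_y$, so Proposition~\ref{prop:abelian} yields $A_x \conv A_y$.

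The only real obstacle is the construction of $\pi$: the ``$+1$'' in the free rank $|D(x)|+1$ is precisely the slack needed to absorb the new cyclic summand $T_x/T_y$ whenever the downset strictly grows. Distinct primes are essential to keep $T_x/T_y$ cyclic; otherwise one would need more slack in the free rank, scaling with the maximum number of elements simultaneously added to a downset.
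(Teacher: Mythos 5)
Your proof is correct and uses essentially the same construction as the paper: realize the finite poset as an inclusion-ordered family of subsets of a finite set (the paper uses principal upsets with reverse inclusion, you use complements of principal downsets, which amounts to the same translation), assign to each subset an abelian group whose torsion is the direct sum of $\Z/p_z\Z$ over that subset with a compensating free rank, and then invoke Proposition~\ref{prop:abelian}. The paper leaves the existence of the epimorphism implicit, whereas you spell it out, but the underlying idea is identical.
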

\begin{proof}
  Let $(X,\le)$ be a partially ordered set. We identify $x\in X$ with
  $I_x:=\{z\in X\colon z\ge x\}$, and have $I_y\subseteq
  I_x\Leftrightarrow x\le y$; therefore, we assume without loss of
  generality that $X$ is contained, for some $N\in\N$, in the
  partially ordered set of subsets of $\{1,\dots,N\}$, ordered under
  reverse inclusion.

  Consider $N$ distinct prime numbers $p_1,\dots,p_N$. For any subset
  $U\subseteq\{1,\dots,N\}$, consider the $N+1$-generated group $A_U$
  defined by
  \[A_U=\bigoplus_{i\in U}\Z/p_i\Z\oplus\Z^{1+N-\#U}.
  \]
  Observe that the torsion subgroup of $A_U$ is contained in the
  torsion group of $A_{U'}$ if and only if $U'\subseteq U$. Observe
  also that if $U'\subseteq U$, then $A_U$ is a quotient of
  $A_{U'}$. By Proposition~\ref{prop:abelian}, we get $A_U\conv
  A_{U'}$ if and only if $U'\subseteq U$.
\end{proof}

\begin{remark}
  Some countable orders cannot be imbedded in $(\mathscr
  A/{\cong},\conv)$; for example, $\N \cup\{\infty\}$. Observe indeed
  that a countable increasing sequence of non-isomorphic abelian
  groups has no common upper bound in $(\mathscr A/{\cong},\conv)$.
\end{remark}

\begin{proposition}\label{prop:aut_ab}
  The group of order-preserving bijections of $(\mathscr
  A/{\cong},\conv)$ is the infinite symmetric group on a countable
  set. If we identify this countable set with the prime numbers, then
  the action on infinite abelian groups is as follows. A permutation
  $p\mapsto \sigma(p)$ of the primes acts as
  \begin{equation}\label{eq:aut_ab}
    \Z^d\oplus\Z/p_1^{\nu_1}\Z\oplus\cdots\oplus\Z/p_k^{\nu_k}\Z\mapsto
    \Z^d\oplus\Z/\sigma(p_1)^{\nu_1}\Z\oplus\cdots\oplus\Z/\sigma(p_k)^{\nu_k}\Z.
  \end{equation}
\end{proposition}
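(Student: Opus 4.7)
One direction is immediate: any permutation $\sigma$ of the primes defines a bijection $\Phi_\sigma$ of $\mathscr A/{\cong}$ via~\eqref{eq:aut_ab}; well-definedness follows from uniqueness of primary decomposition, and by Proposition~\ref{prop:abelian} the relation $\conv$ on abelian groups is intrinsic (a surjection $B\twoheadrightarrow A$ injective on $T_B$) and hence invariant under renaming primes, so $\Phi_\sigma$ is order-preserving.

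For the converse, let $\Phi$ be an order-preserving bijection of $(\mathscr A/{\cong},\conv)$. By Corollary~\ref{cor:tf=>lin}, a group $A$ is torsion-free iff $\uparrow A$ is linearly ordered, a property preserved by $\Phi$. The torsion-free infinite abelian groups form the chain $\Z\conv\Z^2\conv\cdots$, which is order-isomorphic to $(\N,\leq)$ and hence has only the identity as order-automorphism. Therefore $\Phi(\Z^n)=\Z^n$ for every $n\geq 1$.

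Next I extract the permutation $\sigma$ on primes. Since $\Phi$ fixes $\Z^2$ it preserves $\downarrow\Z^2$, and a direct computation via Proposition~\ref{prop:abelian} gives $\downarrow\Z^2=\{\Z,\Z^2\}\cup\{\Z\oplus\Z/m\Z:m\geq 2\}$ with the relation $\Z\oplus\Z/m\Z\conv\Z\oplus\Z/n\Z$ iff $n\mid m$. Thus the subposet $\{\Z\oplus\Z/m\Z:m\geq 2\}$ is order-isomorphic to $(\N_{\geq 2},\mid^{\mathrm{op}})$, whose maximal elements are the primes; $\Phi$ therefore induces a permutation $\sigma$ of the primes via $\Phi(\Z\oplus\Z/p\Z)=\Z\oplus\Z/\sigma(p)\Z$. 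Following the prime-power chains $\cdots\conv\Z\oplus\Z/p^2\Z\conv\Z\oplus\Z/p\Z$ (uniquely characterized in the subposet as the ``$p$-ray'' of elements below no $\Z\oplus\Z/q\Z$ for $q\neq p$) and the meet presentations $\Z\oplus\Z/n\Z=\bigwedge_i\Z\oplus\Z/p_i^{a_i}\Z$ for $n=\prod p_i^{a_i}$ yields $\Phi(\Z\oplus\Z/n\Z)=\Z\oplus\Z/\sigma(n)\Z$ for all $n\geq 2$.

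Finally, for arbitrary $B=\Z^d\oplus T$ I proceed by induction on the number $s$ of invariant factors of $T$, working inside $\downarrow\Z^n$ for $n\geq 1+s$. The partition data of each $q$-primary part of $T$ is detected by the relations $\Z\oplus T\conv\Z^c\oplus(\Z/q^k\Z)^r$, which by Proposition~\ref{prop:abelian} hold (for sufficiently large $c$) iff $(\Z/q^k\Z)^r$ embeds in $T$; transferring these through $\Phi$ determines $\Phi(\Z\oplus T)=\Z\oplus T^\sigma$, where $T^\sigma$ is obtained by applying $\sigma$ to each primary summand. To lift from $\Z\oplus T$ to $\Z^d\oplus T$, I exploit the chain $\{\Z^c\oplus T:c\geq 1\}$ together with its cross-connections $\Z^c\oplus T\conv\Z^b$, which hold iff $b\geq c+s$. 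Since $\Phi$ fixes each $\Z^b$ and $T^\sigma$ has the same number of invariant factors as $T$, the chain index $c$ is rigidly determined, forcing $\Phi(\Z^d\oplus T)=\Z^d\oplus T^\sigma$, exactly the action~\eqref{eq:aut_ab}. The subtlest step is this final disentanglement: separating the pure rank $d$ from the combinatorics of $T$ relies on using the (already fixed) torsion-free chain as an external ``ruler''.
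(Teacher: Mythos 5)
Your opening two steps are sound and match the paper: the forward direction is immediate, and fixing the torsion-free groups $\Z^n$ via Corollary~\ref{cor:tf=>lin} together with rigidity of the chain $(\N,\le)$ is exactly how the paper's Lemma~\ref{lem:aut_ab:1} begins. The serious problem is the claim that
\[\Z\oplus\Z/m\Z\conv\Z\oplus\Z/n\Z\quad\text{iff}\quad n\mid m.\]
This is false. By Proposition~\ref{prop:abelian}, the relation $\Z\oplus\Z/m\Z\conv\Z\oplus\Z/n\Z$ is equivalent to the existence of a surjection $\pi\colon\Z\oplus\Z/n\Z\twoheadrightarrow\Z\oplus\Z/m\Z$ that is injective on $\Z/n\Z$. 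The restriction $\pi|_{\Z/n\Z}$ lands in $\Torsion(\Z\oplus\Z/m\Z)=\Z/m\Z$, so injectivity forces $n\mid m$; but on the other hand lifting $\pi$ to $\Z^2\twoheadrightarrow\Z\oplus\Z/m\Z$ and computing the Smith normal form of the kernel shows any such surjection needs $(0,n)$ in a subgroup of $\Z^2$ of content $m$, forcing $m\mid n$. Hence $m=n$, and the set $\{\Z\oplus\Z/m\Z : m\ge 2\}$ is actually an \emph{antichain} under $\conv$, not $(\N_{\ge 2},\mid^{\mathrm{op}})$. Concretely, you can check directly that no size-$2$ generating set of $\Z\oplus\Z/4\Z$ contains an element of order exactly $2$, so the balls of radius $2$ can never agree with those of $\Z\oplus\Z/2\Z$.

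Once that claim collapses, the entire mechanism by which you extract $\sigma$ fails: the ``maximal elements'' of the subposet are all of its elements, not only primes; the ``prime-power chains'' $\cdots\conv\Z\oplus\Z/p^2\Z\conv\Z\oplus\Z/p\Z$ do not exist (these are pairwise incomparable); and there are no nontrivial meets inside an antichain, so the ``meet presentation'' $\Z\oplus\Z/n\Z=\bigwedge\Z\oplus\Z/p_i^{a_i}\Z$ is vacuous. The information about the torsion order $m$ of $\Z\oplus\Z/m\Z$ simply cannot be read off from the internal order of $\downarrow\Z^2$, because that subposet is nearly structureless; you must look \emph{upward} from $\Z\oplus\Z/m\Z$ into groups of higher free rank (e.g.\ which $\Z^c\oplus\Z/d\Z$ lie above it). This is precisely what the paper does: its Lemmas~\ref{lem:aut_ab:2} and~\ref{lem:aut_ab:3} first show $\phi$ preserves the number of finite and infinite cyclic factors (so $\phi(\Z\oplus\Z/p\Z)=\Z\oplus\Z/n\Z$ for some $n$), and the subsequent Lemmas~\ref{lem:aut_ab:4}--\ref{lem:aut_ab:5} work inductively by counting groups in poset intervals of the form $[A,\Z^{k+2}]$ where the interesting comparisons involve raising the free rank, never by comparing two groups of the same free rank with different torsion. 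Your final paragraph already gestures at the right kind of criterion ($\Z\oplus T\conv\Z^c\oplus(\Z/q^k\Z)^r$ for large $c$), but it depends on the permutation $\sigma$ having been correctly extracted, which your middle step does not deliver.
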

\begin{proof}
  As a countable set, we take the set $\mathscr P$ of prime
  numbers. By Proposition~\ref{prop:abelian}, the group $\mathfrak S$
  of permutations of $\mathscr P$ acts on $(\mathscr A/{\cong},\conv)$
  by~\eqref{eq:aut_ab}. We wish to prove that there
  are no other order-preserving bijections. We implement this in the
  following lemmas.
  
  \begin{lemma}\label{lem:aut_ab:1}
    Every order-preserving bijection of infinite abelian groups fixes
    torsion-free abelian groups.
  \end{lemma}
  \begin{proof}
    By Corollary~\ref{cor:tf=>lin}, torsion-free abelian groups are
    characterized by the fact that the set of groups that they
    \preform\ is linearly ordered. Let $\phi$ be an order-preserving
    bijection. Observe that $\phi$ must fix the minimal element
    $\Z$. Note that groups that are \preformed\ by $\Z$ are linearly
    ordered by $\N$, so admit no order isomorphism. Therefore,
    $\phi(\Z^d)=\Z^d$ for any $d\ge 1$.
  \end{proof} 

  \begin{lemma}\label{lem:aut_ab:2}
    Every order-preserving bijection of infinite abelian groups
    preserves the number of factors in a minimal decomposition as a
    product of cyclic groups.
  \end{lemma}
  \begin{proof}
    Consider an infinite abelian group $A$, and let $\ell$ be the
    minimal number of cyclic subgroups in the decomposition of $A$ in
    a product of (finite or infinite) cyclic groups. Since $A$ is
    infinite, at least one subgroup in the decomposition is infinite.
    We know that for any $p\in\N$ the group $\Z+p\Z$ \preforms\ $\Z^2$,
    so $A$ \preforms\ $\Z^\ell$.

    Observe also that for $k<\ell$ the group $A$ cannot be generated
    by $k$ elements, so $A$ is not a quotient of $\Z^k$.  By
    Proposition \ref{prop:abelian}, $A$ doesn't \preform\ $\Z^k$ for
    $k<\ell$.

    Let $\phi$ be an order-preserving bijection. By
    Lemma~\ref{lem:aut_ab:1}, we have $\phi(\Z^k)=\Z^k$ for all
    $k\ge1$, so $\phi(A)$ \preforms\ $\Z^\ell$ but not $\Z^k$ for
    $k<\ell$. Therefore, $\phi(A)$ requires precisely $\ell$ factors
    in a minimal decomposition as a product of cyclic groups.
  \end{proof}
  
  \begin{lemma}\label{lem:aut_ab:3}
    Every order-preserving bijection $\phi$ of infinite abelian groups
    preserves the number of finite and infinite factors in a minimal
    decomposition as a product of cyclic groups.
  \end{lemma}  
  \begin{proof}
    Let $A$ be an infinite abelian group. Let $t$ be the minimal
    number of finite cyclic groups in its decomposition into a product
    of cyclic ones, and let $t+d$ be the minimal total number of
    finite cyclic groups in such decomposition.  We have $A=\Z^d
    \oplus \bigoplus_{i=1}^t \Z/ n_i \Z$, with $n_i\ge 2$.  Observe
    that $A$ is \preformed\ by $\Z \oplus \bigoplus_{i=1}^t \Z/ n_i
    \Z$, and thus is \preformed\ by some group whose minimal total
    number of cyclic groups in a decomposition equals $t+1$. Observe
    then that $A$ is not \preformed\ by any group for which this
    minimal number is $\le t$. Indeed, if $B$ \preforms\ $A$, then $B$
    is an infinite group, so the number of infinite cyclic group in
    the decomposition is $\ge 1$.  We know that the torsion subgroup
    of $A$, that is $\bigoplus_{i=1}^t \Z/ n_i \Z$, is a subgroup of
    the torsion subgroup of $B$. Therefore, the minimal number of
    finite cyclic groups in the decomposition of $B$ is at least
    $t$. The statement of the lemma now follows from the previous
    lemma.
  \end{proof}
  
  Consider now an order-preserving bijection $\phi$ of abelian groups.
  Let us show that for every prime $p$ there exists a prime $q$ such
  that $\phi(\Z\oplus\Z/p\Z)= \Z\oplus\Z/q\Z$.  First observe that any
  group with non-trivial torsion and which \preforms\ $\Z^2$ has the
  form $\Z\oplus\Z/n\Z$ for some $n\ge 2$.  If $n$ is not a prime
  number, then $n$ can be written as $n=n_1 n_2$ with $n_1, n_2 \ge 2$
  and in this case $\Z\oplus\Z/n\Z$ \preforms\ $\Z\oplus\Z/n_1\Z$.  This
  implies that the groups of the form $A=\Z\oplus\Z/p\Z$ are
  characterized by the following properties: $A$ is not torsion-free;
  $A$ \preforms\ $\Z^2$; if $B$ is such that $A\conv B\conv\Z^2$ then
  either $B=A$ or $B=\Z^2$. This implies that $\phi(\Z\oplus\Z/p\Z)$
  is isomorphic to $\Z\oplus\Z/q\Z$ for some prime $q$.
  
  As we have already mentioned, every permutation of the primes
  induces an order-preserving permutation of infinite abelian
  group. It remains to prove that a permutation of infinite abelian
  groups is determined by its action on groups of the form
  $\Z\oplus\Z/p\Z$. Consider therefore such a permutation $\phi$, and
  assume that it fixes $\Z\oplus\Z/p\Z$ for all $p\in\mathscr P$. We
  wish to show that it fixes every abelian group.
  
  \begin{lemma}\label{lem:aut_ab:4}
    Let $\phi$ be an order-preserving bijection of the infinite
    abelian groups, such that $\phi(\Z\oplus\Z/p\Z)=\Z\oplus\Z/p\Z$
    for all primes $p$.

    Then for all $k,m\ge1$ we have $\phi(\Z^k\oplus\Z/p^m\Z) =
    \Z^k\oplus\Z/p^m\Z$.
  \end{lemma}
  \begin{proof}
    Set $A=\Z^k\oplus\Z/p^m\Z$. By Lemma~\ref{lem:aut_ab:3}, we have
    $\phi(A)=\Z^k\oplus\Z/n\Z$ for some $n\ge2$. We proceed by
    induction on $m$ to show that $A$ is fixed.

    If $m=1$, then $A$ is \preformed\ by $\Z\oplus\Z/p\Z$ which is
    fixed, so $\phi(A)$ is also \preformed\ by this group, and
    $n|p$. Since $n\neq1$, we have $n=p$ as required.

    Consider then $m\ge2$. We have $A\conv\Z^{k+1}\oplus\Z/p^{m-1}\Z$,
    which is fixed by induction, so $p^{m-1}|n$, and in fact
    $p^{m-1}\neq n$ because $\phi(A)$ does not belong to the set of
    groups of the form $\Z^\ell\oplus\Z/p^{m-1}$ which are all fixed
    by $\phi$.

    On the other hand, $A$ doesn't \preform\ any of the groups
    $\Z^\ell\oplus\Z/q\Z$ for $q\neq p$ prime, which are fixed, so
    $\phi(A)$ neither \preform\ any of these groups, and $n=p^e$
    for some $e\ge m$.

    Now there are precisely $m+1$ groups between $A$ and $\Z^{k+2}$,
    namely all $\Z^{k+1}\oplus\Z/p^i\Z$ for $i=0,\dots,m$. This
    feature distinguishes $A$ from $\Z\oplus\Z/p^e\Z$ for all $e\neq
    m$, and therefore $A$ is fixed by $\phi$.
  \end{proof}
      
  \begin{lemma}\label{lem:aut_ab:5}
    Let $\phi$ be an order-preserving bijection of the infinite
    abelian groups, such that $\phi(\Z\oplus\Z/p\Z)=\Z\oplus\Z/p\Z$
    for all primes $p$.

    Then $\phi$ fixes all groups of the form $\Z^k\oplus C$ with $C$
    an abelian $p$-group.
  \end{lemma}
  \begin{proof}
    By Lemma~\ref{lem:aut_ab:3}, we have $\phi(\Z^k\oplus
    C)=\Z^k\oplus C'$ for a finite group $C'$ with the same number of
    factors in a minimal decomposition as a product of cyclic groups.

    Write $C=\bigoplus_{i=1}^r\Z/p^{e_i}\Z$, with $1\le e_1\le
    e_2\le\dots\le e_r$. We proceed by induction on $r$, the case
    $r=1$ being covered by Lemma~\ref{lem:aut_ab:4}.

    Write $A=\Z^k\oplus C$. Since, when $\ell$ is large,
    $A\conv\Z^\ell\oplus\Z/q\Z$ with $q$ prime if and only if $q=p$,
    we find that $C'$ is a $p$-group of the form
    $\bigoplus_{i=1}^r\Z/p^{f_i}\Z$, with $1\le f_1\le\dots\le f_r$.

    Consider $B=\Z^{k+1}\oplus\bigoplus_{i=1}^{k-1}\Z/p^{e_i}\Z$,
    which is fixed by induction. We have $A\conv B$, so $\phi(A)\conv
    B$ and therefore $f_1=e_1,\dots,f_{r-1}=e_{r-1},f_r\ge e_r$ by
    Proposition~\ref{prop:abelian}. It remains to prove $f_r=e_r$.

    Again by induction, the group $\Z\oplus B$ is fixed by
    $\phi$. There are $e_r+1$ groups between $A$ and $\Z\oplus B$,
    namely $B\oplus\Z/p^e\Z$ for $e=0,\dots,e_r$. This distinguishes
    $A$ among all
    $\Z^k\oplus\bigoplus_{i=1}^{r-1}\Z/p^{e_i}\Z\oplus\Z/p^{f_r}\Z$
    with $f_r\ge e_r$.
  \end{proof}
  
  We are ready to finish the proof of
  Proposition~\ref{prop:aut_ab}. Consider again $\phi$ fixing all
  $\Z\oplus\Z/p\Z$ for $p$ prime, and an abelian group $A=\Z^k\oplus
  C$ with $C$ finite; let us show that the torsion of $\phi(A)$ is
  isomorphic to $C$.

  First, by Lemma~\ref{lem:aut_ab:4}, we have $\phi(A)=\Z^k\oplus C'$
  for a finite group $C'$. Observe that, for $\ell$ large and $D$ a
  $p$-group, $A$ \preforms\ $\Z^\ell\oplus D$ if and only if $D$ is a
  subgroup of $C$. By Lemma~\ref{lem:aut_ab:5}, this group
  $\Z^\ell\oplus D$ is fixed by $\phi$, so $C$ and $C'$ have the same
  $p$-subgroups. Since every abelian group is the product of its
  $p$-Sylow subgroups, it follows that $C$ and $C'$ are isomorphic.
\end{proof}

\subsection{Virtually abelian groups}
There are countably many components of virtually abelian groups, as we
now show:

\begin{example}\label{ex:component22p}
  Let $N_{2,2}$ be the group with presentation
  \[N_{2,2}=\langle a,b\mid c=[a,b]\text{ central}\rangle,
  \]
  and every $n\in\N$, let $G_n$ be the virtually abelian group
  \[N_{2,2,n}=N_{2,2}/\langle c^n\rangle=\langle a,b\mid [a,b]^n,
  [a,b]\text{ central}\rangle.
  \]
  Then every $N_{2,2,n}$ is virtually $\Z^2$, but if $m\neq n$ then
  $N_{2,2,n}$ and $N_{2,2,m}$ belong to different components of
  $\markedgroups/{\cong}$.
\end{example}
\begin{proof}
  Without loss of generality, assume $m<n$, and let $H$ belong to the
  component of $N_{2,2,m}$; so there is a sequence
  $N_{2,2,m}=H_0,H_1,\dots,H_\ell=H$ with $H_i\conv H_{i-1}$ or
  $H_{i-1}\conv H_i$ for all $i=1,\dots,\ell$. By
  Lemma~\ref{lem:identity}(1,2), every $H_i$ is finitely presented and
  satisfies the identity $[x,y]^m$. However, $N_{2,2,n}$ does not
  satisfy this identity.
\end{proof}

\begin{remark}\label{rem:successor22p}
  If $p$ is prime, then the set of groups limit greater than
  $N_{2,2,p}$ is precisely
  $\{N_{2,2,p}\times\Z^\ell\colon\ell\in\N\}$.
\end{remark}
\begin{proof}
  Elements of $N_{2,2,p}$ may uniquely be written in the form
  $a^xb^yc^z$ for some $x,y\in\Z$ and $z\in\{0,\dots,p-1\}$. Consider
  a sequence of generating sets $S_1,S_2,\dots$ of same cardinality
  $k$. Clearly, if each $S_n$ is changed by a bounded number of
  Nielsen transformations, then without loss of generality one may
  assume (up to taking a subsequence) that the same transformations
  are applied to all $S_n$, and therefore the limit does not change.

  Using at most $pk$ transformations, the set $S_n$, whose elements we
  write as $\{s_{n,1},\dots,s_{n,k}\}$, can be transformed in such a
  manner that two elements $s_{n,1},s_{n,2}$ generate $N_{2,2,p}$
  while the other $s_{n,3},\dots,s_{n,k}$ are of the form $a^xb^yc^z$
  with $p|x$ and $p|y$, and therefore belong to the centre of
  $N_{2,2,p}$. Some of these elements will belong to $\langle
  s_1,s_2\rangle$ in the limit, and others will generate extra abelian
  factors.
\end{proof}

\section{Nilpotent groups}
Given a group $G$, we denote its lower central series by
$\gamma_1(G)=G$ and $\gamma_{i+1}(G)= [G, \gamma_i(G)]$ for all
$i\ge1$. By $N_{s,k}=\free_k/\gamma_{s+1}(\free_k)$ we denote the free
nilpotent group of class $s$ on $k$ generators.

We study in this section the structure of connected components of
nilpotent groups; our main result is that, if $G/\Torsion(G)$
generates the same variety as $G$, then the connected component of $G$
is determined by the variety that it generates and conversely.

\subsection{Free groups and subgroups in nilpotent varieties}\label{ss:freenil}
Following~\cite{neumann:varieties}*{Definition~17.12}, a group $G$ is
said to be \emph{discriminating} if, given any finite set $\mathscr W$
of identities that do not hold in $G$ (i.e., for every $w\in\mathscr
W$ there are $g_1,g_2,\dots\in G$ with $w(g_1,\dots)\neq1$), all
identities can be falsified simultaneously (i.e.\ there are
$g_1,g_2,\dots\in G$ such that $w(g_1,\dots)\neq1$ for all
$w\in\mathscr W$). We will say $G$ is \emph{discriminating on $k$
  generators} if, given any finite set $\mathscr W$ of identities in
$k$ letters that do not hold in $G$ (i.e., for every $w\in\mathscr W$
there are $g_1,\dots,g_k\in G$ with $w(g_1,\dots,g_k)\neq1$), all
identities can be falsified simultaneously on a generating set (i.e.\
there exists a generating set $\{g_1,\dots,g_k\}$ of $G$ such that
$w(g_1,\dots,g_k)\neq1$ for all $w\in\mathscr W$).

Baumslag, Neumann, Neumann, and Neumann show
in~\cite{baumslag-n-n-n:fgvarieties}*{Corollary~2.17} that finitely
generated torsion-free nilpotent groups are discriminating; see
also~\cite{neumann:varieties}*{Theorem~17.9}. If $G$ is a nilpotent
group with torsion, the matter is more delicate: Bausmlag and Neumanns
prove in the same place that $G$ is discriminating if and only if $G$
and $G/\Torsion(G)$ generate the same variety.

\begin{lemma}\label{freesubgroupsvarieties}
  Let $G$ be a discriminating group, and let $\variety$ be the variety
  generated by $G$. Let $\relfree_k:=\free_k/\variety(\free_k)$ be the
  free group on $k$ generators in $\variety$. Then for every
  $k\in\N$ there exists a group $H$ that is \preformed\ by $G$ and
  contains $\relfree_k$ as a subgroup.

  If furthermore $G$ is discriminating on $k$ generators, then $G$
  \preforms\ $\relfree_k$.
\end{lemma}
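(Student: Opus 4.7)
Fix $k$ and write $T=\{t_1,\dots,t_k\}$ for the canonical generators of $\relfree_k=\free_k/\variety(\free_k)$. For each $R\in\N$ let
\[
W_R=\{w\in\free_k:|w|\le R,\ w\notin\variety(\free_k)\},
\]
a finite set of non-identities of $G$. The basic observation is that if $S=\{g_1,\dots,g_k\}$ is any $k$-tuple in $G$ satisfying $w(g_1,\dots,g_k)\neq 1$ for every $w\in W_{2R}$, then under the identification $g_i\leftrightarrow t_i$ the ball of radius $R$ in $\cay{G}{S}$ coincides (as a marked graph) with the ball of radius $R$ in $\cay{\relfree_k}{T}$. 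Indeed, for words $u,v$ of length $\le R$ one has $u(S)=v(S)$ iff $uv^{-1}(S)=1$, and $uv^{-1}\in\variety(\free_k)$ forces $uv^{-1}(S)=1$ because $G\in\variety$; so the only way for an ``extra'' collapse to occur is via some $w\in W_{2R}$, which is exactly what is ruled out.

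\emph{Second statement.} Suppose $G$ is discriminating on $k$ generators. Applying $k$-discrimination to the finite set $W_{2R}$ yields a generating set $S_R$ of $G$ that simultaneously falsifies every element of $W_{2R}$. By the observation above, $\cay{G}{S_R}$ and $\cay{\relfree_k}{T}$ agree on the ball of radius $R$. Since $R$ is arbitrary, this shows $G\conv\relfree_k$.

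\emph{First statement.} Without the generation hypothesis, plain discrimination still produces a $k$-tuple $(g_{R,1},\dots,g_{R,k})$ in $G$ falsifying $W_{2R}$, but this tuple need not generate $G$. Fix once and for all a finite generating set $\{a_1,\dots,a_\ell\}$ of $G$ and put
\[
S_R=\{g_{R,1},\dots,g_{R,k},a_1,\dots,a_\ell\},
\]
a generating set of $G$ of constant cardinality $k+\ell$. By compactness of $\markedgroups$, pass to a subsequence along which $(G,S_R)$ converges to some $(H,T'\sqcup U)$ with $T'=\{t'_1,\dots,t'_k\}$ the limit of the first $k$ entries; then $G\conv H$. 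For any $w\in\free_k$, if $w\in\variety(\free_k)$ then $w(t'_1,\dots,t'_k)=1$ automatically, while if $w\notin\variety(\free_k)$ then $w\in W_{2R}$ for all $R\ge|w|$, so $w(g_{R,1},\dots,g_{R,k})\neq 1$ in $G$, whence $w(t'_1,\dots,t'_k)\neq 1$ in $H$. Consequently $\langle T'\rangle\le H$ is isomorphic to $\relfree_k$, exhibiting $\relfree_k$ as a subgroup of $H$.

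\emph{Main obstacle.} The only real delicacy is that discrimination only falsifies \emph{finitely} many non-identities at once; this forces the two-step approach above (direct construction when $k$-discrimination is available, and an $\markedgroups$-limit of padded generating sets otherwise). The padding elements $a_1,\dots,a_\ell$ enter only to ensure $\langle S_R\rangle=G$ and play no role in the identification of the subgroup $\langle T'\rangle\cong\relfree_k$ inside $H$.
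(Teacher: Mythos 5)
Your proof is correct and follows essentially the same route as the paper's: apply discrimination to the finite set of non-identities of bounded length, pad with a fixed generating set of $G$ when only plain discrimination is available, and pass to an accumulation point in $\markedgroups$ to recover $\relfree_k$ as a subgroup of the limit. The only (welcome) differences are cosmetic: you spell out the $2R$-versus-$R$ bookkeeping that makes the ball identification precise, and for the second statement you argue directly from the definition of $\conv$ rather than via an accumulation point, which is a bit cleaner but not a different idea.
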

\begin{proof}
  Consider first a finite set of words $\mathscr W\subset\free_k$ that
  are not identities of $\relfree_k$, that is $w(s_1,\dots,s_k)\neq1$
  in $\relfree_k$ for all $w\in\mathscr W$, with $\{s_1,\dots,s_k\}$ a
  free generating set for $\relfree_k$. Observe that, for each
  $w\in\mathscr W$, there exist elements $g_{w,1},\dots,g_{w,k}\in G$
  with $w(g_{w,1},\dots,g_{w,k})\neq1$; otherwise, $w$ would be an
  identity in $G$ and therefore would vanish on $\relfree_k$. Since
  $G$ is discriminating, there exist $g_{\mathscr
    W,1},\dots,g_{\mathscr W,k}\in G$ such that $w(g_{\mathscr
    W,1},\dots,g_{\mathscr W,k})\neq1$ for all $w\in\mathscr W$.

  We apply this with $\mathscr W$ the set of words of length at most
  $R$ in $\free_k$ that are not identities in $\relfree_k$, and denote
  the resulting $g_{\mathscr W,1},\dots,g_{\mathscr W,k}$ by
  $g_{R,1},\dots,g_{R,k}$.

  Let $S$ be a finite generating set for $G$, and put
  $S_R=S\sqcup\{g_{R,1},\dots,g_{R,k}\}$. Choose an accumulation point
  $(H,T)$ of the sequence $(G,S_R)$ in the space $\markedgroups$ of
  marked groups. Then $H$ contains $\relfree_k$ as the subgroup
  generated by the limit of $\{g_{R,1},\dots,g_{R,k}\}$.

  If $G$ is discriminating on $k$ generators, then we can take
  $S=\emptyset$ in the previous paragraph, to see that $H$ is
  isomorphic to the relatively free group $\relfree_k$.
\end{proof}

For a real constant $C$, let us say that the sequence of positive real
numbers $x_1,x_2,\dots,x_s$ grows \emph{at speed $C$} if $x_1\ge C$
and $x_{i+1}\ge x_i^C$ for $i=1,\dots,s-1$. Similarly, an unordered
set $\{x_1,\dots,x_s\}$ grows \emph{at speed $C$} if it admits an
ordering that grows at speed $C$.

\begin{lemma}\label{polynomials}
  Suppose that $f_1,\dots,f_t$ are nonzero polynomials in $s$
  variables with real coefficients. Then there exists $C$ such that
  $f_i(x_1, \dots, x_s)\ne0$ for all $i=1,\dots,t$ whenever
  $(x_1,\dots,x_s)$ grows at speed $C$.
\end{lemma}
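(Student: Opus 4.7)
My plan is to prove, by induction on $s$, the following slightly stronger claim: for every nonzero polynomial $f \in \R[x_1,\dots,x_s]$ there exists $C \ge 1$ such that $|f(x_1,\dots,x_s)| \ge 1$ whenever $(x_1,\dots,x_s)$ grows at speed $C$. The lemma then follows by applying this claim to each $f_i$ individually and taking the maximum of the resulting constants.

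The base case $s=1$ is immediate: a nonzero one-variable real polynomial has $|f(x)|\to\infty$ as $x\to\infty$, so any $C$ exceeding all real roots of $f$ together with the threshold beyond which $|f|\ge 1$ works.

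For the inductive step, I would expand $f$ as a polynomial in $x_s$:
$$f = \sum_{j=0}^d g_j(x_1,\dots,x_{s-1})\,x_s^j,\qquad g_d\neq 0.$$
Applying the inductive hypothesis to $g_d$ gives a constant $C_0$ with $|g_d(x_1,\dots,x_{s-1})|\ge 1$ whenever $(x_1,\dots,x_{s-1})$ grows at speed $C_0$. On the other hand, any growth condition with $C\ge 1$ forces $1 \le x_1 \le x_2 \le \cdots \le x_{s-1}$, so letting $M$ be the maximum total degree among $g_0,\dots,g_{d-1}$ and $K$ the sum of absolute values of all their coefficients, we have $|g_j(x_1,\dots,x_{s-1})| \le K\, x_{s-1}^{M}$ for every $j<d$. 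Then for $x_s \ge 2dK\,x_{s-1}^{M}$ the leading term dominates:
$$|f(x_1,\dots,x_s)| \;\ge\; |g_d|\,x_s^d - d\cdot K x_{s-1}^M \cdot x_s^{d-1} \;\ge\; \tfrac{1}{2}|g_d|\,x_s^d \;\ge\; \tfrac{1}{2} x_s.$$
Choosing $C = \max(C_0,\,M+1,\,2dK,\,2)$ makes $x_s \ge x_{s-1}^{C} \ge x_{s-1}^{M+1} \ge 2dK\,x_{s-1}^M$ and $x_s \ge C \ge 2$, so the final bound gives $|f|\ge 1$ as required.

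The main obstacle is picking the right strengthening for the induction. Carrying only the qualitative statement ``$g_d \neq 0$'' through the induction would be too weak, because we need the leading coefficient to beat the lower-order terms $g_j\,x_s^j$, and this is a quantitative competition. The bound $|g_d|\ge 1$ is the minimal strengthening that couples cleanly with the trivial polynomial-in-$x_{s-1}$ upper bounds on the remaining coefficients: once $x_s$ is superpolynomial in $x_{s-1}$, the leading term wins by an arbitrary margin, which is exactly what the growth-at-speed-$C$ hypothesis guarantees.
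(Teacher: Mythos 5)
Your route---induction on the number of variables, peeling off the top variable and applying the inductive hypothesis to the leading coefficient $g_d$---is genuinely different from the paper's proof, which isolates the lexicographically largest monomial $x_1^{e_1}\cdots x_s^{e_s}$ of $f$ (ordering first by $e_s$, then $e_{s-1}$, etc.) and argues in one step that this single monomial dominates all the others once $(x_1,\dots,x_s)$ grows fast enough. Your version makes the quantitative domination explicit; the paper's avoids induction but is terser about the estimates.

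There is, however, a real gap in the proof as written: the strengthened invariant ``$|f|\ge 1$ on all sufficiently fast-growing tuples'' is false for nonzero constant polynomials with $|f|<1$, and this is not just a boundary nuisance. The base-case assertion that ``a nonzero one-variable real polynomial has $|f(x)|\to\infty$'' is wrong for constants, and in the inductive step you apply the hypothesis to $g_d\in\R[x_1,\dots,x_{s-1}]$, which can perfectly well be a small constant (e.g.\ $f=\tfrac12 x_s+x_1$ has $g_1=\tfrac12$), so the induction does not close as stated. You also tacitly use $d\ge 1$ in the chain $\tfrac12|g_d|x_s^d\ge\tfrac12 x_s$; the degenerate case $d=0$ (where $f$ does not involve $x_s$) should be sent straight back to the inductive hypothesis. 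Both defects are easy to repair: replace the invariant by ``there exist $C$ and $\epsilon_f>0$ with $|f|\ge\epsilon_f$ on fast-growing tuples,'' which is vacuously true for constants and still feeds your domination estimate --- the threshold becomes $x_s\ge 2dKx_{s-1}^M/\epsilon_{g_d}$ and is absorbed into the choice of $C$, after which $|f|\ge\tfrac12\epsilon_{g_d}x_s$ can again be made $\ge 1$ by taking $C$ large. With that adjustment the argument is correct.
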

\begin{proof}
  It suffices to prove the statement for a single polynomial $f$. Let
  $x_1^{e_1}\cdots x_s^{e_s}$ be the lexicographically largest
  monomial in $f$; namely, $e_s$ is maximal among all monomials in
  $f$; then $e_{s-1}$ is maximal among monomials of degree $e_S$ in
  $x_s$; etc. Then this monomial dominates $f$ as $(x_1,\dots,x_s)$
  grows faster and faster.
\end{proof}

\begin{lemma}\label{rapidgenerators}
  Consider $d\ge 1$. Then for all $e\ge d+1$ and all $C>0$ there
  exists a set of numbers
  $\{x_{1,1},x_{1,2},\dots,x_{1,d},x_{2,1},\dots,x_{e,1},x_{e,d}\}$
  growing at speed $C$ and such that
  $\{(x_{1,1},\dots,x_{1,d}),\dots,(x_{e,1},\dots,x_{e,d})$ is a
  generating set for $\Z^d$.
\end{lemma}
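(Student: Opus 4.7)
The plan is to induct on $d$. For the base case $d=1$ with $e\ge 2$, pick distinct primes $p_1<\cdots<p_e$ satisfying $p_1\ge C$ and $p_{i+1}\ge p_i^C$; such primes exist by the infinitude of primes, and being pairwise coprime they generate $\Z$.

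For the inductive step ($d\ge 2$, $e\ge d+1$), apply the hypothesis in dimension $d-1$ (with the same $e$, which meets the bound $e\ge(d-1)+1$) but with a much enlarged growth parameter $C':=C^{3}$, producing vectors $u_1,\dots,u_e\in\Z_{>0}^{d-1}$ whose entries form a set growing at speed $C'$ and whose span is $\Z^{d-1}$. Lift each to $v_i:=(u_i,y_i)\in\Z^d$ with positive integers $y_i$ still to be chosen. Since the $u_i$ already surject onto $\Z^{d-1}$, one checks directly that $\langle v_1,\dots,v_e\rangle=\Z^d$ holds if and only if $\gcd\{\sum_i c_iy_i : (c_i)\in K\}=1$, where $K$ is the kernel (necessarily a saturated sublattice of rank $r=e-d+1\ge 2$) of the map $\mathbf e_i\mapsto u_i$ from $\Z^e$ to $\Z^{d-1}$. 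Fixing a basis $k^{(1)},\dots,k^{(r)}$ of $K$ converts this into making the $r$ integer-valued linear forms $L^{(j)}(y):=\sum_i k^{(j)}_iy_i$ jointly coprime.

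The $y_i$ must then be chosen to achieve this coprimality while fitting into the speed-$C$ ordering of the combined set of entries. Since $C'=C^{3}$ is much larger than $C^{2}$, between any two consecutive $u$-entries $a<a'$ (for which $a'\ge a^{C^{3}}$) lies a ``window'' $[a^C,(a')^{1/C}]$ of integers into which one may insert a new value without spoiling speed-$C$ growth; together with the regions below $\min u_i$ and above $\max u_i$ there are at least $e+1$ such windows, so one may assign each $y_i$ to its own. Within two chosen windows one then tunes the corresponding $y_i$'s by a standard density argument: any two $\Z$-linearly independent linear forms $L^{(1)},L^{(2)}$ take jointly coprime values on a positive-density subset of any sufficiently large $2$-dimensional integer box (by the Chinese remainder theorem applied to the finitely many primes dividing the fixed determinant of a $2\times 2$ submatrix of coefficients). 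The windows are exponentially wide enough in the logarithmic scale to contain such a box, so the coprimality can be achieved, and then $\gcd(L^{(1)},\dots,L^{(r)})=1$ follows.

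The main obstacle is reconciling the arithmetic coprimality condition with the metric speed-$C$ constraint on the placement of the $y_i$'s: each $y_i$ is confined to a prescribed window, yet collectively they must satisfy a gcd condition. The enormous inductive gap between $C'$ and $C$ is exactly what creates the logarithmic room inside each window for the density argument to operate; verifying that this room really is ample enough for coprimality to be achievable will be the main technical step.
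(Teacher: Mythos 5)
Your approach is genuinely different from the paper's. The paper does not induct on the dimension $d$ at all: it builds, entry by entry, a $d\times d$ integer matrix $A$ whose upper-left $k\times k$ corner has prime determinant $p_k$ for each $k$, with the primes growing fast; at each step it uses the coprimality of the two ``previous'' prime minors to steer the new determinant to $1$, and then Dirichlet's theorem on primes in arithmetic progressions to push it to a fresh prime while making the last entry as large as desired. Finally it appends one extra row with entries coprime to $\det A$. This avoids any kernel, Chinese remainder, or density argument. Your inductive decomposition --- apply the statement in dimension $d-1$ with an inflated speed $C'$, lift $u_i$ to $(u_i,y_i)$, and reduce surjectivity onto $\Z^d$ to the condition $\gcd\{\langle k,y\rangle : k\in K\}=1$ over the (correctly identified, saturated, rank $r=e-d+1\ge 2$) kernel $K$ --- is a perfectly legitimate alternative skeleton, and the reduction itself is correct.

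The gap is in the coprimality step. The claim that two $\Z$-linearly independent affine forms in $(y_{i_1},y_{i_2})$ take jointly coprime values on a positive-density subset of a large box, ``by CRT applied to the finitely many primes dividing the determinant $\Delta$ of a $2\times 2$ submatrix,'' is not correct as stated. Take $L^{(1)}(y_{i_1},y_{i_2})=2y_{i_1}+c_1$ and $L^{(2)}(y_{i_1},y_{i_2})=2y_{i_2}+c_2$: these are linearly independent with $\Delta=4$, but if the constants $c_1,c_2$ --- which depend on the $e-2$ values $y_i$ you have already frozen --- happen both to be even, then $2\mid\gcd(L^{(1)},L^{(2)})$ identically, and no tuning of $(y_{i_1},y_{i_2})$ rescues it. In general, the primes one must worry about are not only those dividing $\Delta$; one must ensure the gcd of the six integers (the four coefficients of $y_{i_1},y_{i_2}$ and the two constants) is $1$, and that in turn constrains the choice of the frozen $y_i$. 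A second, more quantitative gap: you assert the windows are ``exponentially wide enough in the logarithmic scale'' for the CRT/density argument, but you never compare the required CRT modulus with the actual window widths. Both issues are fixable --- for example, tune all $e$ of the $y_i$ at once (saturation of $K$ guarantees the reduction of $K$ mod every prime has full rank $r$, so the bad density is $\le 1/p^r\le 1/p^2$ for every $p$ with no unavoidable prime), or first choose the frozen $y_i$ to kill any common factor of the constants, and quantify the window widths against the relevant modulus --- but as written the argument does not close.
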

\begin{proof}
  It suffices to prove the statement for $e=d+1$.  We start by proving
  the following claim by induction on $n=1,\dots,d$: there exists an
  $n\times n$ integer matrix $(x_{i,j})$ whose coefficients grow at
  speed $C$, and such that for every $k=1,\dots,n$ the determinant of
  the upper left corner $(x_{i,j}\colon 1\le i,j\le k)$ is a prime
  number $p_k$, with all primes $p_1,\dots,p_n$ distinct.

  The induction starts by setting $x_{1,1}=p_1$ for some prime number
  $p_1>C$.

  Assume then that an $(n-1)\times(n-1)$ matrix $A_{n-1}=(x_{i,j})$
  has been constructed, with entries growing at speed $C$ and
  determinant a prime number $p_{n-1}$.

  First, an $n$th row $(x_{n,1},\dots,x_{n,n-1})$ may be added to
  $A_{n-1}$ in such a manner that the entries still grow at speed $C$,
  and the determinant $d_n$ of $A'_{n-1}=(x_{i,j}\colon i\neq n-1)$ is
  coprime to $p_{n-1}$. Indeed the coefficients
  $x_{n,1},\dots,x_{n,n-2}$ may be chosen arbitrarily as long as they
  grow fast enough. Then increasing $x_{n,n-1}$ increases the
  determinant of $A'_{n-1}$ by $p_{n-2}$ which is coprime to
  $p_{n-1}$; and sufficiently increasing this coefficient makes the
  augmented matrix $A''_{n-1}=(x_{i,j}\colon i\le n)$ still growing at speed
  $C$.

  Then an $n$th column may be added to $A''_{n-1}$ as follows. Start
  by choosing $x_{1,n},\dots,x_{n-2,n}$ arbitrarily as long as they
  grow fast enough, without fixing $x_{n-1,n}$ and $x_{n,n}$ yet. Call
  $A_n$ the resulting matrix. Then increasing $x_{n-1,n}$ decreases
  the determinant of $A_n$ by $d_n$, while increasing $x_{n,n}$
  increases the determinant of $A_n$ by $p_{n-1}$. Since $d_n$ and
  $p_{n-1}$ are coprime, there exist choices of $x_{n-1,n}$ and
  $x_{n,n}$ such that $A_n$ has determinant $1$; and the entries of
  $A_n$ grow at speed $C$, except perhaps for $x_{n,n}$.

  Now, by Dirichlet's theorem, there exists arbitrarily large primes
  $p_n$ that are $\equiv 1\pmod{p_{n-1}}$. For such a prime
  $p_n=1+ap_{n-1}$, add $a$ to the entry $x_{n,n}$ yielding a matrix
  $A_n$ of determinant $p_n$. Choosing $a$ large enough makes the
  coefficients of $A_n$ grow at speed $C$.

  To prove the lemma, consider a $d\times d$ matrix $A$ with integer
  entries growing at speed $C$ and determinant $p$. Its rows generate
  a subgroup of $\Z^d$ of prime index, and a single extra generator,
  with fast growing entries that are coprime to $p$, gives the desired
  generating set.
\end{proof}

We are ready to
sharpen~\cite{baumslag-n-n-n:fgvarieties}*{Corollary~2.17}, claiming
that torsion-free nilpotent groups are discriminating:
\begin{lemma}\label{lem:discriminatedbygenerators}
  Let $G$ be a torsion-free $k$-generated nilpotent group. Then, for
  each $N>k$, the group $G$ is discriminating on $N$ generators.
\end{lemma}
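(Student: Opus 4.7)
The plan is to reduce the problem to the non-vanishing of polynomials via Mal'cev coordinates, and then invoke Lemmas~\ref{polynomials} and~\ref{rapidgenerators}.

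First, I would fix a Mal'cev basis $u_1,\dots,u_d$ for $G$, so that every element of $G$ is uniquely of the form $u_1^{a_1}\cdots u_d^{a_d}$ with $a_i\in\Z$. The central fact (Hall's commutator collection, or equivalently the Mal'cev coordinate theorem) is that the group multiplication is expressed by polynomials with rational (in fact integer) coefficients in these coordinates. Consequently, for any word $w(y_1,\dots,y_N)$ and any $N$-tuple of elements $g_i=u_1^{x_{i,1}}\cdots u_d^{x_{i,d}}$, the evaluation
\[w(g_1,\dots,g_N)=u_1^{P_{w,1}(\vec x)}\cdots u_d^{P_{w,d}(\vec x)}\]
has Mal'cev coordinates $P_{w,j}$ that are polynomials in the $Nd$ variables $x_{i,j}$. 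The word $w$ is an identity of $G$ if and only if every $P_{w,j}$ vanishes identically as a polynomial; so if $\mathscr W$ is a finite set of non-identities of $G$, the collection of all $P_{w,j}$ with $w\in\mathscr W$ that are nonzero as polynomials forms a finite family $\{Q_1,\dots,Q_t\}$ of nonzero polynomials in $Nd$ variables.

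Next, I apply Lemma~\ref{polynomials} to $\{Q_1,\dots,Q_t\}$ to obtain a constant $C$ such that, whenever the $Nd$ coordinates $x_{i,j}$ are arranged in a sequence growing at speed $C$, every $Q_m$ evaluates to a nonzero integer and hence $w(g_1,\dots,g_N)\neq 1$ for all $w\in\mathscr W$. It remains to exhibit an $N$-tuple $(g_1,\dots,g_N)$ whose Mal'cev coordinates $(x_{i,j})$ admit such an ordering \emph{and} which generates $G$. Since $G$ is nilpotent, by the Burnside basis theorem for nilpotent groups, $g_1,\dots,g_N$ generate $G$ if and only if their images generate the abelianization $G^{ab}$; writing $G^{ab}=\Z^r\oplus T$ with $T$ finite, the minimal number of generators of $G$ is $r+d(T)$, which is at most $k$, so $N\ge k+1\ge r+d(T)+1$.

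I would then construct the Mal'cev coordinates as follows. Using Lemma~\ref{rapidgenerators} applied to the free rank $r$ of $G^{ab}$ (viewed through the projection of the first Mal'cev coordinates to $\Z^r$) with $e=r+1\le N$, obtain $r+1$ coordinate vectors whose projections generate $\Z^r$ and whose concatenated entries grow at speed $C$. The remaining $N-(r+1)\ge d(T)$ elements are chosen so that their images in $G^{ab}$ generate the torsion summand $T$: for any required residue class modulo the order of a torsion generator, infinitely many integers in that class can be placed arbitrarily far out in the growth sequence, since large integers realize every residue. All remaining Mal'cev coordinates (those in $[G,G]$, corresponding to $u_j$ with $j>d_1$) are likewise free to be taken arbitrarily large in the appropriate position of the growth sequence.

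The main obstacle is the bookkeeping of the previous paragraph: simultaneously arranging that every one of the $Nd$ coordinates fits into a single linearly ordered sequence growing at speed $C$, while the projection to $G^{ab}$ generates $\Z^r\oplus T$. The torsion part is the delicate ingredient, but it only imposes residue conditions modulo fixed integers, which do not interfere with fast growth. Once such an $N$-tuple is assembled, Lemma~\ref{polynomials} guarantees $Q_m(\vec x)\neq 0$ for all $m$, hence $w(\vec g)\neq 1$ for all $w\in\mathscr W$, as required.
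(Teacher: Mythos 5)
Your proof follows essentially the same route as the paper's: pass to Mal'cev (Hall) coordinates, express each candidate identity as a tuple of integer polynomials in the exponents, invoke Lemma~\ref{polynomials} to obtain a speed $C$ that simultaneously kills all of them, use Lemma~\ref{rapidgenerators} to produce a fast-growing tuple generating the free abelian part, and appeal to the Frattini fact that a subset generates a finitely generated nilpotent group if and only if it generates the abelianization. You are in fact a bit more careful than the paper's prose on one point: you note that $G^{ab}$ may have a nontrivial torsion summand even though $G$ is torsion-free (this does occur, e.g.\ for $\langle a,b,d\mid[a,b]=d^2,\,d\text{ central}\rangle$, whose abelianization is $\Z^2\oplus\Z/2\Z$), and you handle it via residue conditions on some of the remaining coordinates, whereas the paper simply invokes Lemma~\ref{rapidgenerators} for ``the abelianization'' although that lemma, as stated, only produces generating tuples of $\Z^d$. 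The final bookkeeping --- interleaving the Lemma~\ref{rapidgenerators} output, the residue-constrained coordinates, and the unconstrained ones into a single sequence growing at speed $C$ --- is left at the same informal level in both arguments; it is routine, since every constraint leaves infinitely many admissible integers beyond any given threshold.
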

\begin{proof}
  We start by considering more generally poly-$\Z$ groups, namely
  groups $G$ admitting a sequence of subgroups $G=G_1\triangleright
  G_2\triangleright\dots\triangleright G_{\ell+1}=1$ such that
  $G_i/G_{i+1}\cong\Z$ for all $i$.

  If $G$ is torsion-free nilpotent and $(Z_i)$ denotes its ascending
  central series (defined inductively by $Z_0=1$ and
  $Z_{i+1}/Z_i=Z(G/Z_i)$), then each $Z_{i+1}/Z_i$ is free abelian, so
  the ascending central series can be refined to a series in which
  successive quotients are $\Z$.

  Choose for all $i=1,\dots,\ell$ a generator of $G_i/G_{i+1}$, and
  lift to an element $u_i\in G_i$. Then every $g\in G$ may uniquely be
  written in the form $g=u_1^{\xi_1}\cdots u_\ell^{\xi_\ell}$, and the
  integers $\xi_1,\dots,xi_\ell$ determine the element $g$, which we
  write $u^\xi$. Philip Hall proved
  in~\cite{hall:notesnilpotent}*{Theorem~6.5} that products and
  inverses are given by polynomials, in the sense that if $u^\xi
  u^\eta=u^\zeta$ and $(u^\xi){-1}=u^\chi$, then $\zeta_i$ and
  $\chi_i$ are polynomials in
  $\{\xi_1,\dots,\xi_\ell,\eta_1,\dots,\eta_\ell\}$ and
  $\{\xi_1,\dots,\xi_\ell\}$ respectively.  In particular, every
  identity $w\in\mathscr W$, in $N$ variables, is a polynomial in the
  exponents $\xi_{1,1},\dots,\xi_{\ell,N}$ of its arguments
  $x_1,\dots,x_N$ written as $u^{\xi_1},\dots,u^{\xi_N}$.

  By Lemma~\ref{rapidgenerators}, there exist sequences with
  arbitrarily fast growth that generate the abelianization of $G$; and
  by Lemma~\ref{polynomials} the identities in $\mathscr W$ will not
  vanish on these generators, if their growth is fast enough. Finally,
  since $G$ is nilpotent, a sequence of elements generates $G$ if and
  only if it generates its abelianization.
\end{proof}

\begin{lemma}\label{lem:nilprecedestfnil}
  Let $G$ be a finitely generated nilpotent group such that $G$ and
  $G/\Torsion(G)$ generate the same variety. Then $G$ \preforms\ a
  torsion-free nilpotent group.
\end{lemma}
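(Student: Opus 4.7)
The plan is to show that $G$ is discriminating on $N$ generators for $N$ sufficiently large, so that Lemma~\ref{freesubgroupsvarieties} yields $G\conv\relfree_N(\variety)$, where $\variety$ denotes the variety generated by $G$ (by hypothesis, the same as the one generated by $G/T$, with $T:=\Torsion(G)$) and $\relfree_N(\variety)$ is the rank-$N$ relatively free group in $\variety$. This $\relfree_N(\variety)$ will then be torsion-free nilpotent, proving the lemma.

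Since $G$ is finitely generated nilpotent, $T$ is a finite normal subgroup and $G/T$ is finitely generated torsion-free nilpotent. By Lemma~\ref{lem:discriminatedbygenerators}, $G/T$ is discriminating on $N$ generators for every $N$ exceeding its minimum number of generators; fix such an $N$ that is additionally at least the minimum number of generators of $G$. The discriminating property of $G/T$ also provides, for each non-trivial $w\in\relfree_N(\variety)$, a homomorphism $\relfree_N(\variety)\to G/T$ not vanishing at $w$; the product of all these embeds $\relfree_N(\variety)$ into a direct product of copies of the torsion-free nilpotent group $G/T$, making $\relfree_N(\variety)$ itself torsion-free nilpotent.

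To show $G$ is discriminating on $N$ generators, let $\mathscr{W}$ be any finite set of non-identities of $\variety$ in $N$ letters. Starting from an arbitrary $N$-element generating set $\{h_1,\dots,h_N\}$ of $G$, I would replay inside $G$ the construction from the proof of Lemma~\ref{lem:discriminatedbygenerators}: apply Nielsen transformations (which act as elementary matrices on the abelianization) and multiplications by suitable commutators in $[G,G]$, arranged so that the images $\{\bar h_i\}$ in $G/T$ become the fast-growing discriminating generators produced by Lemmas~\ref{rapidgenerators} and~\ref{polynomials}. Both types of operation preserve $G$-generation---Nielsen transformations tautologically, and $[G,G]$-multiplications because $[G,G]\subseteq\Phi(G)$ and the Frattini subgroup consists of non-generators. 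The resulting tuple $\{g_1,\dots,g_N\}$ then generates $G$ and satisfies, for each $w\in\mathscr{W}$, $w(\bar g_1,\dots,\bar g_N)\neq1$ in $G/T$; hence $w(g_1,\dots,g_N)\notin T$ and is in particular non-trivial in $G$.

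The principal obstacle is transferring the Mal'cev-coordinate construction of Lemma~\ref{lem:discriminatedbygenerators}, stated there for torsion-free nilpotent groups, into the possibly-torsion group $G$. Carrying out all modifications inside $G$ (rather than first manipulating generators of $G/T$ and lifting them a posteriori, which would risk losing generation of $G$) preserves $G$-generation, while the discrimination condition, tested after projection to $G/T$, is unaffected by the torsion.
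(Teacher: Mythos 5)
Your strategy diverges from the paper's. You aim to prove directly that $G$ is discriminating on $N$ generators and then invoke Lemma~\ref{freesubgroupsvarieties}, which would in one step yield $G\conv\relfree_N$ (indeed the sharper conclusion of Proposition~\ref{mainpropositionnilpotent}). The paper instead proves only the stated lemma at this stage, by a limiting argument, and gets the sharper statement afterwards by combining the lemma with Lemma~\ref{lem:discriminatedbygenerators} applied to the torsion-free limit.

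The gap in your argument is the step where you modify the starting generating set $\{h_1,\dots,h_N\}$ of $G$ by Nielsen transformations and by multiplications by commutators so that the images $\bar h_i$ in $G/T$ become \emph{the specific} fast-growing tuple produced by Lemmas~\ref{rapidgenerators} and~\ref{polynomials}. You do not justify why this is achievable. Nielsen transformations act on the abelianization by elementary matrices, and commutator multiplications do not change the abelianization at all; the reachable tuples in $G/T$ therefore form a single Nielsen-equivalence class. Whether this class contains the discriminating tuple of Lemma~\ref{rapidgenerators} is a nontrivial statement about Nielsen equivalence of generating tuples of the finitely generated nilpotent group $G/T$ (and, before that, of its abelianization, which need not be free abelian even though $G/T$ is torsion-free), and you neither prove it nor cite a reference for it. So the crucial claim ``$G$ is discriminating on $N$ generators'' is left unestablished.

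The paper avoids the lifting difficulty by a preliminary reduction: apply Lemma~\ref{lem:verbalquotients} with $\variety$ the variety of abelian groups to replace $G$ by a group (still denoted $G$) with torsion-free abelianization. After this reduction, $G^{\mathrm{ab}}\cong (G/T)^{\mathrm{ab}}$, and since a nilpotent group is generated by a tuple if and only if that tuple generates the abelianization, \emph{every} lift to $G$ of a generating tuple of $G/T$ is automatically a generating tuple of $G$. The paper then discriminates, on a generating set of $G/T$, only the finitely many short words that may evaluate to nontrivial torsion in $G$, lifts arbitrarily to $G$, and passes to a limit in $\markedgroups$; this limit is torsion-free precisely because those words were arranged not to vanish. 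If you want to salvage your approach, you would either need to perform the same preliminary reduction (after which any lift generates, and no Nielsen manipulation is required), or prove the Nielsen-equivalence statement you are implicitly using.
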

\begin{proof}
  Infinite, finitely generated nilpotent groups have infinite
  abelianization; we apply Lemma~\ref{lem:verbalquotients} to $G$ and
  the variety $\variety$ of abelian groups. Since every infinite
  abelian group \preforms\ a free abelian group, we assume
  without loss of generality that $G$ has torsion-free abelianization.

  Assume that $G$ is $k$-generated, and consider $N>k$ and $R>0$.
  Consider the set $\mathscr W(R)$ of all words $w$ of length at most
  $R$ in $N$ variables such that, for some $g_1,\dots,g_N\in G$, the
  evaluation $w(g_1,\dots,g_N)$ is a non-trivial torsion element in
  $G$. In particular, such $w$ are not identities in $G$. Since $G$
  and $G/\Torsion(G)$ generate the same variety, none of these words
  is an identity in $G/\Torsion(G)$.  Since $G/\Torsion(G)$ is a
  torsion-free nilpotent group, Lemma~\ref{lem:discriminatedbygenerators}
  implies that $\mathscr W(R)$ is discriminated by an $N$-element
  generating set of $G/\Torsion(G)$, which we denote by $S'_R$. Let
  $S_R$ denote a preimage in $G$ of $S'_R$.  Since the abelianization
  of $G$ is torsion-free, it is isomorphic (under the natural quotient
  map) to the abelianization of $G/\Torsion(G)$. Therefore, $S_R$
  generates the abelianization of $G$, so generates $G$.
 
  Let $(H,T)$ be an accumulation point of the sequence $(G,S_R)$ in
  the space $\markedgroups$ of marked groups. Observe that $H$ is
  torsion-free. Indeed, by Lemma~\ref{lem:identity}(3) the torsion
  of $H$ imbeds in that of $G$; and if $a$ is a torsion element of
  $G$, then for all $R$ large enough there are words $w\in\mathscr
  W(R)$ that assume the value $a$. By construction of $S_R$, the value
  $a$ is not taken by a word of length $\le R$ in $S_R$, so $a$ does
  not have a limit in $H$.
\end{proof}

\begin{proposition}\label{mainpropositionnilpotent}
  Let $G$ be a $k$-generated nilpotent group, and assume that $G$ and
  $G/\Torsion(G)$ generate the same variety, $\variety$.

  Then, for every $N>k$, the group $G$ \preforms\
  $\relfree_N$.

  Consequently, the connected component of $G$ for the relation
  $\conv$ has diameter $2$.
\end{proposition}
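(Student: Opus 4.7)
The plan is to deduce the proposition from Lemma~\ref{lem:nilprecedestfnil}, which produces a torsion-free nilpotent group in $\variety$ that is \preformed\ by $G$, together with the discrimination machinery of Lemmas~\ref{lem:discriminatedbygenerators} and~\ref{freesubgroupsvarieties}. Rather than invoking these in sequence, the cleanest route is to adapt the construction inside the proof of Lemma~\ref{lem:nilprecedestfnil} so that the limit one extracts is not merely torsion-free, but is in fact the relatively free group $\relfree_N$ itself.

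To carry this out, fix $N>k$ and, as in the proof of Lemma~\ref{lem:nilprecedestfnil}, reduce via Lemma~\ref{lem:verbalquotients} to the case where $G$ has torsion-free abelianization. I then enlarge the ``witness set'' appearing there: let $\mathscr W(R)$ consist of all words of length at most $R$ in $N$ variables which are \emph{not} identities of $\relfree_N$. Since $\relfree_N$ generates the variety $\variety$ whenever $N$ is at least the minimum number of generators of $G$, each such word $w$ fails to be an identity of $\variety$, hence of $G$, hence of $G/\Torsion(G)$. As $G/\Torsion(G)$ is a torsion-free $k$-generated nilpotent group and $N>k$, Lemma~\ref{lem:discriminatedbygenerators} supplies an $N$-element generating set $S'_R$ of $G/\Torsion(G)$ on which every $w\in\mathscr W(R)$ is simultaneously non-trivial. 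Lift $S'_R$ to $S_R\subseteq G$; torsion-freeness of the abelianization of $G$ forces $S_R$ to generate $G$. Any accumulation point $(H,T)$ of $(G,S_R)$ in $\markedgroups$ lies in $\variety$, so all identities of $\relfree_N$ hold on $T$; conversely, by construction no word of $\mathscr W(R)$ vanishes on $T$, so no relation outside the identities of $\relfree_N$ holds in $(H,T)$. Hence $H\cong\relfree_N$, proving $G\conv\relfree_N$.

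For the diameter claim, the point is that for any two groups $G_1,G_2$ in the connected component of $G$, choosing $N$ larger than the ranks of both and applying the first assertion gives $G_i\conv\relfree_N$, so that $G_1$ and $G_2$ are joined by an undirected path of length two through $\relfree_N$. This presupposes that every group in the component still satisfies the hypothesis of the proposition, which is a separate claim: identities propagate along $\conv$ by Lemma~\ref{lem:identity}(1), and combined with Lemma~\ref{lem:comparingverbalsubgroups} on verbal subgroups and the finiteness of torsion in finitely generated nilpotent groups, one shows that the property ``$H$ and $H/\Torsion(H)$ generate the same variety'' is preserved along zigzag paths.

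I expect the main technical obstacle to be ensuring that the accumulation point is precisely $\relfree_N$ rather than some proper quotient: one direction (identities of $\relfree_N$ hold in $H$) is automatic, but the other requires the discrimination argument to survive the lift from $G/\Torsion(G)$ to $G$ without any unintended relations creeping back in via torsion elements. A secondary subtlety lies in pinning down the connected component itself for the diameter claim, but once one knows that every group in the component satisfies $H\conv\relfree_M$ for $M$ sufficiently large, the bound of two on the diameter is immediate.
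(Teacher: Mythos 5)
Your proof is correct and is essentially a streamlined version of the paper's argument. The paper proceeds through three lemmas in sequence: Lemma~\ref{lem:nilprecedestfnil} to replace $G$ by a torsion-free nilpotent group it \preforms, then Lemma~\ref{lem:discriminatedbygenerators} to conclude that this group is discriminating on $N$ generators, and finally Lemma~\ref{freesubgroupsvarieties} to extract $\relfree_N$ as an accumulation point. You instead carry out a single accumulation-point argument, enlarging the witness set $\mathscr W(R)$ of Lemma~\ref{lem:nilprecedestfnil} from ``words of length $\le R$ that take torsion values'' to ``all words of length $\le R$ in $N$ variables that are not identities of $\relfree_N$'', discriminating these in $G/\Torsion(G)$, and lifting. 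The limit is then pinned down to be exactly $\relfree_N$ in one step, rather than being shown first to be torsion-free and only afterwards identified via Lemma~\ref{freesubgroupsvarieties}. This has a genuine small advantage: the paper's route implicitly needs the torsion-free group $H$ produced by Lemma~\ref{lem:nilprecedestfnil} to still generate $\variety$ (it does, because $H$ is finitely presented nilpotent, hence $G$ is a quotient of $H$ by Lemma~\ref{lem:fp}), a point the paper leaves unstated; your choice of witness set sidesteps this entirely. The one step you should double-check in your own write-up is the justification that $\relfree_N$ generates $\variety$ for $N>k$: the cleanest reason is that $G$ is an $N$-generated group in $\variety$, hence a quotient of $\relfree_N$, so the variety of $\relfree_N$ contains that of $G$, and the reverse inclusion is automatic.

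On the diameter claim you are candid that your argument is a sketch; so is the paper's (the stated proof only proves the first assertion). The missing ingredient is exactly what you identify: one must show that every finitely generated group in the connected component of $G$ — which by Corollary~\ref{cor:nilpotentcomponents} is precisely the set of groups generating $\variety$ — still satisfies the hypothesis that it and its torsion quotient generate the same variety. Your proposed tools (Lemma~\ref{lem:identity}(1), Lemma~\ref{lem:comparingverbalsubgroups}, finite torsion in finitely generated nilpotent groups) are plausible but the propagation argument is not spelled out; since the paper itself leaves this implicit, the state of your proof matches the state of the paper's.
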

\begin{proof}
  By Lemma~\ref{lem:nilprecedestfnil}, we may assume that $G$ is
  torsion-free nilpotent. By
  Lemma~\ref{lem:discriminatedbygenerators}, the group $G$ is
  discriminating on $N$ generators. By
  Lemma~\ref{freesubgroupsvarieties}, the group $G$ precedes
  $\relfree_N$.
\end{proof}

\begin{remark}
  The assumption that $G$ is torsion-free is essential for the first
  claim of the proposition above. Consider indeed the variety of
  nilpotent groups of nilpotent class $2$ in which every commutator is
  of order $p$. This variety is generated, e.g., by the group
  $N_{2,2,p}$ from Example~\ref{ex:component22p}. However, there does
  not even exist any group \preformed\ by $G$ and containing
  $\relfree_3$ as a subgroup, because the torsion $\relfree_3$ is
  larger than the torsion in $N_{2,2,p}$.
\end{remark}

\begin{remark} 
  Let $\variety$ be a nilpotent variety. Then, if
  $\relfree_m\conv\relfree_n$, then $m\le n$.
\end{remark}
\begin{proof}
  Since $\relfree_n$ is finitely presented, $\relfree_m$ is a quotient
  of $\relfree_n$. The abelianization of $\relfree_n$ is
  $n$-generated, so the abelianization of any quotient of $\relfree_n$
  is also $n$-generated, so $m\le n$.
\end{proof}

\noindent Proposition~\ref{mainpropositionnilpotent} has the following
\begin{corollary}
  Consider a nilpotent variety $\variety$ generated by a group $G$
  such  that $G/Torsion(G)$ also generates
  $\variety$. Let $c$ be the nilpotency class of $G$.

  For $m,n>c$, we have $\relfree_m\conv\relfree_n$ if and only if
  $m\le n$.
\end{corollary}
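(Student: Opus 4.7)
For the forward direction (which is essentially the remark immediately preceding the corollary): if $\relfree_m\conv\relfree_n$, then Lemma~\ref{lem:fp} shows $\relfree_m$ is a quotient of $\relfree_n$, and comparing abelianizations --- which have exactly $m$ and $n$ indecomposable cyclic factors respectively in any nontrivial nilpotent variety --- forces $m\le n$.

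For the converse, the case $m=n$ is reflexivity of $\conv$, so I assume $m<n$. My plan is to apply Proposition~\ref{mainpropositionnilpotent} to the group $G:=\relfree_m$ itself, with the proposition's parameter $N$ set to $n$. That proposition asserts that any $k$-generated nilpotent group whose torsion-free quotient generates the same variety as itself \preforms\ the relatively free group of rank $N$ in that variety, for every $N>k$. Since $\relfree_m$ is $m$-generated and $n>m$, applying the proposition will yield $\relfree_m\conv\relfree_n$, provided that $\variety(\relfree_m)=\variety$ (so that the relatively free group of rank $n$ in $\variety(\relfree_m)$ is genuinely our target $\relfree_n$), and provided the torsion-free quotient of $\relfree_m$ generates this same variety.

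The verification of hypotheses thus reduces to two varietal equalities: $\variety(\relfree_m)=\variety$ and $\variety(\relfree_m/\Torsion(\relfree_m))=\variety(\relfree_m)$. For the first, I invoke the classical fact that a nilpotent variety of class $c$ is generated by its free group of any rank exceeding~$c$; this is precisely the role of the hypothesis $m>c$ in the statement. For the second, since $G/\Torsion(G)$ is torsion-free and generates $\variety$, and since for $m$ sufficiently large $G/\Torsion(G)$ is a quotient of $\relfree_m$ (as it is an $m$-generated group in $\variety$), the corresponding surjection factors through $\relfree_m/\Torsion(\relfree_m)$; this exhibits $G/\Torsion(G)$ as a quotient of $\relfree_m/\Torsion(\relfree_m)$, so the variety the latter generates contains $\variety$, hence equals $\variety$.

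The step I expect to be the principal obstacle is the first varietal equality $\variety(\relfree_m)=\variety$ for $m>c$. This amounts to the assertion that the identities of a class-$c$ nilpotent variety are already detected on more than $c$ generators, which should follow by writing an arbitrary identity modulo $\gamma_{c+1}$ as a product of basic commutators of length at most $c$, each involving at most $c$ distinct letters, and substituting these letters by generators of $\relfree_m$. Once both varietal equalities are in hand, Proposition~\ref{mainpropositionnilpotent} delivers $\relfree_m\conv\relfree_n$ directly.
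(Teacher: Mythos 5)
Your proof follows the paper's own route: apply Proposition~\ref{mainpropositionnilpotent} with $G:=\relfree_m$, using Neumann's Theorem~35.11 for the fact that $\relfree_m$ generates $\variety$ once $m\ge c$; the forward direction is the preceding Remark. You are in fact more explicit than the paper in flagging the second hypothesis of the Proposition --- that $\relfree_m$ and $\relfree_m/\Torsion(\relfree_m)$ generate the same variety --- as something that needs checking. But the argument you offer for it has a gap precisely where you hedge with ``for $m$ sufficiently large'': you need $G/\Torsion(G)$ to be a quotient of $\relfree_m$, which requires $m$ to be at least the minimal number of generators of $G/\Torsion(G)$, and nothing in the hypotheses bounds that quantity by $c+1$. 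The corollary, however, asserts the conclusion for every $m>c$.

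The repair is that the second hypothesis is actually vacuous here: $\relfree_m$ is torsion-free. Writing $H:=G/\Torsion(G)$, the equality $\variety=\variety(H)$ means the $m$-variable identities of $\variety$ are exactly the $m$-variable identities of $H$, i.e.\ $\variety(\free_m)=\bigcap\ker(\free_m\to H)$; hence $\relfree_m$ is residually $H$, and a residually torsion-free group is torsion-free. So $\Torsion(\relfree_m)=1$ for every $m$, the hypothesis holds trivially, and Proposition~\ref{mainpropositionnilpotent} applied to $\relfree_m$ gives $\relfree_m\conv\relfree_n$ whenever $c<m<n$. With that correction, the rest of your argument --- the forward direction via Lemma~\ref{lem:fp} and abelianizations, and the reduction of the classical fact to Neumann's book --- is correct and matches what the paper does.
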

\begin{proof}
  It is known from~\cite{neumann:varieties}*{Theorem~35.11} that
  $\relfree_m$ generates $\variety$ as soon as $m\ge c$.
\end{proof}

\begin{remark}
  Consider a nilpotent variety $\variety$ generated by a torsion-free
  nilpotent group. For small $m,n$, the free groups $\relfree_m$ and
  $\relfree_n$ need not belong to the same component. For example, if
  $\variety$ the variety of nilpotent groups of class $5$, then
  $\relfree_2$ does not generate $\variety$, since it is metabelian
  but $\relfree_3$ is not. See~\cite{neumann:varieties}*{35.33} for
  details.
\end{remark}

\subsection{When generators of a variety lie in different components}
We will see that, if $G$ and $G/\Torsion(G)$ lie in different
varieties, then the variety of $G$ contains infinitely many connected
components under $\conv$.

\begin{lemma}\label{finiteverbal}
  Let $G$ be a nilpotent group such that $G$ and $G/\Torsion(G)$
  generate different varieties. There exists a variety $\variety$ such
  that the verbal subgroup $\variety(G)$ is non-trivial and finite.
\end{lemma}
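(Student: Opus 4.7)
The plan is to take $\variety$ to be the variety generated by the torsion-free quotient $G/\Torsion(G)$, and show that this choice satisfies both requirements of the lemma. The one structural input I would use is the standard fact that in a finitely generated nilpotent group, the torsion subgroup $\Torsion(G)$ is finite, a consequence of polycyclicity together with the maximum condition on subgroups (equivalently: $\Torsion(G)$ is a finitely generated torsion nilpotent group, hence finite).

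To show that $\variety(G)$ is finite, I would apply the elementary observation that if $\phi\colon A\twoheadrightarrow B$ is a surjective homomorphism and $B\in\variety$, then $\variety(A)\subseteq\ker\phi$: for any identity $w\in\variety$ and any $a_1,a_2,\dots\in A$, one has $\phi(w(a_1,a_2,\dots))=w(\phi(a_1),\phi(a_2),\dots)=1$. Applied to the canonical projection $G\twoheadrightarrow G/\Torsion(G)$, which has the quotient in $\variety$ by construction, this yields $\variety(G)\subseteq\Torsion(G)$, and combined with the finiteness of $\Torsion(G)$ gives finiteness of $\variety(G)$.

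For non-triviality, I would observe that $\variety(G)=1$ is by definition equivalent to $G\in\variety$, i.e., to $G$ satisfying every identity of $G/\Torsion(G)$. Conversely, since $G/\Torsion(G)$ is a quotient of $G$, it automatically satisfies every identity of $G$. Hence $\variety(G)=1$ would force $G$ and $G/\Torsion(G)$ to satisfy exactly the same identities, i.e., to generate the same variety, contradicting the hypothesis of the lemma. There is no serious obstacle to overcome: once the finiteness of $\Torsion(G)$ is invoked, the whole argument is a short unwinding of the definitions of verbal subgroup and of ``variety generated by a group''.
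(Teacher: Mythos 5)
Your proof is correct and follows essentially the same path as the paper's: bound the verbal subgroup $\variety(G)$ inside the (finite) torsion subgroup, and get non-triviality from the hypothesis that the varieties of $G$ and $G/\Torsion(G)$ differ. The only difference is the choice of $\variety$: the paper picks a single identity $w$ satisfied by $G/\Torsion(G)$ but not by $G$ and sets $\variety=\{w\}$, whereas you take the full variety generated by $G/\Torsion(G)$; both choices yield the same containment $\variety(G)\subseteq\Torsion(G)$ and the same non-triviality argument, so this is a cosmetic rather than a substantive variation.
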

\begin{proof}
  First recall that torsion elements of a nilpotent group $G$ form a
  finite subgroup of $G$.  Since $G$ and $G/\Torsion(G)$ generate
  different varieties, there exists an identity $w$ of $G/\Torsion(G)$
  that is not an identity in $G$. Set $\variety=\{w\}$; then
  $\variety(G)$ is non-trivial and is contained in the torsion of $G$,
  hence finite.
\end{proof}

\begin{corollary} \label{cor:nilpotentcomponents}
  Let $G$ be a nilpotent group and let $\variety$ be the variety that
  it generates. The connected component of $G$ coincides with the set
  of groups generating $\variety$ if and only if $G/\Torsion(G)$
  generates $\variety$. If this is not the case, the set of groups
  generating $\variety$ consists of infinitely many connected
  components for the relation $\conv$.
\end{corollary}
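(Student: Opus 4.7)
\emph{Plan.} The statement is a biconditional, which I will split into two directions; the key structural fact throughout is that $\variety$ is a nilpotent variety, so every finitely generated group in $\variety$ is itself nilpotent and hence finitely presented. The two main inputs are Proposition~\ref{mainpropositionnilpotent} (giving $G\conv\relfree_N$ when $G/\Torsion(G)$ generates $\variety$) and Lemma~\ref{finiteverbal} (providing a finite, non-trivial verbal subgroup to distinguish groups).

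\emph{Direction 1 (if $G/\Torsion(G)$ generates $\variety$).} I plan to show the connected component of $G$ equals the set of infinite finitely generated groups generating $\variety$. For the inclusion ``component $\subseteq$ groups generating $\variety$'', I argue by induction along a chain $G=H_0,H_1,\dots,H_m$ with successive groups related by $\conv$ or its reverse. Lemma~\ref{lem:identity}(1) transports identities forward along $\conv$, and~(2) transports positive-universal statements backward when the target is finitely presented; combined with the structural fact above, this shows inductively that each $H_i$ is finitely presented, lies in $\variety$, and has the same identities as $H_{i-1}$, so generates $\variety$. For the reverse inclusion, Proposition~\ref{mainpropositionnilpotent} yields $G\conv\relfree_N$ for $N$ larger than the rank of $G$, where $\relfree_N$ is the relatively free rank-$N$ group in $\variety$; applying the same proposition to any other $H$ generating $\variety$ would place $G$ and $H$ in the common component of $\relfree_N$.

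The main obstacle in Direction~1 is that Proposition~\ref{mainpropositionnilpotent} requires $H/\Torsion(H)$ to generate $\variety$. I plan to prove a self-contained lemma showing this is automatic in the present setting: if $\variety$ is generated by torsion-free groups (which holds here, since $G/\Torsion(G)$ does), then any $H\in\variety$ generating $\variety$ has $H/\Torsion(H)$ generating $\variety$. Given such an $H$ and an identity $w$ of $H/\Torsion(H)$, the values of $w$ on $H$ lie in $\Torsion(H)$, which is finite of some exponent $N$, so $w^N$ is an identity of $H$ and hence of $\variety$; on any torsion-free $G_0\in\variety$ the equation $w^N(G_0)=1$ together with torsion-freeness forces $w(G_0)=1$, so $w$ is an identity of every torsion-free group in $\variety$, and therefore of $\variety$ itself.

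\emph{Direction 2 (if $G/\Torsion(G)$ does not generate $\variety$).} By Lemma~\ref{finiteverbal} there is a variety $\variety_0$ with $\variety_0(G)$ non-trivial finite; set $c=|\variety_0(G)|\ge 2$. I will use the direct powers $G_k:=G^k$ for $k\ge 1$: each is nilpotent, generates $\variety$, and since verbal subgroups distribute over direct products, $|\variety_0(G_k)|=c^k$. I claim $G_k$ and $G_\ell$ lie in different components of $\conv$ whenever $k\ne\ell$, which yields infinitely many components inside the set of groups generating $\variety$. By the induction of Direction~1 (whose conclusion applies identically with $G$ replaced by $G_k$), every group in the component of $G_k$ is finitely presented; Lemma~\ref{lem:comparingverbalsubgroups} applied across each adjacent pair in any chain then forces $|\variety_0|$ to be invariant along the component. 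Hence $c^k=|\variety_0(G_k)|=|\variety_0(G_\ell)|=c^\ell$, so $k=\ell$.
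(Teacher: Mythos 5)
Your proof is correct and follows the same skeleton as the paper's: dispose of the case $G/\Torsion(G)$ generates $\variety$ via Proposition~\ref{mainpropositionnilpotent}, then in the remaining case use Lemma~\ref{finiteverbal} to obtain a variety $\variety_0$ with $\variety_0(G)$ finite non-trivial, observe that verbal subgroups of direct products multiply, and invoke Lemma~\ref{lem:comparingverbalsubgroups} to separate the powers $G^k$ into distinct components. Where you go further than the paper's very terse proof is in making explicit two steps that the paper silently subsumes under ``follows from Proposition~\ref{mainpropositionnilpotent}'': (i) the induction along a chain, using Lemma~\ref{lem:identity}(1)--(2) together with the fact that a nilpotent variety contains only finitely presented f.g.\ groups, to show that every group in the component of $G$ (or of $G^k$) is finitely presented and generates $\variety$ — this is what makes the component-counting argument for $\#\variety_0$ airtight in Direction~2 as well; and (ii) the auxiliary lemma that if $\variety$ is generated by torsion-free groups then any f.g.\ $H$ generating $\variety$ has $H/\Torsion(H)$ generating $\variety$ (using that $\Torsion(H)$ is finite of exponent $N$, so $w^N$ is an identity of $\variety$, and torsion-freeness promotes $w^N$ to $w$), which is exactly the hypothesis needed to apply the Proposition to an arbitrary $H$ generating $\variety$ in the reverse inclusion. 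Both of these additions are correct and welcome.
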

\begin{proof}
  If $G/\Torsion(G)$ generates $\variety$, the corollary follows from
  Proposition~\ref{mainpropositionnilpotent}. Assume now that
  $G/\Torsion(G)$ does not generate $\variety$. Then by
  Lemma~\ref{finiteverbal} there exists a variety $\mathcal W$ such
  that the verbal subgroup $\mathcal W(G)$ is non-trivial and finite.
  Observe that a verbal subgroup of a direct product is the product of
  itsx verbal subgroups.  Therefore, for all $n\in\N$, the verbal
  subgroups $\mathcal W(\bigtimes_n G)$ are non-isomorphic. By
  Lemma~\ref{lem:comparingverbalsubgroups}, all the groups $\bigtimes_n G$
  lie in distinct connected components. However, they all generate
  $\variety$.
\end{proof}

\subsection{Examples and illustrations}
In the variety of abelian groups, the following is true: if $G$ is a
quotient of $H$ and the torsion of $H$ imbeds in the torsion of $G$
under the quotient map, then $G\conv H$. This is not true anymore
among nilpotent groups.

\begin{example}\label{ex:n22xn22}
  Consider the groups $G=N_{2,2}$ and $H=N_{2,2}\times N_{2,2}$, see
  Example~\ref{ex:component22p}. Then both $G$ and $H$ are
  torsion-free, and $G$ is a quotient of $H$. However, $G$
  doesn't \preform\ $H$.
\end{example}
\begin{proof}
  Consider the following universal statement:
  \[\forall
  a,b,c,z(([a,b]=1\wedge[a,c]=1\wedge[b,c]\neq1)\Rightarrow[a,z]=1).\]
  It states that if $a$ commutes with two non-commuting elements $b$
  and $c$, then $a$ is central.

  This property does not hold in $H$: take $a,z$ the generators of the
  first $N_{2,2}$ and $b,c$ the generators of the second one.

  On the other hand, in $N_{2,2}$, this property holds. Indeed if
  $[a,b]=1$ then the image of $\{a,b\}$ in
  $N_{2,2}/Z(N_{2,2})\cong\Z^2$ lies in a cyclic subgroup; Similarly
  the image of $\{a,c\}$ lies in a cyclic subgroup; so either $a$ is
  central or the image of $\{b,c\}$ lies in a cyclic subgroup.
\end{proof}


\begin{example}
  As soon as the nilpotency class is allowed to grow beyond $4$, there
  exist nilpotent varieties whose free groups are not virtually free
  nilpotent. For example, consider the group
  $G=\free_3/\langle\free_3'',\gamma_5(\free_3)\rangle$. This group is
  nilpotent of class $4$, and is an iterated central extension of $29$
  copies of $\Z$. The $3$-generated free nilpotent groups of class $3$
  and $4$ have respectively $14$ and $32$ cyclic factors, so $G$ is
  not commensurable to either. This is easily seen in the (Malcev) Lie
  algebra associated with these groups.
\end{example}

\begin{lemma}
  Let $G$ be a non-virtually abelian nilpotent group. Then the
  connected component of $G$ is not isomorphic, as partially ordered
  set, to the component of abelian groups.
\end{lemma}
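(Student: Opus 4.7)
The plan is to distinguish the two posets by a first-order order-theoretic property.  I would use:
$(\ast)$ \emph{there exists a minimal element $m$ of the poset whose upward cone $\{B : m \conv B\}$ is linearly ordered.}
Property $(\ast)$ holds in the component of abelian groups: take $m=\Z$.  By Proposition~\ref{prop:abelian}, any $A$ with $A\conv\Z$ must be a torsion-free quotient of $\Z$, hence $A=\Z$, so $\Z$ is minimal; and by Corollary~\ref{cor:tf=>lin}, the groups \preformed\ by $\Z$ are exactly $\{\Z^n : n \ge 1\}$, forming a chain.

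To defeat $(\ast)$ in the component $\mathscr C_G$ of $G$, let $m$ be any minimal element.  Applying Lemma~\ref{lem:comparingverbalsubgroups} to the variety whose verbal subgroup is the derived subgroup yields $|m'|=|G'|=\infty$, so $m$ is itself non-virtually-abelian nilpotent; in particular $m/\Torsion(m)$ is a non-abelian torsion-free nilpotent group of some class $c'\ge2$, whose top central term $\gamma_{c'}(m/\Torsion(m))$ is infinite.  I would exhibit two incomparable elements above $m$.  On one hand, the central-generator construction (choosing a sequence $u_n$ in the central subgroup of $m$ lying over $\gamma_{c'}(m/\Torsion(m))$, with word length tending to infinity) gives $m\conv m\times\Z^k$ for all $k\ge 0$, just as $N_{2,2}\conv N_{2,2}\times\Z$ was argued in the discussion preceding Example~\ref{ex:wreath}.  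On the other hand, Proposition~\ref{mainpropositionnilpotent} gives $m\conv\relfree_N$ for every $N$ exceeding the minimal number of generators of $m$, where $\relfree_N$ is the rank-$N$ free group in the variety of $m$.  Now choose $k$ and $N$ so that the torsion-free rank of $(m\times\Z^k)^{\mathrm{ab}}$ strictly exceeds $N$ (ruling out $m\times\Z^k$ as a quotient of $\relfree_N$) and the rank of $\gamma_2(\relfree_N)$ strictly exceeds the rank of $\gamma_2(m)$ (ruling out $\relfree_N$ as a quotient of $m\times\Z^k$).  Since finitely generated nilpotent groups are finitely presented, Lemma~\ref{lem:fp} then forbids either of these groups from \preforming\ the other; they are incomparable elements above $m$, contradicting linear orderedness of the upward cone.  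Running this argument for every minimal $m$ defeats $(\ast)$ and concludes the proof.

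The main obstacle will be carrying out the ``$m\conv m\times\Z^k$'' step in higher nilpotency class, where $Z(m)$ itself can be finite even though $m'$ is infinite, so strictly central elements of infinite order need not exist.  The fix is to pull $\gamma_{c'}(m/\Torsion(m))$ back to $m$ and to let the approximating generators $u_n$ be chosen so that their commutators with a fixed generating set, which lie in $\Torsion(m)$, fall outside the ball of radius $R$ in the Cayley graph under examination; this is exactly in the spirit of the discriminating-sequence constructions used in the proofs of Lemma~\ref{freesubgroupsvarieties} and Lemma~\ref{lem:nilprecedestfnil}.  Once this is in place, the remaining arithmetic choice of $k$ and $N$ is routine.
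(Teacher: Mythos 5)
Your distinguishing property $(\ast)$ is sound (and closely parallels the paper's, which uses the slightly more robust ``for every $A$ there exists $B$ with $A\conv B$ whose upward cone is linearly ordered''), and your reduction to exhibiting two incomparable elements above any candidate $m$ is exactly the paper's strategy. However, there is a genuine gap in how you produce the second incomparable element. You invoke Proposition~\ref{mainpropositionnilpotent} to get $m\conv\relfree_N$, but that proposition carries the hypothesis that $m$ and $m/\Torsion(m)$ generate the same variety, and there is no reason a minimal element $m$ of the component of a non-virtually-abelian nilpotent $G$ should satisfy it. For instance, take $m=\free_2/\gamma_4(\free_2)\gamma_3(\free_2)^p$: this is a non-virtually-abelian nilpotent group of class $3$, $\Torsion(m)=\gamma_3(m)\cong(\Z/p)^2$, and $m/\Torsion(m)=N_{2,2}$ has class $2$, so $\variety(m)\neq\variety(m/\Torsion(m))$. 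Indeed, Corollary~\ref{cor:nilpotentcomponents} shows that for such $m$ the component does \emph{not} contain $\relfree_N$, so $m\conv\relfree_N$ is actually false and your construction breaks down.

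The paper sidesteps this precisely by not reaching for the relatively free group at all: since $[G,G]$ is infinite, one has a surjection $G\twoheadrightarrow N_{2,2}$, and $N_{2,2}\conv N_{2,k}$; then Lemma~\ref{lem:quotients} produces $H_1$ with $G\conv H_1$ and $N_{2,k}$ a quotient of $H_1$, forcing $\gamma_2(H_1)/\gamma_3(H_1)$ to have large rank. Replacing your ``$m\conv\relfree_N$'' step by this argument would close the gap, and the rest of your plan goes through as written. Two smaller remarks: your worry that $Z(m)$ may contain no infinite-order element is in fact unfounded — an infinite finitely generated nilpotent group always has an infinite-order central element (take an infinite-order central element modulo $\gamma_c(m)$ and raise it to a power killing its commutators with generators, which lie in the finite central group $\gamma_c(m)$), so the construction $m\conv m\times\Z^k$ works directly without the pull-back device you describe; and your restriction to \emph{minimal} $m$ is unnecessary — the argument, once repaired, works for every element of the component, as in the paper.
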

\begin{proof}
  In the component of abelian groups, the following holds: for any $A$
  there exists $B$ with $A\conv B$ and such that the set of groups
  that are \preformed\ by $B$ is linearly ordered.  We claim that
  the connected component of $G$ does not have this property.

  More precisely, for any non-virtually abelian nilpotent $G$, we
  construct incomparable groups $H_1,H_2$ that are both \preformed\ by
  $G$.

  Since $G$ is not virtually abelian, $[G,G]$ is infinite. Then both
  $G$ and $[G,G]$ have infinite abelianization, so that $G$ maps onto
  $N_{2,2}$, the free nilpotent group of class $2$ on $2$
  generators. Since $N_{2,2}\conv N_{2,k}$ for all $k\ge2$, there
  exists by Lemma~\ref{lem:quotients} a group $H_1$ such that
  $\gamma_2(H_1)/\gamma_3(H_1)$ has arbitrarily large rank, in
  particular rank larger than that of $\gamma_2(G)/\gamma_3(G)$. Set
  then $H_2=G\times\Z^d$ for $d$ larger than the rank of
  $H_1/\gamma_2(H_1)$. Then $H_1$ is not a quotient of $H_2$, because
  $\gamma_2(H_1)/\gamma_3(H_1)$ is not a quotient of
  $\gamma_2(H_2)/\gamma_3(H_2)$; and $H_2$ is not a quotient of $H_1$,
  because $H_2/\gamma_2(H_2)$ is not a quotient of
  $H_2/\gamma_2(H_2)$.
\end{proof}

\section{Imbeddability of orders. Solvable groups}

We characterize the preorders (transitive, reflexive relations) that
can be imbedded in the preorder of groups up to isomorphism, under the
relation $\conv$. We show in this manner that $\conv$ has a rich
structure, even when restricted to solvable groups of class $3$.

In this section, we view $\conv$ as a preorder on $\markedgroups$,
defined by $(G,S)\conv(H,T)$ if and only if $G\conv H$. For $X$ a set,
we denote by $\mathcal P(X)$ the family of subsets of $X$.
\begin{proposition}\label{prop:order}
  Let $\mathscr B$ be a countably infinite set, and let $\mathscr X$
  have the cardinality of the continuum. Put on $\mathcal P(\mathscr
  B)\times\mathscr X$ the preorder
  \[(X,c)\precsim(Y,c')\text{ if and only if }X\supseteq Y.
  \]
  Then the preorders $(\markedgroups,\conv)$ and $(\mathcal P(\mathscr
  B)\times\mathscr X,\precsim)$ imbed into each other.
\end{proposition}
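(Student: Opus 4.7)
The plan is to build the two embeddings separately, the easy one via marked balls and the hard one by an explicit construction in solvable groups.

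\emph{Direction 1: $(\markedgroups,\conv)\hookrightarrow(\mathcal P(\mathscr B)\times\mathscr X,\precsim)$.}
Let $\mathscr B$ be the (countable) set of all finite rooted edge-labelled graphs up to isomorphism, i.e.\ the set of possible ``marked balls''. For each finitely generated group $G$, set
\[Y(G):=\{\cay{G}{S'}\cap B(1,R)\colon R\in\N,\;S'\text{ a finite generating set of }G\}\subseteq\mathscr B.\]
Unwinding Definition~\ref{def:precede} via Lemma~\ref{choiceofgen} yields the tautological reformulation $G\conv H\iff Y(H)\subseteq Y(G)$. Since $|\markedgroups|\le 2^{\aleph_0}=|\mathscr X|$, fix any injection $\iota\colon\markedgroups\hookrightarrow\mathscr X$ and set $\phi(G,S):=(Y(G),\iota(G,S))$. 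Then $\phi$ is injective, and $\phi(G,S)\precsim\phi(H,T)$ is equivalent to $Y(G)\supseteq Y(H)$, hence to $G\conv H$.

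\emph{Direction 2: $(\mathcal P(\mathscr B)\times\mathscr X,\precsim)\hookrightarrow(\markedgroups,\conv)$.}
For each $X\subseteq\mathscr B$ I would build a solvable group $G_X$ of solubility class~$3$, together with a fixed countable family $(\sigma_b)_{b\in\mathscr B}$ of universal statements designed so that $G_X$ satisfies $\sigma_b$ exactly when $b\notin X$. The necessity $G_X\conv G_Y\Rightarrow X\supseteq Y$ then follows immediately from Lemma~\ref{lem:identity}(1). For sufficiency, given $X\supseteq Y$ and $R\in\N$, I would exhibit a generating set of $G_X$ whose marked $R$-ball coincides with that of a fixed marked generating set of $G_Y$, by pushing the extra failures of $\sigma_b$ for $b\in X\setminus Y$ outside that ball. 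A natural ansatz is to take $G_X$ to be a (verbal or wreath) product over $X$ of building blocks $B_b$, each realising a single failure of $\sigma_b$. The second coordinate $\mathscr X$ is then absorbed by replacing each $G_X$ with a continuum of pairwise non-isomorphic but $\conv$-equivalent variants, exploiting the existence (mentioned in the introduction, e.g.\ via Nekrashevych's examples) of solvable groups admitting a continuum of $\conv$-equivalent cousins.

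The principal obstacle is Direction 2: engineering the $B_b$ and their assembly so that the statements $\sigma_b$ are independent over the variety of solvable groups of class~$3$, the resulting $G_X$ remain of class~$3$ (and, if desired, also \preform\ $\free_3$), and the ``balls-match-balls'' sufficiency step can be performed uniformly in $R$. The last constraint dictates the shape of the $\sigma_b$: they will typically be universal statements asserting non-commutation of specified generators, which can be ``pushed to infinity'' in the Cayley graph by a suitable choice of generating set in $G_X$. Once the family $(G_X)$ is in place, promoting the construction to absorb the $\mathscr X$-factor is a comparatively routine perturbation argument.
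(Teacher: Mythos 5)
Your Direction~1 is correct and is essentially the paper's own argument: identify $\mathscr B$ with the (countable) set of possible marked balls, send $G$ to the set $\mathscr O_G$ of balls that occur for some generating set, observe that $G\conv H$ iff $\mathscr O_H\subseteq\mathscr O_G$, and add an injective second coordinate. Good.

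Direction~2, however, is only a plan, and the plan as stated does not close the gap. You correctly isolate the two halves (necessity via Lemma~\ref{lem:identity}(1) applied to a family of universal sentences $\sigma_b$; sufficiency via ball-matching), but the crucial construction is left undone, and your proposed ansatz would not work. Taking $G_X$ to be a ``(verbal or wreath) product over $X$ of building blocks $B_b$'' is problematic precisely when it matters: $X$ ranges over \emph{all} subsets of a countably infinite set, so for infinite $X$ such a product is not finitely generated without a further (and nontrivial) packaging device, and you give none. The hard point is exactly the one you flag as ``a comparatively routine perturbation argument'' or ``can be performed uniformly in $R$'': you need a single finitely generated group $G_X$ in which, for \emph{every} finite partial torsion pattern consistent with $X$ and every radius $R$, there is a generating set whose $R$-ball realises that pattern, while at the same time the global torsion of $G_X$ reveals $X$ exactly. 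That requires a concrete mechanism for ``rotating'' an infinite torsion configuration so that any prescribed finite window appears near the identity. The paper achieves this by taking $P=\Z^2$ in Hall's class-$3$ solvable group $H(P)=P\ltimes\overline N_{2,P}$, encoding $X$ as a \emph{prime colouring} $\phi$ of $(\Z^2)_+$ (so that the centre of the quotient $H_\phi$ has $\Z/\phi(z)\Z$ summands), imposing $I$-\emph{universality} so that every finite colouring pattern occurs up to an $\mathbf{SL}_2(\Z)$-change of coordinates (Lemma~\ref{lem:manyIcolourings}), and then realising that change of coordinates as a change of generating set $\{x^ay^b,x^cy^d,a\}$. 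That $\mathbf{SL}_2(\Z)$-symmetry of the acting lattice is the missing idea, not a routine step. Finally, the $\mathscr X$-absorption is also not a black box you can pull off the shelf: the paper produces the continuum by encoding a subset of $\N$ into the choice of matrices $M_i\in\mathbf{SL}_2(\Z)$ used in building the $I$-universal colouring (again Lemma~\ref{lem:manyIcolourings}), which requires having the Hall-type construction in hand in the first place.
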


We note that $(\mathcal P(\mathscr B)\times\mathscr X,\precsim)$ is
the relation obtained by the partial order on subsets of $\mathscr B$
by inclusion; its equivalence classes (strongly connected components)
have the cardinality of the continuum. We also remark that $(\mathcal
P(\mathscr B),\subseteq)$ is isomorphic to $(\mathcal P(\mathscr
B),\supseteq)$, via the map $X\mapsto\mathscr B\setminus X$.

\begin{corollary}\label{cor:order}
  A preorder imbeds in $(\markedgroups/{\cong},\conv)$ if and only if
  it imbeds in $(\mathcal P(\mathscr B)\times\mathscr X,\precsim)$. In
  particular, a partial order imbeds in
  $(\markedgroups/{\cong},\conv)$ if and only if it is realizable by
  subsets of a countable set under inclusion.
\end{corollary}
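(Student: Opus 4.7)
The plan is to derive both statements of the corollary directly from Proposition~\ref{prop:order}, which furnishes preorder imbeddings in both directions between $(\markedgroups,\conv)$ and $(\mathcal P(\mathscr B)\times\mathscr X,\precsim)$. The key preliminary remark is that, since $\conv$ depends only on the underlying group (Lemma~\ref{choiceofgen}), it descends to $\markedgroups/{\cong}$, and any imbedding into $\markedgroups/{\cong}$ lifts to $\markedgroups$ by choosing a marking for each element of the image.

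For the ``only if'' direction of the first assertion, given a preorder imbedding $\phi\colon(X,\precsim)\hookrightarrow(\markedgroups/{\cong},\conv)$, I would lift it to $\tilde\phi\colon X\hookrightarrow\markedgroups$ by selecting a generating set for each group in the image; since the relation $\conv$ is marking-independent, $\tilde\phi$ is still a preorder imbedding. Composing with the imbedding $\markedgroups\hookrightarrow\mathcal P(\mathscr B)\times\mathscr X$ provided by Proposition~\ref{prop:order} then yields the required imbedding into $(\mathcal P(\mathscr B)\times\mathscr X,\precsim)$.

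For the ``if'' direction I would start from an imbedding $\psi\colon X\hookrightarrow\mathcal P(\mathscr B)\times\mathscr X$, compose with the imbedding $\mathcal P(\mathscr B)\times\mathscr X\hookrightarrow\markedgroups$ of Proposition~\ref{prop:order}, and then project via the quotient $\markedgroups\twoheadrightarrow\markedgroups/{\cong}$. Preservation and reflection of the preorder under the quotient are automatic; the one subtlety is that the quotient could a priori identify distinct points of the image. This is exactly what the continuum factor $\mathscr X$ in Proposition~\ref{prop:order} is there for: it absorbs the multiplicity of markings that may sit above a single isomorphism class, and the construction yielding $\mathcal P(\mathscr B)\times\mathscr X\hookrightarrow\markedgroups$ in the proposition is arranged so that distinct points of $\mathcal P(\mathscr B)\times\mathscr X$ lie in distinct isomorphism classes. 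This is where the main obstacle lies, and the verification is effectively built into the proof of the proposition rather than added in the corollary.

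Finally, for the statement about partial orders, I would observe that two elements of $\mathcal P(\mathscr B)\times\mathscr X$ are preorder-equivalent precisely when they share their first coordinate. Hence an \emph{antisymmetric} order imbeds in $(\mathcal P(\mathscr B)\times\mathscr X,\precsim)$ if and only if it imbeds in $(\mathcal P(\mathscr B),\supseteq)$, which via complementation $U\mapsto\mathscr B\setminus U$ is isomorphic to $(\mathcal P(\mathscr B),\subseteq)$. Combined with the first assertion, this gives the characterization of partial orders imbeddable in $(\markedgroups/{\cong},\conv)$ as exactly those realizable by subsets of a countable set under inclusion.
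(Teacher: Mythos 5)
Your ``only if'' direction and the reduction from partial orders to $(\mathcal P(\mathscr B),\subseteq)$ via the first coordinate and complementation are both correct and match the paper's intent. The problem is in the ``if'' direction, and it is exactly at the point you flag as ``the one subtlety.''

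You claim that the imbedding $\mathcal P(\mathscr B)\times\mathscr X\hookrightarrow\markedgroups$ constructed in Proposition~\ref{prop:order} is ``arranged so that distinct points of $\mathcal P(\mathscr B)\times\mathscr X$ lie in distinct isomorphism classes,'' and that this verification is ``effectively built into the proof of the proposition.'' It is not. The proposition's proof only observes that the marked groups $(H_{X,C},\{x,y,a\})$ are pairwise distinct \emph{as points of $\markedgroups$}; it says nothing about whether the abstract groups $H_{X,C}$ and $H_{X,C'}$ can be isomorphic for different colourings $C\neq C'$. Indeed, the two colourings $\phi_{I,C}$ and $\phi_{I,C'}$ could a priori be carried to each other by an affine self-map of $\Z^2$, inducing an isomorphism of standard quotients; ruling this out would require an analysis in the spirit of Philip Hall's count of non-isomorphic groups $H(\Z)$, which the paper does not reproduce. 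So your argument, as written, has an unfilled gap precisely where you concede the obstacle lies.

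The paper closes this gap not in the proposition but in the corollary, by a cardinality argument that you should supply instead of the unverified assertion. Each isomorphism class of finitely generated groups contributes only countably many points to $\markedgroups$ (a countable group has only countably many finite ordered generating sets), whereas each $\precsim$-equivalence class $\{X\}\times\mathscr X$ has cardinality the continuum. Hence, even after collapsing along isomorphism, the image of $\{X\}\times\mathscr X$ in $\markedgroups/{\cong}$ still has cardinality the continuum. Since groups coming from different $X$'s are distinguished by their torsion (Lemma~\ref{lem:order1}), one may choose, for each $X$, a continuum-sized $\mathscr K_X\subseteq\mathscr X$ on which $C\mapsto[H_{X,C}]$ is injective, and then reparametrize to obtain a preorder imbedding of $\mathcal P(\mathscr B)\times\mathscr K$ (with $\mathscr K$ of cardinality continuum, hence order-isomorphic to $\mathcal P(\mathscr B)\times\mathscr X$) into $\markedgroups/{\cong}$. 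Replacing your sentence ``the verification is effectively built into the proof of the proposition'' with this counting argument would complete the proof.
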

\begin{proof}
  Proposition~\ref{prop:order} yields imbeddings between
  $\markedgroups$ and $\mathcal P(\mathscr B)\times\mathscr X$. We
  therefore have an imbedding of $\markedgroups/{\cong}$ into
  $\mathcal P(\mathscr B)\times\mathscr X$.

  Conversely, isomorphism classes of groups in $\markedgroups$ are
  countable, because there are countably many homomorphisms between
  finitely generated groups. On the other hand, equivalence classes in
  $\mathcal P(\mathscr B)\times\mathscr X$ are uncountable; so there
  exists an imbedding $\mathcal P(\mathscr B)\times\mathscr
  K\to\mathcal P(\mathscr B)\times\mathscr X$, which is the identity
  on its first argument, and such that its image imbeds in
  $\markedgroups/{\cong}$.
\end{proof}

%
%
%

\begin{proof}[Proof of Proposition~\ref{prop:order}, $\hookrightarrow$]
  Consider first the space $\markedgroups$ of marked groups.  For
  every $k,R\in\N$, there are finitely many possibilities for the
  marked graphs $B(1,R)$ of degree $\le k$ that may appear in the
  Cayley graphs of these groups; letting $k,R$ range over $\N$, we
  obtain a countable collection $\mathscr B$ of finite graphs. Now to
  each $(G,S')\in\markedgroups$ we associate the subset $\mathscr O_G$
  of $\mathscr B$ consisting of all marked balls that may appear in
  Cayley graphs $\cay GS$, as we let $S$ range over generating sets of
  $G$. Clearly, $G\conv H$ if and only if $\mathscr
  O_H\subseteq\mathscr O_G$.

  We deduce that $(\markedgroups,\conv)$ imbeds in $(\mathcal
  P(\mathscr B),\subseteq)$. We can make this map injective by taking
  $\mathscr X=\mathcal P(\free)$, and mapping $(G,S)$ to $(\mathscr
  O_G,\ker(\free\twoheadrightarrow G))$, for the natural map
  $\free\twoheadrightarrow G$ presenting $G$.
\end{proof}

To construct the imbedding in the other direction, we begin by a
general construction. Let $P$ be a group. Consider first the free
nilpotent group $N_{2,P}$ of class $2$ on a generating set indexed by
$P$.  Denote its generators by $a_p$ for $p\in P$, and for $p,q\in P$
write $c_{p,q}:=[a_p,a_q]$. We have $c_{p,p}=0$, and
$c_{p,q}=-c_{q,p}$ for all $p,q\in P$. Define then $\overline N_{2,P}$
as the quotient of $N_{2,P}$ by the relations $c_{p,q}=c_{pr,qr}$ for
all $p,q,r\in P$. Finally let $H(P)$ be the semidirect product
$P\ltimes\overline N_{2,P}$, for the action $a_p\cdot q:=a_{pq}$. The
centre of $H(P)$ is generated by the images of the $c_{p,q}$. Let
$P_+\subseteq P\setminus\{1\}$ contain precisely one element out of
each pair $\{p,p^{-1}\}$; then $\{c_{1,p}\}$ freely generates the
centre of $H(P)$. If $S$ be a generating set for $P$, then
$S\cup\{a_1\}$ generates $H(P)$.

The case $P=\Z$ is considered by Hall in~\cite{hall:soluble}*{\S3}; he
introduced this group in order to construct $2^{\aleph_0}$
non-isomorphic solvable finitely generated groups (of solvability
length $3$).
  
In this proof, we take $P=\Z^2$, and for convenience
$(\Z^2)_+=\{(m,n)\in\Z^2\colon m>0\text{ or }m=0<n\}$. We abbreviate
$H(\Z^2)$ as $H$, generated by $\{x,y,a\}$ with $\{x,y\}$ the standard
generators of $\Z^2$ and $a=a_{(0,0)}$.

A \emph{prime colouring} is a function
$\phi\colon(\Z^2)_+\to\{1\}\cup\{\text{primes}\}$; it extends to a
function still written $\phi\colon\Z^2\to\Z$ by $\phi(-z)=-\phi(z)$
and $\phi(0)=0$. Given a prime colouring $\phi$, we define the
\emph{standard quotient} $H_\phi$ of $H$ as the quotient of $H$ by all
the relations $c_{1,z}^{\phi(z)}=1$, as $z$ ranges over
$(\Z^2)_+$. Clearly,
\begin{lemma}\label{lem:order1}
  A standard central quotient $H_\phi$ contains an element of order
  $p$ if any only if there exists $z\in(\Z^2)_+$ such that
  $\phi(z)=p$.

  If $H_\phi\conv H_\psi$, then the set of primes in $\psi(\Z)$ is
  contained in the set of primes in $\phi(\Z)$.\qed
\end{lemma}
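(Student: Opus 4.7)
The plan is to pinpoint the torsion of $H_\phi$ by computing $Z(H)$ and using the quotient $H/Z(H)$; both parts of the lemma then fall out quickly.

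First I would compute the centre of $H$. The defining relations $c_{p,q}=c_{pr,qr}$ of $\overline N_{2,\Z^2}$ say precisely that the shift action $a_p\mapsto a_{pr}$ of $\Z^2$ on $\overline N_{2,\Z^2}$ fixes the commutator subgroup $[\overline N_{2,\Z^2},\overline N_{2,\Z^2}]$ pointwise; since $\overline N_{2,\Z^2}$ is nilpotent of class two, that subgroup is central in $\overline N_{2,\Z^2}$ and is freely abelian on $\{c_{1,z}:z\in(\Z^2)_+\}$. A short direct check in $H=\Z^2\ltimes\overline N_{2,\Z^2}$ then rules out all other central elements: no element of $\overline N_{2,\Z^2}$ with nonzero image in the abelianization $\bigoplus_{p\in\Z^2}\Z$ is $\Z^2$-invariant (the action is by shifts on the free basis $\{a_p\}$), and no element of $H$ with nontrivial $\Z^2$-component is central (it fails to fix some $a_p$). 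Hence $Z(H)=[\overline N_{2,\Z^2},\overline N_{2,\Z^2}]$.

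Second, every relation $c_{1,z}^{\phi(z)}=1$ defining $H_\phi$ from $H$ lies in $Z(H)$; writing $\widetilde Z$ for the image of $Z(H)$ in $H_\phi$, one obtains
\[\widetilde Z=\bigoplus_{z\in(\Z^2)_+}\Z/\phi(z)\Z,\qquad H_\phi/\widetilde Z\cong H/Z(H)\cong\Z^2\ltimes\bigoplus_{p\in\Z^2}\Z\cong\Z\wr\Z^2.\]
The last group is torsion-free, so every torsion element of $H_\phi$ lies in $\widetilde Z$. Since $\phi$ takes values in $\{1\}\cup\{\text{primes}\}$, the elementary divisors of $\widetilde Z$ are precisely the primes occurring as values of $\phi$, which proves the first assertion.

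For the second assertion, the statement ``$H_\phi$ has no element of order $p$'' is the universal sentence $\forall g\,(g^p=1\Rightarrow g=1)$. By Lemma~\ref{lem:identity}(1), every universal sentence true in $H_\phi$ is true in $H_\psi$ whenever $H_\phi\conv H_\psi$; combined with the first assertion, this forces any prime value of $\psi$ to occur also as a value of $\phi$. The only mildly subtle point is the case analysis in the first step ruling out ``mixed'' central elements of $\Z^2\ltimes\overline N_{2,\Z^2}$, but this is an entirely routine computation; everything else is an immediate consequence of the structure of $H_\phi$ and of Lemma~\ref{lem:identity}.
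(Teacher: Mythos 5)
Your argument is correct and fills in exactly the details the paper treats as immediate: the paper has already asserted (without proof) that $Z(H)$ is free abelian on $\{c_{1,z}\colon z\in(\Z^2)_+\}$, and the lemma is marked as self-evident from that. Your computation of $Z(H)$, the identification of $\widetilde Z=\bigoplus_{z\in(\Z^2)_+}\Z/\phi(z)\Z$ as the torsion via the torsion-free quotient $H_\phi/\widetilde Z\cong\Z\wr\Z^2$, and the appeal to Lemma~\ref{lem:identity} for the second assertion are precisely the omitted steps; note that you could equally well have cited Lemma~\ref{lem:identity}(3) directly (finite subgroups of $H_\psi$ imbed in $H_\phi$), which is the paper's stated consequence of the universal-sentence argument you use.
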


Let $I$ be a set of primes. A prime colouring $\phi$ is
\emph{$I$-universal} if its values lie in $I$ and it contains every
finite $I$-colouring, in the following sense: for every $R\in\N$ and
every function $\theta:\{-R,\dots,R\}^2\cap(\Z^2)_+\to I\cup\{1\}$,
there exists $M\in \mathbf{SL}_2(\Z)$ such that $\theta(z)=\phi(M(z))$
for all $z\in\{-R,\dots,R\}^2\cap(\Z^2)_+$.

\begin{lemma}\label{lem:manyIcolourings}
  For every set $I$ of primes of cardinality $\ge2$, there exist a
  continuum of $I$-universal colourings.
\end{lemma}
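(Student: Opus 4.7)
The strategy is to construct $\phi$ by diagonalisation: enumerate all finite target patterns, place a shifted copy of each inside $\phi$ using an element of $\mathbf{SL}_2(\Z)$ as the ``shift'', and arrange the shifts to have pairwise disjoint images so they do not conflict. The freedom in filling the remaining, infinite set of positions with values in $I$ will then produce continuum many $I$-universal colourings.

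More concretely: since each domain $\{-R,\dots,R\}^2\cap(\Z^2)_+$ is finite and $I\cup\{1\}$ is countable, the set of candidate patterns $(R,\theta)$ is countable; enumerate it as $(R_n,\theta_n)_{n\ge1}$. By induction on $n$ I would choose $M_n\in\mathbf{SL}_2(\Z)$ such that the set $M_n\bigl(\{-R_n,\dots,R_n\}^2\bigr)\setminus\{0\}$ is disjoint from every previously placed image $\pm M_m\bigl(\{-R_m,\dots,R_m\}^2\bigr)$ for $m<n$. On each of these disjoint placements, I set $\phi(M_n(z))=\theta_n(z)$ for $z\in\{-R_n,\dots,R_n\}^2\cap(\Z^2)_+$, interpreting the equation via the antisymmetry $\phi(-z)=-\phi(z)$ so that the prime value is recorded at whichever of $\pm M_n(z)$ lies in $(\Z^2)_+$. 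On the remaining, infinite set of positions of $(\Z^2)_+$, I then define $\phi$ arbitrarily with values in $I$. Since $|I|\ge 2$, this yields at least $2^{\aleph_0}$ distinct colourings, all $I$-universal by construction; the upper bound $|I\cup\{1\}|^{\aleph_0}=2^{\aleph_0}$ on the total number of prime colourings then pins the cardinality at exactly the continuum.

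The main obstacle is the disjointness condition in the inductive step: one needs an explicit family of $\mathbf{SL}_2(\Z)$-matrices that sends a prescribed finite box to points arbitrarily far from the origin. A convenient choice is
\[
M_k=\begin{pmatrix}1 & k\\ k & k^2+1\end{pmatrix}\in\mathbf{SL}_2(\Z),
\]
which acts by $(x,y)\mapsto(x+ky,\,k(x+ky)+y)$. For $k>R$, the linear form $x+ky$ vanishes on $\{-R,\dots,R\}^2$ only at the origin, so both coordinates of $M_k(x,y)$ grow at least linearly in $k$ for all $(x,y)\ne 0$ in the box. Consequently, given any finite forbidden set $F\subset\Z^2$, one may choose $k$ so large that $M_k\bigl(\{-R,\dots,R\}^2\setminus\{0\}\bigr)$ lies outside $F$. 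Applying this iteratively with $F=\bigcup_{m<n}\pm M_m\bigl(\{-R_m,\dots,R_m\}^2\bigr)$ produces the desired sequence $(M_n)$, after which the rest of the argument is pure bookkeeping.
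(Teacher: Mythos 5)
Your construction is correct and follows the same broad diagonalisation strategy as the paper: enumerate all finite $I$-colourings and use large elements of $\mathbf{SL}_2(\Z)$ to place each one in a region disjoint from what has already been specified. The explicit family $M_k=\bigl(\begin{smallmatrix}1&k\\k&k^2+1\end{smallmatrix}\bigr)$ you propose works just as well as the paper's $M=\bigl(\begin{smallmatrix}(S+1)(S+R+1)+1&S+1\\S+R+1&1\end{smallmatrix}\bigr)$, since for $k>R$ both coordinates of $M_k(x,y)$ eventually escape any finite box as $k\to\infty$.

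Where you genuinely diverge is in how you extract a continuum of colourings. The paper keeps the fill-in canonical (everything unspecified is set to $1$) and instead threads a subset $C\subseteq\N$ into the construction through the choice of matrices: it fixes a finite-index subgroup $\Gamma\le\mathbf{SL}_2(\Z)$ and requires $M_i\in\Gamma\Leftrightarrow i\in C$, which is compatible with the ``large enough'' freedom in choosing $M_i$. You instead vary the fill-in on the infinitely many positions of $(\Z^2)_+$ left untouched by all placements, using $|I|\ge2$. Both arguments are valid; yours is arguably more elementary, as it does not appeal to the existence of finite-index subgroups of $\mathbf{SL}_2(\Z)$.

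The one point you should make explicit is that the set of unconstrained positions really is infinite --- a priori a countable union of finite placements could exhaust the countable set $(\Z^2)_+$. With your specific family this is automatic: for $k>R$ and $m\ge1$, solving $M_k(x,y)=(m,0)$ forces $x=m(k^2+1)>R$, so $(m,0)$ is never in the image $M_k\bigl(\{-R,\dots,R\}^2\bigr)$; since $\pm M_k$ have the same image on the symmetric box, none of the infinitely many points $(m,0)\in(\Z^2)_+$ is ever constrained. Once this is noted, the argument is complete. (In contrast, the paper's version sidesteps this issue entirely because it does not rely on the leftover positions at all.)
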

\begin{proof}
  One enumerates all finite $I$-colourings, and constructs $\phi$ step
  by step. At each step, only finitely many values of $\phi$ have been
  specified, say within the box $\{-S,\dots,S\}^2$, and we want to
  extend $\phi$ using the partial colouring
  $\theta:\{-R,\dots,R\}^2\cap(\Z^2)_+\to I\cup\{1\}$. A large enough
  $M\in\mathbf{SL}_2(\Z)$ can be found such that
  $M(\{-R,\dots,R\}^2)\cap\{-S,\dots,S\}^2=\{(0,0)\}$, for example
  $M=(\begin{smallmatrix}(S+1)(S+R+1)+1 & S+1\\ S+R+1 &
    1\end{smallmatrix})$. Extend $\phi$ by setting
  $\phi(M(z))=\theta(z)$ for all
  $z\in\{-R,\dots,R\}^2\cap(\Z^2)_+$. Once this is done for all
  $R\in\N$, set finally $\phi(z)=1$ at unspecified values in
  $(\Z^2)_+$.

  To obtain a continuum of different $I$-universal colourings, note
  that countably many matrices $M_0,M_1,\dots$ were used in the
  construction, and the only condition was that they had to be
  sufficiently far away from the identity. Fix a finite-index subgroup
  $\Gamma\subset\mathbf{SL}_2(\Z)$. Then, given a subset
  $C\subseteq\N$, one may choose the matrices $M_i$ as above, and
  additionally such that $M_i\in\Gamma\Leftrightarrow i\in C$. This
  encodes $C$ into the constructed colouring.
\end{proof}

\begin{proof}[Proof of Proposition~\ref{prop:order}, $\hookleftarrow$]
  We are ready to imbed $\mathcal P(\mathscr B)\times\mathscr X$ into
  $\markedgroups$. Without loss of generality, we may assume that
  $\mathscr B$ is the set of primes $\ge10$.

  Given $X\subseteq\mathscr B$, consider $I=\{2,3\}\cup X$. By
  Lemma~\ref{lem:manyIcolourings}, there exist continuously many
  $I$-universal prime colourings $\phi_{I,C}$, parameterized by
  $C\subseteq\N$. Let $H_{X,C}$ be the central quotient
  $H_{\phi_{X\cup\{2,3\},C}}$, and note that the $(H_{X,C},\{x,y,a\})$
  are distinct points of $\markedgroups$ for distinct $(X,C)$. We have
  therefore defined an imbedding $\mathcal P(\mathscr
  B)\times\mathcal P(\N)\to\markedgroups$.

  On the one hand, if $H_{X,C}\conv H_{Y,C'}$, then $X\supseteq Y$ by
  Lemma~\ref{lem:order1}. On the other hand, if $X\supseteq
  Y\subseteq\mathscr B$ and $C,C'\subseteq\N$, then consider the prime
  colourings $\phi,\psi$ with $H_{X,C}=H_\phi$ and $H_{Y,C'}=H_\psi$,
  and choose $T=\{x,y,a\}$ as generating set of $H_\psi$. Consider an
  arbitrary $R\in\N$. Then the restriction of $\psi$ to
  $\{-R,\dots,R\}^2$ is a finite $(\{2,3\}\cup Y)$-colouring, and
  therefore a finite $(\{2,3\}\cup X)$-colouring; so there
  exists $M=(\begin{smallmatrix}a&b\\
    c&d\end{smallmatrix})\in\mathbf{SL}_2(\Z)$ such that $\psi$ and
  $\phi\circ M$ agree on $\{-R,\dots,R\}^2$. Consider the generating
  set $S=\{x^ay^b,x^cy^d,a\}$ of $H_\phi$; then the Cayley graphs
  $\cay{H_\psi}T$ and $\cay{H_\phi}S$ agree on a ball of radius $R$.
\end{proof}

\begin{remark}\label{complicatedtorsion}
  By Lemma~\ref{lem:identity}(3), if $A\conv B$ and $F$ is a finite
  subgroup of $B$, then $F$ imbeds in $A$.  In general, if $F$ is a
  torsion subgroup of $B$, this need not be true.  There exist
  finitely generated solvable groups $A\conv B$, such that $B$
  contains the divisible group $\Q/\Z$, while $A$ does not contain any
  divisible elements.
\end{remark}
\begin{proof}
  We modify the proof of Proposition~\ref{prop:order}. Before, we
  enumerated finite $I$-colourings
  $\theta:\{-R,\dots,R\}^2\cap(\Z^2)_+\to I\cup\{1\}$ and imposed the
  relations $c_{1,M(z)}^{\theta(z)}=1$, for appropriate
  $M\in\mathbf{SL}_2(\Z)$. Now, we enumerate $(\Z^2)_+$ as
  $\{p_1,p_2,\dots\}$, and we impose relations on $H$
  step-by-step. At each step, only finitely many of the $c_{1,z}$ will
  have been affected by the relations; we call the corresponding
  $z\in\Z^2$ \emph{bound}.

  For each $N=1,2,\dots$, we find $M\in\mathbf{SL}_2(\Z)$ such that
  $M(\{p_1,\dots,p_N\})$ is disjoint from all bound $z\in\Z^2$. We
  impose the relations $c_{1,M(p_1)}=1$ and
  $c_{1,M(p_i)}^i=c_{1,M(p_{i-1})}$ for all $i=2,\dots,N$. Finally, we
  set $c_{1,z}=1$ for all unbound $z\in\Z^2$.

  We call the resulting central quotient $G$, and note that it is
  solvable, and that its torsion is the subgroup generated by the
  $c_{1,z}$; this group is a direct sum of cyclic groups, and in
  particular is not divisible.

  On the other hand, let $(H,T)$ be the limit of $(G,S_M)$ in the
  space $\markedgroups$ of marked groups, along the generating sets
  $S_M=\{x^ay^b,x^cy^d,a\}$ corresponding to the matrices
  $M=(\begin{smallmatrix}a&b\\
    c&d\end{smallmatrix})\in\mathbf{SL}_2(\Z)$ used in the
  construction of $G$. Then $H$ contains a copy of $\Q/\Z$, with the
  limit of $c_{1,M(p_i)}$ playing the role of $1/i!$.
\end{proof}

\section{The connected component of free groups}

We concentrate, in this section, on those groups that either
\preform\ or are \preformed\ by free groups. Both of these classes have
already been thoroughly investigated; the first are known as ``limit
groups'', and the second as ``groups without almost-identities''.

\subsection{Limit groups}\label{ss:limitgroups}
Groups that are \preformed\ by free groups are known as ``limit
groups''. This section reviews some known facts about them; we refer
to the recent
expositions~\cites{bestvina-feighn:sela,paulin:sela,kharlampovich-myasnikov:equations}.

Benjamin Baumslag considered residually free groups
in~\cite{baumslag:residuallyfree}. An \emph{$\omega$-residually free}
groups is a group $G$ such that, for all $n$ and all distinct
$g_1,\dots,g_n\in G$, there exists a homomorphism $\pi\colon
G\twoheadrightarrow\free$ to a free group such that all
$\pi(g_1),\dots,\pi(g_n)$ are distinct. Baumslag proved in particular
that $G$ is $\omega$-residually free if and only if it is both
residually free and commutative-transitive (see
Equation~\ref{eq:commtrans}).

Remeslennikov proved in~\cite{remeslennikov:existentiallyfree} that
the following are equivalent for a residually free group: it is
$\omega$-residually free; it is universally free (namely has the same
universal theory as a free group); it is commutative transitive (see
Equation~\ref{eq:commtrans}). All three statements are
characterizations of non-abelian \emph{limit groups}. The terminology was
introduced by Sela, referring to limits of epimorphisms onto free
groups.



Champetier and Guirardel
show in~\cite{champetier-guirardel:limitgroups} that $G$ is a limit
group if and only if it is a limit of subgroups of free groups. In
other words, $G$ is a non-abelian limit group if and only if $\free_2
\conv G$.

Kharlampovich-Myasnikov~\cite{kharlampovich-myasnikov:iavofg1,kharlampovich-myasnikov:iavofg2}
and Sela~\cite{sela:diophantine1} prove that limit groups are finitely
presented.

\subsection{Groups groups with no almost-identities}\label{ss:smallerfree}
Groups that \preform\ free groups will be shown to be
``groups with no almost-identities''. We write $G\conv\free$ if there
exists $k\in\N$ such that $G\conv\free_k$; equivalently,
$G\conv\free_k$ for all $k$ large enough.

We begin by some elementary observations and examples. We include the
proofs for convenience of the reader.
\begin{lemma}[See~\cite{schleimer:girth} and~\cite{champetier-guirardel:limitgroups}*{Example~2.4(d)}]\label{lem:freemn}
  We have $\free_m\conv\free_n$ if and only if $m\le n$.

  More precisely, let $\{x_1,\dots,x_m\}$ be a basis of $\free_m$ and
  let $S_R$ be, for all $R\in\N$, a set of $n-m$ words of length at
  least $2R$ satisfying the $C'(1/6)$ small cancellation
  condition. Then $(\free_m,\{x_1,\dots,x_m\}\cup S_R)$ converges to
  $(\free_n,\text{basis})$ in $\markedgroups$.
\end{lemma}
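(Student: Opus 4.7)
The plan is to handle the two directions of the equivalence separately.

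For the ``only if'' direction, suppose $\free_m \conv \free_n$. Since $\free_n$ is finitely presented, Lemma~\ref{lem:fp} yields an epimorphism $\free_n \twoheadrightarrow \free_m$; abelianizing gives an epimorphism $\Z^n \twoheadrightarrow \Z^m$, which forces $m \le n$. This direction is essentially immediate.

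For the ``if'' direction I will construct the generating sets explicitly and verify the claimed ball isomorphism. The case $n = m$ takes $S_R = \emptyset$, so I assume $2 \le m < n$ (the edge case $m = 1$ is excluded since $\free_1 = \Z$ satisfies $[x,y]=1$ and so cannot \preform\ $\free_n$ for $n \ge 2$ by Lemma~\ref{lem:identity}(1); thus the stated iff implicitly requires $m \ge 2$). For each $R$, fix $S_R = \{w_1,\dots,w_{n-m}\}$, a set of words of length $\ge 2R$ in $\free_m$ satisfying $C'(1/6)$; such sets exist by a standard generic construction. Let $\pi \colon \free_n \twoheadrightarrow \free_m$ send the standard basis to $x_1,\dots,x_m,w_1,\dots,w_{n-m}$. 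The desired ball isomorphism is equivalent to the statement that $\ker\pi$ contains no non-trivial reduced word of length $\le 2R$.

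To establish this, I would take a reduced word $u = z_1 \cdots z_k \in \free_n$ with $k \le 2R$ and track the free reduction in $\free_m$ of the concatenation $\pi(z_1)\cdots \pi(z_k)$. Cancellation at a boundary between two single letters $x_a^{\pm 1}$ is excluded by the reducedness of $u$; between a letter and a $w_j^{\pm 1}$ it is at most one symbol; and between two $w_j^{\pm 1}$'s the overlap is a piece, bounded by $C'(1/6)$ to be strictly less than $L/6$, where $L = \min_j |w_j| \ge 2R$. Summing these bounds shows the reduced length of $\pi(u)$ stays strictly positive, so $\pi(u) \ne 1$ as required.

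The main obstacle I anticipate is the possibility that cancellations at successive boundaries might cascade: after a first boundary eats a piece off one side of some $w_j^{\pm 1}$, a subsequent cancellation could in principle reach further into its interior. This is precisely what $C'(1/6)$ is designed to prevent — at most a $1/6$-fraction of each $w_j$ can be removed from either side, so a strict majority of the interior of every copy of every $w_j$ in the expansion survives and shields its neighbours. An alternative, arguably cleaner route is to apply Greendlinger's lemma to the presentation of $\free_m$ on generators $\{x_i\} \cup S_R$ with defining relators $y_{m+j} w_j^{-1}$, which inherit a suitable small-cancellation condition from the $w_j$, immediately ruling out short non-trivial consequences.
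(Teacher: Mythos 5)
Your proof is, at bottom, the paper's proof. The ``only if'' direction is identical: Lemma~\ref{lem:fp} makes $\free_m$ a quotient of $\free_n$, which forces $m\le n$ (the paper does not even need to abelianize; counting generators of a quotient suffices). Your parenthetical on the case $m=1$ is a genuine observation that the paper leaves implicit: no set of words in $\free_1$ can satisfy $C'(1/6)$, and indeed $\Z$ satisfies the identity $[x,y]=1$, so by Lemma~\ref{lem:identity}(1) it cannot \preform\ $\free_n$ for $n\ge2$; the lemma should be read with $m,n\ge 2$.

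For the ``if'' direction, the route you describe as ``an alternative, arguably cleaner route'' is in fact exactly the argument the paper gives: one forms the presentation $\langle x_1,\dots,x_m,y_1,\dots,y_{n-m}\mid y_1w_1,\dots,y_{n-m}w_{n-m}\rangle$ of $\free_m$, observes that it inherits $C'(1/6)$ from $\{w_j\}$ (each $y_j$ is a fresh letter occurring once in the relator set, so new pieces are just pieces of the $w_j$'s), and invokes Greendlinger's Lemma to conclude that no nontrivial reduced word of length $\le 2R$ lies in the normal closure. Your first, boundary-by-boundary cancellation argument is where the gap you flag actually sits: the per-boundary bounds (``one symbol'', ``a piece'') control the initial overlaps, but once reduction has eaten into a $w_j^{\pm1}$ from one side, what survives of it can butt against its next neighbour along a subword that is no longer a prefix-to-suffix overlap of the original relators; controlling how these second-order cancellations propagate is precisely the inductive content of Greendlinger's Lemma (equivalently, of the reduced van Kampen diagram argument). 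The $C'(1/6)$ condition alone does not hand you the invariant ``a majority of each $w_j$ survives'' without that bookkeeping. Promote your Greendlinger route from ``alternative'' to the main line of the proof and you have reproduced the paper's argument.
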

\begin{proof}
  Consider $m\le n$. Let $S=\{x_1,\dots,x_m\}$ be a basis of
  $\free_m$.  Given $R>0$, consider a set $S_R:=\{w_1,\dots,w_{n-m}\}$
  such that each word $w_i$ has length larger than $2R$, and
  $\{w_1,\dots,w_{n-m}\}$ satisfies the $C'(1/6)$ small cancellation
  condition. The presentation $\langle
  x_1,\dots,x_m,y_1,\dots,y_{n-m}\mid
  y_1w_1,\dots,y_{n-m}w_{n-m}\rangle$ then defines the free group
  $\free_m$, and also satisfies the $C'(1/6)$ small cancellation
  condition. By Greendlinger's Lemma~\cite{greendlinger:dehn}, the
  shortest relation in it has length larger than $2R$, so the ball of
  radius $R$ in $\cay{\free_m}{\{x_1,\dots,x_m\}\cup S_R}$ coincides
  with that in $\free_n$.

  Conversely, if $\free_m\conv\free_n$ then $\free_m$ is a quotient of
  $\free_n$, by Lemma~\ref{lem:fp}, so $m\le n$.
\end{proof}

\begin{lemma}[See~\cite{schleimer:girth}*{Lemma~5.1}]\label{lem:quotientfreeprecedesfree}
  If $G$ be an $s$-generated group which admits $\free_m$ as a
  quotient, for some $m\ge2$, then $G$ \preforms\ a free
  group on $m+s$ elements.
\end{lemma}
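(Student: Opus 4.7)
The plan is to explicitly construct, for each $R\in\N$, a generating set $T_R$ of $G$ of cardinality $m+s$ whose Cayley graph agrees with $\cay{\free_{m+s}}{\text{basis}}$ on a ball of radius $R$. The key mechanism is that the quotient map $\pi\colon G\twoheadrightarrow\free_m$ lets us pull back the small-cancellation ``freeness'' from Lemma~\ref{lem:freemn}, and the freedom to modify each original generator of $G$ by an element of the lifted free subgroup lets us engineer the projected generating set to have whatever shape we like.

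First, fix an $s$-element generating set $\{g_1,\dots,g_s\}$ of $G$, and choose $a_1,\dots,a_m\in G$ whose images $\pi(a_1),\dots,\pi(a_m)$ form a free basis of $\free_m$. The subgroup $F:=\langle a_1,\dots,a_m\rangle\subseteq G$ then maps isomorphically onto $\free_m$ under $\pi$, so for any element $v\in\free_m$ there is a unique lift $\widetilde v\in F$. Given $R\in\N$, apply Lemma~\ref{lem:freemn} (with $n=m+s$) to produce words $w_1,\dots,w_s$ in the basis $\pi(a_1),\dots,\pi(a_m)$ of $\free_m$, each of length at least $2R$ and satisfying the $C'(1/6)$ small-cancellation condition, such that the ball of radius $R$ in $\cay{\free_m}{\{\pi(a_1),\dots,\pi(a_m),w_1,\dots,w_s\}}$ coincides with that in $\free_{m+s}$. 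For each $i=1,\dots,s$, set $u_i:=\widetilde{\pi(g_i)^{-1}w_i}\in F$, so that $\pi(g_iu_i)=w_i$. Define
\[T_R:=\{a_1,\dots,a_m,g_1u_1,\dots,g_su_s\}.\]
This is a generating set of $G$: since $u_i\in F=\langle a_1,\dots,a_m\rangle$, we recover $g_i=(g_iu_i)u_i^{-1}$ from $T_R$, and $\{g_1,\dots,g_s\}\cup\{a_1,\dots,a_m\}$ generates $G$.

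Now observe that, under $\pi$, the set $T_R$ maps letter-by-letter to the generating set $T'_R:=\{\pi(a_1),\dots,\pi(a_m),w_1,\dots,w_s\}$ of $\free_m$. Consider any reduced word $W$ of length at most $R$ in the $m+s$-letter alphabet labelling $T_R$. If $W$ evaluated on $T_R$ equals $1$ in $G$, then applying $\pi$ shows that the same $W$ evaluated on $T'_R$ equals $1$ in $\free_m$; by the choice of the $w_i$, this forces $W$ to be trivial as a reduced word. Consequently, no two reduced words of length $\le R$ in $T_R$ represent the same element of $G$, so the marked ball of radius $R$ in $\cay{G}{T_R}$ is isomorphic to the marked ball of radius $R$ in $\cay{\free_{m+s}}{\text{basis}}$. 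Letting $R\to\infty$ gives $G\conv\free_{m+s}$.

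The only subtle point in this argument is to make sure the labels of the Cayley balls match, not merely that no short relations hold; but because $T_R$ was defined to project letter-by-letter to $T'_R$ over the same alphabet of size $m+s$, the labels on the balls of radius $R$ in $\cay{G}{T_R}$, $\cay{\free_m}{T'_R}$ and $\cay{\free_{m+s}}{\text{basis}}$ are forced to agree simultaneously. No further obstacle should arise; everything else is just bookkeeping on lifts through $\pi$ and on the length estimate in Lemma~\ref{lem:freemn}.
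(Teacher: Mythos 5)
Your proof is correct, and it uses the same fundamental strategy as the paper's argument: apply Lemma~\ref{lem:freemn} to produce small-cancellation words, and modify the original generators of $G$ by elements of the lifted copy of $\free_m$ so that the projection to $\free_m$ becomes a generating set of large girth. However, you improve on the paper's bookkeeping in a pleasant way. The paper's proof takes $S_R=\{h_1,\dots,h_m,g_1w_1(h),\dots,g_sw_s(h)\}$, whose projection to $\free_m$ is $\{x_1,\dots,x_m,g'_1w_1,\dots,g'_sw_s\}$; because the $g'_i$ contaminate the small-cancellation set, the paper must rewrite those projections as words of length $\le N+1$ over $\{x_i,w_j\}$ (with $N=\max\|g'_i\|$), and this introduces a loss of $R/(N+1)$ in the girth estimate. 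You instead set $u_i=\widetilde{\pi(g_i)^{-1}w_i}$ with $u_i$ in the lifted free subgroup $F=\langle a_1,\dots,a_m\rangle\cong\free_m$, so that $\pi(g_iu_i)=w_i$ exactly; the projection of your $T_R$ is then literally $\{x_1,\dots,x_m,w_1,\dots,w_s\}$, and Lemma~\ref{lem:freemn} applies directly with no rewriting factor. The only cost of this refinement is the observation (correct, and implicit in the paper too) that $F$ maps isomorphically onto $\free_m$ because free groups are Hopfian, which is what guarantees the lift $\widetilde v$ exists and lies in $\langle a_1,\dots,a_m\rangle$ so that $T_R$ still generates $G$. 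One small slip to tidy: you claim that knowing no reduced word of length $\le R$ kills $T_R$ implies no two reduced words of length $\le R$ agree, but that actually requires knowing no nontrivial relation of length $\le 2R$ holds; simply invoke Lemma~\ref{lem:freemn} for radius $2R$ (or take the $w_i$ of length $\ge 4R$) and the issue disappears. The paper's own statement "no relation of length $\le R/(N+1)$" has the same implicit slack, so this is not a substantive defect of your argument.
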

\begin{proof}
  Let $\{g_1,\dots,g_s\}$ generate $G$, and let $g'_1,\dots,g'_s$ be
  the projections of the $g_i$ to $\free_m$.  Let also
  $h_1,\dots,h_m\in G$ project to a basis $x_1,\dots,x_m$ of
  $\free_m$. Let $N$ be the maximal length of a $g'_i$ in the basis
  $\{x_1,\dots,x_m\}$.

  For each $R>0$, consider words $w_1,\dots,w_s$ in
  $\{x_1,\dots,x_m\}$ of length at least $R$ and satisfying the small
  cancellation condition $C'(1/6)$. Consider the generating set
  $S_R=\{h_1,\dots,h_m,g_1w_1(h_1,\dots,h_m),\dots,g_sw_s(h_1,\dots,h_m)\}$
  of $G$, of cardinality $m+s$. Its projection to $\free_m$ is
  $\{x_1,\dots,x_m,g'_1w_1,\dots,g'_sw_s\}$. These elements may be
  rewritten as words of length at most $N+1$ in
  $\{x_1,\dots,x_m,w_1,\dots,w_s\}$. Therefore, by
  Lemma~\ref{lem:freemn}, no relation of length $\le R/(N+1)$ holds
  among these elements.
\end{proof}

\begin{example}\label{ex:continuum}
  For every group $A$ and every $m\ge 2$, we have
  $A\times\free_m\conv\free$, $A*\free_m\conv\free$ and
  $A\wr\free_m\conv\free$.

  In particular, there exists a continuum of non-isomorphic groups
  that \preform\ free groups.
\end{example}

\begin{remark}
  If $A$ \preforms\ a non-abelian free group, and $A$ is a
  quotient of $B$, then $B$ \preforms\ a non-abelian free
  group.
\end{remark}
\begin{proof}
  By Lemma~\ref{lem:quotients} we know that $B$ \preforms\ some
  group $C$, that admits a non-abelian free group as a quotient.  By
  Lemma~\ref{lem:quotientfreeprecedesfree} we know that $C$
  \preforms\ a non-abelian free group. Therefore, $B$
  \preforms\ a non-abelian free group.
\end{proof}

By Lemma~\ref{lem:identity}(1), if $G$ satisfies an identity then $G$
doesn't \preform\ a free group. However, this does not characterize
groups that \preform\ free groups.

\begin{lemma}
  Given words $w_1,\dots,w_\ell\in\free_n$, there exists a word
  $w\in\free_n$ such that, for every group $G$, the identity $w$ is
  satisfied in $G$ as soon as at least one identity $w_i$ is
  satisfied.
\end{lemma}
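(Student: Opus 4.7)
My plan is to induct on $\ell$, aiming for a non-trivial word $w\in\free_n$ (the lemma is vacuous with $w=1$, but recall that the paper defines identities to be non-trivial words, so non-triviality is what makes the conclusion meaningful). The base case $\ell=1$ is immediate with $w:=w_1$. For the inductive step, I assume that a non-trivial $w'\in\free_n$ has been built that serves for $w_2,\dots,w_\ell$, and I combine $w_1$ with $w'$ into a single non-trivial word $w\in\free_n$ that serves for the whole list.

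The case $n=1$ is settled by writing each $w_i=x_1^{k_i}$ and taking $w:=x_1^{\mathrm{lcm}(k_1,\dots,k_\ell)}$, so I assume $n\ge2$. By the Nielsen--Schreier theorem, $\free_n$ contains free subgroups of any finite rank, so I fix $n^2$ words $y_{i,j}\in\free_n$ (for $1\le i,j\le n$) that freely generate a free subgroup of rank $n^2$. I then set $u_i:=w'(y_{i,1},\dots,y_{i,n})\in\free_n$ for $i=1,\dots,n$, and define
\[w:=w_1(u_1,\dots,u_n)\in\free_n.\]

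Two verifications remain. First, the identity property: for any group $G$ and any $g_1,\dots,g_n\in G$, if $w_1$ is an identity of $G$ then $w(g_1,\dots,g_n)=w_1(\tilde u_1,\dots,\tilde u_n)=1$ directly, where $\tilde u_i:=u_i(g_1,\dots,g_n)\in G$; while if some $w_j$ with $j\ge2$ is an identity of $G$, then by the inductive hypothesis $w'$ is an identity of $G$, so every $\tilde u_i=w'(\cdots)=1$ and consequently $w(g_1,\dots,g_n)=w_1(1,\dots,1)=1$. Second, the non-triviality of $w$: the free subgroup generated by the $y_{i,j}$ decomposes as the free product $F_1*\cdots*F_n$ with $F_i:=\langle y_{i,1},\dots,y_{i,n}\rangle\cong\free_n$; each $u_i$ is a non-trivial element of $F_i$ by the inductive non-triviality of $w'$, so the $u_i$ lie in distinct free factors and, by the normal form theorem for free products, freely generate a subgroup of $\free_n$ isomorphic to $\free_n$. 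Since $w_1\ne 1$ in $\free_n$, it follows that $w\ne 1$.

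The main obstacle is precisely this last non-triviality argument; everything else is formal. Conceptually, what makes the construction go through is that the verbal subgroup of $\free_n$ cut out by an identity is fully invariant (closed under all endomorphisms of $\free_n$), so substituting copies of $w'$ on disjoint variable tuples into $w_1$ automatically lands $w$ in both verbal subgroups simultaneously.
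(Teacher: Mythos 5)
Your proof is correct, but it takes a genuinely different route from the paper's. The paper builds $w$ by a one-step-at-a-time combination: it sets $v_1=w_1$, and at each stage forms $v_i$ from $v_{i-1}$ and $w_i$ either as a common power (if $v_{i-1}$ and $w_i$ commute in $\free_n$) or as the commutator $[v_{i-1},w_i]$ (if they do not); non-triviality follows from the fact that in a free group, elements that do not share a common power do not commute, so the commutator is nontrivial. Your construction instead substitutes the recursively-built word $w'$, evaluated on $n$ disjoint $n$-tuples of fresh free generators $y_{i,j}$ inside a free subgroup of rank $n^2$, into $w_1$; non-triviality comes from the free-product decomposition $\langle y_{i,j}\rangle = F_1*\cdots*F_n$ and the normal form theorem. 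Both are valid. The paper's route is more elementary and self-contained, needing only the commutation criterion in free groups; yours invokes Nielsen--Schreier and free-product normal forms, but in exchange is conceptually cleaner (it is ``just substitution'', and your closing remark about verbal subgroups being fully invariant nicely explains why the substitution works). One minor point: your separate handling of $n=1$ is not strictly needed, since for $n=1$ the word $w_1(u_1)$ with $u_1=w'(y_{1,1})$ for $y_{1,1}=x_1$ already works (as $w_1$ and $w'$ are both powers of $x_1$), but there is no harm in treating it separately.
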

\begin{proof}
  Construct words $v_1,\dots,v_\ell\in\free_n$ inductively as follows:
  $v_1=w_1$; and for $i\ge2$, if $v_{i-1}$ and $w_i$ have a common
  power $v_{i-1}^a=w_i^b=z$ then $v_i:=z$, while otherwise
  $v_i=[v_{i-1},w_i]$.
  
  Observe that $v_\ell$ is non-trivial,
  and $v_\ell(g,h)=1$ if $w_i(g,h)=1$ for some
  $i\in\{1,\dots,\ell\}$. Therefore $w=v_\ell$ is the required
  identity.
\end{proof}

\begin{corollary}\label{lemma:withoutidentity}
  A group satisfies no identity if and only if it \preforms\ a
  group containing a non-abelian free subgroup.
\end{corollary}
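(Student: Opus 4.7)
The plan is to prove both directions, with the backward one being essentially formal from earlier machinery and the forward one using the word‑building lemma that immediately precedes the corollary.

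For the backward direction, I would argue by contrapositive. Suppose $G$ satisfies a non‑trivial identity $w$. By Lemma~\ref{lem:identity}(1), every $H$ with $G\conv H$ must also satisfy $w$. But an identity of a group is inherited by every subgroup, and a non‑abelian free group satisfies no non‑trivial identity; so no such $H$ can contain $\free_2$.

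For the forward direction, fix a generating set $\{s_1,\dots,s_k\}$ of $G$. For each $R\in\N$, enumerate the finite set $\mathscr{W}_R$ of non‑trivial elements of $\free_2$ of length at most $R$. The preceding lemma, applied to $\mathscr{W}_R$, produces a single word $W_R\in\free_2$ which becomes an identity in any group in which any $w\in\mathscr{W}_R$ is an identity; inspection of its construction (iterated commutators, replaced by a common power on the nose whenever the two factors commute) shows additionally that $W_R$ is non‑trivial in $\free_2$ and, in any group, $W_R(a,b)\ne1$ forces $w(a,b)\ne1$ for every $w\in\mathscr{W}_R$ simultaneously. Since by hypothesis $G$ has no identity, $W_R$ is not an identity of $G$, so there exist $a_R,b_R\in G$ with $W_R(a_R,b_R)\ne1$; hence $w(a_R,b_R)\ne1$ for every $1\ne w\in\free_2$ with $|w|\le R$.

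Set $S_R:=\{s_1,\dots,s_k,a_R,b_R\}$, a generating set of $G$ of fixed cardinality $k+2$. By compactness of $\markedgroups$, extract a subsequence $(R_n)$ with $(G,S_{R_n})\to(H,T)$ in the Chabauty--Grigorchuk topology, where $T=\{t_1,\dots,t_k,a,b\}$. By definition, $G\conv H$. For any fixed non‑trivial $w\in\free_2$, once $R_n\ge|w|$ and the ball of radius $|w|$ in $\cay G{S_{R_n}}$ agrees with that in $\cay HT$, the loop traced by $w$ at $a_{R_n},b_{R_n}$ is open in $G$ and therefore open in $H$, i.e.\ $w(a,b)\ne1$. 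Hence $\langle a,b\rangle\le H$ is freely generated by $a,b$, and $H$ contains a non‑abelian free subgroup, as required.

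The proof is short once the preceding lemma is in hand; the only delicate point is extracting from its construction the stronger ``common witness'' statement (that $W_R(a,b)\ne1$ detects the non‑triviality of all $w\in\mathscr{W}_R$ at the same pair $(a,b)$), which is needed to run the discrimination argument on a single pair of generators rather than on separately chosen tuples.
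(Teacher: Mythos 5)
Your proof is correct and follows essentially the same route as the paper: the backward direction via Lemma~\ref{lem:identity}(1), and the forward direction by feeding all short non-trivial words of $\free_2$ into the preceding combining lemma, discriminating them at a single pair $(g_R,h_R)$, adjoining that pair to a fixed generating set of $G$, and passing to a subsequential limit $(H,T)$ in which the two limiting generators freely generate $\free_2$. The one thing you do more carefully than the paper is to flag explicitly that the lemma's \emph{proof} (but not its literal statement) gives a common-witness property --- $W_R(a,b)\neq1$ forces $w(a,b)\neq1$ for all $w$ in the list at the \emph{same} pair --- and that this stronger form is what the argument actually needs; the paper uses this silently when it asserts ``so $v(g_R,h_R)\neq1$ for every word $v$ of length at most $R$''.
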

\begin{proof}
  If a group $G$ satisfies an identity, than so does any group that
  is \preformed\ by it; so no group which is \preformed\ by $G$ may have a
  non-abelian free subgroup.

  Conversely, consider a group $G$ which satisfies no identity.  Let
  the set $S$ generate $G$. For every $R>0$, apply the previous lemma
  to the set $\{w_1,\dots,w_\ell\}$ of non-trivial words of length at
  most $R$ in $\free_2$. Let $w$ be the resulting identity. Since it
  does not hold in $G$, there are $g_R,h_R$ be such that
  $w(g_R,h_R)\neq1$, so $v(g_R,h_R)\neq1$ for every word $v$ of length
  at most $R$. Consider the generating set $S_R=S\cup\{g_R,h_R\}$ of
  $G$. Take a converging subsequence, in $\markedgroups$, of the
  marked groups $\cay G{S_R}$, and let $\cay HT$ be its limit. Then
  the last two elements of $T$ generate a free subgroup $\free_2$ of
  $H$.
\end{proof}

Akhmedov and Olshansky-Sapir~\cites{olshanskii-sapir:fklike,
  akhmedov:schleimer} make the following definition. Let $G$ be a
$k$-generated group. A non-trivial word $w(x_1,\dots,x_k)$ is a
\emph{$k$-almost-identity} for $G$ if $w(g_1,\dots,g_k)=1$ for all
$g_1,\dots,g_k\in G$ such that $\{g_1,\dots,g_k\}$ generates $G$. The
group $G$ \emph{satisfies an almost-identity} if for all $k\in\N$
there exists a $k$-almost-identity satisfied by $G$.

\begin{corollary}[Olshansky-Sapir, \cite{olshanskii-sapir:fklike}*{Theorem~9}]\label{cor:noai=<free}
  A group \preforms\ a free group if and only if it satisfies no
  almost-identity. More precisely, $G\conv\free_k$ if and only if $G$
  is $k$-generated and satisfies no $k$-almost-identity.
\end{corollary}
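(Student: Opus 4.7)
The plan is to prove both implications, using the constructions and observations already set up in this section.

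For the forward implication ($G \conv \free_k \Rightarrow$ $G$ is $k$-generated without $k$-almost-identities), I would first note that $k$-generatedness is immediate: by definition, any witnessing sequence $(G, S_n) \to (\free_k, T)$ forces $|S_n| = k$. Now suppose for contradiction that $G$ admits a $k$-almost-identity $w$ of length $R$. Choose $n$ large enough that the marked balls of radius $R$ in $\cay{G}{S_n}$ and $\cay{\free_k}{T}$ coincide. Since $S_n$ generates $G$, we have $w(S_n) = 1$ in $G$; but then the same relation must hold among $T$ in $\free_k$ (the word $w$ traces a closed loop inside the ball of radius $R$), contradicting $w \neq 1$ in $\free_k$.

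For the backward implication, the idea is to exhibit, for each $R$, a generating set $S_R$ of cardinality $k$ whose marked ball of radius $R$ is a regular tree, matching the ball of radius $R$ in $\free_k$. The ball of radius $R$ in $\cay G{S_R}$ is tree-like precisely when no non-trivial word of length $\le 2R$ evaluates to $1$ on $S_R$. Enumerate the non-trivial elements $w_1,\dots,w_\ell$ of $\free_k$ of length $\le 2R$; the key tool is the combining lemma proved just before Corollary~\ref{lemma:withoutidentity} (which applies verbatim to words in $k$ variables, since the construction only iterates commutators and extracts common powers in a free group). It produces a single non-trivial word $v$ in $k$ variables such that $v(g_1,\dots,g_k)=1$ whenever $w_i(g_1,\dots,g_k)=1$ for some $i$. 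By hypothesis $v$ is not a $k$-almost-identity of $G$, so there exists a generating set $S_R=\{g_1,\dots,g_k\}$ with $v(g_1,\dots,g_k) \neq 1$; consequently $w_i(g_1,\dots,g_k)\neq 1$ for every $i$, as required. Taking $R\to\infty$ gives $G \conv \free_k$.

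The main conceptual point is the reduction of the ``simultaneous falsification'' problem (all $w_i$ must be non-trivial on the \emph{same} generating set) to the single-word problem (only $v$ must be non-trivial), which is delivered by the combining lemma. This step is what makes the ``$k$-almost-identity'' hypothesis usable: without such a trick, the non-identity-ness of each individual $w_i$ on \emph{some} generating set would not obviously produce a generating set killing none of them. I expect this to be the only subtlety; the remaining work — verifying that the ball of radius $R$ is determined by the non-vanishing of words of length $\le 2R$, and that $k$-generatedness is preserved under $\conv$ in this form — is routine given the material already developed.
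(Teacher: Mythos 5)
Your proof is correct and follows essentially the same route as the paper's: both directions use the ``combining lemma'' stated just before Corollary~\ref{lemma:withoutidentity} to reduce simultaneous falsification of all short words to falsifying a single word, which the no-$k$-almost-identity hypothesis handles. The only cosmetic difference is that you track the radius as $2R$ while the paper uses $R$ (and the paper extracts a convergent subsequence rather than noting direct convergence), but these are inessential; the argument is the same.
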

\begin{proof}
  If $G$ satisfies a $k$-almost-identity and $G\conv H$, then $H$
  satisfies the same almost-identity; therefore $H$ cannot be free.

  Conversely, consider a $k$-generated group $G$ which satisfies no
  $k$-almost-identity.  For every $R>0$, apply the previous lemma to the
  set $\{w_1,\dots,w_\ell\}$ of non-trivial words of length at most
  $R$ in $\free_k$. Let $w$ be the resulting word. Since it is not a
  almost-identity satisfied by $G$, there exists a generating set
  $S_R:=\{g_{R,1},\dots,g_{R,k}\}$ of $G$ such that
  $w(g_{R,1},\dots,g_{R,k})\neq1$, so $v(g_{R,1},\dots,g_{R,k})\neq1$
  for every word $v$ of length at most $R$. Take a converging
  subsequence, in $\markedgroups$, of the marked groups $\cay G{S_R}$,
  and let $\cay HT$ be its limit. Then $H$ is a free group of rank
  $k$.
\end{proof}

Following an idea sketched by Schleimer
in~\cite{schleimer:girth}*{\S4}, Olshansky and Sapir show
in~\cite{olshanskii-sapir:fklike} that there are groups with
almost-identities but without identities; see
also~\cite{akhmedov:schleimer}*{\S4}.
\begin{example}[Schleimer, Olshansky \& Sapir]
  There exist groups without identities, but with
  almost-identities. For all $n$ large enough, such an example is the
  group $\free_2/\langle w^n\colon
  w\not\in\free_2^n[\free_2,\free_2]\rangle$.
\end{example}

It is known that the following groups \preform\ $\free$:
\begin{enumerate}
\item Non-elementary hyperbolic groups~(see
  Akhmedov~\cite{akhmedov:girth}, with a refinement in by Olshansky
  and Sapir~\cite{olshanskii-sapir:fklike} on the number of generators
  of the free group);
\item linear groups~\cite{akhmedov:girth};
\item one-relator groups~\cite{akhmedov:girth};
\item Thompson's group $F$ (Brin shows in~\cite{brin:thompsonfreelike}
  that it \preforms\ $\free_2$, and Akhmedov, Stein and Taback
  give a slightly worse
  estimate~\cite{akhmedov-stein-taback:thompson}).
\end{enumerate}

Akhmedov also shows that there exist amenable groups that
\preform\ $\free$. We show later in this section that there are
groups of intermediate growth (e.g.\ the first Grigorchuk group) that
\preform\ free groups.

\begin{remark}\label{rem:order}
  Any order satisfying the assumption of Corollary~\ref{cor:order} is
  imbeddable in the set of groups that \preform\ $\free$.
\end{remark}
\begin{proof}
  If $G$ \preforms\ $H$, then $G\times\free_m$ \preforms\
  $H\times\free_m$, by Lemma~\ref{lem:products}(1).

  Observe, by considering the torsion subgroups, that the converse is
  true for the groups used in the proof of Proposition~\ref{prop:order}.
\end{proof}

\subsection{A criterion \`a la Ab\'ert for having no almost-identity }\label{ss:abert}
We start by recalling a general result by
Ab\'ert~\cite{abert:nonfree}. Consider a group $G$ acting by
permutations on a set $X$. Say that $G$ \emph{separates} $X$ if, for
every finite $Y\subseteq X$, the fixed point set of the fixator $G_Y$
of $Y$ is equal to $Y$. Ab\'ert proves that if $G$ separates $X$ then
$G$ satisfies no identity.

In the theorem below we strengthen the assumption of Ab\'ert's theorem
in order to get a criterion for absence of almost-identities, not only
identities. Recall that the \emph{Frattini subgroup} $\Phi(G)$ of a
group $G$ is the intersection of its maximal subgroups. It is the
maximal subgroup of $G$ such that $S$ generates $G$ if and only if
$S\Phi(G)$ generates $G/\Phi(G)$. Equivalently, if $\{s_1,\dots,s_k\}$
generates $G$, then $\{s_1g_1,\dots,s_kg_k\}$ also generates $G$, for
arbitrary $g_1,\dots,g_k\in\Phi(G)$.

\begin{theorem}\label{prop:abert}
  Let $G$ separate the set $X$ on which it acts on the right, and
  assume that $\Phi(G)$ has finite index in $G$. Then $G$ satisfies no
  almost-identity.
\end{theorem}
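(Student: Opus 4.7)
\emph{Plan.} Without loss of generality I assume $G$ is $k$-generated, as otherwise no word in $k$ variables can be a $k$-almost-identity of $G$. Fix once and for all a generating tuple $(s_1,\dots,s_k)$ of $G$. The defining property of the Frattini subgroup asserts that, for every $h_1,\dots,h_k\in\Phi(G)$, the tuple $(s_1h_1,\dots,s_kh_k)$ is again a generating set of $G$. Consequently, to prove that a non-trivial word $w(x_1,\dots,x_k)\in\free_k$ is not a $k$-almost-identity, it suffices to exhibit $h_1,\dots,h_k\in\Phi(G)$ with $w(s_1h_1,\dots,s_kh_k)\ne 1$ in $G$.

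I would establish this by adapting Ab\'ert's argument from~\cite{abert:nonfree} inside the cosets $s_i\Phi(G)$. Writing the reduced form of $w$ as $y_1\cdots y_n$ with $y_j=x_{i_j}^{\epsilon_j}$, Ab\'ert's proof produces inductively distinct points $p_0,\dots,p_n\in X$ and elements $g_1,\dots,g_k\in G$ such that at each step $j$ the element $g_{i_j}^{\epsilon_j}$ sends $p_{j-1}$ to $p_j$; the resulting element $w(g_1,\dots,g_k)$ then carries $p_0$ to $p_n\ne p_0$ and is non-trivial. At each step Ab\'ert invokes the separation property to find an element of $G$ fixing a given finite subset of $X$ pointwise while realizing a prescribed move on a new point. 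To force $g_i=s_ih_i$ with $h_i\in\Phi(G)$, I substitute this into every step-$j$ constraint and translate it, via multiplication by $s_{i_j}^{-\epsilon_j}$, into a constraint of the form ``$h_{i_j}$ sends $p_{j-1}$ to $s_{i_j}^{-1}p_j$'' (and symmetrically when $\epsilon_j=-1$). The problem thus reduces to realising, at each step of the induction, a specified finite partial injection of $X$ by an element of $\Phi(G)$ fixing a prescribed finite subset pointwise.

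The key new ingredient is therefore a finite-index strengthening of the separation hypothesis: I would show that $\Phi(G)$ itself separates $X$. The plan for this is to enlarge any finite $Y\subseteq X$ to a finite $Y^*\supseteq Y$ whose pointwise $G$-stabilizer is contained in $\Phi(G)\cap G_Y$; the enlargement is built from a transversal $1=g_1,\dots,g_n$ of $\Phi(G)$ in $G$ by adjoining translates of $Y$ by the $g_i$'s (and possibly iterating), and then applying $G$-separation to $Y^*\cup\{x\}$ produces the required element of $\Phi(G)\cap G_Y$ moving $x$. The main obstacle is precisely this finite-index reduction from $G$-separation to $\Phi(G)$-separation: a naive enlargement need not force the stabilizer inside $\Phi(G)$, and some care is required to choose the enlargement so that non-Frattini cosets are ruled out. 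Once $\Phi(G)$-separation is secured, Ab\'ert's inductive construction goes through verbatim inside $\Phi(G)$, producing the desired $h_1,\dots,h_k\in\Phi(G)$ and completing the proof.
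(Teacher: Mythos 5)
Your overall strategy---running Ab\'ert's inductive construction while constraining each modification to lie in $\Phi(G)$, so that the Frattini property keeps the tuple generating---is essentially the same as the paper's. However, you flag as ``the main obstacle'' the passage from $G$-separation to $\Phi(G)$-separation, and the workaround you propose does not work: there is no reason a finite enlargement $Y^*\supseteq Y$ should satisfy $G_{Y^*}\subseteq\Phi(G)$, since pointwise stabilizers of finite sets need not be contained in any proper subgroup of $G$, of finite index or otherwise; you correctly sense this yourself.

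The gap is closed by a much simpler observation, which the paper uses directly and which renders the enlargement unnecessary. One never needs $\Phi(G)$-separation as a standalone lemma; all that is required is that, at each inductive step, the $(\Phi(G)\cap G_Y)$-orbit of the current point $p$ is infinite. This follows from two facts. First, by $G$-separation the $G_Y$-orbit of any $p\notin Y$ is infinite: if it were a finite set $\{p\}\cup F$, then $G_{Y\cup F}\leq G_Y$ would stabilize that finite orbit while fixing $F$ pointwise, hence fix $p$, contradicting separation applied to $Y\cup F$. Second, since $\Phi(G)$ has finite index in $G$, the subgroup $\Phi(G)\cap G_Y$ has finite index in $G_Y$, and a finite-index subgroup $H$ of a group $K$ acting with infinite orbit through $p$ still has infinite orbit through $p$, because $[H:H_p]\geq[K:K_p]/[K:H]$. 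With this orbit count in place, at each step one picks $c\in\Phi(G)\cap G_Y$ so that $pc$ avoids finitely many forbidden points, multiplies the appropriate $g_j$ by $c$ on the appropriate side, and the Frattini property keeps the tuple generating; Ab\'ert's argument then proceeds verbatim. Incidentally, the same finite-index orbit count also proves that $\Phi(G)$ does separate $X$---so your reformulated goal is true---but it is established by this argument, not by the enlargement you sketch.
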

\begin{proof}
  We follow~\cite{abert:nonfree}*{Theorem~1}. Let $k$ be large enough
  that $G$ can be $k$-generated, and let $w=w(x_1,\dots,x_k)=v_1\dots
  v_\ell$ be a non-trivial reduced word in $\free_k$. Write
  $w_n=v_1\dots v_n$ for all $n\in\{0,\dots,\ell\}$. Fix a point
  $p_0\in X$. A tuple $(g_1,\dots,g_k)\in G^k$ is called
  \emph{distinctive} for $w$ if all the points
  $p_n=p_0w_n(g_1,\dots,g_k)$, for $n=0,\dots,\ell$, are
  distinct. This implies in particular $p_\ell\neq p_0$, so
  $w(g_1,\dots,g_k)\neq1$.

  We prove by induction on $n=0,\dots,\ell$ that there exists a
  distinctive tuple $(g_1,\dots,g_k)$ for $w_n$ such that
  $\{g_1,\dots,g_k\}$ generates $G$. The case $n=0$ follows from the
  fact that $G$ can be $k$-generated; we choose any generating
  sequence $(g_1,\dots,g_k)$.

  By induction, we may assume that $p_0,\dots,p_{n-1}$ are all
  distinct. Put
  \[Y=\{p_i\colon v_{i+1}=v_n\text{ for }i\le n-1,\text{ or }v_i=v_n^{-1}\}.
  \]
  If $v_n=x_j$, then we modify $g_j$ into $h_j:=cg_j$ for some
  $c\in\Phi(G)\cap G_Y$ to be chosen later, while if $v_n=x_j^{-1}$
  then we modify $g_j$ into $h_j:=g_jc$. In all cases, we leave the
  other $g_i$ unchanged, and write $h_i:=g_i$ for all $i\neq
  j$. Clearly $(h_1,\dots,h_k)$ still generates $G$.

  For $i=1,\dots,n-1$, we have
  $p_i=p_{i-1}v_i(g_1,\dots,g_k)=p_{i-1}v_i(h_1,\dots,h_k)$ since
  $c\in G_Y$. From $v_{n-1}\neq v_n^{-1}$ we get $p_{n-1}\not\in Y$,
  so the $G_Y$-orbit of $p_{n-1}$ is infinite, and its
  $(G_Y\cap\Phi(G))$-orbit is infinite too. Therefore, we may choose
  $c\in G_y\cap\Phi(G)$ such that
  \[p_{n-1}c\not\in\{p_iv_n(g_1,\dots,g_k)\colon i=1,\dots,n-1\},\]
  from which
  $p_n=p_{n-1}v_n(h_1,\dots,h_k)\not\in\{x_0,\dots,x_{n-1}\}$ and
  $(h_1,\dots,h_k)$ is distinctive for $w_n$.
\end{proof}

\subsection{The first Grigorchuk group}
We now show that the first Grigorchuk group $\grig$ satisfies no
almost-identity, and therefore \preforms\ $\free$. We begin
by recalling its construction.

Consider the following recursively defined transformations $a,b,c,d$
of $\{0,1\}^\infty$: for $\omega\in\{0,1\}^\infty$,
\begin{xalignat*}{2}
  (0\omega)a&=1\omega & (1\omega)a&=0\omega,\\
  (0\omega)b&=0(\omega a) & (1\omega)b&=1(\omega c),\\
  (0\omega)c&=0(\omega a) & (1\omega)c&=1(\omega d),\\
  (0\omega)d&=0\omega & (1\omega)d&=1(\omega b).
\end{xalignat*}
This action is continuous and measure-preserving; it permutes the
clopens $\{v\{0,1\}^\infty\colon v\in\{0,1\}^*$, preserving the length
of $v$. We call such actions \emph{arborical}.  The first Grigorchuk
group $\grig$ is $\langle a,b,c,d\rangle$;
see~\cites{grigorchuk:burnside,aleshin:burnside} for its origins,
and~\cite{harpe:ggt}*{Chapter~VIII} for a more recent introduction.

Recall that a group $G$ acting arborically with dense orbits on a
cantor set $\Sigma^\infty$ is \emph{weakly branched} if, for every
$v\in\Sigma^*$, there exists $g\in G$ which acts non-trivially in the
clopen $v\Sigma^\infty\subseteq\Sigma^\infty$ but fixes its
complement. It is known that $\grig$ is weakly branched.

\begin{lemma}\label{lem:wb=>sep}
  If $G$ is weakly branched, then it separates $\Sigma^\infty$.
\end{lemma}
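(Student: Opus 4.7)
The plan is to show that for any finite $Y \subseteq \Sigma^\infty$ and any $x \in \Sigma^\infty \setminus Y$, some element of $G$ fixes $Y$ pointwise while moving $x$. The natural first step is to pick $n$ large enough that the length-$n$ prefix $v$ of $x$ is distinct from the length-$n$ prefix of every point of the finite set $Y$; then $x \in v\Sigma^\infty$ and $Y \cap v\Sigma^\infty = \emptyset$. Consequently every element of the rigid stabilizer $\text{rist}(v)$ (the subgroup of $G$ fixing the complement of $v\Sigma^\infty$ pointwise) automatically fixes $Y$, so it is enough to produce one element of $\text{rist}(v)$ that moves $x$.

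The weakly branched hypothesis supplies a non-trivial $g_0 \in \text{rist}(v)$, but $g_0$ need not move the specific point $x$. This is precisely the main obstacle: ``weakly branched'' only yields some non-trivial action on $v\Sigma^\infty$, with no control over which points get moved. My plan is to conjugate $g_0$ by a suitable $h \in G$ so that the resulting element moves $x$. The support $U := \{y : g_0 y \neq y\}$ is a non-empty open subset of $v\Sigma^\infty$, and since the $G$-orbit of $x$ is dense in $\Sigma^\infty$, there exists $h \in G$ with $h x \in U$.

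The key geometric observation is that such an $h$ automatically preserves $v\Sigma^\infty$ setwise: both $x$ and $h x$ lie in $v\Sigma^\infty$, and since $h$ is arborical it sends the unique length-$|v|$ cylinder containing $x$ (namely $v\Sigma^\infty$) to the length-$|v|$ cylinder containing $h x$ (again $v\Sigma^\infty$). It follows that $g := h^{-1} g_0 h$ fixes the complement of $v\Sigma^\infty$ pointwise, hence $g \in \text{rist}(v)$; at the same time $g x = h^{-1} g_0(h x) \neq h^{-1}(h x) = x$, since $h x$ lies in the support of $g_0$ and $h^{-1}$ is a bijection. This $g$ fixes $Y$ pointwise and moves $x$, completing the argument. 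The whole point of the proof is therefore the interaction between weakly branched (to get a non-trivial local element), arborical (to force conjugation to stay inside $\text{rist}(v)$), and dense orbits (to move $x$ into the support of the local element).
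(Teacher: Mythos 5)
Your argument is correct and, beneath its constructive phrasing, is essentially the paper's proof: the paper lets $K=\operatorname{rist}(v)$ be normal in the setwise stabilizer $H$ of $v\Sigma^\infty$, notes $H$ has dense orbits on $v\Sigma^\infty$, and derives a contradiction if $K$ fixes $\omega$; your conjugation $g=h^{-1}g_0h$ with $h$ chosen (via dense orbits) so that $hx$ lies in the support of $g_0$ is exactly the same normality-plus-density mechanism, written out directly rather than by contradiction. The observation that an arborical $h$ with $x,hx\in v\Sigma^\infty$ must preserve $v\Sigma^\infty$ setwise is the same fact the paper encodes by working inside $H$.
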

\begin{proof}
  Consider a finite subset $Y\subset\Sigma^\infty$, and
  $\omega\in\Sigma^\infty\setminus Y$. Let $v\in\Sigma^*$ be a prefix
  of $\omega$ that is not a prefix of any element of $Y$. Let $H$
  denote the stabilizer $v\Sigma^\infty$, and let $K\triangleleft H$
  be the set of $g\in G$ that fix $\Sigma^\infty\setminus
  v\Sigma^\infty$.

  Since $G$ has dense orbits on $\Sigma^\infty$, its subgroup $H$ has
  dense orbits on $v\Sigma^\infty$. Assume for contradiction that $K$
  fixes $\omega$; then, since $K$ is normal in $H$, it fixes $\omega
  H$ which is dense in $v\Sigma^\infty$, so $K=1$, contradicting the
  hypothesis that $G$ is weakly branched.
\end{proof}

\begin{corollary}\label{cor:grig<free}
  The first Grigorchuk group $\grig$ \preforms\
  $\free_3$. In particular, $\grig$ has infinite girth.
\end{corollary}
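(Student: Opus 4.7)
The plan is to apply Theorem~\ref{prop:abert} to the natural right action of $\grig$ on its Cantor boundary $X=\{0,1\}^\infty$, and then invoke Corollary~\ref{cor:noai=<free}. To do so, I must verify the two hypotheses of the theorem: that $\grig$ separates $X$, and that the Frattini subgroup $\Phi(\grig)$ has finite index in $\grig$. Once these are in hand, Theorem~\ref{prop:abert} yields that $\grig$ has no almost-identity; since $\grig$ is $3$-generated (for example by $\{a,b,c\}$, using the relation $d=bc$), Corollary~\ref{cor:noai=<free} gives $\grig\conv\free_3$.

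Separation is the easy half. The action of $\grig$ on $\{0,1\}^\infty$ is arborical and has dense orbits: level-transitivity follows from a straightforward induction using that $a$ swaps the two halves at the root, while the recursive definition of $b,c,d$ shows that $\grig$ is weakly branched, meaning that for every finite word $v\in\{0,1\}^*$ one can exhibit an element of $\grig$ acting non-trivially inside the clopen $v\{0,1\}^\infty$ but trivially outside of it (this is a standard fact about $\grig$, recorded in the references cited before Lemma~\ref{lem:wb=>sep}). The separation hypothesis then follows directly from Lemma~\ref{lem:wb=>sep}.

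The hypothesis $[\grig:\Phi(\grig)]<\infty$ is the main non-trivial input, and the step I expect to be the principal obstacle. It rests on a theorem of E.~Pervova: every maximal subgroup of $\grig$ has finite index; indeed they are all normal of index~$2$, and there are exactly three of them, namely the kernels of the three non-trivial characters $\grig\to\Z/2$. Consequently $\Phi(\grig)$ is the intersection of three subgroups of index~$2$, hence of finite index in $\grig$ (equal to $4$). Without some such external input one cannot bound $[\grig:\Phi(\grig)]$, since a finitely generated infinite group can in principle have maximal subgroups of infinite index or too many of them for the intersection to be large.

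With both hypotheses verified, Theorem~\ref{prop:abert} gives that $\grig$ has no almost-identity, hence in particular no $3$-almost-identity, so by Corollary~\ref{cor:noai=<free} we conclude $\grig\conv\free_3$. The infinite girth statement is then immediate: for every $R\in\N$ the definition of $\conv$ supplies a generating set $S_R$ of $\grig$ such that the ball of radius $R$ in $\cay{\grig}{S_R}$ is isomorphic to the corresponding ball in $\cay{\free_3}{\text{basis}}$, which is a tree; consequently no reduced word of length at most $R$ over $S_R$ represents a non-trivial relation in $\grig$, so the girth of $\cay{\grig}{S_R}$ exceeds $R$ and is therefore unbounded over the family of generating sets $S_R$.
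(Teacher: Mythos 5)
Your proof takes exactly the paper's route: verify separation via Lemma~\ref{lem:wb=>sep} (weakly branched implies separating), invoke Pervova's theorem on maximal subgroups to bound the Frattini index, apply Theorem~\ref{prop:abert}, and finish with Corollary~\ref{cor:noai=<free}. One small factual slip: the abelianization of $\grig$ is $(\Z/2)^3$, so there are \emph{seven} non-trivial characters $\grig\to\Z/2$, and Pervova's result gives $\Phi(\grig)=[\grig,\grig]$ of index $8$, not three maximal subgroups with Frattini index $4$. This does not affect the argument, since all that is used is that $[\grig:\Phi(\grig)]<\infty$, but the count should be corrected.
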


Note that this gives a negative answer to a question of Schleimer, who
has conjectured in~\cite{schleimer:girth}*{Conjecture 6.2} that all
groups with infinite girth have exponential growth.

\begin{proof}
  Lemma~\ref{lem:wb=>sep} shows that $\grig$ separating
  $\{0,1\}^\infty$. Pervova proved in~\cite{pervova:edsubgroups} that
  all maximal subgroups of $\grig$ have index $2$; so the Frattini
  subgroup of $\grig$ satisfies
  $\Phi(\grig)=[\grig,\grig]$. Proposition~\ref{prop:abert} then shows
  that $\grig$ satisfies no almost-identity, so $\grig\conv\free_3$.
\end{proof}

Note that Pervova proved, in~\cite{pervova:maximal}, that a large
class of groups, called ``GGS groups'', satisfy the same condition
that all of their maximal subgroups are normal, and hence contain the
derived subgroup. Since all GGS groups (except a few, well-understood
exceptions) are weakly branched, they all \preform\
$\free_2$, following the same argument as in~\ref{cor:grig<free}.

\subsection{Permutational wreath products}\label{ss:wreath}
We return to wreath products, and consider a more general situation.
Let $A$ be a group, and let $G$ be a group acting on a set $X$. Recall
that the \emph{permutational wreath product} is the group
\[A\wr_X G=\{f:X\to A\text{ finitely supported}\}\rtimes G,
\]
with the standard action at the source of $G$ on functions $X\to
A$. The \emph{standard wreath product} $A\wr G$ is then the wreath in
which $X=G$ carries the regular $G$-action.

We extend the notion of Cayley graph to sets with a group action (they
are sometimes called \emph{Schreier graphs}. If $G=\langle T\rangle$,
we denote by $\cay XU$ the graph with vertex set $X$ and an edge from
$x$ to $xt$ for all $x\in X,t\in T$.

\begin{lemma}\label{lem:any<direct}
  Let $A=\langle a_1,\dots,a_k\rangle$ be an arbitrary group, and let
  $G=\langle T\rangle$ be a group acting transitively on an infinite
  set $X$. Fix a point $x_1\in X$, and assume that, for all $R\in\N$,
  there exist $x_2,\dots,x_k\in X$, at distance $>R$ from each other
  and from $x_1$ in $\cay XT$, such that the balls of radius $R$
  around all $x_i$ are isomorphic. Let $e_1,\dots,e_k$ denote the
  orders of $a_1,\dots,a_k$ respectively. Then
  \[A\wr_X G\conv(C_{e_1}\times\cdots\times C_{e_k})\wr_X G.\]
\end{lemma}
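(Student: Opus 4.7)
The plan is, for each $R\in\N$, to produce a generating set $S_R$ of $A\wr_X G$ whose marked ball of radius $R$ at the identity is isomorphic to the marked ball of radius $R$ at the identity in the Cayley graph of $(C_{e_1}\times\cdots\times C_{e_k})\wr_X G$ with respect to a natural generating set $T'$. Pick a generator $b_i$ of $C_{e_i}$ for each $i$, and set $T':=T\sqcup\{\epsilon_1,\dots,\epsilon_k\}$, where $\epsilon_i$ is the function $X\to C_{e_1}\times\cdots\times C_{e_k}$ taking value $b_i$ at $x_1$ and the identity elsewhere. For $A\wr_X G$, apply the hypothesis with $4R$ in place of $R$ to obtain points $x_{1,R}:=x_1,x_{2,R},\dots,x_{k,R}$, pairwise at distance $>4R$ in $\cay XT$ and with mutually isomorphic marked $4R$-balls; then take $S_R:=T\sqcup\{\sigma_{1,R},\dots,\sigma_{k,R}\}$, where $\sigma_{i,R}$ is the function taking value $a_i$ at $x_{i,R}$ and the identity elsewhere. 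Since $G$ acts transitively on $X$, the set $S_R$ generates $A\wr_X G$. The obvious bijection $S_R\to T'$ sends $t\mapsto t$ and $\sigma_{i,R}\mapsto\epsilon_i$.

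The central claim is that for any word $w$ of length $\le 2R$ in these generators, $w$ evaluates to the identity in $A\wr_X G$ if and only if it does in the target; this is equivalent to the agreement of the marked balls of radius $R$. Any element of either wreath product factors as $(f,g)$, with $f$ the lamp configuration and $g\in G$; the $G$-coordinate depends only on the $T$-letters of $w$, so agrees verbatim in the two groups. For the lamp configuration, recall that a lamp-placing letter at word-position $j$, preceded by a partial $G$-product $h_j$, contributes a lamp at the point $h_j\cdot x_1$ (with value $b_i$, in the target, when the letter is $\epsilon_i$) or $h_j\cdot x_{i,R}$ (with value $a_i$, in $A\wr_X G$, when the letter is $\sigma_{i,R}$).

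Since $|h_j|\le 2R$ for every prefix, two lamp-placing letters of distinct types $i\neq i'$ can land at the same point of $X$ only if $h_j^{-1}h_{j'}\cdot x_{i',R}=x_{i,R}$ for some element of length $\le 4R$, which is ruled out by the pairwise distance $>4R$ between the chosen points. Thus in $A\wr_X G$ the contributions accumulating at each $y\in X$ are all of a single type $i$, and multiply to a power $a_i^n$, which is trivial precisely when $e_i\mid n$ --- exactly the condition at the corresponding coordinate in the target. The isomorphism of $4R$-balls around each $x_{i,R}$ and around $x_1$ guarantees that, for $|h|,|h'|\le 2R$, the equality $h\cdot x_{i,R}=h'\cdot x_{i,R}$ holds if and only if $h\cdot x_1=h'\cdot x_1$, so the combinatorial pattern of same-type collisions (and hence the exponents $n$) exactly mirrors that in the target.

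The main obstacle is this careful simultaneous tracking of two kinds of collisions: same-type lamps collapsing through the local $G$-orbit structure, which is handled by the isomorphic-ball assumption; and different-type lamps colliding because they sit over different base points $x_{i,R}$, which is ruled out by the pairwise-distance assumption. Once these are separated, the only remaining difference between the two evaluations --- namely, that the generators $a_i$ of $A$ need not commute, while the $b_i$'s lie in a direct product --- is invisible inside the ball of radius $R$, and the two marked Cayley balls agree. This yields $A\wr_X G\conv(C_{e_1}\times\cdots\times C_{e_k})\wr_X G$.
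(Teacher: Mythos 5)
Your proof is correct and uses the same key construction as the paper: realize the generators of $A$ as lamp generators supported at pairwise far-apart points $x_{i,R}$ with isomorphic local ball structure, so that distinct lamp types never collide within a ball of radius $R$, while same-type collisions are governed identically on both sides by the ball isomorphism. The paper packages the final verification by exhibiting both wreath products as quotients of $(\bigast_i C_{e_i})*G$ and checking that the defining relations agree on a ball of radius $R$, whereas you track lamp positions and values directly; this is a difference of bookkeeping, not of substance.
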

\begin{proof}
  We adapt the argument in Example~\ref{ex:wreath}.  As generating set
  of $(C_{e_1}\times\cdots\times C_{e_k})\wr_X G$, we consider
  $\{b_1,\dots,b_k\}\sqcup T$, in which $b_i$ corresponds to the
  generator of $C_{e_i}$ supported at $x_0\in X$.

  For arbitrary $R\in\N$, choose $x_1,\dots,x_k\in X$ as in the
  Lemma's hypotheses, and consider the following generating set
  $\{s_1,\dots,s_k\}\sqcup T$ of $A\wr_X G$: the generator $s_i$
  corresponds to the generator $a_i$ of the copy of $A$ supported at
  $x_i$.

  Both $\prod C_{e_i}\wr_X G$ and $A\wr_X G$ are quotients of
  $(\bigast_i C_{e_i})*G$; for the former, the additional relations are
  $[b_i,g]$ for all $i\in\{1,\dots,k\}$ and $g\in G_{x_0}$, and
  $[b_i^g,b_j]$ for all $i,j\in\{1,\dots,k\}$ and $g\in G$.

  For the latter, the additional relations are $[s_i,g]$ for all
  $i\in\{1,\dots,k\}$ and $g\in G_{x_i}$, and $[s_i^g,s_j]$ for all
  $i,j\in\{1,\dots,k\}$ and $g\in G$ with $x_ig\neq x_j$, and
  $w(s_1^{g_1},\dots,s_k^{g_k})$ for every relation
  $w(a_1,\dots,a_k)=1$ in $A$ and every $g_1,\dots,g_k\in G$ such that
  $x_ig_i=x_jg_j$ for all $i,j$.

  Our conditions imply that these two sets of relations agree on a
  ball of radius $R$.
\end{proof}

Our main example is as follows. Let $X$ be the orbit of $0^\infty$
under $\grig$.

\begin{corollary}\label{cor:grigwr}
  For every group $G$, there exists an abelian group $B$ such that
  $G\wr_X\grig\conv B\wr_X\grig$.
\end{corollary}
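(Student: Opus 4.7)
The plan is to apply Lemma~\ref{lem:any<direct} directly, with the acting group being $\grig$ and the acted set being $X$. Writing $G = \langle a_1, \dots, a_k\rangle$ with $a_i$ of order $e_i \in \N \cup \{\infty\}$, that lemma will give $G \wr_X \grig \conv (C_{e_1} \times \cdots \times C_{e_k}) \wr_X \grig$; so $B := C_{e_1} \times \cdots \times C_{e_k}$ is the desired abelian group, provided we can verify the geometric hypothesis of Lemma~\ref{lem:any<direct}.

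That hypothesis requires, for every $R \in \N$, producing points $x_1, \dots, x_k \in X$ at pairwise Schreier-distance greater than $R$ whose balls of radius $R$ in $\cay X{\{a,b,c,d\}}$ are pairwise isomorphic as marked graphs. The key observation is self-similarity of the Grigorchuk action: the action of any word of length $\le R$ in $\{a,b,c,d\}$ on a sequence $\omega \in X$ reads only a prefix of $\omega$ of length at most some $\ell = \ell(R)$. Consequently the marked $R$-ball around $\omega$ in the Schreier graph depends only on the length-$\ell$ prefix of $\omega$; any two points of $X$ sharing this prefix automatically have marked-isomorphic $R$-balls. Taking $x_1 := 0^\infty$, it suffices therefore to find $k-1$ further points in $X$ whose length-$\ell$ prefix is $0^\ell$ and which lie at pairwise Schreier-distance greater than $R$ from one another and from $x_1$.

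The main obstacle is to exhibit these further points; here I would use that $\grig$ is weakly branched. For any vertex $v \in \{0,1\}^*$ below level $\ell$, there exist elements of $\grig$ acting non-trivially inside the subtree $v\{0,1\}^\infty$ while fixing its complement, so such elements move $x_1 = 0^\infty$ to points $x \in X$ still beginning with $0^\ell$. Since the orbit $X$ is infinite and the Schreier graph $\cay X{\{a,b,c,d\}}$ has bounded degree, its balls have finite size; a pigeonhole argument (iteratively choosing new points outside the Schreier-$R$-neighbourhood of the already chosen ones) then yields $k-1$ such points at pairwise Schreier-distance greater than $R$. Together with $x_1$ these furnish the required $x_1, \dots, x_k$, and the hypothesis of Lemma~\ref{lem:any<direct} is met.
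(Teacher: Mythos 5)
Your overall strategy (reduce to Lemma~\ref{lem:any<direct}) is the same as the paper's, but your method for producing the points $x_1,\dots,x_k$ is essentially opposite to the paper's, and I believe it contains a genuine gap.

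The gap is in the claim that ``the action of any word of length $\le R$ in $\{a,b,c,d\}$ on a sequence $\omega\in X$ reads only a prefix of $\omega$ of length at most $\ell(R)$, so the marked $R$-ball around $\omega$ depends only on the length-$\ell$ prefix of $\omega$.'' What the contracting property gives you is weaker: for a word $w$ of length $\le 2R$, the section $w|_v$ at a vertex $v$ of depth $\ell(R)$ lies in the nucleus $\{1,a,b,c,d\}$, but it is \emph{not} trivial, and it still acts non-trivially on the tail. Concretely, $w$ fixes $\omega=0^\ell\tau$ if and only if $w$ fixes the vertex $0^\ell$ \emph{and} the nucleus element $w|_{0^\ell}$ fixes the tail $\tau$; that second condition genuinely depends on $\tau$. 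For instance, $d$ fixes $0^\infty$ but not $10^\infty$, while $c$ fixes $10^\infty$ but not $0^\infty$. Hence the set of short words fixing $\omega$ --- which is exactly what determines the marked Schreier ball around $\omega$ --- is \emph{not} a function of a bounded prefix alone; $0^\ell 0^\infty$ and $0^\ell 1 0^\infty$, although they share the prefix $0^\ell$, will in general have non-isomorphic marked balls. So after you use weak branching to produce points of $X$ sharing a long prefix with $0^\infty$, you have not secured isomorphic balls, and the hypothesis of Lemma~\ref{lem:any<direct} has not been verified.

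The paper sidesteps exactly this issue by keeping the \emph{tail} fixed rather than the prefix: it takes $x_i=v_i 0^\infty$ for distinct words $v_i$ of length $2\lfloor\log_2 R\rfloor$ (with $v_1=0^\ell$, so $x_1=0^\infty$). Then every $x_i$ has the same tail $0^\infty$, so when a word $w$ of length $\le 2R$ is contracted through the prefix $v_i$, the resulting nucleus section is evaluated on the \emph{same} sequence $0^\infty$ for every $i$; the ambiguity you run into never arises. (The paper's justification, ``since the action of $\grig$ is contracting, the $R$-balls around the $x_i$ are isomorphic'', is itself quite compressed, but the fix-tail/vary-prefix choice is essential to it.) If you want to follow your line of thought, you would have to additionally control the tails $\tau$ of your candidate points so that the nucleus elements fixing $\tau$ coincide with those fixing $0^\infty$ (namely $\{1,d\}$); as stated, requiring only the prefix $0^\ell$ is not enough.
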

\begin{proof}
  Let $\{a_1,\dots,a_k\}$, of respective orders $e_1,\dots,e_k$,
  generate $G$. Define $B=C_{e_1}\times\cdots\times C_{e_k}$. Choose
  $x_1=0^\infty$, and for $R\in\N$ choose distinct words
  $v_2,\dots,v_k\in\{0,1\}^*$ of length $2\lfloor\log_2R\rfloor$. Set
  $x_i=v_i0^\infty$ for $i=2,\dots,k$. Since the action of $\grig$ is
  contracting, the $R$-balls around the $x_i$ are isomorphic. The
  conclusion follows from Lemma~\ref{lem:any<direct}.
\end{proof}

\subsection{A necessary and sufficient condition for standard wreath products}\label{ss:morewreath}
\begin{proposition}\label{prop:wreath}
  Consider a wreath product $W=G\wr H$ with $H$ infinite. Then $G\wr
  H\conv\free$ if and only if one of the following holds:
  \begin{enumerate}
  \item $G$ does not satisfy any identity;
  \item $H$ does not satisfy any almost-identity.
  \end{enumerate}
\end{proposition}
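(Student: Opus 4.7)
The plan is to prove both implications separately. For \emph{sufficiency} I split into the two subcases of the hypothesis. If condition~(2) holds, i.e.\ $H$ satisfies no almost-identity, then by Corollary~\ref{cor:noai=<free} there is some $k$ with $H\conv\free_k$; combining Lemma~\ref{lem:products}(4) (applied to $G\conv G$ and $H\conv\free_k$) with Example~\ref{ex:continuum} yields $G\wr H\conv G\wr\free_k\conv\free$, and transitivity via Lemma~\ref{lem:preorder} closes the case.

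The more delicate subcase of sufficiency is condition~(1), when $G$ has no identity. Fix generating sets $\{g_1,\dots,g_m\}$ of $G$ and $T=\{t_1,\dots,t_\ell\}$ of $H$. Because $G$ satisfies no identity, the proof of Corollary~\ref{lemma:withoutidentity} provides, for every $R\in\N$, elements $a_R,b_R\in G$ such that no reduced word of length $\le R$ in $\{a_R,b_R\}$ is trivial in $G$. Since $H$ is infinite, for every $R$ I can choose points $x_1^R,\dots,x_m^R\in H$ pairwise at distance $>R$ in $\cay HT$. Following and extending Akhmedov's construction, I will assemble a generating set
\[
S_R=\bigl\{(a_R\delta_1,1),\;(b_R\delta_1,1),\;(g_i\delta_{x_i^R},1)_{i=1}^m,\;(f_j^R,t_j)_{j=1}^\ell\bigr\}
\]
of $G\wr H$, where the lamp corrections $f_j^R\in G^{(H)}$ are chosen so that every relation of $H$ of length $\le R$ lifts through the shift generators to a word whose lamp configuration is non-vanishing. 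One then verifies that the $R$-ball in $\cay{G\wr H}{S_R}$ is isomorphic to the free $R$-ball on $m+\ell+2$ generators: a reduced word $W$ of length $\le R$ in $S_R$ either projects non-trivially to $H$ (and is then non-trivial), or has trivial $H$-projection, in which case the lamp at position $1\in H$ spells out a length-$\le R$ word in $a_R,b_R$, non-trivial by construction; here the pairwise distance $>R$ between the $x_i^R$ ensures that contributions from $(g_i\delta_{x_i^R},1)$ cannot cancel with those of other generators within length $R$.

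For \emph{necessity} I assume both conditions fail, so $G$ satisfies an identity $w(x_1,\dots,x_n)=1$ and, for every $k$, the group $H$ satisfies a $k$-almost-identity $w_k'(y_1,\dots,y_k)$. Let $\pi\colon G\wr H\twoheadrightarrow H$ denote the projection onto the acting group, with kernel $G^{(H)}$. If $\{s_1,\dots,s_k\}$ generates $G\wr H$ then $\{\pi(s_i)\}$ generates $H$, so $w_k'(\pi(s_1),\dots,\pi(s_k))=1$, whence $u:=w_k'(s_1,\dots,s_k)\in G^{(H)}$. Since the identity $w$ holds in every direct power of $G$, it holds in $G^{(H)}$ as well. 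Setting
\[
W(z_1,\dots,z_k):=w\bigl(w_k'(\vec z),\;z_1w_k'(\vec z)z_1^{-1},\;\dots,\;z_1^{n-1}w_k'(\vec z)z_1^{-(n-1)}\bigr)
\]
with $\vec z=(z_1,\dots,z_k)$, the $n$ arguments of $w$ all lie in $G^{(H)}$ after substitution of a generating tuple, so $W(s_1,\dots,s_k)=1$ in $G\wr H$. Non-triviality of $W$ in $\free_k$ (and hence the fact that $W$ is a bona fide almost-identity) follows from a free-group computation: the $n$ conjugates of $w_k'(\vec z)$ by distinct powers of $z_1$ generate a free subgroup of $\free_k$, after harmlessly modifying $w_k'$ to avoid the case where $w_k'(\vec z)$ is a power of $z_1$, and the non-trivial word $w$ does not vanish on such a free tuple.

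The main obstacle is the explicit construction in Case~(1) of sufficiency: choosing the lamp corrections $f_j^R$ and running the combinatorial bookkeeping which shows that the $R$-ball of $\cay{G\wr H}{S_R}$ agrees with the free $R$-ball. One must track which lamp positions are reachable by a length-$\le R$ walk, and carry out a case analysis separating relators whose $H$-projection is already trivial from those which use a length-$\le R$ relation of $H$, so as to rule out every potential short relator. A secondary, technical obstacle is verifying non-triviality of $W$ in $\free_k$ in the necessity direction, which I handle by the free subgroup argument above.
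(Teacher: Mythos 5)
Your necessity argument and the subcase of sufficiency where $H$ has no almost-identity parallel the paper's own proof (Lemmas~\ref{Hwithoutquasi} and~\ref{GidentityHquasi}) essentially step for step: the same decomposition, the same appeal to Corollary~\ref{cor:noai=<free} and Lemma~\ref{lem:products}(4), and, for necessity, the same trick of conjugating the $H$-almost-identity by powers of a suitable element so that the conjugates generate a free subgroup of $\free_k$ and the $G$-identity evaluated on them stays nontrivial.

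The other sufficiency subcase, $G$ without identity, is where you depart from the paper, and it contains a genuine gap. The paper's Lemma~\ref{lem:generalizedakhmedov} deliberately does \emph{not} try to make the $R$-ball of $\cay{G\wr H}{U}$ free: the generating set $U=\{w,w^{h_1}g_1^h,\dots,w^{h_k}g_k^h\}\cup T$ keeps the $T$-letters undressed, and the claim proved is only that the relations of length $\le R$ in $\cay{G\wr H}{U}$ are exactly those of $\cay HT$. Hence the limit is $\free_{k+1}*H$, not a free group, and one concludes by Lemma~\ref{lem:quotientfreeprecedesfree} because $\free_{k+1}*H$ has $\free_{k+1}$ as a quotient. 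Your plan instead dresses the shift generators with lamp corrections $f_j^R$ and asserts that the $R$-ball of $\cay{G\wr H}{S_R}$ is literally free of rank $m+\ell+2$. That is a far stronger statement: it requires in particular that the subgroup $\langle(f_1^R,t_1),\dots,(f_\ell^R,t_\ell)\rangle$, which surjects onto $H$, look free of rank $\ell$ up to radius $R$. When $H$ has a short relator $r$, the lift $W=r((f_j^R,t_j))$ is a lamp, and conjugating $W$ by a product of shift generators that moves far in $H$ produces another lamp whose support one can typically detach; disjoint supports give $[W,W^g]=1$, a nontrivial free-group word of length governed by the size of $H$'s relators and of the supports of the $f_j^R$, not by $R$. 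Trying to defeat this by letting the supports of $f_j^R$ grow with $R$ causes all short $H$-loops to spell out massively overlapping lamps, and then the noncancellation you need against the $a_R$, $b_R$ and the far-away $g_i$ components becomes the whole proof. This is precisely what you flag as the ``main obstacle'' and never carry out, so as written the argument is incomplete (and I am skeptical the stronger free-ball claim is even true for, say, $H=\Z^2$). The fix is the paper's route: prove only $G\wr H\conv\free_{k+1}*H$ and let Lemma~\ref{lem:quotientfreeprecedesfree} close the gap.
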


We split the proof in a sequence of lemmas.  The following generalizes
the construction in~\cite{akhmedov:girth}*{Lemma~2.3} and the main
result of that paper:
\begin{lemma}\label{lem:generalizedakhmedov}
  Let $G$ be a $k$-generated group that satisfies no identity, and let
  $H$ be an infinite group. Then $G\wr H$ \preforms\
  $\free_{k+1}*H$, and hence \preforms\ $\free$ in view of
  Lemma~\ref{lem:quotientfreeprecedesfree}.
\end{lemma}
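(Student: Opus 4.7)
I would prove the principal claim $G\wr H \conv \free_{k+1}*H$; the ``hence'' part then follows because $\free_{k+1}*H$ admits $\free_{k+1}$ as a quotient (kill $H$), so Lemma~\ref{lem:quotientfreeprecedesfree} gives $\free_{k+1}*H \conv \free$, and transitivity of $\conv$ concludes. Fix $T$ a generating set of $H$ and $\{g_1,\dots,g_k\}$ of $G$. For each $R\in\N$ the goal is to exhibit a generating set $S_R$ of $G\wr H$ of cardinality $k+1+|T|$, in bijection with the standard generating set $\{f_0,\dots,f_k\}\sqcup T$ of $\free_{k+1}*H$, so that the marked balls of radius $R$ in the two Cayley graphs coincide.

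The construction extends Akhmedov's strategy from \cite{akhmedov:girth}*{Lemma~2.3}: the $k+1$ ``free-looking'' generators are encoded as lamp configurations spread widely across $H$ and coupled with $H$-shifts, so that the commutation relations $[\delta_x(a),\delta_y(b)]=1$ (for $x\neq y$) holding in $G\wr H$ cannot manifest themselves in words of length $\le R$. Using the hypothesis that $G$ has no identity, the combining-words trick from Corollary~\ref{lemma:withoutidentity} yields, for any $N$, elements $a_1,\dots,a_N\in G$ such that no non-trivial word of length $\le R'$ in them vanishes in $G$, where $R'$ may be taken polynomial in $R$. Since $H$ is infinite we pick $h_0\in H$ with $|h_0|_T>R$ and rapidly growing exponents $M_0<M_1<\cdots<M_k$, and define
\[
\sigma_i \;=\; \Bigl(\prod_{j=0}^{L-1}\delta_{h_0^{j}}(a_{i,j})\Bigr)\cdot h_0^{M_i}, \qquad i=0,\dots,k,
\]
with $L$ polynomial in $R$ and the $a_{i,j}$ drawn from the reservoir $\{a_1,\dots,a_N\}$. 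Setting $S_R := T\sqcup\{\sigma_0,\dots,\sigma_k\}$, appropriate products of $T$-conjugates of the $\sigma_i$ recover every $\delta_e(g_l)$, so $S_R$ generates $G\wr H$.

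The heart of the argument is to show that the natural homomorphism $\phi\colon \free_{k+1}*H \to G\wr H$ sending $f_i\mapsto\sigma_i$ and $t\mapsto t$ is injective on the ball of radius $R$. Given a non-trivial reduced word $W=V_1U_1V_2U_2\cdots$ in $\free_{k+1}*H$ of length $\le R$, one tracks the evolving $H$-position and the accumulating lamp function in $G^{(H)}$ while evaluating $\phi(W)$. The large $T$-norm of $h_0$ and the rapid growth of the $M_i$ ensure that the positions of lamps deposited by distinct letters of $W$ do not collide inside the ball of radius $R$: a putative collision would force a vanishing signed sum $\sum\epsilon_\ell M_{i_\ell}$, impossible by the spacing of the $M_i$, or else force a $T$-word of length $\le R$ to reach a large $h_0$-coset, impossible since $|h_0|_T>R$. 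Consequently the accumulated lamp value at each non-empty position is a non-trivial word of length $\le R$ in the reservoir $\{a_j\}$, hence non-trivial in $G$ by our choice of reservoir; thus $\phi(W)\neq 1$.

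The main obstacle is the simultaneous design of the parameters $|h_0|_T$, $L$, the $M_i$, $N$, and the pattern of $a_{i,j}$'s so that the position-injectivity and value-non-triviality arguments work across all reduced words of length $\le R$, including careful treatment of the interface between the $V_j$ and $U_j$ factors of $W$. All parameters can be made polynomial in $R$; the essential use of the no-identity hypothesis on $G$ is to guarantee the existence of the reservoir $\{a_j\}$, which feeds non-trivial values into the lamp configuration and prevents the ``metabelian-like'' identities $[[x,y],[x,z]]=1$ (satisfied by $G\wr H$ for single-lamp $x$, even when $H$ is abelian) from becoming visible relations of the $\sigma_i$'s within the ball of radius $R$.
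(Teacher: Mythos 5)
Your overall framing — building, for each $R$, a generating set $T\sqcup\{\sigma_0,\dots,\sigma_k\}$ of $G\wr H$ so that the natural map from $\free_{k+1}*H$ is injective on the ball of radius $R$ — matches the paper's. But you have missed the paper's key reduction, and the construction you substitute has a genuine gap. The paper first invokes Corollary~\ref{lemma:withoutidentity} together with Lemma~\ref{lem:products}(4): a group with no identity preforms a group containing a non-abelian free subgroup, and preforming is compatible with $\cdot\wr H$, so one may assume outright that $G$ contains a free subgroup of \emph{arbitrary} rank. This removes the need for your ``reservoir'' of approximately-free elements and the bookkeeping of the length bound $R'$: the paper simply puts one genuine free generator of $\free_B$ at each point of a ball $B\subset H$, so an accumulated lamp value is a word in distinct free generators and can never collapse to the identity.

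The concrete gap in your construction is the use of the line $\{h_0^j:0\le j<L\}$ and the shifts $h_0^{M_i}$. You implicitly require that $h_0^j$ be pairwise distinct and $T$-far from $1$ for all $0<|j|<L$, and that the $h_0^{M_i}$ be widely separated. But $H$ is only assumed to be an infinite finitely generated group: it may be a torsion group of bounded exponent (an infinite Burnside group), in which case $h_0$ has bounded order, the positions $h_0^j$ recur with period independent of $R$, and the shifts $h_0^{M_i}$ collapse to a bounded set no matter how fast $M_i$ grows. Even when $h_0$ has infinite order, $\langle h_0\rangle$ may be badly distorted in $(H,T)$, so $|h_0^j|_T$ need not exceed $R$. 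Your position-collision analysis (``a putative collision would force a vanishing signed sum $\sum\epsilon_\ell M_{i_\ell}$'') therefore fails in general, and this is not a matter of tuning parameters polynomially in $R$ as you suggest. The paper avoids the issue entirely by taking the lamp support to be a metric ball $B$ of radius $(k{+}1)R$ (large because $H$ is infinite, with no hypothesis on orders or distortion), choosing $h\in H\setminus B$ and $h_1,\dots,h_k$ of norms $R,2R,\dots,kR$, and setting the $i$-th free-looking generator to be $w^{h_i}g_i^h$ with $w$ the lamp configuration on $B$. If you want to keep a direct argument without the reduction, you would at minimum need to replace $\{h_0^j\}$ by a genuinely spread-out subset of $H$ (e.g.\ built from metric balls) rather than a cyclic line, and then the advantage over the paper's two-step approach disappears.
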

\begin{proof}
  Fix generating sets $S=\{g_1,\dots,g_k\}$ of $G$ and $T$ of $H$; we
  then identify $g_i$ with the function $H\to G$ supported at
  $\{1\}\subset H$ at taking value $g_i$ at $1$.

  By Lemma~\ref{lemma:withoutidentity} and Lemma~\ref{lem:products}(4)
  it is sufficient to consider the case in which $G$ contains a
  non-abelian free subgroup. Given $R>0$, we construct the following
  generating set of $G\wr H$. Let $B$ denote the ball of radius
  $(k+1)R$ in $H$. Since $G$ contains a free subgroup, it also
  contains a free subgroup $\free_B$ of rank $\#B$. Let $w$ be a
  function $G\to H$, supported at $B$, whose image is a basis of
  $\free_B$. Choose also $h \in H\setminus B$, and $h_1,\dots,h_k\in H$
  such that $\|h_i\|=Ri$ for all $i=1,\dots,k$. Consider then the
  set
  \[U=\{w,w^{h_1}g_1^h,\dots,w^{h_k}g_k^h\}\cup T.
  \]
  It is clear that $U$ generates $G\wr H$. Consider a word $u$ of
  length $\le R$ in $U^{\pm1}$. Assume that it contains no relation in
  $H$ (that would come from the $T$ letters). If $u$ is non-trivial,
  then it contains at least one term $w^{h_i}g_i^h$. Concentrating on
  what happens in $B$, we see generators of $\free_B$ that cannot
  cancel, because to do so they would have to come from a term
  $(w^{h_i}g_i^h)^{-1}$, which would imply that $u$ was not reduced,
  or from a term $(w^{h_j}g_j^h)^{-1}$ via conjugation by a word of
  length at least $R$ in $T$.

  Therefore, the relations of length $\le R$ that appear in $\cay{G\wr
    H}U$ are precisely those of $\cay HT$.
\end{proof}

\begin{lemma}\label{Hwithoutquasi}
  If $H$ satisfies no almost-identity, then $G\wr H$ \preforms\ a
  non-abelian free group.
\end{lemma}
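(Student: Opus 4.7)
The plan is to deduce this lemma in three short steps, chained by transitivity of $\conv$ (Lemma~\ref{lem:preorder}), using results already in hand.

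First I would invoke Corollary~\ref{cor:noai=<free}: since by hypothesis $H$ satisfies no almost-identity, for every $m$ large enough that $H$ is $m$-generated (and in particular for some $m\ge 2$, since otherwise $H$ would be cyclic and easily seen to satisfy an almost-identity) we have $H\conv\free_m$. Fix such an $m\ge 2$.

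Next I would apply Lemma~\ref{lem:products}(4) to the pairs $G\conv G$ (trivial) and $H\conv\free_m$, yielding
\[G\wr H\;\conv\;G\wr\free_m.\]
Finally, Example~\ref{ex:continuum} asserts that $A\wr\free_m\conv\free$ for every group $A$ and every $m\ge 2$; applying it with $A=G$ gives $G\wr\free_m\conv\free_N$ for some $N$. Transitivity then yields $G\wr H\conv\free_N$, which is the desired conclusion.

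There is essentially no obstacle: the lemma is a formal consequence of the wreath-product monotonicity of $\conv$ combined with the already-recorded fact that a wreath product by a free group preforms a free group. The only mild verification needed is that $H$, being infinite and almost-identity-free, is $m$-generated for some $m\ge 2$; this is immediate, since a group with only $1$ generator is cyclic and trivially satisfies an almost-identity of the form $x_1^{n}$ (if finite) or is $\Z$ and satisfies $[x_1,x_1]$ — in either case, the hypothesis forces $H$ to be at least $2$-generated as soon as we are in the nontrivial case.
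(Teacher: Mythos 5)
Your proof is correct and essentially coincides with the paper's: Corollary~\ref{cor:noai=<free} gives $H\conv\free_m$ for some $m\ge 2$, Lemma~\ref{lem:products}(4) gives $G\wr H\conv G\wr\free_m$, and then one concludes that $G\wr\free_m$ \preforms\ a free group---the paper cites Lemma~\ref{lem:quotientfreeprecedesfree} here (via the quotient map $G\wr\free_m\twoheadrightarrow\free_m$), while you cite Example~\ref{ex:continuum}, which is the same observation. One cosmetic slip in your side verification that $m\ge 2$: $[x_1,x_1]$ is the trivial word, hence not an identity; what you want is that $\Z$ (indeed any abelian group) satisfies the identity $[x_1,x_2]$, so a $1$-generated group always has an almost-identity.
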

\begin{proof}
  Let $H$ be $k$-generated. Since $H$ does not satisfy any
  $k$-almost-identity, it \preforms\ $\free_k$ by
  Corollary~\ref{cor:noai=<free}. By Lemma~\ref{lem:products}(4), we
  get $G\wr H\conv G\wr\free_k$. Then $G\wr\free_k$ admits $\free_k$
  as a quotient, hence by Lemma~\ref{lem:quotientfreeprecedesfree}
  \preforms\ a non-abelian free group.
\end{proof}

If two groups satisfy an identity, then so does their wreath product.
An analogous statement is valid for almost-identities:
\begin{lemma}\label{GidentityHquasi}
  Suppose that the group $G$ satisfies an identity, and that for all
  $k\in\N$ there is an $k$-almost-identity in $H$.  Then for all
  $k\in\N$ the wreath product $G\wr H$ satisfies a
  $k$-almost-identity.
\end{lemma}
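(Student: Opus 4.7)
The plan is to combine the $k$-almost-identity $v\in\free_k$ of $H$ with the identity $w\in\free_n$ of $G$ into a single word of $\free_k$ by substituting conjugates of $v$ for the variables of $w$, and then to verify that the resulting word is a non-trivial $k$-almost-identity of $G\wr H$.

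First I would reduce to the case where $G\wr H$ admits a generating $k$-tuple; otherwise any non-trivial word is vacuously a $k$-almost-identity. For any generating tuple $(g_1,\dots,g_k)$ of $G\wr H$, its image under the natural quotient $G\wr H\twoheadrightarrow H$ is a generating $k$-tuple $(h_1,\dots,h_k)$ of $H$, so $v(h_1,\dots,h_k)=1$ and therefore $v(g_1,\dots,g_k)$ lies in the base group $G^{(H)}\triangleleft G\wr H$. By normality of the base group, for any $u\in\free_k$ the element $v^u(g_1,\dots,g_k)$ is also a finitely supported function $H\to G$. Now pick words $u_1,\dots,u_n\in\free_k$ and define
\[W(x_1,\dots,x_k):=w\bigl(v^{u_1},\dots,v^{u_n}\bigr)\in\free_k.\]
Then $W(g_1,\dots,g_k)$ is computed in the base group pointwise: at each $x\in H$ it evaluates to $w$ applied to an $n$-tuple in $G$, which is trivial since $w$ is an identity of $G$. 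Hence $W$ vanishes on every generating $k$-tuple of $G\wr H$.

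The main obstacle is to choose the $u_i$ so that $W$ is non-trivial in $\free_k$. For $k\ge 2$, I would pick $t\in\free_k$ such that $\langle v,t\rangle$ is free of rank $2$; such a $t$ exists because the centralizer of $v$ in $\free_k$ is cyclic (generated by the primitive root of $v$), so any $t$ lying outside that centralizer works. Inside the rank-$2$ free group $\langle v,t\rangle$, the kernel of the homomorphism $v\mapsto 0,\,t\mapsto 1$ onto $\Z$ is freely generated by the conjugates $\{v^{t^i}\colon i\in\Z\}$ (by Nielsen--Schreier with the evident Schreier transversal), so in particular $v,v^t,\dots,v^{t^{n-1}}$ freely generate a subgroup of rank $n$. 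Setting $u_i:=t^{i-1}$ makes the substitution $\free_n\hookrightarrow\free_k$, $x_i\mapsto v^{u_i}$, injective, and consequently $W$ is the non-trivial image of the non-trivial word $w$. The remaining degenerate cases ($k=1$, or $G$ or $H$ trivial) can be handled by hand: when $G$ is non-trivial and $H$ is infinite the group $G\wr H$ is not cyclic, so any non-trivial word of $\free_1$ is vacuously a $1$-almost-identity.
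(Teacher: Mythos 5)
Your proposal is correct and takes essentially the same approach as the paper: both substitute conjugates $v^{t^{i}}$ of the $k$-almost-identity $v$ of $H$ for the variables of the identity $w$ of $G$, observe that the result vanishes on every generating $k$-tuple because $v$ lands in the base group where $w$ kills it, and secure non-triviality of the composite word via the free independence of the conjugates $\{v^{t^i}\}$ inside a rank-$2$ free subgroup $\langle v,t\rangle$. Your only addition is an explicit treatment of the degenerate cases ($k=1$, trivial factors), which the paper elides.
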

\begin{proof}
  Let $k\in\N$ be given, let $v(x_1,\dots,x_m)$ be an identity for
  $G$, and let $w(x_1,\dots,x_k)$ be an almost-identity for $H$ on
  generating sets of cardinality $k$.

  Let $\{s_1,\dots,s_k\}$ be a generating set for $G\wr H$. Its
  projection to $H$ then is a generating set for $H$, so
  $w(s_1,\dots,s_k)$ belongs to the base $G^H$ of $G\wr H$. For
  $a_1,\dots,a_m\in\free_k$ to be determined later, let us consider
  the word
  \[u(x_1,\dots,x_k)=v(w(x_1,\dots,x_k)^{a_1},\dots,w(x_1,\dots,x_k)^{a_m}).
  \]
  We clearly have $u(s_1,\dots,s_k)=1$, so $u$ is an almost-identity
  in $G\wr H$. We only have to choose the $a_i\in\Z$ in such a way
  that $u$ is not the trivial word.

  Since $w$ is a non-trivial word, there exists $a\in\free_k$ such
  that $\langle w,a\rangle$ is a free group of rank $2$. Observe that
  $\{w^{a^n}\colon n\in\N\}$ freely generates a free subgroup $E$ of
  $\free_k$. Select then $a_i=a^i$. Then, since $v$ is a non-trivial
  word, $v(w^{a_1},\dots,w^{a_m})$ is a non-trivial element of $E$ and
  therefore of $\free_k$.
\end{proof}

\begin{example}[A solvable group in the component of free groups]\label{ex:solvablefree}
  Consider $A=\free_2\wr\Z$ and $B=\Z^2\wr\Z$. Then $B$ is solvable of
  class $2$.  By Lemma~\ref{lem:any<direct}, the group $A$
  \preforms\ $B$.  Since $\free_2$ satisfies no identity and since
  $\Z$ is infinite, Lemma~\ref{lem:generalizedakhmedov} implies that
  $A$ \preforms\ a free group.

  In summary, $A$ \preforms\ a solvable group, and also
  \preforms\ a non-abelian free group.
\end{example}

\begin{example}[A group of bounded torsion in the component of free groups]\label{ex:boundedtorsionfree}
  Let $p\ge 3$ be such that there exist infinite finitely generated
  groups of $p$-exponent (any sufficiently large prime $p$ has such
  property, see~\cite{adyan:burnside-orig}). Let $H$ be an infinite
  $s$-generated group of exponent $p$. Set $A=(\bigast^s\Z/p\Z)\wr H$
  and $B=(\Z/p\Z)^s\wr H$. By Lemma~\ref{lem:any<direct}, the group
  $A$ \preforms\ $B$.

  Observe that $\bigast^s\Z/p\Z$ contains a non-abelian free subgroup
  and therefore satisfies no identity.  Since $H$ is infinite,
  Lemma~\ref{lem:generalizedakhmedov} implies that $A$ \preforms\
  a free group.  Clearly $B$ is a torsion group of exponent $p^2$.
\end{example}

\subsection{Distance between finitely generated groups}
Given two finitely generated group $A$ and $B$, let us denote by
$\dist(A,B)$ the distance between $A$ and $B$ in the (oriented) graph
corresponding to the
 limit preorder. It is the minimal length
$\ell$ of a chain of groups $A=A_0,A_1,\dots,A_\ell=B$ such that
either $A_{i-1}\conv A_i$ or $A_i\conv A_{i-1}$ for all
$i=1,\dots,\ell$. We also write $\dist(A,B)=\infty$ if $A$ and $B$ are
in distinct connected components.

If $A$ is a torsion-free nilpotent group, then we have seen in
Proposition~\ref{mainpropositionnilpotent} that the diameter of the
connected component that contains $A$ is equal to two.

Examples~\ref{ex:solvablefree} and~\ref{ex:boundedtorsionfree} exhibit
solvable groups and groups of bounded exponent at distance $2$ from
some non-abelian free group.

In contrast to the nilpotent case, the diameter of the connected
component that contains non-abelian free groups is at least $3$:
\begin{remark}\label{re:atleastthree}
  If $A$ is a finitely presented group satisfying an identity (for
  example, a finitely presented solvable group), then
  $\dist(A,\free_m)\ge 3$ for all $m\ge2$. Indeed, any group that
  is \preformed\ by $A$ satisfies the same identity. Any group that
  \preforms\ $A$ is a quotient of $A$ (since $A$ is finitely
  presented) and hence also satisfies the same identity.  This implies
  that all groups that are \preformed\ by or \preform\ $A$ are at
  distance at least $2$ from non-abelian free groups.  Therefore, the
  distance from $A$ to free groups is at least $3$.
\end{remark}
 
Before we discuss in more detail some groups from
Remark~\ref{re:atleastthree}, we need the following
 
\begin{example}\label{ex:baumslagandwreath}
  Consider $p\ge 2$, and let
  \[\mathbf{BS}(1,p)=\langle a,t\mid t^{-1}at=a^p\rangle\]
  be a solvable Baumslag-Solitar group. Then $\mathbf{BS}(1,p)$
 \preforms\ $\Z\wr\Z^2$.
\end{example} 
\begin{proof}
  We write $A=\mathbf{BS}(1,p)$. Fix sequences $(m_R),(n_R)$ in $\N$
  such that $m_R,n_R$ are relatively prime, $m_R\to\infty$,
  $n_R\to\infty$ and $n_R/m_R\to\infty$. For example, $m_R=i$ and
  $n_R=i^2+1$ will do.

  Consider the generating set $\{a,x_R=t^{n_R},y_R=t^{m_R}\}$ of
  $A$. Let us prove that $(A,S_R)$ subconverges to $\Z\wr\Z^2=\langle
  a,x,y\mid [b,c],[a,a^{x^iy^j}]\forall i,j\in\Z\langle$ in
  $\markedgroups$.

  Observe that $a,x_R,y_R$ satisfy all the relations satisfied by
  $a,x,y$ in $\Z\wr\Z^2$. Therefore, $(A,S_R)$ subconverges to a
  quotient $(\Z\wr\Z^2)/N$ of $\Z\wr\Z^2$. Furthermore, $(\langle
  t\rangle,\{x_R,y_R\})$ converges to $(\Z^2,\{x,y\})$, so $N$ maps to
  the trivial subgroup of $\Z^2$ under the natural projection
  $\Z\wr\Z^2\to\Z^2$.

  Now every element of $\Z\wr\Z^2$ may uniquely be written in the form
  $w(a,x,y)=\prod_{i,j\in\Z}a^{\ell_{i,j}x^iy^j}x^py^q$, and if this
  element maps trivially to $\Z^2$ then $p=q=0$.

  Let us therefore assume by contradiction that there exists a
  non-trivial word $w(a,x,y)=\prod_{i,j\in\Z}a^{\ell_{i,j}x^iy^j}$
  with $w(a,x_R,y_R)=1$ for all sufficiently large $R$.

  The group $A$ is isomorphic to $\Z[1/p]\rtimes\Z$, with the
  generator of $\Z$ acting on $\Z[1/p]$ by multiplication by
  $p$. Since $w(a,x,y)$ maps trivially to $\Z^2$, we have
  $w(a,x_R,y_R)\in\Z[1/p]$, and in fact under this identification
  \[w(a,x_R,y_R)=\sum_{i,j\in\Z}\ell_{i,j}p^{in_R+jm_R}.\]

  Let $(i,j)\in\Z^2$ be lexicographically maximal such that
  $\ell_{i,j}\neq0$; that is, $\ell_{i',j'}=0$ if $i'>i$ or if $i'=i$
  and $j'>j$. Set $N=\sum_{i,j\in\Z}|\ell_{i,j}|$. For $R$
  sufficiently large, we have $p^{in_R+jm_R}>Np^{i'n_R+j'm_R}$
  whenever $(i',j')\in\Z^2$ is such that $\ell_{i',j'}\neq0$. For such
  $R$, we have $|w(a,x_R,y_R)|\ge
  p^{in_R+jm_R}-\sum_{(i',j')\neq(i,j)}\ell_{i',j'}p^{i'n_R+j'm_R}>0$,
  contradicting the hypothesis that $w$ is a relation in the limit of
  $(A,S_R)$.
\end{proof}
 
\begin{example}[Groups at distance $3$ from free groups]
  The distance between solvable Baumslag Solitar groups and free
  groups is equal to $3$.
\end{example}
\begin{proof}
  Consider $p\ge 2$ and $A=\mathbf{BS}(1,p)$ a solvable
  Baumslag-Solitar group. Since $A$ is finitely presented and
  solvable, Remark~\ref{re:atleastthree} implies that the distance
  from $A$ to free groups is at least $3$.

  By Example~\ref{ex:baumslagandwreath} we know that $A$
  \preforms\ $\Z\wr\Z^2$.  Since $\Z\conv\Z^2$, we know by
  Lemma~\ref{lem:products} that $\Z\wr\Z^2 \conv \Z^2\wr\Z^2$, so $A
  \conv \Z^2\wr\Z^2$.  By Lemma~{lem:any<direct}, $\free_2\wr\Z^2
  \conv \Z^2\wr\Z^2$.  By Lemma~\ref{lem:generalizedakhmedov},
  $\free_2 \wr \Z^2$ \preforms\ a free group. We therefore have a
  chain $A\conv\Z^2\wr\Z^2\vnoc\free_2\wr\Z^2\conv\free_4$, and
  $\dist(A,\free_4)\le3$.

  On the other hand, if we had $\dist(A,\free_4)=2$ then either there
  would exist $B$ with $A\conv B\vnoc\free_4$; this is impossible
  because $B$ would then be both solvable and \preformed\ by a free
  group; or there would exist $B$ with $A\vnoc B\conv\free_4$; and
  again $B$ would be both solvable and without almost-identities.
\end{proof}

\section{Groups of non-uniform exponential growth}\label{ss:growth}
Let $G$ be a group generated by a set $S$. The \emph{growth function}
of $G$ with respect to $S$,
\[\nu_{G,S}(R)=\#B(1,R)\subseteq\cay GS,\]
counts the number of group elements that may be expressed using at
most $R$ generators. This function depends on $S$, but only mildly; if
for two functions $\gamma,\delta:\N\to\N$ one defines
$\gamma\precsim\delta$ whenever there exists a constant $k\in\N_+$
such that $\gamma(R)\le\delta(kR)$, and $\gamma\sim\delta$ whenever
$\gamma\precsim\delta\precsim\gamma$, then the $\sim$-equivalence
class of $\nu_{G,S}$ is independent of $S$.

The group $G$ has \emph{polynomial growth} if $\nu_{G,S}(R)\precsim
R^d$ for some $d$; then necessarily $G$ is virtually nilpotent and
$\nu_{G,S}(R)\sim R^d$ for some $d\in\N$,
by~\cites{gromov:nilpotent,bass:nilpotent}. On the other hand, if
$\nu_{G,S}(R)\succsim b^R$ for some $b>1$, then
$\nu_{G,S}(R)\sim2^R$ and $G$ has \emph{exponential growth}; this
happens for free groups, and more generally for groups containing a
free subsemigroup.  If $G$ has neither polynomial nor exponential
growth, then it has \emph{intermediate growth}. The existence of
groups of intermediate growth, asked by Milnor~\cite{milnor:5603}, was
proven by Grigorchuk~\cite{grigorchuk:gdegree}.

Set $\lambda_{G,S}=\lim\sqrt[R]{\nu_{G,S}(R)}$; the limit exists
because $\nu_{G,S}$ is submultiplicative
($\nu_{G,S}(R_1+R_2)\le\nu_{G,S}(R_1)\nu_{G,S}(R_2)$). Reformulating
the above definitions, we say $G$ that has \emph{subexponential
  growth} if $\lambda_{G,S}=1$ for some and hence all $S$; that $G$
has \emph{exponential growth} if $\lambda_{G,S}>1$; and that $G$ has
\emph{uniform exponential growth} if $\inf_S\lambda_{G,S}>1$. The
existence of groups of non-uniform exponential growth, asked by
Gromov~\cite{gromov:metriques}*{Remarque~5.12}, was proven by
Wilson~\cite{wilson:ueg}.

\begin{lemma}\label{lem:lambdainf}
  If $G\conv H$, then $\inf_S\lambda_{G,S}\le\inf_T\lambda_{H,T}$. In
  particular, if $G$ has exponential growth and $H$ has subexponential
  growth, then $G$ has non-uniform exponential growth.
\end{lemma}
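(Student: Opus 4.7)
The plan is to use the elementary fact that the growth rate is governed by the growth of the ball of any single radius, thanks to submultiplicativity: since $\nu_{G,S}(R_1+R_2)\le\nu_{G,S}(R_1)\nu_{G,S}(R_2)$, Fekete's lemma gives $\lambda_{G,S}=\inf_{R\ge1}\nu_{G,S}(R)^{1/R}$, so in particular $\lambda_{G,S}\le\nu_{G,S}(R)^{1/R}$ for every single $R$. This is the key observation: a good upper bound on a single ball already controls the whole growth rate.

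First I would fix an arbitrary generating set $T$ of $H$ and a real $\epsilon>0$. By the above, choose $R_0$ large enough that $\nu_{H,T}(R_0)^{1/R_0}<\lambda_{H,T}+\epsilon$. Now use $G\conv H$ with respect to the generating set $T$ (here I invoke Lemma~\ref{choiceofgen}, which lets us take $T$ to be the given one): there is a generating set $S$ of $G$ such that the balls of radius $R_0$ in $\cay GS$ and $\cay HT$ are isomorphic as marked graphs. In particular $\nu_{G,S}(R_0)=\nu_{H,T}(R_0)$. Then
\[\inf_{S'}\lambda_{G,S'}\le\lambda_{G,S}\le\nu_{G,S}(R_0)^{1/R_0}=\nu_{H,T}(R_0)^{1/R_0}<\lambda_{H,T}+\epsilon.\]
Letting $\epsilon\to0$ and then taking the infimum over $T$ yields $\inf_S\lambda_{G,S}\le\inf_T\lambda_{H,T}$.

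For the second assertion, suppose $G$ has exponential growth and $H$ has subexponential growth. Then $\lambda_{H,T}=1$ for every $T$, so the inequality just proved gives $\inf_S\lambda_{G,S}\le1$. Since trivially $\lambda_{G,S}\ge1$ always, we obtain $\inf_S\lambda_{G,S}=1$, while by hypothesis $\lambda_{G,S}>1$ for every $S$. This is precisely the statement that $G$ has non-uniform exponential growth.

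There is essentially no obstacle; the only thing to get right is the appeal to submultiplicativity (to pass from agreement on a fixed ball to a uniform bound on $\lambda_{G,S}$) and the invocation of Lemma~\ref{choiceofgen} so that the same $T$ can be used both to compute $\lambda_{H,T}$ and to serve as the target of the convergence $(G,S)\to(H,T)$ in $\markedgroups$.
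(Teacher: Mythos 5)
Your proof is correct and follows essentially the same route as the paper's: both use submultiplicativity (Fekete) to get $\lambda_{G,S}\le\nu_{G,S}(R)^{1/R}$, then match balls of radius $R$ in $\cay GS$ and $\cay HT$ to transfer the bound from $H$ to $G$. The only cosmetic difference is that you fix an arbitrary $T$ and take the infimum at the end, while the paper first picks $T$ to nearly realize $\inf_{T'}\lambda_{H,T'}$ and collects a $2\epsilon$; your explicit appeal to Lemma~\ref{choiceofgen} is a good touch that the paper leaves implicit.
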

\begin{proof}
  For every $\epsilon>0$, there exists a generating set $T$ for $H$
  such that $\lambda_{H,T}<\inf_{T'}\lambda_{H,T'}+\epsilon$. There
  exists then $R\in\N$ such that
  $\nu_{H,T}(R)^{1/R}\le\lambda_{H,T}+\epsilon$. Choose then a
  generating set $S$ for $G$ such that the balls of radius $R$ in
  $\cay GS$ and $\cay HT$ agree. Then
  $\nu_{G,S}(R)=\nu_{H,T}(R)$, so
  $\lambda_{G,S}\le\nu_{H,T}(R)^{1/R}$ because growth functions are
  submultiplicative. Therefore, for all $\epsilon>0$ there exists $S$
  generating $G$ such that
  $\lambda_{G,S}\le\inf_{T'}\lambda_{H,T'}+2\epsilon$.
\end{proof}

Note that the inequality in Lemma~\ref{lem:lambdainf} can be strict;
for example, the Grigorchuk group $\grig$, has intermediate growth,
yet $\grig\conv\free_3$.

\begin{corollary}\label{cor:nueg}
  For every group $G$ of exponential growth, the group $G\wr_X\grig$
  has non-uniform exponential growth.
\end{corollary}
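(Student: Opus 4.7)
The strategy is to combine Corollary~\ref{cor:grigwr} with the second half of Lemma~\ref{lem:lambdainf}: if $G\wr_X\grig$ preforms a group of subexponential growth, then, since $G\wr_X\grig$ itself has exponential growth (it contains $G$ as the subgroup of functions $X\to G$ supported at the base point $x_0=1^\infty$, and $G$ is exponential by hypothesis), the wreath product has non-uniform exponential growth. So the entire task reduces to exhibiting a subexponential-growth group \preformed\ by $G\wr_X\grig$.

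Corollary~\ref{cor:grigwr} furnishes the candidate: writing $G$ with $k$ generators of orders $e_1,\dots,e_k$, the abelian group $B := C_{e_1}\times\cdots\times C_{e_k}$ satisfies $G\wr_X\grig \conv B\wr_X\grig$. It therefore suffices to prove that $B\wr_X\grig$ has subexponential growth. I would bound its growth function, with respect to the natural generating set (one generator per cyclic factor of $B$, each supported at $x_0$, together with the generators $\{a,b,c,d\}$ of $\grig$), by the standard inequality for permutational wreath products:
\[
\nu_{B\wr_X\grig}(R)\;\le\;\nu_B(R)^{\omega(R)}\cdot\nu_\grig(R),
\]
where $\omega(R)$ denotes the maximal cardinality of an \emph{inverted orbit} $\{x_0,\,x_0g_1,\,x_0g_1g_2,\,\ldots,\,x_0g_1\cdots g_R\}$ as $g_1\cdots g_R$ runs over words of length $\le R$ in the generators of $\grig$.

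The main obstacle is the estimate $\omega(R)\le CR^\alpha$ for some $\alpha<1$: this is the quantitative heart of the argument and eventually produces the explicit $\alpha\approx0.7674$ appearing in Theorem~\ref{thm:E}. Such a bound rests on the contracting self-similar structure of $\grig$: via the wreath recursion $\grig\hookrightarrow(\grig\times\grig)\rtimes C_2$, a word of length $R$ acting on $X\subseteq\{0,1\}^\infty$ decomposes into two words of length at most $\eta R$ with $\eta<1$ acting on the two subtrees, yielding a recursion of the shape $\omega(R)\le 2\,\omega(\eta R)+O(1)$ which forces $\omega(R)=O(R^\alpha)$. Granted this, since $B$ is finitely generated abelian, $\nu_B(R)$ is polynomial in $R$, so $\nu_B(R)^{\omega(R)}\le\exp(O(R^\alpha\log R))$; combined with the subexponential $\nu_\grig(R)$ (which in fact satisfies the same kind of bound), the product remains subexponential, which completes the reduction and the proof.
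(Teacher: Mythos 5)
Your proof is correct and takes essentially the same route as the paper: both reduce, via Corollary~\ref{cor:grigwr}, to showing that $B\wr_X\grig$ has subexponential growth for the abelian group $B=C_{e_1}\times\cdots\times C_{e_k}$, and then conclude with Lemma~\ref{lem:lambdainf}. The one point of divergence is that the paper simply cites Theorem~A of the reference on permutational wreath products for the subexponential growth of $B\wr_X\grig$, whereas you attempt to sketch the internal argument (the inverted-orbit estimate). That sketch is oversimplified as stated: the one-level recursion $\omega(R)\le 2\,\omega(\eta R)+O(1)$ has $\eta\approx 1/2$ for Grigorchuk's group, which only yields $\omega(R)=O(R)$ and therefore nothing subexponential; the actual bound is obtained by iterating the contraction over three levels of the tree, which is precisely what produces the cubic relation $2^{3-3/\alpha}+2^{2-2/\alpha}+2^{1-1/\alpha}=2$ defining $\alpha$. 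Since you explicitly write ``Granted this,'' I read you as effectively citing the estimate rather than proving it, and the surrounding argument --- including the observation that $G\wr_X\grig$ has exponential growth because $G$ imbeds as the functions supported at a single point of $X$ --- is sound.
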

\begin{proof}
  From Corollary~\ref{cor:grigwr} we get $G\wr_X\grig\conv
  B\wr_X\grig$ for an abelian group $B$. It was proved
  in~\cite{bartholdi-erschler:permutational}*{Theorem~A} that
  $B\wr_X\grig$ has subexponential growth, in fact of the form
  $\exp(R^\alpha)$ if $B$ is finite, non-trivial, and of the form
  $\exp(R^\alpha\log R)$ if $B$ is infinite, for some constant
  $\alpha<1$, see Corollary~\ref{cor:imbednueg}. The claim then
  follows from Lemma~\ref{lem:lambdainf}.
\end{proof}

\begin{corollary}\label{cor:imbednueg}
  Every countable group may be imbedded in a group of non-uniform
  exponential growth.

  Furthermore, let $\alpha\approx0.7674$ be the positive root of
  $2^{3-3/\alpha} + 2^{2-2/\alpha} + 2^{1-1/\alpha} = 2$.  Then the
  group of non-uniform exponential growth $G$ has the following
  property: there is a constant $K$ such that, for any $R>0$, there
  exists a generating set $S$ of $G$ with
  \[\nu_{G,S}(r) \le \exp(K r^\alpha)\text{ for all }r\le R.
  \]

  In particular, there exist groups of non-uniform exponential growth
  that do not imbed uniformly into Hilbert space.
\end{corollary}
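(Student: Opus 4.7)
The strategy is to realize the desired group $G$ as a permutational wreath product $G_0\wr_X\grig$, where $G_0$ is a carefully chosen finitely generated group containing the given countable group $K$. Non-uniform exponential growth of $G$ will follow directly from Corollary~\ref{cor:nueg}; the quantitative estimate $\exp(Kr^\alpha)$ will come from combining Corollary~\ref{cor:grigwr} with the Bartholdi--Erschler subexponential bound on $B\wr_X\grig$ for finite abelian $B$.

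Given a countable $K$, the first step is to embed it into a finitely generated group $G_0$ using the Higman--Neumann--Neumann embedding theorem. For the bound with the stated exponent $\alpha$ to apply, the embedding must be refined so that $G_0$ is finitely generated by \emph{torsion} elements and also has exponential growth. This can be arranged, for example by taking a free product with a torsion-generated group of exponential growth such as $(\Z/2\Z)*(\Z/3\Z)$, and repackaging the generators into a finite torsion generating set; the essential point is only that every generator of $G_0$ is of finite order. Set then $G=G_0\wr_X\grig$; this is finitely generated, and $K\hookrightarrow G_0\hookrightarrow G$ via the inclusion of $G_0$ as the coordinate subgroup supported at $1^\infty\in X$.

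Since $G_0$ has exponential growth and $\grig$ is infinite, $G$ inherits exponential growth, and Corollary~\ref{cor:nueg} yields that $G$ has non-uniform exponential growth. For the quantitative bound, Corollary~\ref{cor:grigwr} produces a finite abelian group $B$---finite precisely because all generators of $G_0$ are torsion---with $G\conv B\wr_X\grig$. By \cite{bartholdi-erschler:permutational}*{Theorem~A}, the growth of $B\wr_X\grig$ with respect to a fixed generating set $T$ is bounded by $\exp(Kr^\alpha)$ for the stated $\alpha$ and some absolute constant $K$. For arbitrary $R>0$, the relation $G\conv B\wr_X\grig$ supplies a generating set $S$ of $G$ so that $\cay GS$ and $\cay{B\wr_X\grig}T$ agree on the ball of radius $R$, whence $\nu_{G,S}(r)=\nu_{B\wr_X\grig,T}(r)\le\exp(Kr^\alpha)$ for every $r\le R$, and $K$ is independent of $R$.

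For the final clause, I would apply the above to a countable group $K$ known not to embed uniformly into Hilbert space---for instance, a countable group containing a sequence of expander graphs. Uniform embeddability into Hilbert space passes to subgroups (using any proper left-invariant metric on $K$), so the resulting $G$ cannot embed uniformly either. The main obstacle I anticipate is the packaging in the first step: realizing a prescribed countable group inside a finitely generated, torsion-generated group of exponential growth, with enough control that the abelian group $B$ arising from Corollary~\ref{cor:grigwr} is genuinely finite and the sharp exponent $\alpha$ rather than $\alpha\log r$ governs the estimate.
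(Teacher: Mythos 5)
Your proposal follows essentially the same route as the paper's proof: embed the countable group into a finitely generated group with exponential growth and torsion generators, form the permutational wreath product over $X$ with $\grig$, invoke Corollary~\ref{cor:grigwr} to obtain a $\conv$-relation to $B\wr_X\grig$ with $B$ finite abelian, and cite the Bartholdi--Erschler growth bound $\exp(Kr^\alpha)$ together with the Gromov monster for the Hilbert-space claim. The ``packaging'' step that worries you (arranging torsion generators without spoiling exponential growth) is handled with the same level of brevity in the paper's own proof, so you are not introducing any new gap.
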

\begin{proof}
  Let $G$ be a countable group. Imbed first $G$ into a finitely
  generated group $H$. Without loss of generality, assume that $H$ has
  exponential growth (if needed, replace $H$ by $H\times\free_2$), and
  that the generators of $H$ are torsion elements.

  By Corollary~\ref{cor:grigwr}, the group $H\wr_X\grig$
  \preforms\ $A\wr_X\grig$ for a finite abelian group $A$. Since
  $A\wr_X\grig$ has growth $\sim\exp(R^\alpha)$, the first claim
  follows.

  The second claim follows from the first, since there exist groups
  $G$ that do not imbed into Hilbert space~\cite{gromov:rwrg}; and the
  property of not imbedding into Hilbert space is inherited from
  subgroups.
\end{proof}

Brieussel asked in~\cite{brieussel:entropy}*{after Proposition 2.5}
whether there exist groups of non-uniform exponential growth and
without the Haagerup property.  Recall that a group has the Haagerup
property if it admits a proper affine action on Hilbert space; this
property is also known as ``a-T-menability'',
see~\cite{cherix+-haagerup}. It is clear that any group with the
Haagerup property can be uniformly imbedded into Hilbert space.
Therefore, Corollary~\ref{cor:imbednueg} implies in particular that
there exist groups of non-uniform exponential growth that do not have
the Haagerup property.

\subsection{Non-uniform non-amenability}
Let $G$ be a group generated by a finite set $S$. By F\o lner's
criterion, $G$ is \emph{non-amenable} if the isoperimetric constant
\[\alpha_S:=\inf_{F\subset G\text{ finite}}\#(FS\setminus F)/\#F
\]
satisfies $\alpha_S>0$. Arzhantseva et
al.~\cite{arzhantseva-b-l-r-s-v:una} call $G$ \emph{non-uniformly
  non-amenable} if $G$ is non-amenable, but $\inf_S\alpha_S=0$.

If $G$ has non-uniform exponential growth and is non-amenable, then it
is non-uniformly amenable. However, there are groups of uniform
exponential growth that are non-uniformly non-amenable. Clearly, if
$G$ \preforms\ an amenable group, then $G$ may not be
uniformly non-amenable:
\begin{example}
  $\free_2\wr\Z$ has uniform exponential growth, but is
  non-uniformly non-amenable.
\end{example}
\begin{proof}
  The group $\free_2\wr\Z$ maps onto $\Z^2\wr\Z$, which is solvable
  and of exponential growth; so its growth is uniformly exponential,
  and the same holds for $\free_2\wr\Z$.

  By Lemma~\ref{lem:any<direct}, we also have
  $\free_2\wr\Z\conv\Z^2\wr\Z$, so $\free_2\wr\Z$ precedes an amenable
  group, so is not uniformly non-amenable.
\end{proof}

\section{Open problems and questions}

\begin{question}
  Is every non-virtually nilpotent group in the connected component of
  the free group?
\end{question}

A positive answer to the following question would imply a negative
answer to the question by Olshansky: ``Is there a variety
other than virtually nilpotent or free in which the relatively free
group is finitely presented?''

\begin{question}
  Do two nilpotent groups belong to the same connected
  component if and only if they have the same positive universal
  theory?
\end{question}

We have answered positively the question above in the case of
nilpotent groups $G$ such that $G$ and $G/\Torsion(G)$ generate the
same variety.

%

We show in Remark~\ref{re:atleastthree} that the diameter of the free
group's component is at least three:
\begin{question}
  What is the diameter of the connected component of the free group?
\end{question}

The following question complements the previous one; we show in
Proposition~\ref{mainpropositionnilpotent} that its answer is
positive, in particular, in the case of torsion-free nilpotent groups.
Guyot considered limits of dihedral groups in~\cite{guyot:dihedral},
and showed that they are semidirect products of (a finitely generated
abelian group with cyclic torsion subgroup) by $\Z/2$, the latter
acting by $-1$. His result implies that the groups \preformed\ by the
infinite dihedral group form a directed set.
\begin{question}
  Is every connected component of virtually nilpotent groups directed,
  namely, is it a partially ordered set in which every finite subset
  has an upper bound?
\end{question}

If $G\conv\free_k$, then there are generating sets $S_n$ for $G$, of
cardinality $k$, such that the girth of $\cay G{S_n}$ tends to
infinity.  
\begin{question}\label{qu:girth}
  If a finitely generated group $G$ has infinite girth, does one have
  $G\conv\free_k$ for some $k\in\N$?
\end{question}
In other words, the question asks whether in the definition of girth
one can always chose a sequence of generating sets with a bounded
number of generators.

Cornulier and Mann asked
in~\cite{cornulier-mann:rflaws}*{Question~18}: ``Does there exist a
group of intermediate growth that satisfies an identity?'' The
following question is also open: ``Does there exist a group of
non-uniform exponential growth that satisfies an identity?'' So as to
better determine which groups \preform\ free groups, we ask:
\begin{question}
  Does there exist a group of intermediate growth that satifies an
  almost-identity?  Does there exist a group of non-uniform
  exponential growth that satisfies an almost-identity?
\end{question}

A well-known question by S.I. Adyan asks: ``Are there finitely
presented groups of intermediate growth?'' Such a group would not be
\preformed\ by a group of exponential growth. The following question
by A. Mann is also open~\cite{mann:growthfree}*{Problem~4}: ``Are
there finitely presented groups of non-uniform exponential growth?''

Given a group $G$ of non-uniform exponential growth, it admits
generating sets $S_n$ with growth rate tending to $1$. If furthermore
the cardinalities of the $S_n$ are bounded, then a subsequence of
$(G,S_n)$ converges to a group of intermediate growth.
\begin{question}\label{qu:nue=>seg}
  Does there exist a group of non-uniform exponential growth that
  doesn't \preform\ a group of subexponential (equivalently,
  intermediate) growth?
\end{question}

\begin{question}
  Does there exist a group $G$ such that, for every finitely generated
  group $A$ of non-polynomial growth, there exists a group $H$ with
  $G\conv H$ and the growth of $A$ and $H$ are equivalent?
\end{question}

\begin{bibdiv}
\begin{biblist}
\font\cyreight=wncyr8
\bibselect{math}
\end{biblist}
\end{bibdiv}

\end{document}